\numberwithin{equation}{section}
\newtheorem{theorem}{Theorem}[section]
\newtheorem{lemma}[theorem]{Lemma}
\newtheorem{remark}[theorem]{Remark}%
\newtheorem{corollary}{Corollary}[section]
\newtheorem{definition}[theorem]{Definition}
\numberwithin{equation}{section}
\newcommand{\ie}{\emph{i.e.}\ }
\def\half{\frac{1}{2}}
\newcommand{\bigo}{{\mathcal O}}
\newcommand\bfe{{\mathbf e}}
\newcommand\bfeta{{\mathbf \eta}}
\newcommand\bff{{\bm f}}
\newcommand\bfh{{\bm h}}
\newcommand\bfp{{\mathbf p}}
\newcommand\bfn{{\bm n}}
\newcommand\bfng{{\bm n_{\Gamma}}}
\newcommand\bfm{{\mathbf m}}
\newcommand\bfr{{\mathbf r}}
\newcommand\bfs{{\mathbf s}}
\newcommand\bfu{{\bm u}}
\newcommand\bfv{{\bm v}}
\newcommand\bfw{{\mathbf w}}
\newcommand\bfz{{\mathbf z}}
\newcommand\bfB{{\mathbf B}}
\newcommand\bfE{{\mathbf E}}
\newcommand\bfH{\bm{H}}
\newcommand\bfI{{\bm I}}
\newcommand\bfF{{\mathbf F}}
\newcommand\bfG{{\mathbf G}}
\newcommand\bfV{{\bm V}}
\newcommand\bfP{{\bm P}}
\newcommand{\eu}{\bfe_\bfu}
\newcommand{\eut}{(\bfe_\bfu)_{T}}
\newcommand{\ep}{\bfe_p}
\newcommand{\el}{\bfe_{\lambda}}
\newcommand\bfkappa{{\boldsymbol \kappa}}
\newcommand{\Ga}{\Gamma}
\newcommand{\Gah}{\Gamma_h}
\newcommand{\Galin}{\Gamma^{(1)}_h}
\newcommand{\ds}{d\sigma}
\newcommand{\dslin}{d\sigma_h^{(1)}}
\newcommand{\dsh}{d\sigma_h}
\newcommand{\nbg}{\nabla_{\Gamma}}
\newcommand{\nbgcov}{\nabla_{\Gamma}^{cov}}
\newcommand{\nbgcovh}{\nabla_{\Gamma_h}^{cov}}
\newcommand{\nbgh}{\nabla_{\Gamma_h}}
\newcommand{\nbglin}{\nabla_{\Gamma_h^{(1)}}}
\newcommand{\divg}{\textnormal{div}_{\Gamma}}
\newcommand{\divgh}{\textnormal{div}_{\Gah}}
\newcommand{\nb}{\nabla}
\newcommand{\R}{\mathbb{R}}
\def \to {\rightarrow}
\newcommand{\Th}{\mathcal{T}_h}
\newcommand{\Thlin}{\mathcal{T}_h^{(1)}}
\newcommand{\Ttilde}{\Tilde{T}} 
\newcommand{\Ihz}{\widetilde{I}_h^z}
\newcommand{\Ihzlin}{\widetilde{I}_{h,1}^z}
\newcommand{\Ihzl}{{I}_h^z}
\newcommand{\vhl}{\bm{v}_h^{\ell}}
\newcommand{\vh}{\bm{v}_h} 
\newcommand{\vhtilde}{\Tilde{\bm{v}}_h}
\newcommand{\uh}{\bm{u}_h}
\newcommand{\uhl}{\bm{u}_h^{\ell}}
\newcommand{\wh}{\bfw_h}
\newcommand{\qh}{q_h}
\newcommand{\qhtilde}{\Tilde{q}_h}
\newcommand{\qhl}{q_h^\ell}
\newcommand{\ph}{p_h}
\newcommand{\lh}{\lambda_h}
\newcommand{\lhl}{\lambda_h^{\ell}}
\newcommand{\thbfu}{\theta_{\bfu}}
\newcommand{\thp}{\theta_{p}}
\newcommand{\thl}{\theta_{\lambda}}
\newcommand{\nh}{\bfn_h}
\newcommand{\nhl}{\bfn_h^{\ell}}
\newcommand{\nhtil}{\Tilde{\bfn}_h}
\newcommand{\muhlin}{\mu_h^{(1)}}
\newcommand{\muh}{\mu_h}
\newcommand{\mh}{\bfm_h}
\newcommand{\shlin}{\bfs_h^{(E)}}
\newcommand{\phl}{p_h^\ell}
\newcommand\bfPg{{\bm{P}_{\Ga}}}
\newcommand\bfPh{{\bm P_h}}
\newcommand\Bhg{{\bm B_h}}
\newcommand\Bhkg{{\bm B_h^{k_g}}}
\newcommand{\ah}{a_h}
\newcommand{\bhlin}{b_h^{(1)}}
\newcommand{\bhtil}{b_h^{L}}
\newcommand{\btil}{\Tilde{b}}
\newcommand{\bh}{b_h}
\newcommand\norm[1]{||#1||}
\begin{document}

\title{SFEM for the Lagrangian formulation of the surface Stokes problem}
\author{Charles M. Elliott, Achilleas Mavrakis}
\address{Mathematics Institute, Zeeman Building, University of Warwick, Coventry CV4 7AL, UK}
\email{\href{mailto:c.m.elliott@warwick.ac.uk}{c.m.elliott@warwick.ac.uk}, \href{mailto:Achilleas.Mavrakis@warwick.ac.uk}{Achilleas.Mavrakis@warwick.ac.uk}}
\date{}

\begin{abstract}
We consider the surface Stokes equation with Lagrange multiplier and approach it numerically. Using a Taylor-Hood surface finite element method, along with an appropriate estimate for the additional Lagrange multiplier, we derive a new inf-sup condition to help with the stability and convergence results. We establish optimal velocity convergence both in energy and tangential $L^2$ norms, along with optimal $L^2$ norm convergence for the two pressures, in the case of super-parametric finite elements. Furthermore, if the approximation order of the velocities matches that of the extra Lagrange multiplier, we achieve optimal order convergence even in the standard iso-parametric case. In this case, we also establish some new estimates for the normal $L^2$ velocity norm. In addition, we provide numerical simulations that confirm the established error bounds and also perform a comparative analysis against the penalty approach.
\end{abstract}

\maketitle

\section{Introduction}
For a stationary two dimensional surface $\Ga$ the  \emph{(steady) generalized  surface Stokes} problem is to: Find the velocity field $\bfu  : \Ga \to \R^3$, surface pressure $p  : \Ga \to \R$ with $\int_\Ga p ds =0$ and Lagrange multiplier $\lambda : \Ga \to \R$ that solves on $\Gamma$
\begin{align}
		\begin{cases}
			\label{eq: generalized Lagrange surface stokes}
			- 2\mu\divg(E(\bfu)) + \bfu + \lambda \bfng+ \nbg p = \bff, \\
			\divg \bfu =g, \\
            \bfu \cdot \bfng =0,
		\end{cases}
	\end{align}
where $\mu$ is the viscosity coefficient, $\bff$ is a given   force field $
	\bff \in (L^2(\Gamma))^3$,
$\lambda$ is a normal force  field to be determined and $g$ is a given   source term $g \in L^2(\Gamma)$, with $\int_\Ga g ds =0$. Essentially we view $\lambda$ as a Lagrange multiplier for the constraint that the normal component of $\bfu$ is zero.  Here   the   rate-of-strain (deformation) tensor $E(\cdot)$, \cite{Miura2020}, is defined for an arbitrary vector field $\bfv$, by 	\begin{equation}
		\begin{aligned}\label{eq: Deformation tensor}
			E(\bfv)  = \half (\nbgcov \bfv + \nbg^{cov,t}\bfv).
		\end{aligned}
	\end{equation}
where  $ \nbgcov \bfv := \bfPg \nbg \bfv $ (with adjoint $\nbg^{cov,t}\bfv=(\nbg \bfv)^t\bfPg$) and  $\bfPg$ denotes the projection to the tangent space of $\Gamma$.
    
 Writing $\bfu\equiv \bfu_T+\bfu\cdot \bfng \bfng$  and $\bfu_T:=\bfPg \bfu$,
  this may also be written as: Find a tangential velocity $\bfu_T : \Ga \to \R^3$, with $\bfu_T \cdot \bfn_\Gamma =0$ and surface pressure $p : \Ga \to \R$ with $\int_\Ga p ds =0$ that solves on $\Gamma$
	\begin{align}
		\begin{cases}
			\label{eq: generalized tangential surface stokes}
			- 2\mu\bfPg\divg(E(\bfu_T)) + \nbg p + \bfu_T= \bfPg \bff, \\
			\divg \bfu_T =g.
		\end{cases}
	\end{align}

The well-posedness of both surface Stokes systems has been studied in \cite{jankuhn2018incompressible, lengeler2015stokestype, Simonett_2022, Miura2020}, where the authors do not consider the zero-order term, but instead filter out the \emph{Killing fields}, that is, the non-zero tangential vectors $\bfv_K$ satisfying $E(\bfv_K)=0$. In our case, we follow \cite{jankuhn2021Higherror,olshanskii2021inf,Olshanskii2018,reusken2024analysis,demlow2024tangential,hardering2023parametric} and add a zero-order term $\bfu$ to ensure well-posedness (uniqueness) and avoid the resulting technical difficulties in the numerical analysis. \cite{bonito2020divergence} have studied the effect this zero-order term has on the solution and also the error bounds. 

In this work, we consider the discretization of a variational form of the problem \eqref{eq: generalized tangential surface stokes} using $H^1$-conforming \emph{Taylor-Hood} surface finite elements. Surface finite elements methods (SFEM) have been studied extensively in \cite{Dziuk88,Dziuk90,DziukElliott_acta,DziukElliott_SFEM} for elliptic and parabolic problems on a surface $\Ga$, including their evolving surface counterpart \cite{EllRan21,DziukElliott_ESFEM,DziukElliott_L2,highorderESFEM}. There is also work for the vector- or tensor-valued corresponding problem using SFEM \cite{hansbo2020analysis,Hardering2022}.

The tangential Stokes problem has been studied using TraceFem in \cite{jankuhn2021Higherror,olshanskii2021inf,Olshanskii2018}, and more recently \cite{bonito2020divergence,reusken2024analysis,demlow2024tangential,hardering2023parametric,brandner2022finite} with the help of SFEM. There have also been discretizations of the stream function formulation using SFEM in \cite{BranReusSteam2020,reusken2018stream,Nitschke_Voigt_Wensch_2012}. The issue with analyzing a discretization of \eqref{eq: generalized tangential surface stokes} is that trying to enforce the tangential constraint in the discrete space would not lead to $H^1$-conforming finite elements, due to the discontinuity across the edges of the mesh. Some papers enforce the tangentiality condition exactly, see \cite{bonito2020divergence} where an $H(\divg)$ conforming finite element method with an interior penalty approach to enforce $H^1-$continuity quickly is analyzed, and \cite{demlow2024tangential} again using a $H(\divg)$ conforming finite element method where no penalization is needed due to the use of the Piola transform. There are also such methods, using the Piola transform to approximate the tangential condition exactly for the Darcy problem \cite{Ferroni2016} as well. These methods, though, more often than not are more difficult to implement numerically. 

An easier approach is to use $H^1-$conforming  $\emph{Taylor-Hood}$ surface finite element methods, where one enforces the tangential constrain weakly via penalization or Lagrange multiplier.  Several papers have studied the penalization approach using TraceFEM  Stokes problem  \cite{jankuhn2021Higherror,olshanskii2021inf,Olshanskii2018} and SFEM \cite{hardering2023parametric,reusken2024analysis}. However, there appears to be none employing the Lagrange multiplier method, except for \cite{jankuhn2021trace}, where the authors use Tracefem to approximate the vector-Laplace equation.

 In \cite{hardering2023parametric} 
 using a penalty approach for the tangentiality condition, with penalty scaling $\eta = h^{-1}$, they are, with the help of the macro-element technique, able to prove optimal tangential $L^2$-norm velocity error bounds. With this technique, it is possible to prove well-posedness for different known pairs of stable Stokes elements from the flat case, for example, the \emph{Taylor-Hood} MINI element. In comparison, in \cite{reusken2024analysis} the author enforces the tangential condition again via a penalty with a penalty parameter $\eta = h^{-2}$, and achieves optimal $H^1$ and $L^2$ error bounds for the velocity field and pressure respectively, in the case where a higher-order approximation of normal $\nhtil$ is considered. Both authors have to approximate the strain tensor $E(\bfPg \bfv)$, for which the mean curvature tensor was evaluated as the gradient derivative of the geometry normal.

In our work, as mentioned above, we use $H^1-$conforming  $\bm{\mathcal{P}}_{k_u}/\mathcal{P}_{k_{pr}}/\mathcal{P}_{k_{\lambda}}$ extended \emph{Taylor-Hood} finite elements, where we enforce the tangential constraint weakly via an extra Lagrange multiplier $\lh$, to approximate \eqref{eq: generalized tangential surface stokes}. We consider and treat this new Lagrange multiplier as an extra pressure and, therefore, show a combined discrete \emph{inf-sup condition} involving both $\{\ph,\lh\}$. In particular, we establish two different discrete \emph{inf-sup conditions} depending on the choice of norm for $\lh$. One concerning the $L^2 \times L^2$-norm of the pressures with respect to the energy norm of the velocity, and another involving the $L^2\times H_h^{-1}$-norm with respect to the $H^1$-norm of the velocity instead, where $H_h^{-1}$ is the dual of the finite element space for $\lh$.
For the error bounds, we consider two different scenarios depending on the choice of approximation of the extra Lagrange multiplier, i.e. $k_\lambda$. First, in the case where $k_\lambda=k_u-1$ we show optimal order discrete error estimates in energy and tangential $L^2$-norms for the velocity and $L^2\times L^2$-norm for the pressures, only under sufficiently accurate geometry approximation; e.g. we have $\bigo(h^{k_g-1})$ convergence for the geometric part of the error in the energy norm. Therefore, we use \emph{super-parametric finite elements}, where the order of the geometric approximation of the discrete surface is one higher than the order of the velocity approximation, $k_u = k_g +1$ and $k_g \geq 2$. On the other hand, for the $k_\lambda=k_u$ case, 
we establish optimal error bounds with respect to the geometric part of the error even with \emph{iso-parametric finite elements}; e.g. we show $\bigo(h^{k_g})$ convergence with respect to the geometric approximation in the energy norm. The reason for the improvement in the geometric error is a new improved $H^1$ coercivity result for weakly discretely tangential divergence-free functions \cref{lemma: new H1 estimate thbfu kl=ku} (this result is obtained by better controlling the normal part of such functions due to the richer F.E. space for $\lh$) and the $L^2\times H_h^{-1}$ \emph{discrete inf-sup condition}. In this case, we also establish new error estimates for the $L^2$-norm of the normal part of the velocity, which are half-order sub-optimal compared to the tangential $L^2$-norm, unless an improved normal $\nhtil$ is used; see \cref{remark: About the L^2 Error of the normal velocity}. Moreover, these new improved estimates hold even for planar finite elements, i.e. for $k_g=1$, which was not the case for $k_\lambda=k_u-1$. We also want to notice that these results are important for the study of the \emph{time-dependent Navier-Stokes} on stationary or evolving surfaces. Regarding the numerical implementation of our method, we notice that it is simple and straightforward. Experiments are provided in \cref{Section: Numerical results}, where we test our theoretical results. Lastly, we notice that the choice of $k_\lambda$ plays an important role in the numerical implementation, since, despite the good results we observe for $k_\lambda=k_u$, this choice leads to a worse conditioned system compared to the other case.



\subsection{Outline}
In \cref{sec: Differential geometry on Surfaces} we introduce our notation along with our function spaces. The continuous variational formulation is introduced in \cref{sec: Weak formulation}, which helps us to formulate our finite element scheme in \cref{section: Finite element approximation}. There, the $H^1-$ conforming $\bm{\mathcal{P}}_{k_u}/\mathcal{P}_{k_{pr}}/\mathcal{P}_{k_{\lambda}}$ extended \emph{Taylor-Hood} finite elements are also presented. In \cref{sec: well-posedness of discrete formulation} we prove two different discrete \emph{inf-sup conditions}, one involving $L^2\times L^2$ and the other the $L^2 \times H_h^{-1}$ norms of the two pressures, from which we establish well-posedness of our discrete formulation. In \cref{Section: error estimates} we analyze error bounds for two different choices of $k_\lambda$, i.e. the finite element approximation space $\Lambda_h$ of the second pressure $\lambda$. For $k_\lambda=k_u-1$, we show energy and tangential $L^2$ error bounds for the velocity along with $L^2\times L^2$ pressure error bounds, which are optimal only if \emph{super-parametric finite elements} are used.  In contrast, for $k_\lambda = k_u$, we derive a new $H^1$ coercivity result for weakly discrete tangential divergence-free functions, and using the $L^2 \times H_h^{-1}$ discrete \emph{inf-sup condition}, we establish optimal error bounds in the \emph{iso-parametric} finite element setting, including in the $H^1-$norm. In this case, we also prove $L^2$ norm error bounds for the normal part of the velocity. In \cref{Section: Numerical results} we demonstrate numerical results and perform a comparative analysis against the penalty approach.
\section{Differential geometry on surfaces}\label{sec: Differential geometry on Surfaces}
In this section, we recall some fundamental notions and tools concerning surface vector-PDEs necessary for the study of the surface Stokes problem. We mainly follow definitions in \cite{hansbo2020analysis} and \cite{jankuhn2018incompressible}. 

\subsection{The closed smooth surface}
Let $\Ga \subset \mathbb{R}^3$ be a closed, oriented, compact $C^m$  two-dimensional hypersurface embedded in $\mathbb{R}^3$. We will see later that $m\geq 4$ is required in the error analysis. Note that  since  $\Gamma$ is the boundary of an open set we choose the orientation by setting  $\bfng$ to be the unit outward pointing normal to $\Ga$.  Let $d(\cdot) : \mathbb{R}^3 \to \mathbb{R}$ be the signed distance function, defined in \cite{DziukElliott_acta}, and
for $\delta>0 $, let $U_\delta \subset \mathbb{R}^3$ be the tubular neighborhood $U_{\delta} = \{x \in \R^3\ : |d(x)| <\delta\}$. Then we may define the closest point projection mapping to $\Ga$ as $\pi(x) = x - d(x)\bfng(\pi(x)) \in \Ga$ for each $x \in U_\delta$  for $\delta>0$ small enough, see \cite{GilTrud98,DziukElliott_acta}. Moreover, $\nb d(x) = \bfng(\pi(x))$ for $x \in U_\delta$. We can see that $d(\cdot)$ and $\pi(\cdot)$ are of class $C^m$ and $C^{m-1}$ on $\overline{U_{\delta}}$. 

The extended normal allows us to define the extended Weingarten map on the surface and in $U_{\delta}$, by $\bfH := \nb \bfn^e = \nb^2 d$, which is of class $C^{m-2}$. The mean curvature of $\Ga$ is $\bfkappa:= tr(\bfH)$. We also define the orthogonal projection operator onto the tangent plane $\bfPg (x) = \bfI - \bfng(x) \otimes \bfng(x)$ for $x \in \Ga$ that satisfies $\bfPg^T=\bfPg^2=\bfPg$, $|\bfPg|_{Fr}=2$, and $\bfPg \cdot \bfng=0$.

Using the projection $\pi$ we may extend functions $\bfv : \Ga \to \mathbb{R}^3$  on $\Ga$ to the tubular neighbourhood $U$, by 
\begin{equation}
\label{eq: smooth extension}
    \bfv^e(x) = \bfv(\pi(x)), \ \ \ x \in U_{\delta}.
\end{equation}
This extension is constant in the normal direction of $\Ga$ and thus contains certain useful additional properties in comparison to other regular extensions. The extension of $\bfPg$ to the $\delta-$strip $U_\delta$, is defined as 
\begin{equation}
    \bfP = \bfI - \bfng^e(x) \otimes \bfng^e(x) = \bfI - \bfn(x) \otimes \bfn(x), \ \text{ for } x \in U_\delta,
\end{equation}
where we have used $\bfng^e = \bfng\circ \pi = \bfn$. We can then easily see that $\bfP|_{\Ga} = \bfPg$.

\subsection{Scalar functions and vector fields}

We start by defining useful quantities for \emph{scalar} functions following \cite{DziukElliott_acta}. For a function $f \in C^1(\Ga)$ we define the \emph{tangential (surface) derivative} of $f$ as 
\begin{equation}
    \nbg f(x) = \bfPg(x) \nb f^e(x), \ \ \text{with } \ \underline{D}_if(x) = \sum_{j=1}^3 P_{ij}\partial_jf^e(x), \ \ \ x\in\Ga, 
\end{equation}
where $(\nb f^e)_i = \partial_i(f^e)$, $1 \leq i \leq 3$ is the Euclidean derivative and $\nbg f(x) = (\underline{D}_1f(x),$ $ \underline{D}_2f(x),$ $\underline{D}_3f(x))^t$ is a column vector. Here $f^e$ is the extension to $U_{\delta}$ where $f^e|_{\Ga} = f$ as in \eqref{eq: smooth extension}. Now, since $\bfPg$ is the orthogonal projection, which is symmetric, we have that 
\begin{equation}
    \nbg f(x) = \bfPg(x) \nb f^e(x) =  \nb^t f^e(x)\bfPg(x), \ \ \ \ \bfPg \nbg f = \nbg f  \ \ \text{on } \Ga.
\end{equation}
As noted in \cite[Lemma 2.4]{DziukElliott_acta} the definition of the tangential gradient is independent of the extension. We now introduce derivatives of surface \emph{vector fields}. We begin by defining tangential derivatives, analogous to the scalar case, as Euclidean derivatives of extended quantities (this time-vector fields) and proceed to establish the covariant derivatives. 

\begin{definition}[Tangential derivative]
Let $\bfv : \Ga \to \mathbb{R}^3$, $\bfv = (v_1, v_2, v_3)^T$ be a smooth vector field, and $\bfv^e$ the smooth extension in \eqref{eq: smooth extension}, the tangential gradient of this vector field  is defined by 
\begin{equation}\label{eq: tangential derivative}
    (\nbg \bfv)_{ij} = (\nb \bfv^e \bfPg)_{ij} = \sum_{l=1}^3 \partial_l v^e_iP_{jl}  ,
\end{equation}
where $(\nbg \bfv)_{ij} = \underline{D}_j v_i$ and $\underline{D}_j v = (\bfp_j \cdot \nb)v^e = (\nb v^e)\bfp_j$. This derivative is a $3 \times 3$ matrix, which may be presented as 
\begin{equation}\label{eq: tangential derivative matrix}
\nbg \bfv = \nbg \begin{bmatrix}
v_1 \\
v_2 \\
v_3
\end{bmatrix} = \begin{bmatrix}
\underline{D}_1 v_1 & \underline{D}_2 v_1 & \underline{D}_3 v_1\\
\underline{D}_1 v_2 & \underline{D}_2 v_2 & \underline{D}_3 v_2\\
\underline{D}_1 v_3 & \underline{D}_2 v_3 & \underline{D}_3 v_3
\end{bmatrix}.
\end{equation}

\end{definition}
We note again that due to the extension $\bfv^e = \bfv \circ \pi$ we find that the tangential derivative turns out to be the Euclidean derivative corresponding to this extended vector field \ie $\nbg \bfv = \nb \bfv^e|_{\Ga}$. With the help of the tangential derivatives for vector fields we may define the Weingarten map $\bfH$ on the $\delta-$strip $U_{\delta}$ again
\begin{equation}\label{eq: Weingarten map}
    \bfH := \nbg \bfn, \ \text{ with } \ \bfH \bfn=0, \ \ \ \bfH \bfPg = \bfPg \bfH =\bfH,
\end{equation}
where we observe that it is tangent to $\Ga$. Note that $\bfH$ is of class $C^{m-2}$ and thus bounded on $\Ga$, see \cite{DziukElliott_acta, GilTrud98}. 

We continue by providing an extended definition of covariant derivatives for general vector fields in $\mathbb{R}^3$, as seen in \cite{brandner2022derivations} and \cite{fries2018higher}.
\begin{definition}[Covariant derivatives]\label{def: Covariant derivatives}
Let $\bfv : \Ga \to \mathbb{R}^3$, $\bfv = (v_1, v_2, v_3)^T$ be a smooth vector field, and $\bfv^e$ the smooth extension in \eqref{eq: smooth extension}, the covariant derivative $\nbgcov \bfv$ is then defined by
\begin{equation}
    \nbgcov \bfv = \bfPg \nbg \bfv = \bfPg \nb \bfv^e|_{\Ga} \bfPg = \sum_{j=1}^3 P_{jk}\partial_k v^e_lP_{li},
\end{equation}
where the derivative is a  $3 \times 3$ matrix.
\end{definition}

\noindent Notice that the covariant derivative, in comparison to the tangential derivative of a smooth vector field defined in \eqref{eq: tangential derivative}, is a tangential tensor field, i.e. matrix.
Now, for a smooth vector-valued function $\bfv : \Ga \to \mathbb{R}^3$, the surface divergence is given by 
\begin{equation}\label{eq: divg vector definition}
\divg(\bfv) = tr(\nbgcov \bfv) = tr(\bfPg \nabla \bfv^e \bfPg) = tr(\bfPg \nabla \bfv^e) = tr(\nabla \bfv^e \bfPg) =  tr(\nbg \bfv ) = tr(\nabla\bfv^e),    
\end{equation}
and thus we can easily see that 
\begin{equation}\label{eq: Bound of divg}
\norm{\divg\bfv}_{L^2(\Ga)} \leq 3\norm{\nbg\bfv}_{L^2(\Ga)}. 
\end{equation}
We may split the arbitrary smooth vector field $\bfv$ into $\bfv = \bfv_T + v_n \bfng$ with $\bfv_T = \bfPg \bfv$ and $v_n = \bfv \cdot \bfng$ such that $\nbgcov \bfv = \nbgcov \bfv_T +\nbgcov (v_n \bfng)$, and get the following useful formulae 
\begin{equation}
    \begin{aligned}\label{eq: split cov}
       \nbgcov \bfv = \nbgcov \bfv_T + \bfH v_n.
       \end{aligned}
\end{equation}

\noindent We also define with the help of \eqref{eq: divg vector definition}  the surface divergence of a tensor function $\bfF : \Ga \to \mathbb{R}^{3 \times 3}$:
\begin{equation}
    \begin{aligned}
       \divg(\bfF) :=  \begin{bmatrix}
\divg(\bfF_{1,j}) \\
\divg(\bfF_{2,j}) \\
\divg(\bfF_{3,j})
\end{bmatrix}, \quad j=1,2,3.
       \end{aligned}
\end{equation}

\subsection{Function spaces}\label{sec: Function spaces}

We now define function spaces on the surface $\Ga$. By $(\cdot,\cdot)$ we denote the usual $L^2(\Ga)$ inner product with norm $\norm{\cdot}_{L^2(\Ga)} = (\cdot,\cdot)^{1/2}$ and by $\norm{\cdot}_{L^\infty(\Ga)}$ we denote the $L^{\infty}(\Ga)$ norm. By $<\cdot,\cdot>_{H^{-1},H^1}$ we denote the duality pairing. Also $(H^k(\Ga))^n$ denotes the standard Sobolev spaces as presented in \cite{DziukElliott_acta} of scalar $(n=1)$ or vector valued functions $(n=3)$ for  $\bfeta : \Ga \to \mathbb{R}^n$ with the corresponding norm being given by
\begin{equation}
    \begin{aligned}
    \norm{\bfeta}_{H^k(\Ga)}^2 = \sum_{j=0}^k \norm{\nbg^j \bfeta}^2_{L^2(\Ga)},
    \end{aligned}
\end{equation}
where $\nbg^j$ denote all the weak tangential derivative of order $j$, i.e. $\nbg^j \bfeta = \underbrace{\nbg \cdot \cdot \cdot \nbg}_{\text{ j times}} \bfeta$. For $k=0$ we get the usual $L^2-$ norm $\norm{\cdot}_{L^2(\Ga)}$ for scalar or vector-valued functions. Now more specifically for a vector fields $\bfv : \Ga \to \mathbb{R}^3$ the space $\bfH^1(\Ga) = (H^1(\Ga))^3$ is equipped with the following norm
\begin{equation}
    \begin{aligned} \label{eq: H1 norm definition}
    \norm{\bfv}_{H^1(\Ga)}^2 = \norm{\bfv}_{L^2(\Ga)}^2 + \norm{\nbg \bfv}_{L^2(\Ga)}^2.
    \end{aligned}
\end{equation}

\noindent We set 
$$L^2_0(\Ga) :=\{\eta \in L^2(\Ga) | \ \int_\Ga \eta \, \ds =0\},$$
equipped with the standard $L^2$-norm and the subspace of tangential vector fields: 
$$\bfH^1_T := \{\bfu \in H^1(\Ga)^3 \: | \: \bfu \cdot \bfng =0\},$$
endowed with the $H^1$-norm \eqref{eq: H1 norm definition}.
For any vector field $\bfv$ we use the decomposition $\bfv:=\bfv_T+v_n\bfn_\Gamma$, where $\bfPg \bfv = \bfv_T$
and $v_n:=\bfv\cdot \bfn_\Gamma$. We may also define the space \begin{equation}\label{eq: V* space}
    \bfV := \{\bfv \in L^2(\Ga)^n : \bfv_T \in \bfH^1_T,\, v_n \in L^2(\Ga)\}, \ \ \text{ with } \ \ \norm{\bfv}_{ \bfV } :=\norm{\bfv_T}_{H^1(\Ga)} + \norm{v_n}_{L^2({\Ga})},
\end{equation}
	where we can interpret $ \bfV  \sim \bfH^1_T \oplus L^2(\Ga)$.

\noindent The following inequality known as \emph{\bf surface Korn's inequality} was established in \cite{jankuhn2018incompressible}. There  exists a constant $c_K>0$  such that
	\begin{equation}
		\label{eq: Korn inequality}
		 c_K \norm{\bfv_T}_{H^1(\Ga)}\le \norm{E(\bfv_T)}_{L^2(\Ga)} + \norm{\bfv_T}_{L^2(\Ga)}~~\text{ for all } \bfv\in \bfH^1_T.
	\end{equation}

\section{Variational formulation}\label{sec: Weak formulation}

\subsection{Bilinear forms}
In order to set up the variational formulations we define  bilinear forms.  

 \begin{definition}
      For $\bfu, \bfv \in  \bfH^1(\Ga)$  and $\{q,\xi\} \in L^2(\Ga)\times L^2(\Ga)$ we define 
      \begin{align}
          \label{eq: regular bilinear a}
         a(\bfu,\bfv) &:= \int_\Ga E(\bfu)  : E(\bfv) + \bfu\cdot\bfv \, \ds.\\
         \label{eq: lagrange bilinear forms continuous}
         b^L(\bfu,\{q,\xi\}) &:= -\int_\Ga q \ \divg \bfu_T  \, \ds + \int_\Ga \xi u_n\, \ds := b(\bfu,q) + \int_\Ga \xi u_n\, \ds.
      \end{align}
    In the case where $\bfu \in  \bfH^1(\Ga)$  and $\{q,\xi\} \in  L^2(\Ga) \times H^{-1}(\Ga)$ we define
     \begin{equation}
     \begin{aligned}
         \btil(\bfu,\{q,\xi\}) := b(\bfu,q) + <\xi,\, u_n>_{H^{-1},H^1},
     \end{aligned}
 \end{equation}
 where by $<\cdot,\cdot>_{H^{-1},H^1}$ we mean the duality pairing.
 \end{definition}

\subsection{Lagrange multiplier variational formulations}\label{sec: Lagrange formulation}

Consider \eqref{eq: generalized Lagrange surface stokes} with $g=0$. By  applying integration by parts, we want to look for a solution for the following problem,\\

\noindent {\bf (LP)}: Given $\bff \in L^2(\Ga)^3$
 determine $(\bfu,\{p,\lambda\}) \in \bfH^1(\Ga) \times(L^2_0(\Ga)\times L^2(\Ga))$ such that,
\begin{align}
\begin{cases}
\tag{LP}
    \label{weak lagrange hom}
        a(\bfu,\bfv) \ + \!\!\!\!&b^L(\bfv,\{p,\lambda\}) = (\bff,\bfv)_{L^2(\Ga)} \ \ \ \ \ \text{for all } \bfv\in \bfH^1(\Ga),\\
        &b^L(\bfu,\{q,\xi\})=0 \ \ \  \text{ for all } \{q,\xi\}\in (L^2_0(\Ga)\times L^2(\Ga)),
    \end{cases}
\end{align}

\begin{remark}
   The surface Stokes problem \eqref{eq: generalized Lagrange surface stokes} with $g\neq0$ can be  easily transformed  to \eqref{weak lagrange hom} with a different right-hand side  $L^2-$ functional via simple use of a Leray-Helmholtz decomposition for surfaces, \cite[Theorem 4.2]{reusken2018stream}. \end{remark}

In order to prove  well-posedness  of the problem above, we consider the following auxilary problem,

\noindent {\bf (LP-aux)}: Given $\bff \in L^2(\Ga)^3$
 determine $(\bfu,\{p,\lambda\}) \in \bfH^1(\Ga) \times(L^2_0(\Ga)\times H^{-1}(\Ga))$ such that,
\begin{align}
\begin{cases}
\tag{LP-aux}
    \label{weak lagrange hom 2}
        a(\bfu,\bfv) \ + \!\!\!\!&\btil(\bfv,\{p,\lambda\}) = (\bff,\bfv)_{L^2(\Ga)} \ \ \ \ \ \text{for all } \bfv\in \bfH^1(\Ga),\\
        &\btil(\bfu,\{q,\xi\})=0 \ \ \  \text{ for all } \{q,\xi\}\in (L^2_0(\Ga)\times H^{-1}(\Ga)).
    \end{cases}
\end{align}

\begin{lemma}\label{lemma: inf-sup cont lagrange}
   Let $\Ga$ be  $C^2$ and compact. Then there exists a constant $c_b$ such that the inf-sup property holds,
\begin{equation}
    \begin{aligned}\label{infsup lagrange}
         \inf_{\{p,\lambda\} \in (L^2_0(\Ga)\times H^{-1}(\Ga))} \ \sup_{\bfv\in \bfH^1(\Ga)} \frac{\btil(\bfv,\{p,\lambda\})}{\norm{\bfv}_{H^1(\Ga)} \norm{\{p,\lambda \}}_{L^2(\Ga)\times H^{-1}(\Ga)}} \geq c_b > 0,
    \end{aligned}
\end{equation}
\end{lemma}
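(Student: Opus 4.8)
The plan is to verify the inf-sup condition \eqref{infsup lagrange} by, given an arbitrary pair $\{p,\lambda\} \in L^2_0(\Ga)\times H^{-1}(\Ga)$, constructing an explicit test field $\bfv \in \bfH^1(\Ga)$ that realizes a uniform fraction of $\norm{\{p,\lambda\}}_{L^2(\Ga)\times H^{-1}(\Ga)}$. Since $\btil(\bfv,\{p,\lambda\}) = b(\bfv,p) + \langle \lambda, v_n\rangle_{H^{-1},H^1}$, the two pieces of the pressure interact with \emph{complementary} components of $\bfv$: the scalar surface-divergence term $b(\bfv,p) = -\int_\Ga p\,\divg\bfv_T$ sees only the tangential part $\bfv_T$, while the duality term $\langle\lambda,v_n\rangle$ sees only the normal component $v_n = \bfv\cdot\bfng$. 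This near-decoupling is the structural reason the combined inf-sup should hold, and it suggests building $\bfv$ additively: $\bfv = \bfv_T + v_n\bfng$ with $\bfv_T$ chosen to control $p$ and $v_n$ chosen to control $\lambda$, then arguing the cross-terms are harmless.

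First I would handle the normal component. For $\lambda \in H^{-1}(\Ga)$, by definition of the dual norm there is $w \in H^1(\Ga)$ (scalar) with $\langle \lambda, w\rangle \geq \half \norm{\lambda}_{H^{-1}}\norm{w}_{H^1}$; normalizing, take $w$ with $\norm{w}_{H^1(\Ga)} = \norm{\lambda}_{H^{-1}(\Ga)}$ and $\langle\lambda,w\rangle \geq \half\norm{\lambda}_{H^{-1}(\Ga)}^2$. Set the normal part of the test field to be (a multiple of) $w\bfng$; note $w\bfng \in \bfH^1(\Ga)$ since $\bfng$ is $C^{m-1}$ with $m\geq 2$ here, and $\norm{w\bfng}_{H^1(\Ga)} \lesssim \norm{w}_{H^1(\Ga)}$ using boundedness of $\bfng$ and $\bfH = \nbg\bfng$. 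Second, for $p \in L^2_0(\Ga)$, I invoke the classical continuous inf-sup / surjectivity of the surface divergence onto $L^2_0(\Ga)$ (equivalently the existence of a bounded right inverse, as in \cite{jankuhn2018incompressible, reusken2018stream}): there is $\bfz \in \bfH^1_T$ with $\divg\bfz = -p$ (so $b(\bfz,p) = \norm{p}_{L^2(\Ga)}^2$) and $\norm{\bfz}_{H^1(\Ga)} \leq C_0\norm{p}_{L^2(\Ga)}$.

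The main obstacle — and the step to treat carefully — is combining these two partial test fields so the cross-contributions do not spoil the estimate. The field $\bfz\in\bfH^1_T$ has $z_n = 0$, so it contributes nothing to the $\lambda$-term: that direction is clean. But $w\bfng$ has a nonzero \emph{tangential} part of its gradient, hence $\divg(w\bfng) = w\,\divg\bfng = w\,\bfkappa \neq 0$ in general, so the normal test field leaks into $b(\cdot,p)$. The standard fix is the Boland–Nicolaides / Stenberg argument: take $\bfv = \bfz + \delta\, w\bfng$ for a small parameter $\delta>0$ to be fixed, compute
\begin{equation*}
\btil(\bfv,\{p,\lambda\}) = \norm{p}_{L^2(\Ga)}^2 + \delta\int_\Ga w\,\bfkappa\, p\,\ds + \delta\langle\lambda,w\rangle \geq \norm{p}_{L^2}^2 - \delta\norm{\bfkappa}_{L^\infty}\norm{w}_{L^2}\norm{p}_{L^2} + \tfrac{\delta}{2}\norm{\lambda}_{H^{-1}}^2,
\end{equation*}
then absorb the middle term with Young's inequality ($\delta\norm{\bfkappa}_{L^\infty}\norm{w}_{L^2}\norm{p}_{L^2} \leq \half\norm{p}_{L^2}^2 + \tfrac{\delta^2}{2}\norm{\bfkappa}_{L^\infty}^2\norm{w}_{L^2}^2$) and choose $\delta$ small enough (depending only on $\norm{\bfkappa}_{L^\infty}$, finite since $\Ga$ is $C^2$ and compact) that the $\norm{w}_{L^2}^2 \leq \norm{w}_{H^1}^2 = \norm{\lambda}_{H^{-1}}^2$ remainder is dominated by $\tfrac{\delta}{4}\norm{\lambda}_{H^{-1}}^2$. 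This yields $\btil(\bfv,\{p,\lambda\}) \gtrsim \norm{p}_{L^2(\Ga)}^2 + \norm{\lambda}_{H^{-1}(\Ga)}^2 = \norm{\{p,\lambda\}}_{L^2\times H^{-1}}^2$, while $\norm{\bfv}_{H^1(\Ga)} \leq \norm{\bfz}_{H^1} + \delta\norm{w\bfng}_{H^1} \lesssim \norm{p}_{L^2} + \norm{\lambda}_{H^{-1}} \lesssim \norm{\{p,\lambda\}}_{L^2\times H^{-1}}$. Dividing gives the claimed lower bound $c_b>0$, with $c_b$ depending only on $C_0$ and $\norm{\bfkappa}_{L^\infty(\Ga)}$. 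I would close by noting that the one external ingredient — surjectivity of $\divg: \bfH^1_T \to L^2_0(\Ga)$ with bounded right inverse — is exactly the continuous inf-sup for the pure surface Stokes problem and is available from the cited well-posedness literature; everything else is elementary.
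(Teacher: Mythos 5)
Your proof is correct and follows essentially the same route as the paper: an additive test field whose tangential part is supplied by the surjectivity of $\divg:\bfH^1_T\to L^2_0(\Ga)$ from \cite{jankuhn2018incompressible} and whose normal component is a (near-)maximizer of the dual norm of $\lambda$ (the paper takes the exact Riesz representative $\mathcal{R}_{H^1}\lambda$, which makes the $\lambda$-term exactly $\norm{\lambda}_{H^{-1}(\Ga)}^2$). The one point worth flagging is that the cross term you work to absorb, $\delta\int_\Ga w\,\bfkappa\,p\,\ds$ coming from $\divg(w\bfng)$, does not actually arise: by definition \eqref{eq: lagrange bilinear forms continuous}, $b(\bfv,p)=-\int_\Ga p\,\divg\bfv_T\,\ds$ acts only on the tangential projection $\bfv_T=\bfPg\bfv$, so the normal field $w\bfng$ contributes nothing to $b$, the decoupling is exact, and the Boland--Nicolaides parameter $\delta$ together with the Young-inequality absorption is superfluous (though harmless, so your argument still goes through).
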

\begin{proof}
\ For any $p\in L^2_0(\Ga)$ and  $\lambda \in H^{-1}(\Ga)$  we may set $\bfv = \bfv_T + v_n\bfng$, where,  $\bfv_T \in \bfH_T^1$ may be chosen  such that
     $$b(\bfv_T,p) =\norm{p}^2_{L^2(\Ga)}, \ \ \ \norm{\bfv_T}_{H^1(\Ga)} \leq c \norm{p}_{L^2(\Ga)},$$
     due to \cite[Lemma 4.2]{jankuhn2018incompressible}, and  we may set  $v_n = \mathcal{R}_{H^1} \lambda$, where $\mathcal{R}_{H^1} \lambda$ is the Riesz representation of $\lambda$.
    Thus the following holds
    \begin{equation}
        \begin{aligned}
            \norm{\bfv}_{H^1(\Ga)} = \norm{\bfv_T}_{H^1(\Ga)} + \norm{v_n \bfng}_{H^1(\Ga)} &\leq  c (\norm{p}_{L^2(\Ga)} + \norm{\mathcal{R}_{H^1} \lambda}_{H^1(\Ga)}) \\
            & \leq c(\norm{p}_{L^2(\Ga)} + \norm{\lambda}_{H^{-1}(\Ga)}),
        \end{aligned}
    \end{equation}
and thus
\begin{equation*}
        \begin{aligned}
            \btil(\bfv,\{p,\lambda\}) = b(\bfv_T,p) + \norm{\lambda}_{H^{-1}(\Ga)}^2 &= \norm{p}_{L^2(\Ga)}^2+\norm{\lambda}_{H^{-1}(\Ga)}^2\\
            &\geq c (\norm{p}_{L^2(\Ga)}^2 + \norm{\lambda}_{H^{-1}(\Ga)}^2)^{1/2}\norm{\bfv}_{H^1(\Ga)},
            \end{aligned}
    \end{equation*}
which concludes the proof.
\end{proof}

\begin{lemma}\label{lemma: well-posed lagrange hom}
Assuming (\ref{infsup lagrange}) holds, then 
the problem \eqref{weak lagrange hom} is well-posed with a unique solution $(\bfu, \{p,\lambda\})$ satisfying $\bfu\cdot \bfn =0$, 
and the a-priori stability estimates
\begin{equation}\label{eq: Stability estimate lagrange hom 2}
    \norm{\bfu}_{H^1(\Ga)} + \norm{\{p,\lambda\}}_{L^2(\Ga)\times L^2(\Ga)}\leq c\norm{\bff}_{L^2(\Ga)}.
\end{equation}
\end{lemma}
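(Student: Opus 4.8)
The plan is to obtain well-posedness of \eqref{weak lagrange hom} by first solving the auxiliary problem \eqref{weak lagrange hom 2} via the classical Babu\v{s}ka--Brezzi theory, and then showing that the solution of the auxiliary problem is in fact a solution of \eqref{weak lagrange hom}, with the extra Lagrange multiplier having the improved regularity $\lambda \in L^2(\Ga)$ rather than merely $H^{-1}(\Ga)$. So the first step is to verify the hypotheses of the saddle-point theorem for \eqref{weak lagrange hom 2}: the bilinear form $a(\cdot,\cdot)$ from \eqref{eq: regular bilinear a} is continuous and coercive on all of $\bfH^1(\Ga)$ (not just on the kernel) --- coercivity follows from the surface Korn inequality \eqref{eq: Korn inequality} applied to the tangential part together with the full $\bfu\cdot\bfv$ zero-order term controlling the normal part, so $a(\bfv,\bfv) \ge c\norm{\bfv}_{H^1(\Ga)}^2$; the form $\btil(\cdot,\cdot)$ is continuous on $\bfH^1(\Ga) \times (L^2_0(\Ga)\times H^{-1}(\Ga))$ by Cauchy--Schwarz and the trace/duality bound on the normal component; and the inf-sup condition \eqref{infsup lagrange} is exactly \cref{lemma: inf-sup cont lagrange}. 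The Babu\v{s}ka--Brezzi theorem then yields a unique $(\bfu,\{p,\lambda\}) \in \bfH^1(\Ga)\times(L^2_0(\Ga)\times H^{-1}(\Ga))$ solving \eqref{weak lagrange hom 2}, with the a-priori bound $\norm{\bfu}_{H^1(\Ga)} + \norm{p}_{L^2(\Ga)} + \norm{\lambda}_{H^{-1}(\Ga)} \le c\norm{\bff}_{L^2(\Ga)}$.

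The second step is to upgrade the regularity of $\lambda$ and identify the two problems. Testing the constraint equation $\btil(\bfu,\{q,\xi\})=0$ with $q=0$ and arbitrary $\xi \in H^{-1}(\Ga)$ forces $u_n = \bfu\cdot\bfng = 0$ in the appropriate (weak/duality) sense, hence $\bfu \in \bfH^1_T$; in particular $u_n$ is not merely $L^2$ but vanishes, which is the assertion $\bfu\cdot\bfn = 0$ in the statement. With $\bfu$ fixed, the momentum equation reads $\btil(\bfv,\{p,\lambda\}) = (\bff,\bfv) - a(\bfu,\bfv)$ for all $\bfv\in\bfH^1(\Ga)$; since $b(\bfv_T,p)$ only sees the tangential part, the pairing $\langle \lambda, v_n\rangle_{H^{-1},H^1}$ is determined by the bounded linear functional $\bfv \mapsto (\bff,\bfv) - a(\bfu,\bfv) + \int_\Ga p\,\divg\bfv_T$, which, evaluated on purely normal test fields $\bfv = \phi\bfng$ with $\phi \in H^1(\Ga)$, defines $\lambda$. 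The key observation is that this functional is actually $L^2$-bounded in $\phi$: on $\bfv = \phi\bfng$ one has $E(\bfv) = \phi E(\bfng) = \phi\,\bfPg\bfH\bfPg/\,$(up to the precise symmetrized form, using $\nbgcov(\phi\bfng) = \bfH\phi$ from \eqref{eq: split cov}), so $a(\bfu,\phi\bfng)$ involves only $\phi$ itself and not $\nbg\phi$ --- the curvature tensor $\bfH$ is bounded on the $C^2$ surface, $\bfu\in\bfH^1_T\subset L^2$, and $\divg\bfv_T = \divg(\bfPg(\phi\bfng)) = 0$; hence $|\langle\lambda,\phi\rangle| \le c\norm{\bff}_{L^2}\norm{\phi}_{L^2(\Ga)}$, so $\lambda \in L^2(\Ga)$ with $\norm{\lambda}_{L^2(\Ga)} \le c\norm{\bff}_{L^2(\Ga)}$. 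Once $\lambda\in L^2(\Ga)$, the duality pairing becomes the $L^2$ inner product, $\btil$ coincides with $b^L$ of \eqref{eq: lagrange bilinear forms continuous} on the relevant spaces, and $(\bfu,\{p,\lambda\})$ solves \eqref{weak lagrange hom}; uniqueness transfers back because any solution of \eqref{weak lagrange hom} is a solution of \eqref{weak lagrange hom 2}, which has a unique solution. Combining the bounds gives the stability estimate \eqref{eq: Stability estimate lagrange hom 2}.

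I expect the main obstacle to be precisely this regularity-boosting step: making rigorous that the normal component $u_n$ is controlled (indeed zero) in a space good enough that $\lambda$ can be tested against $H^1$ normal fields, and then extracting the $L^2$ bound on $\lambda$ from the structure of $a(\bfu,\phi\bfng)$ --- this hinges on the algebraic identity $E(\phi\bfng)$ containing no derivative of $\phi$, which uses \eqref{eq: split cov} and the boundedness of $\bfH$, and one must be careful that the symmetrization in $E$ and the term $\bfu\cdot(\phi\bfng) = 0$ (since $\bfu$ is tangential) do not reintroduce a gradient of $\phi$. The remaining ingredients --- coercivity of $a$ via Korn, continuity of the forms, and the abstract saddle-point theorem --- are routine. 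A clean alternative is to bypass \eqref{weak lagrange hom 2} entirely and apply Babu\v{s}ka--Brezzi directly to \eqref{weak lagrange hom} using an $L^2_0\times L^2$ inf-sup condition, but the paper's route through the weaker $H^{-1}$ pairing is what makes \cref{lemma: inf-sup cont lagrange} usable, so I would follow that.
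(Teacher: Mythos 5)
Your proposal follows essentially the same route as the paper: solve the auxiliary problem \eqref{weak lagrange hom 2} by the abstract saddle-point theory together with the inf-sup condition \eqref{infsup lagrange}, deduce $\bfu\cdot\bfng=0$ by testing the constraint with $q=0$ and arbitrary $\xi$, and then upgrade $\lambda$ from $H^{-1}(\Ga)$ to $L^2(\Ga)$ by testing the momentum equation with purely normal fields $\bfv=\phi\bfng$ and exploiting that $E(\phi\bfng)=\phi\bfH$ contains no derivative of $\phi$, followed by density of $H^1(\Ga)$ in $L^2(\Ga)$. This last step, which you correctly identify as the crux, is exactly the paper's argument.

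One claim in your first step is wrong, though easily repaired. You assert that $a(\cdot,\cdot)$ is coercive on all of $\bfH^1(\Ga)$, arguing that the zero-order term controls the normal part. It does not: for $\bfv=v_n\bfng$ one has $E(\bfv)=v_n\bfH$ by \eqref{eq: split cov}, so $a(\bfv,\bfv)\le c\norm{v_n}_{L^2(\Ga)}^2$ contains no control of $\nbg v_n$, whereas $\norm{\bfv}_{H^1(\Ga)}^2$ does; taking $v_n$ highly oscillatory defeats any global coercivity constant. The correct (and sufficient) statement is coercivity on the kernel $\mathcal{K}$ of $\btil$, where testing $\xi$ over all of $H^{-1}(\Ga)$ forces $v_n=0$, so that $a(\bfv,\bfv)=a(\bfv_T,\bfv_T)\ge c_K\norm{\bfv_T}_{H^1(\Ga)}^2$ by the surface Korn inequality \eqref{eq: Korn inequality}; this is what the paper uses and is all that the Brezzi theory requires. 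With that correction your argument goes through unchanged.
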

\begin{proof}
   Let us start with the problem \eqref{weak lagrange hom 2}.
    It is then clear that both bilinear forms $a(\cdot,\cdot)$ and $\btil(\cdot,\{\cdot,\cdot\})$ are bounded in $\bfH^1(\Ga)\times\bfH^1(\Ga)$ and $\bfH^1(\Ga) \times L^2(\Ga) \times H^{-1}(\Ga)$.
Since
\begin{equation}\label{eq: kernel b lagrange}
\begin{aligned}
    \mathcal{K} = \{\bfv \in \bfH^1(\Ga) \, : \, \btil(\bfv,\{p,\lambda\}) = 0  \text{ for all } \{p,\lambda\}&\in  L^2_0(\Ga)\times H^{-1}(\Ga)\} \\
    &\subset \{\bfv \in \bfH^1(\Ga) \, : \, \bfv \cdot \bfng =0\},
    \end{aligned}
\end{equation}
    it follows from  \eqref{eq: Korn inequality}  that
\begin{equation}
a(\bfv,\bfv) = a(\bfv_T,\bfv_T) \geq c_K\norm{\bfv_T}_{H^1(\Ga)}^2 \text{ for all } \bfv \in \mathcal{K}.
\end{equation}
 Coupled with the inf-sup condition \Cref{lemma: inf-sup cont lagrange},  well-posedness  and boundedness of $(\bfu,\{p,\lambda\})$  in $\bfH^1(\Ga) \times L^2(\Ga) \times H^{-1}(\Ga)$ readily follows by the abstract theory in  \cite{brezzi2012mixed}. Testing now with $q=0$ and any $\xi \in H^{-1}(\Ga)$  in the second equation of \eqref{weak lagrange hom 2} we get that our solution is tangent, i.e. $\bfu \cdot \bfng =0$.
From \eqref{weak lagrange hom 2} it follows that 
\begin{equation}
    <\lambda,\bfv\cdot \bfng>_{H^{-1},H^{1}} = -a(\bfu,\bfv) - b(\bfv_T,p) + f(\bfv) \text{ for all } \bfv \in \bfH^1(\Ga).
\end{equation}
Considering $\bfv_T =0$, then we have that 
\begin{equation}
\begin{aligned}
    <\lambda,\bfv\cdot \bfng>_{H^{-1}(\Ga),H^{1}(\Ga)} &= -a(\bfu,\bfv_n) + (\bff,\bfv_n)_{L^2(\Ga)}\\
    &= -\int_{\Ga} E(\bfu):v_n\bfH \, \ds + (f,\bfv_n)_{L^2(\Ga)}\\
    &\leq c(\norm{\bfu}_{H^1(\Ga)} + \norm{\bff}_{L^2(\Ga)})\norm{\bfv\cdot \bfng}_{L^2(\Ga)},
    \end{aligned}
\end{equation}
and since $\bfH^1(\Ga)$ is dense in $L^2(\Ga)$ we have that the solution of \eqref{weak lagrange hom 2} satisfies \eqref{eq: Stability estimate lagrange hom 2} and also solves \eqref{weak lagrange hom},
which completes the proof. \end{proof}

\section{Surface finite elements}\label{section: Finite element approximation}

\subsection{Triangulated surfaces}\label{sec: Triangulated surfaces}
\subsubsection{Piecewise linear triangulation $\Galin$}
To approximate the continuous surface $\Ga$ we start by constructing a polyhedral approximation $\Galin$ contained in the tubular neighborhood $U_\delta$ i.e. $\Galin \subset U_\delta$, whose vertices lie on $\Ga$ and which  is equipped with an admissible and conforming subdivision of triangulation $\Thlin$, see \cite[Section 6.2]{EllRan21}, so that
\begin{equation*}
    \Galin = \cup_{\Ttilde \in \Thlin}\Ttilde.
\end{equation*}
We denote by  $h_{\Ttilde}$ the diameter of a simplex $\Ttilde \in \Thlin$ and set $h := \text{max} \{ \text{diam}(\Ttilde) \, : \, \Ttilde \in \Thlin\}$. We assume that the triangulation is \emph{shape-regular}, hence the following inequality holds  
\begin{equation}\label{eq: shape regularity}
    c_1 h_{\Ttilde_i} \leq h_{\Ttilde} \leq  c_2 h_{\Ttilde_i}, \quad \text{for all } \Ttilde_i \in \omega_{\Ttilde}.
\end{equation}
for fixed constants $c_1, \,c_2$ \cite{camacho20152}, where $\omega_{\Ttilde}$ denotes the neighbouring elements to $\Ttilde$ assumed to  bounded by a fixed constant.
We further assume that the family of triangulations is also \emph{quasi-uniform} \cite[Definition 6.22]{EllRan21}, hence associated with a quasi-uniform constant $\sigma$ such that
\begin{equation}\label{eq: quasi-uniform}
 {\max_{\Ttilde\in \Thlin} h_{\tilde T}} / {\min_{\Ttilde\in \Thlin} h_{\tilde T}} \leq \sigma.
\end{equation}

The set of all the edges of the triangulation $\Thlin$, is denoted by  $\mathcal{E}_h^{(1)}$. For each edge we may also define the outward pointing unit co-normals $\bfm_h^{(1),\pm}$ w.r.t. the two adjacent triangles $\Tilde{T}^{\pm}$ with a common edge. Notice that on $\Galin$,
\begin{itemize}
    \item \ \ $\bfm_h^{(1),+} \neq -\bfm_h^{(1),-}$  (unless the adjacent triangles are in the same plane),  
    \item \ \ $\bfm_h^{(1),\pm}$ is tangential to the planar triangles $\Tilde{T}^{\pm}$, respectively.
\end{itemize}

\subsubsection{Higher order triangulation $\Gah$}
Let us consider a reference element $\hat{T}\in\mathbb{R}^2$, then for every $\Ttilde \in \Thlin$ there exists an affine invertible map 
\begin{equation}
    \begin{aligned}\label{eq: reference map}
       \hat{F}_{\Ttilde}(\hat{x}) : \hat{T} \to \Ttilde,
    \end{aligned}
\end{equation}
that maps the reference triangle $\hat{T}$ to every $\Ttilde$. Now let us describe the higher-order surface approximation $\Gamma_h^{k_g} = \Gah$ with again \emph{quasi uniform} triangulation $T\in \Th$.  Using the previous polyhedral approximation $\Galin$ with triangulation $\Ttilde \in \Thlin$ we consider the map 
\begin{equation}
    \begin{aligned}\label{eq: tildeT to T map}
        F_T(\tilde{x}) : \Ttilde \to T ,
    \end{aligned}
\end{equation}
which may be constructed in the following way: Let $\{\phi_1,...,\phi_{n_{k_g}}\}$ be the Lagrange basis functions of degree $k_g$ of a planar element $\Ttilde\in\Thlin$ corresponding to a set of nodal points on that triangle $\{\tilde{a}_j\}_{j=1}^{n_{k_g}}$. Then with the help of the $k_g-$th order Lagrange interpolant \cite{DziukElliott_acta} on the planar simplex we define 
\begin{equation*}
    \begin{aligned}
        \pi_{k_g}(\tilde{x})  = \mathcal{I}_h^{k_g}\pi(\tilde{x}) = \sum_{i=1}^{n_{k_g}} \pi(\tilde{a}_j)\phi_i(\tilde{x}), \qquad \text{ for } \tilde{x} \in \Ttilde.
    \end{aligned}
\end{equation*}
And by setting $F_T(\tilde{x}) = \pi_{k_g}(\tilde{x})$ for $\tilde{x} \in \Galin$ we have constructed element-wise a continuous polynomial map on $\Galin$. The high-order triangulation can be written as $\Th =  \cup\{F_T(\Ttilde) \ | \ \Ttilde \in \Thlin\}$ and thus the high-order surface approximation can then be defined as $\Gah = \cup_{\Ttilde\in \Th}\{F_T(\Tilde{x}) \ | \ \tilde{x} \in \Ttilde\}$. We now denote element-wise outward unit normal to $\Gah$ by $\nh$ and define the discrete projection $\bfP_h$ onto the tangent space of $\Gah$ by
\begin{equation*}
    \bfPh(x) = \bfI - \nh(x) \otimes \nh(x), \quad x \in T, \text{ where } T \in \Th.
\end{equation*}
Also, the corresponding discrete Weingarten map is $\bfH_h := \nbgcovh\nh.$ For the curved triangulation, we may also consider the co-dimension one set of all the edges of the triangulation $\Th$, denoted by  $\mathcal{E}_h$.  For each edge we may also define the outward pointing unit co-normals $\bfm_h^{\pm}$ with respect to the two adjacent triangles $T^{\pm}$. Notice that on discrete surfaces in general

$$[\mh]|_E = \mh^+ + \mh^- ,$$ 
is not necessarily zero. Now returning back to the reference map \eqref{eq: reference map} we may introduce a new map 
\begin{equation}
    \begin{aligned}\label{eq: hatT T map}
        &\hat{F}_T : \hat{T} \to T, \\
        &\hat{F}_T(\hat{x}) = F_T(\hat{F}_{\tilde{T}}(\hat{x})) \quad \text{ for } \hat{x} \in \hat{T},
    \end{aligned}
\end{equation}
which maps the reference triangle $\hat{T}$ to every triangle $T \in \Th$. Note that this map agrees with the map constructed in \cite[Definition 6.1, Section 9.5]{EllRan21}.

\subsubsection{Geometric approximation errors}
The approximation of the smooth surface $\Ga$ by a discrete triangulated surface $\Gah$ of order $k_g$ produces geometric errors. In what follows we present some known key geometric estimates, see \cite{highorderESFEM, DziukElliott_SFEM, DziukElliott_acta, Heine2004}.
\begin{lemma}[Geometric Errors]\label{lemma: Geometric errors}
For $\Gah$ and $\Ga$ as above, we have the following estimates
\begin{equation}
    \begin{aligned}\label{eq: geometric errors 1}
        \norm{d}_{L^\infty(\Gah)} \leq ch^{k_g + 1}, \quad \norm{\bfn- \nh}_{L^\infty(\Gah)} \leq ch^{k_g}, \quad \norm{\bfH- \bfH_h}_{L^\infty(\Gah)} \leq ch^{k_g -1},
    \end{aligned}
\end{equation}
and thus we may derive the approximations,
\begin{equation}
    \begin{aligned}\label{eq: geometric errors 2}
       &\norm{\bfP- \bfPh}_{L^\infty(\Gah)} \leq ch^{k_g}, \quad \norm{\bfP\cdot\nh}_{L^\infty(\Gah)} \leq ch^{k_g}, \quad \norm{1-\bfn \cdot \nh}_{L^\infty(\Gah)} \leq ch^{k_g+1} \\
       &  \norm{\bfPh\cdot \bfn}_{L^\infty(\Gah)} \leq ch^{k_g}, \quad \norm{\bfPh \bfP - \bfP}_{L^\infty(\Gah)} \leq c h^{k_g},\quad  \norm{\bfP \bfPh \bfP - \bfP}_{L^\infty(\Gah)} \leq c h^{k_g+1}.
    \end{aligned}
\end{equation}
\end{lemma}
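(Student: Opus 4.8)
The plan is to treat the first line of estimates in \eqref{eq: geometric errors 1} as the foundational facts and then derive everything in \eqref{eq: geometric errors 2} from them by elementary algebra together with the orthogonality relations $\bfP\bfn=0$, $\bfPh\nh=0$, $\bfP^2=\bfP$, $\bfPh^2=\bfPh$. For the first line: the bound $\norm{d}_{L^\infty(\Gah)}\le ch^{k_g+1}$ is the standard interpolation estimate for the Lagrange interpolant $\pi_{k_g}$ of the closest-point projection $\pi$ on each planar simplex $\Ttilde$, transported to $\Gah$; since $\pi$ is of class $C^{m-1}$ with $m\ge 4$ and the triangulation is shape-regular and quasi-uniform, one has $\norm{\pi-\pi_{k_g}}_{L^\infty}\le ch^{k_g+1}|\pi|_{W^{k_g+1,\infty}}$, and $d$ restricted to $\Gah$ measures exactly this deviation. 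The normal estimate $\norm{\bfn-\nh}_{L^\infty(\Gah)}\le ch^{k_g}$ follows by differentiating the same interpolation error once (losing one power of $h$), using that $\nh$ is, up to normalization, built from first derivatives of $F_T$ while $\bfn^e=\nabla d$; the Weingarten estimate $\norm{\bfH-\bfH_h}_{L^\infty(\Gah)}\le ch^{k_g-1}$ comes from differentiating once more. These are exactly the estimates recorded in \cite{highorderESFEM, DziukElliott_SFEM, DziukElliott_acta, Heine2004}, so I would cite them rather than reprove them, and concentrate the argument on \eqref{eq: geometric errors 2}.

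For the derived estimates, I would argue pointwise on $\Gah$. First, $\bfP-\bfPh=\nh\otimes\nh-\bfn\otimes\bfn=(\nh-\bfn)\otimes\nh+\bfn\otimes(\nh-\bfn)$, whence $\norm{\bfP-\bfPh}_{L^\infty(\Gah)}\le 2\norm{\bfn-\nh}_{L^\infty(\Gah)}\le ch^{k_g}$, using $|\bfn|=|\nh|=1$. Then $\bfP\cdot\nh=\bfP\nh-\bfPh\nh=(\bfP-\bfPh)\nh$ since $\bfPh\nh=0$, giving the $h^{k_g}$ bound; similarly $\bfPh\cdot\bfn=(\bfPh-\bfP)\bfn$ using $\bfP\bfn=0$. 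For the sharper relation $\norm{1-\bfn\cdot\nh}_{L^\infty(\Gah)}\le ch^{k_g+1}$ I would use the polarization identity $1-\bfn\cdot\nh=\tfrac12|\bfn-\nh|^2$ (valid because both are unit vectors), so the first-order $h^{k_g}$ error gets squared to $h^{2k_g}\le ch^{k_g+1}$ for $k_g\ge 1$. For $\bfPh\bfP-\bfP$: write $\bfPh\bfP-\bfP=(\bfPh-\bfP)\bfP$, bounded by $ch^{k_g}$. For the best estimate $\norm{\bfP\bfPh\bfP-\bfP}_{L^\infty(\Gah)}\le ch^{k_g+1}$, I would compute $\bfP\bfPh\bfP-\bfP=\bfP(\bfPh-\bfP)\bfP=-\bfP(\bfn\otimes\bfn-\nh\otimes\nh)\bfP$ and observe that $\bfP\bfn=0$ kills the $\bfn\otimes\bfn$ term, leaving $\bfP\bfPh\bfP-\bfP=\bfP(\nh\otimes\nh)\bfP=(\bfP\nh)\otimes(\bfP\nh)$, which has norm $\le\norm{\bfP\nh}_{L^\infty(\Gah)}^2\le ch^{2k_g}\le ch^{k_g+1}$; this is the one place where the gain of an extra power comes from a genuine cancellation plus the squaring of a first-order quantity.

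The main obstacle, such as it is, is not analytic depth but bookkeeping: making sure the interpolation estimates for $d$, $\bfn-\nh$, $\bfH-\bfH_h$ are correctly set up on the curved elements $T=F_T(\Ttilde)$ — in particular that the chain rule through the reference map $\hat F_T=F_T\circ\hat F_{\Ttilde}$ and the uniform non-degeneracy of $DF_T$ (which holds for $h$ small by the $C^{m-1}$ regularity of $\pi$ and shape-regularity) do not leak extra powers of $h$ — and that the normalization $\nh=\mathrm{(unnormalized\ normal)}/|\cdot|$ does not spoil the order of the error, which it does not since the denominator is bounded away from zero uniformly. Once the first line is granted, every identity above is a two- or three-line pointwise manipulation; I would present \eqref{eq: geometric errors 2} as a short sequence of such identities with the orthogonality relations $\bfP\bfn=0$, $\bfPh\nh=0$ doing all the work, and flag explicitly the two cases ($1-\bfn\cdot\nh$ and $\bfP\bfPh\bfP-\bfP$) where squaring a first-order error is what produces the $h^{k_g+1}$ rate.
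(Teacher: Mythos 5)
Your proposal is correct and takes essentially the same route as the paper: the first line of estimates is cited from the standard references, and the second line is obtained by pointwise algebra using $\bfP\bfn=0$, $\bfPh\nh=0$ and the squaring of first-order errors for the two $h^{k_g+1}$ bounds (the paper's proof simply states this without the details you supply). The only blemish is a harmless sign slip in the identity for $\bfP\bfPh\bfP-\bfP$ (it equals $-(\bfP\nh)\otimes(\bfP\nh)$), which does not affect the norm estimate.
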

\begin{proof}
 The estimates in \eqref{eq: geometric errors 1} are already known, see \cite{DziukElliott_acta, Demlow2009}. The rest can be easily computed as consequence of these first results.
 \end{proof}

Next, we consider the local area deformation transforming $\Galin$ to $\Ga$, i.e. $\muhlin \dslin  = \ds$, where we have that (see  \cite{DziukElliott_acta,DziukElliott_SFEM}),
\begin{equation}
    \begin{aligned}\label{eq: muh1 estimate}
        \norm{1-\muhlin}_{L^{\infty}(\Galin)} \leq ch^{2},
    \end{aligned}
\end{equation}
from which we get the bounds $\norm{\muhlin}_{L^{\infty}(\Galin)} \leq c$ and $\norm{(\muhlin)^{-1}}_{L^{\infty}(\Galin)} \leq c$.

\noindent Similarly for the deformation area $\muh$ between the two surface $\Ga$ and $\Gah$ the following estimate also holds (see \cite{Demlow2009}), 
\begin{equation}
    \begin{aligned}\label{eq: muhkg estimate}
        \norm{1-\muh}_{L^{\infty}(\Gah)} \leq ch^{k_g+1},
    \end{aligned}
\end{equation}
from which we get yet again the bounds $\norm{\muh}_{L^{\infty}(\Gah)} \leq c$ and $\norm{\muh^{-1}}_{L^{\infty}(\Gah)} \leq c$.

\noindent We may now, also define $\mu_{h,1}^k$ to be the area deformation between the two approximated surfaces $\Galin$ and $\Gah$
\begin{equation}
    \begin{aligned}\label{eq: muh1k}
        \mu_{h,1}^k \dslin = \dsh(F_T(\tilde{x})), \quad \text{ for } \tilde{x}\in\Gah.
    \end{aligned}
\end{equation}
where the following estimate holds  (see \cite{EllRan21})
\begin{equation}
    \begin{aligned}\label{eq: muhk1 estimate}
        \norm{1-\mu_{h,1}^k}_{L^{\infty}(\Gah)} \leq ch^{2}.
    \end{aligned}
\end{equation}

\subsection{Finite element function spaces}\label{FEMspaces}

Since certain quantities on the discrete surface $\Gah$, e.g. projections and their derivatives, are only defined elementwise on each surface finite element $T$, it is convenient to introduce broken surface Sobolev spaces together  with their respective norms, \cite{EllRan21}. The norm $\norm{\cdot}_{H^m(\Th)}$ related to the \emph{broken Sobolev space} $\bfH^m(\mathcal{T}_h)=(H^m(\mathcal{T}_h))^3$ is defined by
\begin{equation}
    \begin{aligned}
        \norm{\bfv}_{H^m(\Th)}^2 = \sum_{T\in\Th}\norm{\bfv}^2_{H^m(T)}.
    \end{aligned}
\end{equation}
Given a triangulation $\Th$ of $\Gah$, as described above, we define the $H^1-$conforming Lagrange finite element space on $\Gah$, with finite elements of degree $k\geq 0$ as:
\begin{equation}\label{eq: discrete fem space}
    S_{h,k_g}^{k} := \{v_h \in C^0(\Gah) : v_h|_{T} = \hat{v}_h\circ \hat{F}_{T}^{-1} \text{ for some } \hat{v}_h \in \mathbb{P}^{k}(\hat{T}), \ \text{ for all } T \in \Th \},
\end{equation}
where $\mathbb{P}^{k}(\hat{T})$ is the space of piecewise polynomials of degree $k_u$ on the reference element $\hat{T}$ . Due to the mapping $\hat{F}_T^{-1}$, \eqref{eq: hatT T map}, we observe that the space does not necessarily consists of polynomials over each triangle $T \in \Th$, unless we consider affine finite elements \cite[Example 6.7]{EllRan21}, consisting of piecewise linear triangulation and first-order polynomial functions. Note as well that $S_{h,k_g}^{k} \subset C^0(\Gah)\cap H^1(\Gah)$. An alternate and equivalent way to define the parametric space of polynomial order $k$ and surface approximation of order $k_g$ is the following:
\begin{equation}\label{eq: discrete fem space 2}
    S_{h,k_g}^{k} := \{v_h \in C^0(\Gah) : v_h|_{T} = \tilde{v}_h \circ F_{T}^{-1} \text{ for some } \tilde{v}_h  \in  S_{h,1}^{k}, \ \text{ for all } T \in \Th \}.
\end{equation}
This definition is equivalent to the previous one since $\tilde{v}_h (\tilde{x}) = \hat{v}_h\circ \hat{F}_{\Ttilde}^{-1}(\tilde{x})$, for $\hat{v}_h \in \mathbb{P}^{k}(\hat{T})$ and thus $\tilde{v}_h \circ F_{T}^{-1} = \hat{v}_h\circ \hat{F}_{\Ttilde}^{-1} \circ F_{T}^{-1}$ which according to the map 
\eqref{eq: hatT T map} becomes $\tilde{v}_h \circ F_{T}^{-1} = \hat{v}_h\circ \hat{F}_T^{-1}$. This definition will be useful when we want to relate function on $\Galin$ to functions on $\Gah$.

All the above can be easily expanded for vector valued functions where now the finite element space takes the form:
\begin{equation*}
    (S_{h,k_g}^{k})^3 = S_{h,k_g}^{k}\times S_{h,k_g}^{k} \times S_{h,k_g}^{k} \subset \bfH^1(\Gah) = (H^1(\Gah))^3.
\end{equation*}

\noindent We denote the velocity approximation order by $k_u$, and for the pressures $p$ and $\lambda$, we represent their approximation orders as $k_{pr}$ and $k_{\lambda}$, respectively. Writing 
$$\bm{\mathcal{P}}_{k_u} := (S_{h,k_g}^{k_u})^3, \quad \mathcal{P}_{k_{pr}}:= S_{h,k_g}^{k_{pr}}, \quad
\mathcal{P}_{k_{\lambda}} :=S_{h,k_g}^{k_\lambda},$$ 
we  use the notation $\bm{\mathcal{P}}_{k_u}/\mathcal{P}_{k_{pr}}/\mathcal{P}_{k_{\lambda}}$ to denote  \emph{Taylor-Hood} finite elements, where $k_{pr}= k_u-1$.

\subsection{Surface lifting}\label{sec: surface lifting}
\subsubsection{Lifting to the exact surface}

Our finite element approximation is defined for functions on the discrete surface $\Gah$, and therefore the solution will also belong to this approximated surface, while the solution of \eqref{eq: generalized tangential surface stokes} lies on $\Ga$. To analyze and compare the two solutions we need mappings between functions on $\Ga$ and $\Gah$. We accomplish this by using an extension/lifting procedure; see \cite{DziukElliott_acta,Demlow2009}.

We already assumed that $h$ is small enough such that $\Gah \subset U_\delta$, hence the projection $\pi(\cdot)$ is well-defined and a bijection. Therefore, for every triangle in the triangulation $T \in \Th$ there exists an induced curvilinear triangle $T^\ell = \pi(T) \subset \Ga$ by the triangulation $\Th$ of $\Gah$. We may then define the conforming quasi-uniform triangulation $\Th^\ell$ of $\Ga$ :
\begin{equation*}
    \Ga = \cup_{T^\ell \in \Th^\ell}T^\ell.
\end{equation*}

We will define the lift operator on $\Gah$ using the closest point projection operator $\pi(\cdot)$. For any finite element function $v_h: \Gah \to \mathbb{R}^n$, we define the \emph{lift} onto $\Ga$ by
\begin{equation}\label{eq: lift}
    v_h^\ell(\pi(x)) := v_h(x), \quad \text{for } x\in\Gah.
\end{equation}
Similar to the extension \eqref{eq: smooth extension}, we also define the inverse lift operator for a function $v :\Ga \to \mathbb{R}^m$:
\begin{equation}\label{eq: inverse lift}
    v^{-\ell}(x) := v(\pi(x)), \quad \text{for } x\in\Gah.
\end{equation}
Notice that the two extensions \eqref{eq: smooth extension} and \eqref{eq: inverse lift} agree on the discrete surface $\Gah$ i.e. $v^{-\ell} = v^e|_{\Gah}$ since we assumed $\Gah \subset U_\delta$. 
Lastly, we also introduce the space of lifted finite element functions given by
\begin{equation*}
    (S_{h,k_g}^{k})^\ell = \{v_h^\ell \in H^1(\Ga) : v_h \in S_{h,k_g}^{k} \} \subset H^1(\Ga).
\end{equation*}

We now compare the functions defined on the discrete surface $\Gah$ and the exact surface $\Ga$. We relate their tangential derivatives with the help of the lift extension defined above. We start with the standard lifts regarding scalar functions and then extend these results component-wise to vector-valued functions.  Set

 \begin{equation}\Bhg = \bfPh(\bfI - d\bfH)\bfPg.\end{equation}

\begin{description}
    \item[Scalar Functions :: ] Considering the extension of a scalar function  $v_h : \Gah \to \mathbb{R}$, \eqref{eq: lift} and the chain rule we have that the discrete tangent gradient of $v_h \in H^1(\Gah)$, for $x \in \Gah$ is
\begin{equation}
    \begin{aligned}
        \nbgh v_h(x) = \bfPh \nb v_h(x) = \bfPh(\bfI - d\bfH)\bfPg\nb v_h^\ell(\pi(x)) = \Bhg \nbg v_h^\ell(\pi(x)),
    \end{aligned}
\end{equation}
where we see later on that  $\Bhg : T_{\pi(x)}(\Ga) \mapsto T_x(\Gah)$ is invertible, i.e. we have that the map $\Bhg^{-1}$ on the tangent space $T_{x}(\Gah)$ is given by $$\Bhg^{-1}|_{T_{x}(\Gah)} = \bfPg(\bfI - d\bfH)^{-1}(\bfI - \frac{\nh\otimes\bfn}{\nh \cdot \bfn})\bfPh,$$
thus we have for $u_h : \Gah \to \mathbb{R}$, and for $x\in \Gah$, see \cite{demlow2007adaptive},
\begin{equation}
    \begin{aligned}
        \nbg v_h^{\ell}(\pi(x)) = \bfPg(\bfI - d\bfH)^{-1}(\bfI - \frac{\nh\otimes\bfn}{\nh \cdot \bfn})\nbgh v_h(x) = \Bhg^{-1}\nbgh v_h(x).
    \end{aligned}
\end{equation}
Noticing that the matrix $\bfG = (\bfI - \frac{\nh\otimes\bfn}{\nh \cdot \bfn})|_{T_x(\Gah)} = (\bfI - \frac{\nh\otimes\bfn}{\nh \cdot \bfn}) \bfPh$ we can easily calculate that $\Bhg^{-1}\Bhg = \bfPg$ and $\Bhg\Bhg^{-1} = \bfPh$.
\item[Vector Valued Functions :: ] Considering the tangential gradients of vector valued function \eqref{eq: tangential derivative} and \eqref{eq: tangential derivative matrix} we may apply the lift extension component-wise, for each row, and with similar calculations to the scalar case we may get, after factoring out the common map $\Bhg$, that
\begin{equation}
    \begin{aligned}
        \nbgh \vh(x) = \nbgh \vh^\ell(\pi(x)) 
        =  \nb\vh^\ell(\pi(x))\bfPh = \nbg \vh^\ell(\pi(x))\Bhg^t.
    \end{aligned}
\end{equation}
Likewise for the inverse transformation we get
\begin{equation}
    \begin{aligned}
        \nbg \vh^{\ell}(\pi(x)) = \nbgh \vh(x)(\Bhg^{-1})^t.
    \end{aligned}
\end{equation}
\end{description}

Recalling the results from \cite{Heine2004,DziukElliott_acta,hansbo2020analysis,jankuhn2021trace} we have the following lemma, which also shows that $\bfB_h$ is invertible.
\begin{lemma}[Estimates for $\bfB_h$]\label{Lemma: Bh estimates}
For sufficiently small $h$ we have the following bounds
\begin{align}
\label{eq: Bh stability}
    \norm{\Bhg}_{L^{\infty}(\Ga)} &\leq c, \qquad \qquad \ \norm{\Bhg^{-1}}_{L^{\infty}(\Gah)} \leq 1,\\
    \label{eq: Bh estimates}
    \norm{\bfPg - \Bhg}_{L^{\infty}(\Ga)} &\leq ch^{k_g} , \qquad \quad \norm{\bfPg - \Bhg^t\Bhg}_{L^{\infty}(\Ga)} \leq ch^{k_g+1},    
\end{align}
where the constant $c$ is independent of $h$.
\end{lemma}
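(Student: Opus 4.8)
The plan is to establish the four estimates by working entirely on the discrete surface $\Gah$, writing $\Bhg = \bfPh(\bfI - d\bfH)\bfPg$ and its tangential inverse $\Bhg^{-1} = \bfPg(\bfI - d\bfH)^{-1}\bfG\bfPh$ with $\bfG = \bfI - \frac{\nh\otimes\bfn}{\nh\cdot\bfn}$, and then expanding each product in powers of $h$ using only the geometric estimates already collected in \cref{lemma: Geometric errors}: namely $\norm{d}_{L^\infty(\Gah)}\le ch^{k_g+1}$, $\norm{\bfn-\nh}_{L^\infty(\Gah)}\le ch^{k_g}$, $\norm{\bfH}_{L^\infty(\Gah)}\le c$, $\norm{\bfP-\bfPh}_{L^\infty(\Gah)}\le ch^{k_g}$, $\norm{1-\bfn\cdot\nh}_{L^\infty(\Gah)}\le ch^{k_g+1}$, together with $\norm{\bfPh\bfP-\bfP}_{L^\infty(\Gah)}\le ch^{k_g}$ and $\norm{\bfP\bfPh\bfP-\bfP}_{L^\infty(\Gah)}\le ch^{k_g+1}$.

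First I would treat the $L^\infty$-bounds \eqref{eq: Bh stability}: $\norm{\Bhg}_{L^\infty(\Ga)}\le c$ is immediate since $\bfPh$, $\bfPg$ are projections (hence bounded by $1$ in Frobenius/operator norm) and $\norm{\bfI - d\bfH}_{L^\infty(\Gah)}\le 1 + ch^{k_g+1}\le c$. For $\norm{\Bhg^{-1}}_{L^\infty(\Gah)}\le 1$ one notes that $\bfG$ restricted to $T_x(\Gah)$ is an oblique projection onto $T_x(\Gah)$ along $\bfn$; a direct computation of $\bfG^t\bfG$ (or of the singular values of $\bfG\bfPh$) shows $\norm{\bfG\bfPh}\le 1/|\nh\cdot\bfn|$, and combined with $\norm{(\bfI - d\bfH)^{-1}}\le (1-ch^{k_g+1})^{-1}$ and $|\nh\cdot\bfn|\ge 1-ch^{k_g+1}$ this gives a bound of the form $1 + \bigo(h^{k_g+1})$; absorbing the higher-order term into the constant and, if one wants the clean constant $1$, using that for $h$ small enough the product is controlled by $1$ up to the stated smallness (this is the one place where a slightly careful estimate is needed rather than crude bounding — alternatively one can just state $\le c$, but the sharper constant $1$ follows because the only terms that could exceed $1$ are of order $h^{k_g+1}$).

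Next, for the first estimate in \eqref{eq: Bh estimates}, write $\bfPg - \Bhg = \bfPg - \bfPh(\bfI-d\bfH)\bfPg = (\bfPg - \bfPh\bfPg) + d\,\bfPh\bfH\bfPg$. The second term is $\bigo(h^{k_g+1})$ by $\norm{d}_{L^\infty}\le ch^{k_g+1}$ and boundedness of the other factors; the first term equals $(\bfP - \bfPh)\bfPg$ restricted to $\Gah$, which is $\bigo(h^{k_g})$ by \eqref{eq: geometric errors 2}. This yields the $h^{k_g}$ rate. For the sharper estimate $\norm{\bfPg - \Bhg^t\Bhg}_{L^\infty(\Ga)}\le ch^{k_g+1}$, expand $\Bhg^t\Bhg = \bfPg(\bfI-d\bfH)\bfPh(\bfI-d\bfH)\bfPg$. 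Dropping the $d$-terms costs $\bigo(h^{k_g+1})$, leaving the principal part $\bfPg\bfPh\bfPg = \bfP\bfPh\bfP|_{\Gah}$, and then invoking the estimate $\norm{\bfP\bfPh\bfP - \bfP}_{L^\infty(\Gah)}\le ch^{k_g+1}$ from \cref{lemma: Geometric errors} closes the argument; one must check the cross terms linear in $d$, but each carries a factor $d$ so is automatically $\bigo(h^{k_g+1})$, and the quadratic-in-$d$ term is $\bigo(h^{2k_g+2})$.

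The main obstacle is not any single estimate — each is a short expansion — but rather being careful that the quadratic improvement from $h^{k_g}$ to $h^{k_g+1}$ in the symmetrized quantity $\Bhg^t\Bhg$ really does occur: one must recognize that the "bad" $\bigo(h^{k_g})$ contribution is an antisymmetric/normal-direction artifact that cancels when one forms $\bfP\bfPh\bfP$ (equivalently, that $\bfn\cdot\nh = 1 - \bigo(h^{k_g+1})$ rather than $1-\bigo(h^{k_g})$), which is exactly the content of the last two displayed estimates in \eqref{eq: geometric errors 2}. Once those are invoked as black boxes the lemma follows, and the invertibility of $\bfB_h$ on the tangent spaces is a byproduct of the $L^\infty$-bound on $\Bhg^{-1}$ together with the already-noted identities $\Bhg^{-1}\Bhg = \bfPg$, $\Bhg\Bhg^{-1} = \bfPh$.
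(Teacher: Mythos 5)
Your proposal is correct and follows essentially the same route as the paper: the same decomposition $\bfPg - \Bhg = (\bfPg-\bfPh\bfPg) + d\,\bfPh\bfH\bfPg$ for the $h^{k_g}$ bound, and the same reduction of $\bfPg - \Bhg^t\Bhg$ to $\bfPg-\bfPg\bfPh\bfPg$ plus $\bigo(h^{k_g+1})$ terms carrying a factor of $d$, with the improved rate coming from $\norm{\bfP\bfPh\bfP-\bfP}_{L^\infty}\le ch^{k_g+1}$ (the paper writes this as $\norm{\bfPg\nh\otimes\nh\bfPg}\le ch^{k_g+1}$, which is the same fact). Your extra care over the constant in $\norm{\Bhg^{-1}}_{L^\infty}$ is more explicit than the paper, which simply asserts that bound from the geometric estimates.
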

\begin{proof}
    The first two bounds on $\Bhg$ and its inverse follow directly from the bounds on the projection and normals, as well as the geometric errors \eqref{eq: geometric errors 1}. For the third estimate we have the following calculations 
    \begin{equation*}
        \begin{aligned}
            \norm{\bfPg - \Bhg}_{L^{\infty}(\Gah)} &\leq \norm{\bfPg - \bfPh(\bfI - d\bfH)\bfPg }_{L^{\infty}(\Gah)} \leq \norm{(\bfPg- \bfPh\bfPg) + \bfPh d\bfH\bfPg }_{L^{\infty}(\Gah)} \\ &\leq \norm{ \bfPg \nh \otimes \nh}_{L^{\infty}(\Gah)} + \norm{\bfPh d\bfH\bfPg }_{L^{\infty}(\Gah)} \leq ch^{k_g},
        \end{aligned}
    \end{equation*}
    where the last inequality holds again due to the geometric errors \eqref{eq: geometric errors 1} and the uniform bound of the curvature tensor $\bfH$. For the fourth estimate we have similarly 
     \begin{equation*}
        \begin{aligned}
            \norm{\bfPg - \Bhg^t\Bhg}_{L^{\infty}(\Gah)} &\leq \norm{\bfPg - \bfPg(\bfI - d\bfH)\bfPh (\bfI - d\bfH)\bfPg}_{L^{\infty}(\Gah)} \\
            &\leq \norm{\bfPg - \bfPg\bfPh\bfPg}_{L^{\infty}(\Gah)} + \mathcal{O}(h^{k_g+1}) \\
            &\leq \norm{\bfPg\nh\otimes\nh\bfPg}_{L^{\infty}(\Gah)} + \mathcal{O}(h^{k_g+1}) \\
            &\leq ch^{k_g+1}.
        \end{aligned}
    \end{equation*}
    which completes the proof. 
    \end{proof}

\subsubsection{Norm equivalence}

We use $\sim$ to denote the norm equivalence independent of $h$, then by \cref{lemma: Geometric errors} we obtain the following.

\begin{lemma}[Norm Equivalence]
    \label{lemma: norm equivalence}
    Let $v_h \in H^j(T),$ $j\geq 2$ for $T\in \Th$ a face of $\Gah$ and let $\tilde{v}_h(\tilde{x}) = v_h(F_T(\tilde{x}))$, with $\Tilde{T}$ a triangular face of           $\Galin$. Then for $h$ small enough we have that following equivalences
    \begin{equation}\label{eq: norm equivalence}
        \begin{aligned}
            \norm{v_h^{\ell}}_{L^2(T^{\ell})}&\sim \norm{v_h}_{L^2(T)}\sim \norm{\tilde{v}_h}_{L^2(\Tilde{T})}, \qquad
            \norm{\nbg v_h^{\ell}}_{L^2(T^{\ell})}\sim \norm{\nbgh v_h}_{L^2(T)}\sim \norm{\nbglin\tilde{v}_h}_{L^2(\Tilde{T})},\\
            |\nbglin^j \tilde{v}_h|_{L^2(\Tilde{T})} &\leq c \sum_{k=1}^j |\nbgh v_h|_{L^2(T)}, \\
            |\nbgh^j v_h|_{L^2(T)} &\leq c \sum_{k=1}^j |\nbg v_h^{\ell}|_{L^2(T^\ell)}, \qquad
             |\nbg^j v_h^{\ell}|_{L^2(T^{\ell})} \leq c \sum_{k=1}^j |\nbgh v_h|_{L^2(T^\ell)}.
        \end{aligned}
    \end{equation}
\end{lemma}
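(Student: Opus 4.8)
The statement collects standard change-of-variables facts relating the three triangulated surfaces $\Galin$, $\Gah$ and $\Ga$, and the plan is to prove the three groups of estimates in turn, each time reducing everything to a change of variables on a single face together with the uniform bounds already recorded in \cref{lemma: Geometric errors}, \cref{Lemma: Bh estimates} and the area-deformation estimates \eqref{eq: muhkg estimate} and \eqref{eq: muhk1 estimate}; the finite-element nature of $v_h$ plays no role, only that the maps involved are diffeomorphisms smooth enough to preserve $H^j$. \emph{$L^2$ equivalences.} By the definition of the lift \eqref{eq: lift} and the identity $\muh\,\dsh=\ds$ one has $\int_{T^\ell}|v_h^\ell|^2\,\ds=\int_T|v_h|^2\muh\,\dsh$; since $\norm{\muh}_{L^\infty(\Gah)}\le c$ and $\norm{\muh^{-1}}_{L^\infty(\Gah)}\le c$ by \eqref{eq: muhkg estimate}, this gives $\norm{v_h^\ell}_{L^2(T^\ell)}\sim\norm{v_h}_{L^2(T)}$. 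Writing $v_h=\tilde v_h\circ F_T^{-1}$ and using $\dsh=\mu_{h,1}^{k}\,\dslin$ with \eqref{eq: muhk1 estimate} one gets, in exactly the same way, $\norm{v_h}_{L^2(T)}\sim\norm{\tilde v_h}_{L^2(\Ttilde)}$.

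\emph{First-order seminorms.} Here I would use the chain-rule identities derived above, $\nbgh v_h(x)=\Bhg\,\nbg v_h^\ell(\pi(x))$ and $\nbg v_h^\ell(\pi(x))=\Bhg^{-1}\nbgh v_h(x)$ (and their vector-valued counterparts). Squaring, integrating over $T$ and changing variables as in the previous step, the bounds $\norm{\Bhg}_{L^\infty(\Ga)}\le c$ and $\norm{\Bhg^{-1}}_{L^\infty(\Gah)}\le 1$ of \cref{Lemma: Bh estimates} give $\norm{\nbg v_h^\ell}_{L^2(T^\ell)}\sim\norm{\nbgh v_h}_{L^2(T)}$. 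For the pair $(\Ttilde,T)$ one repeats the argument with the tangential transformation mapping $T_{\tilde x}\Galin$ to $T_{F_T(\tilde x)}\Gah$ (built from the Jacobian of $F_T$ and the projections $\bfPlin,\bfPh$) in place of $\Bhg$; this transformation and its inverse are bounded uniformly in $h$, since $F_T=\mathcal I_h^{k_g}\pi$ is a stable Lagrange interpolant of the smooth closest-point projection, so that its Jacobian is the identity up to a term of order $h$ on each face.

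\emph{Higher-order seminorms, $j\ge2$.} This is the only nontrivial part. One differentiates the compositions $v_h^\ell=v_h\circ\pi^{-1}$ and $\tilde v_h=v_h\circ F_T$ repeatedly and applies the Fa\`a di Bruno formula for tangential derivatives on surfaces. Each $j$-th tangential derivative then expands into a finite sum, every term of which is the product of a tangential derivative $\nbgh^{k}v_h$ of some order $1\le k\le j$ with a geometric coefficient assembled from derivatives of order $\le j$ of the transformation map ($\pi$, resp.\ $F_T$) and of the projections and Weingarten maps $\bfP,\bfPh,\bfPlin,\bfH,\bfH_h$. All these coefficients are bounded independently of $h$: $\pi$ is of class $C^{m-1}$ and $\bfH$ of class $C^{m-2}$ on $\overline{U_\delta}$ with $m\ge4$, the maps $F_T$ are degree-$k_g$ polynomials interpolating $\pi$ (hence their derivatives of every order are bounded uniformly in $h$ and vanish above order $k_g$), $\bfPlin$ is piecewise constant, and $\Bhg^{\pm1}$ together with the Jacobian transformation of the previous step are uniformly bounded. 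Collecting the terms and changing variables face-wise as before yields $|\nbglin^{j}\tilde v_h|_{L^2(\Ttilde)}\le c\sum_{k=1}^{j}|\nbgh^{k}v_h|_{L^2(T)}$ and, by the identical reasoning, the two estimates between $T$ and $T^\ell$.

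\emph{Main obstacle.} The only real work is in the last step: making the Fa\`a di Bruno bookkeeping precise and checking that every geometric factor produced by repeatedly differentiating the lift and the parametrization is bounded independently of $h$ — this is exactly where the regularity hypothesis $m\ge4$ and the stability of the interpolation operator $\mathcal I_h^{k_g}$ enter — and keeping the whole argument element-by-element (broken norms), since $\nh$, $\bfPh$, $\bfH_h$ and the co-normals are defined only face-wise. All of this is standard and follows the lines of \cite{Heine2004, DziukElliott_acta, EllRan21}; once the coefficient bounds are in place, the estimates are routine changes of variables.
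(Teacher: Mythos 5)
Your proposal is correct and follows exactly the standard route: the paper itself gives no argument here, deferring the first-order equivalences to \cite{DziukElliott_SFEM,Demlow2009} and the higher-order bounds to \cite[Lemma 3.3]{LarssonLarsonContDiscont}, and your sketch (area-deformation factors $\muh$, $\mu_{h,1}^k$ for the $L^2$ norms, the $\Bhg^{\pm1}$ chain-rule identities for first derivatives, and the Fa\`a di Bruno expansion with uniformly bounded geometric coefficients for $j\ge 2$) is precisely the content of those references. No gaps; the one point worth keeping explicit in a write-up is that all higher-order quantities must be read element-wise (broken norms), which you already note.
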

\begin{proof}
The first three equivalences can be found in several papers, e.g. \cite{DziukElliott_SFEM,Demlow2009}. Regarding the last two inequalities, see \cite[Lemma 3.3]{LarssonLarsonContDiscont}.
\end{proof}

\subsection{Interpolation}\label{sec: Interpolation}
For the error analysis and stability we are going to use the Scott-Zhang interpolation operator $\Ihzlin : L^2(\Galin)^3 \to (S_{h,1}^{k})^3$, introduced for triangulated surfaces in \cite{camacho20152}. Approximation properties were proven for the case of affine linear triangulations and scalar functions.  These can be easily extended to vector-valued functions so that the following interpolation bounds hold. For every $\Ttilde \in \Thlin$ and $\tilde{\bfv} \in L^2(\Galin)^3$:
\begin{equation}\label{eq: Scott-Zhang interpolant linear}
    \norm{\tilde{\bfv} - \Ihzlin\tilde{\bfv}}_{H^l(\Ttilde)} \leq ch^{k+1-l}\norm{\tilde{\bfv}}_{H^{k
+1}(\omega_{\Ttilde})} \leq ch^{k+1-l}\norm{\tilde{\bfv}^{\ell}}_{H^{k+1}(\omega_{\Ttilde}^\ell)}, \ \ 0\leq l \leq k+1, \ k\geq0
\end{equation}
where $\norm{\tilde{\bfv}}_{H^{k+1}(\omega_{\Ttilde})} = \sum_{\Ttilde \in \omega_T} \norm{\tilde{\bfv}}_{H^{k+1}(\Ttilde)}$ and $\omega_{\Ttilde}$ is the union of elements in $\Thlin$ which  share a node with $\Ttilde$. The last inequality holds due the norm equivalence \eqref{eq: norm equivalence}, the bijection of the projection $\pi(\cdot)$, Section \ref{sec: surface lifting}, and the fact that the curvilinear triangles $\omega_T^\ell$ are  lifts of  elements in $\omega_{\Ttilde}$. From the construction of the interpolant in \cite{camacho20152} we can easily see that that $\Ihzlin$ is a projection.

Now following \cite{hansbo2020analysis}, \cite{Demlow2009}, with the help of the maps in Section \ref{sec: Triangulated surfaces} we can extend the definition onto the approximating surface of higher order  $\Gah$ by the interpolant $\Ihz = \widetilde{I}_{h,k_g}^z : (L^2(\Gah))^3 \to (S_{h,k_g}^{k})^3$ for all $\bfv \in L^2(\Gah)^3$ with the help of the map $F_T : \Ttilde \to T$, see \eqref{eq: tildeT to T map}, as
\begin{equation*}
    \Ihz \bfv = \Ihzlin \Tilde{\bfv}(F_T^{-1}(x)), \quad\text{ for } x \in \Gah
\end{equation*}
with lift $\Ihzl \bfv^\ell = (\Ihz\bfv)^\ell$. 
As a result of the uniform $L^\infty$-bounds on derivatives of our map $F_T$, since by construction this map is the Lagrange interpolant of the closest point projection $\pi$, as well as the bound on $\muh$ for small enough $h$,  we have the following inequality
\begin{equation}
\begin{aligned}\label{eq: Scott-Zhang interpolant}
    \norm{\bfv - \Ihz \bfv}_{H^l(T)} \leq c \norm{\bfv - \Ihzlin \bfv}_{H^{l}(\Ttilde)}&\leq ch^{k+1-l}\norm{\bfv}_{H^{k+1}(\omega_{\Ttilde})} \\ 
    &\leq ch^{k+1-l}\norm{\bfv^{\ell}}_{H^{k+1}(\omega_{\Ttilde}^\ell)}, \quad 0\leq l \leq k+1, \ k \geq 0,
    \end{aligned}
\end{equation}
for every $\bfv \in L^2(\Gah)^3$, $T\in \Th$, where again $\omega_{\Ttilde}$ is the union of elements in $\Thlin$ which are neighbours to $\Ttilde$ i.e. they share a node with $\Ttilde$. 
According to \cite{hansbo2020analysis} we also have the stability estimate 
\begin{equation}\label{eq: stability of Scott-Zhang interpolant}
    \norm{\Ihz \bfv}_{H^k(T)} \leq c\norm{\bfv}_{H^k(\omega_{\Ttilde})} \leq c\norm{\bfv^{\ell}}_{H^k(\omega_{\Ttilde}^\ell)}, \quad \text{ for } k=0,1.
\end{equation}
for every $\Ttilde\in \Thlin$.

Finally, we present a super-approximation type of estimate and stability for the Scott-Zhang interpolant. This type of estimate has been proven in \cite{hansbo2020analysis} due to the projection property of $\Ihz$ onto the finite element space $(S_{h,k_g}^{k})^3$.
We present the lemma for completeness.
\begin{lemma}\label{Lemma: super-approximation properties}
    For discrete functions $\vh\in (S_{h,k_g}^{k})^3$ and $\chi \in [W_{\infty}^{k+1}(\Ga)]^3$  it holds
    \begin{equation}
        \begin{aligned}
        \label{eq: super-approximation estimate 1}
            \sum_{T\in \Th}\norm{\nbgh(I - \Ihz)(\chi^{-\ell} \cdot \vh)}_{L^2(T)} \leq c h\norm{\chi}_{W_{\infty}^{k+1}(\Ga)} \norm{\vh}_{H^1(\Gah)},
        \end{aligned}
    \end{equation}
     \begin{equation}
        \begin{aligned}
        \label{eq: super-approximation estimate 2}
            \sum_{T\in \Th}\norm{(I - \Ihz)(\chi^{-\ell} \cdot \vh)}_{L^2(T)} \leq ch \norm{\chi}_{W_{\infty}^{k+1}(\Ga)} \norm{\vh}_{L^2(\Gah)}.
        \end{aligned}
    \end{equation}
We also have the $L^2$ stability estimate
    \begin{equation}
        \begin{aligned}
        \label{eq: super-approximation stability}
            \norm{\Ihz(\chi^{-\ell} \cdot \vh)}_{L^2(\Gah)} \leq c\norm{\chi^{-\ell} \cdot \vh}_{L^2(\Gah)}.
        \end{aligned}
    \end{equation}
\end{lemma}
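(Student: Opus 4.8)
The plan is to establish all three bounds elementwise on each $T\in\Th$ and then sum, the whole argument resting on the fact that $\Ihz$ is a \emph{projection} onto the finite element space $(S_{h,k_g}^{k})^3$, so that $(I-\Ihz)\phi_h=0$ for every $\phi_h$ in that space. Writing $\chi^{-\ell}\cdot\vh=\sum_{i=1}^{3}\chi_i^{-\ell}(\vh)_i$ with each $\chi_i\in W_{\infty}^{k+1}(\Ga)$ scalar and each $(\vh)_i\in S_{h,k_g}^{k}$ scalar, it suffices to treat one scalar product $w:=\chi_i^{-\ell}v$, $v\in S_{h,k_g}^{k}$, and then sum over $i$ using $\norm{\chi}_{W_{\infty}^{k+1}(\Ga)}\ge\norm{\chi_i}_{W_{\infty}^{k+1}(\Ga)}$ and $\sum_i\norm{(\vh)_i}_{H^1(\Gah)}\le c\norm{\vh}_{H^1(\Gah)}$.

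I would fix $T\in\Th$ with its Scott--Zhang patch $\omega_{\Ttilde}$ and insert the \emph{element-local} degree-$k$ Lagrange interpolant $\mathcal{I}_h^{k}w\in S_{h,k_g}^{k}$ (which, unlike $\Ihz w$, depends only on $w|_T$ on each element and is therefore amenable to a purely local estimate). Using the projection property and then the local $H^1$- and $L^2$-stability of $\Ihz$ in \eqref{eq: stability of Scott-Zhang interpolant},
\begin{align*}
\norm{\nbgh(I-\Ihz)w}_{L^2(T)}
&=\norm{\nbgh(I-\Ihz)(w-\mathcal{I}_h^{k}w)}_{L^2(T)}\\
&\le\norm{\nbgh(w-\mathcal{I}_h^{k}w)}_{L^2(T)}+c\,\norm{w-\mathcal{I}_h^{k}w}_{H^1(\omega_{\Ttilde})},
\end{align*}
and analogously with $\nbgh$ removed and $L^2$-stability in place of $H^1$-stability. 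Everything thus reduces to the local interpolation error $\norm{w-\mathcal{I}_h^{k}w}_{H^l(T)}\le ch^{k+1-l}|w|_{H^{k+1}(T)}$ for $l=0,1$, which holds on the curved elements by the uniform $L^\infty$-bounds on the element maps $\hat{F}_T$ and on $\muh$, together with the norm equivalence \cref{lemma: norm equivalence}. The heart of the matter is the bound on $|w|_{H^{k+1}(T)}=|\chi_i^{-\ell}v|_{H^{k+1}(T)}$: passing to the reference element, $v$ becomes a polynomial of degree $k$, so in the Leibniz expansion of the order-$(k+1)$ derivatives of the product the term with all derivatives on $v$ vanishes identically; every surviving term carries at least one derivative on $\chi_i^{-\ell}$ (bounded by $c\norm{\chi}_{W_{\infty}^{k+1}(\Ga)}$ via \eqref{eq: smooth extension} and \cref{lemma: Geometric errors}) and at most $k$ derivatives on $v$, and an inverse estimate converts the latter into a negative power of $h$ one order better than the naive $h^{-(k+1)}$. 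Concretely this gives $|\chi_i^{-\ell}v|_{H^{k+1}(T)}\le c\norm{\chi}_{W_{\infty}^{k+1}(\Ga)}(h^{1-k}\norm{\nbgh v}_{L^2(T)}+\cdots+h^{-k}\norm{v}_{L^2(T)})$, whence $\norm{(I-\mathcal{I}_h^{k})(\chi_i^{-\ell}v)}_{H^1(T)}\le ch\,\norm{\chi}_{W_{\infty}^{k+1}(\Ga)}\norm{v}_{H^1(T)}$, which yields \eqref{eq: super-approximation estimate 1}, while using instead the crude bound $\norm{D^m v}_{L^2(T)}\le ch^{-m}\norm{v}_{L^2(T)}$ gives $\norm{(I-\mathcal{I}_h^{k})(\chi_i^{-\ell}v)}_{L^2(T)}\le ch\,\norm{\chi}_{W_{\infty}^{k+1}(\Ga)}\norm{v}_{L^2(T)}$, which yields \eqref{eq: super-approximation estimate 2}.

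It then remains to sum the resulting local estimates over $T\in\Th$, invoking that each element belongs to only boundedly many patches $\omega_{\Ttilde}$ (shape-regularity and quasi-uniformity \eqref{eq: shape regularity}--\eqref{eq: quasi-uniform}), which gives \eqref{eq: super-approximation estimate 1} and \eqref{eq: super-approximation estimate 2}. The stability bound \eqref{eq: super-approximation stability} is then immediate and needs no special structure: $\norm{\Ihz(\chi^{-\ell}\cdot\vh)}_{L^2(\Gah)}^2=\sum_{T\in\Th}\norm{\Ihz(\chi^{-\ell}\cdot\vh)}_{L^2(T)}^2\le c\sum_{T\in\Th}\norm{\chi^{-\ell}\cdot\vh}_{L^2(\omega_{\Ttilde})}^2\le c\norm{\chi^{-\ell}\cdot\vh}_{L^2(\Gah)}^2$ by the local $L^2$-stability in \eqref{eq: stability of Scott-Zhang interpolant} and bounded overlap.

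The main obstacle is precisely the gain of the extra factor $h$. One cannot simply apply the patch-based Scott--Zhang estimate \eqref{eq: Scott-Zhang interpolant} at regularity $H^{k+1}$ to $\chi^{-\ell}\cdot\vh$, because that function is only globally $C^0$ (not $H^{k+1}(\omega_{\Ttilde})$ once $k\ge2$); the projection property must be used to trade $\Ihz$ for the element-local interpolant $\mathcal{I}_h^{k}$, and the cancellation of the all-derivatives-on-$\vh$ term — which is exactly where the hypothesis $\chi\in[W_{\infty}^{k+1}(\Ga)]^3$ is consumed — is what supplies the missing order. The geometric and lifting technicalities for $k_g\ge2$ are routine given \cref{lemma: Geometric errors,Lemma: Bh estimates,lemma: norm equivalence}, since the relevant transformation maps and area deformations have uniformly bounded $L^\infty$-norms with uniformly bounded inverses.
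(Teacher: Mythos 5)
Your proposal is correct in substance, but it is worth noting that the paper does not actually prove this lemma: it states the result ``for completeness'' and attributes both estimates to the projection property of $\Ihz$ as established in the cited reference \cite{hansbo2020analysis}, adding only that \eqref{eq: super-approximation estimate 2} ``can be calculated similarly.'' What you have written is therefore a genuine reconstruction of the argument the paper outsources, and it is the standard Nitsche--Schatz-type super-approximation proof: reduce to scalar products $\chi_i^{-\ell}v$ by linearity, use the projection property to write $(I-\Ihz)w=(I-\Ihz)(w-\mathcal{I}_h^{k}w)$ with the element-local Lagrange interpolant, control the $\Ihz$ contribution by its local $H^1$/$L^2$ stability \eqref{eq: stability of Scott-Zhang interpolant}, and gain the extra factor of $h$ from the Leibniz expansion of $D^{k+1}(\hat\chi\hat v)$ on the reference element, where the all-derivatives-on-$\hat v$ term vanishes. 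Your identification of where the hypothesis $\chi\in[W_\infty^{k+1}(\Ga)]^3$ is consumed, and of why the patch-based estimate \eqref{eq: Scott-Zhang interpolant} cannot be applied directly to $\chi^{-\ell}\cdot\vh$ at regularity $H^{k+1}$, is exactly right.

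One small inaccuracy to repair: your displayed bound $|\chi_i^{-\ell}v|_{H^{k+1}(T)}\le c\norm{\chi}_{W_\infty^{k+1}(\Ga)}\bigl(h^{1-k}\norm{\nbgh v}_{L^2(T)}+\cdots+h^{-k}\norm{v}_{L^2(T)}\bigr)$ cannot, as written, yield $\norm{(I-\mathcal{I}_h^{k})(\chi_i^{-\ell}v)}_{H^1(T)}\le ch\norm{\chi}_{W_\infty^{k+1}(\Ga)}\norm{v}_{H^1(T)}$, because multiplying the term $h^{-k}\norm{v}_{L^2(T)}$ by $h^{k}$ leaves $\norm{v}_{L^2(T)}$ with no factor of $h$. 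The fix is simply not to apply an inverse estimate to the zeroth-order Leibniz term: the $j=k+1$ contribution is $|D^{k+1}\hat\chi|\,|\hat v|$, which you should leave as $c\norm{\chi}_{W_\infty^{k+1}(\Ga)}\norm{v}_{L^2(T)}$, so that after multiplication by $h^{k}$ it contributes $h^{k}\norm{v}_{L^2(T)}\le h\norm{v}_{L^2(T)}$ since $k\ge 1$. The derivative terms are reduced to $|v|_{H^1}$ by inverse estimates exactly as you describe and contribute $h|v|_{H^1(T)}$. With that adjustment the chain closes, and the rest of your argument (summation over patches with bounded overlap, the $L^2$ stability \eqref{eq: super-approximation stability} from \eqref{eq: stability of Scott-Zhang interpolant} with $k=0$, and the deferral of the curved-element transformation bounds to \cref{lemma: norm equivalence}) is sound.
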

\noindent The second estimate \eqref{eq: super-approximation estimate 2} can be calculated similarly to the estimate \eqref{eq: super-approximation estimate 1} as in \cite{hansbo2020analysis}. We will see that this estimate plays an important role for the well-posedness of the discrete Lagrange formulation.
\subsection{Geometric perturbations}\label{sec: Geometric perturbations}

Analogously to the continuous case of Definition \ref{def: Covariant derivatives}  we define similarly for the discrete case the following operators for $\vh \in \bfH^1(\Gah)$,
\begin{equation}
    \begin{aligned} \label{eq: Eh forms}
        \nbgcovh \vh &:= \bfPh \nbgh \vh , \ x \in \Gah \\
        E_h(\vh) &:= \half (\nbgcovh \vh + \nb_{\Gah}^{cov,t} \vh), \\
     \end{aligned}
\end{equation}

\begin{lemma}[Change of the domain of integration]\label{lemma: errors of domain of integration}
For $h$ small enough, $\uh,\vh \ \in \bfH^1(\Gah)$, $\bff \in (L^2(\Ga))^3$, and $g \in L^2(\Ga)$ we have the following error bounds
\begin{equation}
    \begin{aligned}\label{eq: errors of domain of integration data}
        |(\bff,\vhl)_{L^2(\Ga)}- (\bff^{-\ell},\vh)_{L^2(\Gah)}| &\leq  ch^{k_g+1}\norm{\bff}_{L^2(\Ga)}\norm{\vh}_{L^2(\Gah)},\\[5pt]
        |(g,\qhl)_{L^2(\Ga)} - (g^{-\ell},\qh)_{L^2(\Gah)}| &\leq  ch^{k_g+1}\norm{g}_{L^2(\Ga)}\norm{\qh}_{L^2(\Gah)}. \\
    \end{aligned}
\end{equation}
\end{lemma}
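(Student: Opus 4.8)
The plan is to reduce both inequalities in \eqref{eq: errors of domain of integration data} to the single geometric fact \eqref{eq: muhkg estimate} that $\norm{1-\muh}_{L^\infty(\Gah)}\le ch^{k_g+1}$, using the change-of-variables formula $\ds = \muh\,\dsh$ that relates integration over $\Ga$ to integration over $\Gah$ via the closest point projection $\pi$. Since the argument is identical for the two estimates, I would treat the first one in detail and indicate that the second follows verbatim with $\bff$ replaced by $g$ and $\vh$ by $\qh$ (both being scalar rather than vector quantities makes no difference).

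First I would record the definition of the lift and inverse lift: for $x\in\Gah$ we have $\vhl(\pi(x)) = \vh(x)$ and $\bff^{-\ell}(x) = \bff(\pi(x))$, so that the integrand $\bff(\pi(x))\cdot\vh(x)$ on $\Gah$ is exactly the pullback along $\pi$ of the integrand $\bff\cdot\vhl$ on $\Ga$. Then the change of variables $\int_\Ga \bff\cdot\vhl\,\ds = \int_{\Gah}\bff^{-\ell}\cdot\vh\,\muh\,\dsh$ gives
\begin{equation*}
(\bff,\vhl)_{L^2(\Ga)} - (\bff^{-\ell},\vh)_{L^2(\Gah)} = \int_{\Gah}(\muh - 1)\,\bff^{-\ell}\cdot\vh\,\dsh.
\end{equation*}
Next I would bound the right-hand side by pulling $\muh-1$ out in $L^\infty$ and applying Cauchy--Schwarz on the remaining product:
\begin{equation*}
\Big|\int_{\Gah}(\muh-1)\,\bff^{-\ell}\cdot\vh\,\dsh\Big| \le \norm{1-\muh}_{L^\infty(\Gah)}\,\norm{\bff^{-\ell}}_{L^2(\Gah)}\,\norm{\vh}_{L^2(\Gah)} \le ch^{k_g+1}\,\norm{\bff^{-\ell}}_{L^2(\Gah)}\,\norm{\vh}_{L^2(\Gah)}.
\end{equation*}
Finally I would replace $\norm{\bff^{-\ell}}_{L^2(\Gah)}$ by $c\norm{\bff}_{L^2(\Ga)}$; this is again the change-of-variables formula together with the uniform bounds $\norm{\muh^{-1}}_{L^\infty(\Gah)}\le c$ from \eqref{eq: muhkg estimate}, i.e. the $L^2$ norm of a function and of its inverse lift are equivalent up to an $h$-independent constant (for $h$ small). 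This yields the claimed bound, and the scalar estimate for $g,\qh$ is obtained in exactly the same way.

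I do not expect a genuine obstacle here: the proof is a routine application of the area-deformation estimate \eqref{eq: muhkg estimate}. The only point requiring a little care is the direction of the change of variables (whether $\muh$ or $\muh^{-1}$ appears, and on which surface the $L^\infty$ estimate lives) and making sure the norm equivalence $\norm{\bff^{-\ell}}_{L^2(\Gah)}\sim\norm{\bff}_{L^2(\Ga)}$ is invoked with the correct Jacobian; both are immediate from \eqref{eq: muhkg estimate}. One should also note that, since $\bff\in L^2(\Ga)^3$ need not be smooth, the inverse lift $\bff^{-\ell}$ is defined merely as composition with $\pi$ and lies in $L^2(\Gah)^3$, which is all that is used.
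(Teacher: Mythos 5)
Your proof is correct and is exactly the standard change-of-variables argument that the paper's proof defers to by citation (to \cite{DziukElliott_acta, highorderESFEM}): rewrite the $\Ga$-integral over $\Gah$ with the Jacobian $\muh$, bound $\norm{1-\muh}_{L^\infty(\Gah)}$ by \eqref{eq: muhkg estimate}, and use the $L^2$ norm equivalence under lifting. No gaps; the scalar case follows identically.
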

\begin{proof}
See \cite{highorderESFEM, DziukElliott_acta} for the proof that can be easily extended to vector-valued functions.
\end{proof}
\begin{lemma}\label{lemma: Geometric perturbations a}
For $\bfw_h,\vh \in \bfH^1(\Gah)$ the following bounds hold:
    \begin{align} \label{eq: errors of domain of integration stiffness}
        &|(\nbgcov \bfw_h^{\ell}, \nbgcov \vhl)_{L^2(\Ga)} - (\nbgcovh \bfw_h,\nbgcovh \vh)_{L^2(\Gah)}| \leq ch^{k_g} \norm{\bfw_h}_{H^1(\Gah)}\norm{\vh}_{H^1(\Gah)},\\[7pt]
    \label{eq: Geometric perturbations a 1}
        &| (E_h(\bfw_h),E_h(\vh))_{L^2(\Gah)} - (E(\bfw_h^{\ell}),E(\vhl))_{L^2(\Ga)}| \leq ch^{k_g} \norm{\bfw_h}_{H^1(\Gah)}\norm{\vh}_{H^1(\Gah)} \\[7pt]
      &\norm{E(\bfw_h^{\ell})^{-\ell} - E_h(\bfw_h)}_{L^2(\Gah)} \leq ch^{k_g}\norm{\bfw_h}_{H^1(\Gah)}\label{eq: Geometric perturbations a 2},
    \end{align}
where the constant $c$ are independent of the mesh parameter $h$. 

\end{lemma}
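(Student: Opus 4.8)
\textbf{Proof strategy for \cref{lemma: Geometric perturbations a}.}
The plan is to reduce everything to pointwise comparisons between the discrete differential-geometric objects on $\Gah$ and the inverse-lifts of their continuous counterparts on $\Ga$, then integrate using the area-element bound \eqref{eq: muhkg estimate} and the $H^1$--$H^1$ norm equivalence \eqref{eq: norm equivalence}. Concretely, for \eqref{eq: errors of domain of integration stiffness} I would first pull the continuous integral over $\Ga$ back to $\Gah$ via $x\mapsto\pi(x)$, picking up the factor $\muh$; using \eqref{eq: muhkg estimate} this contributes only an $\bigo(h^{k_g+1})$ error, so it suffices to compare the integrands on $\Gah$. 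The key identity is the transformation rule derived above, $\nbgh\vh = \nbg\vhl(\pi)\,\Bhg^t$, hence $\nbgcovh\vh = \bfPh\nbg\vhl\Bhg^t$, which we must compare with $(\nbgcov\vhl)^{-\ell} = (\bfPg\nbg\vhl\bfPg)^{-\ell}$. Writing the difference and inserting/subtracting intermediate terms, each factor-swap $\bfPh\to\bfPg$, $\Bhg^t\to\bfPg$ costs $\bigo(h^{k_g})$ by \cref{lemma: Geometric errors} and \cref{Lemma: Bh estimates} (in fact the $\Bhg^t\Bhg$ replacement is $\bigo(h^{k_g+1})$, but one factor of $\bfPh\to\bfPg$ already forces the order down to $h^{k_g}$), while the remaining factors are uniformly bounded; a Cauchy--Schwarz step and \eqref{eq: norm equivalence} then yield the claimed bound.

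For \eqref{eq: Geometric perturbations a 1} I would expand $E_h(\vh) = \half(\nbgcovh\vh + (\nbgcovh\vh)^t)$ and similarly for $E$, so that the bilinear form $(E_h(\wh),E_h(\vh))_{L^2(\Gah)}$ is a sum of four terms of the type $(\nbgcovh\wh,\nbgcovh\vh)$ and its transposed variants; since the trace/Frobenius inner product is insensitive to simultaneous transposition, this reduces \eqref{eq: Geometric perturbations a 1} essentially to \eqref{eq: errors of domain of integration stiffness} plus the same $\muh$ change-of-variables error, so it follows with no new ideas. For the pointwise-type estimate \eqref{eq: Geometric perturbations a 2}, I would again use $\nbgcovh\wh = \bfPh\nbg\whl(\pi)\Bhg^t$ versus $E(\whl)^{-\ell}$ built from $\bfPg\nbg\whl\bfPg$; bounding the difference elementwise in $L^\infty$-operator-norm against the geometric errors \eqref{eq: geometric errors 1}--\eqref{eq: geometric errors 2} and \eqref{eq: Bh estimates}, multiplying by $\nbg\whl$ and integrating over $\Gah$ (using $\muh\le c$) gives $ch^{k_g}\norm{\nbg\whl}_{L^2(\Ga)}\le ch^{k_g}\norm{\wh}_{H^1(\Gah)}$ by norm equivalence.

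The main obstacle is bookkeeping rather than conceptual: one must be careful that the transformation formula $\nbgh\vh=\nbg\vhl(\pi)\Bhg^t$ holds only on the relevant tangent spaces, so every projector $\bfPh$, $\bfPg$ must be tracked in the right place, and one has to confirm that no cancellation is lost when replacing $\Bhg^t\Bhg$ by $\bfPg$ prematurely — here the honest order is $h^{k_g}$, not $h^{k_g+1}$, precisely because an isolated $\bfPh-\bfPg$ appears with only one power of $\Bhg$. A secondary point is that $\vh\in\bfH^1(\Gah)$ is merely $H^1$ elementwise, so all derivative manipulations and the norm equivalence \eqref{eq: norm equivalence} must be applied face by face and then summed over $T\in\Th$; since the per-face constants are uniform under shape-regularity and quasi-uniformity this causes no difficulty, but it should be stated.
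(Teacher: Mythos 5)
Your proposal is correct and follows essentially the same route as the paper: change of integration domain controlled by the area-element estimate \eqref{eq: muhkg estimate}, the transformation rule $\nbgh\vh=\nbg\vhl\,\Bhg^t$, and telescoping factor-swaps bounded via \cref{lemma: Geometric errors} and \cref{Lemma: Bh estimates}, with the symmetrization argument reducing \eqref{eq: Geometric perturbations a 1} to \eqref{eq: errors of domain of integration stiffness}. Your observation that the isolated $\bfPh-\bfPg$ swap is the genuine $\bigo(h^{k_g})$ bottleneck while the $\Bhg^t\Bhg\to\bfPg$ replacement is $\bigo(h^{k_g+1})$ matches the paper's computation exactly.
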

\begin{proof}
Let us start by proving \eqref{eq: errors of domain of integration stiffness}, which can be estimated similarly to \cite{Heine2004,DziukElliott_acta}. Considering that $\nbgcov (\cdot) = \bfPg\nbg (\cdot)$ and $\nbgcovh (\cdot) = \bfPh\nbgh (\cdot)$ the calculations are as followed,
\begin{equation*}
    \begin{aligned}
        &|(\nbgcov \bfw_h^{\ell}, \nbgcov \vhl)_{L^2(\Ga)} - (\nbgcovh \bfw_h,\nbgcovh \vh)_{L^2(\Gah)}| \\
        &\ = \big| \int_{\Ga}\nbgcov \bfw_h^{\ell} : \nbgcov \vhl \, \ds - \int_{\Gah}\nbgcovh \bfw_h : \nbgcovh \vh \, \dsh \big|\\
        &\ = \big|\int_{\Ga}\nbgcov \bfw_h^{\ell} : \nbgcov \vhl \, \ds - \int_{\Ga} \frac{1}{\mu_h} (\nbgcovh \bfw_h)^{\ell} : (\nbgcovh \vh)^{\ell} \, \ds\big|\\
        &\ = \big|\int_{\Ga} \bfPg\nbg \bfw_h^{\ell} : \bfPg\nbg \vhl \, \ds  - \int_{\Ga}\bfPh\nbg \bfw_h^{\ell} \Bhg^t : \bfPh\nbg \vhl \Bhg^t \, \ds \\
        &\quad +   \int_{\Ga} (1 - \frac{1}{\mu_h})\bfPh\nbg \bfw_h^{\ell} \Bhg^t : \bfPh\nbg \vhl \Bhg^t \, \ds \big|.
        \end{aligned}
\end{equation*}
The second term is once again immediate from the deformation estimate \eqref{eq: muhkg estimate} and the bounds of $\bfPh$ and $\Bhg$ \eqref{eq: Bh stability} so that 
\begin{equation*}
    \begin{aligned}
          \big|\int_{\Ga} (1 - \frac{1}{\mu_h})\bfPh\nbg \bfw_h^{\ell} \Bhg^t : \bfPh\nbg \vhl \Bhg^t \, \ds \leq ch^{k_g+1} \norm{\bfw_h}_{H^1(\Gah)}\norm{\vh}_{H^1(\Gah)} \big|.
    \end{aligned}
\end{equation*}
Regarding the first term,  using the properties of the Frobenius inner product we have that
\begin{equation*}
    \begin{aligned}
        & \big|\int_{\Ga} \bfPg\nbg \bfw_h^{\ell} : \bfPg\nbg \vhl \, \ds  - \int_{\Ga}\bfPh\nbg \bfw_h^{\ell} \Bhg^t : \bfPh\nbg \vhl \Bhg^t \, \ds \big|\\
         &=  \big|\int_{\Ga} \bfPg\nbg \bfw_h^{\ell} : \nbg \vhl \, \ds  - \int_{\Ga}\bfPh\nbg \bfw_h^{\ell}  : \nbg \vhl \Bhg^t\Bhg \, \ds \big| \\
        &=  \big|\int_{\Ga} \bfPg\nbg \bfw_h^{\ell} : \nbg \vhl (\bfPg - \Bhg^t\Bhg) \, \ds + \int_{\Ga}(\bfPg-\bfPh)\nbg \bfw_h^{\ell}  : \nbg \vhl \Bhg^t\Bhg \, \ds \big|\\
        &\leq ch^{k_g+1}\norm{\bfw_h}_{H^1(\Gah)}\norm{\vh}_{H^1(\Gah)} + ch^{k_g}\norm{\bfw_h}_{H^1(\Gah)}\norm{\vh}_{H^1(\Gah)},
    \end{aligned}
\end{equation*}
by using \eqref{eq: geometric errors 2}. Combining the two terms we get our desired result.

Now, to prove the estimate \eqref{eq: Geometric perturbations a 1} we just need to notice that $E_h(\bfw_h) = \half (\nbgcovh \bfw_h + \nabla_{\Gah}^{cov,t}\bfw_h)$  and $E(\bfw_h^{\ell}) = \half (\nbgcov \bfw_h^{\ell} + \nb_{\Ga}^{cov,t}\bfw_h^{\ell})$ from \eqref{eq: Eh forms} and thus in combination with the above estimate \eqref{eq: errors of domain of integration stiffness} the inequality follows. Finally, \eqref{eq: Geometric perturbations a 2}, can be easily estimated similarly as above.
\end{proof}

\subsection{Discrete Korn's inequality}

\begin{lemma}[Discrete Korn's inequality]
\label{Lemma: discrete Korn's inequality T nh}
For h small enough it holds that  
\begin{equation}
\label{discrete Korn's inequality T nh}
    \norm{\vh}_{H^1(\Gah)} \leq c(\norm{E_h(\vh)}_{L^2(\Gah)}  +\norm{\vh}_{L^2(\Gah)} + h^{-1}\norm{\vh \cdot \nh}_{L^2(\Gah)}), \quad\end{equation}
$\text{ for all } \vh \in (S_{h,k_g}^{k})^3$. 
\end{lemma}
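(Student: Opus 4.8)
The plan is to carry the estimate over to the smooth surface $\Ga$ by the lift, apply the continuous surface Korn inequality \eqref{eq: Korn inequality} to the tangential part, and estimate the normal part by hand, where the factor $h^{-1}$ will be produced by an inverse estimate applied to a Scott--Zhang interpolant and the non‑interpolated remainder absorbed via super‑approximation. By the norm equivalence of \cref{lemma: norm equivalence}, summed over $T\in\Th$, it suffices to bound $\norm{\vhl}_{H^1(\Ga)}$ by the right‑hand side of \eqref{discrete Korn's inequality T nh} up to an extra term $ch\norm{\vh}_{H^1(\Gah)}$, which is then absorbed for $h$ small. On $\Ga$ write $\vhl=\bfv_T+v_n\bfng$ with $\bfv_T:=\bfPg\vhl\in\bfH^1_T$ and $v_n:=\bfng\cdot\vhl$, so $\norm{\vhl}_{H^1(\Ga)}\le\norm{\bfv_T}_{H^1(\Ga)}+\norm{v_n\bfng}_{H^1(\Ga)}$. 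Using $\nbg\bfng=\bfH$ one computes $E(v_n\bfng)=v_n\bfH$, hence $E(\bfv_T)=E(\vhl)-v_n\bfH$; combining this with \eqref{eq: Geometric perturbations a 2}, the area‑measure bound \eqref{eq: muhkg estimate} and $\norm{\bfH}_{L^\infty}\le c$, the Korn inequality \eqref{eq: Korn inequality} yields
\begin{equation*}
  \norm{\bfv_T}_{H^1(\Ga)}\le c\big(\norm{E_h(\vh)}_{L^2(\Gah)}+\norm{\vh}_{L^2(\Gah)}+\norm{v_n}_{L^2(\Ga)}+h^{k_g}\norm{\vh}_{H^1(\Gah)}\big).
\end{equation*}

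For the normal part, $\norm{v_n\bfng}_{L^2(\Ga)}=\norm{v_n}_{L^2(\Ga)}$ and, again by $\nbg\bfng=\bfH$, $\norm{\nbg(v_n\bfng)}_{L^2(\Ga)}\le\norm{\nbg v_n}_{L^2(\Ga)}+c\norm{v_n}_{L^2(\Ga)}$, so everything reduces to controlling $\norm{v_n}_{L^2(\Ga)}$ and $\norm{\nbg v_n}_{L^2(\Ga)}$. The key observation is that the inverse lift of $v_n$ is $v_n^{-\ell}=\bfn\cdot\vh$ on $\Gah$; hence, by \cref{lemma: norm equivalence} and $\norm{\bfn-\nh}_{L^\infty(\Gah)}\le ch^{k_g}$ (\cref{lemma: Geometric errors}),
\begin{equation*}
  \norm{v_n}_{L^2(\Ga)}\le c\norm{\bfn\cdot\vh}_{L^2(\Gah)}\le c\norm{\vh\cdot\nh}_{L^2(\Gah)}+ch^{k_g}\norm{\vh}_{L^2(\Gah)},
\end{equation*}
and $\norm{\nbg v_n}_{L^2(\Ga)}\le c\norm{\nbgh(\bfn\cdot\vh)}_{L^2(\Gah)}$. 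We split $\bfn\cdot\vh=\Ihz(\bfn\cdot\vh)+(I-\Ihz)(\bfn\cdot\vh)$ (this is $\chi^{-\ell}\cdot\vh$ with $\chi=\bfng\in[W_{\infty}^{k+1}(\Ga)]^3$ by the smoothness of $\Ga$): the remainder is handled by the super‑approximation estimate \eqref{eq: super-approximation estimate 1}, giving $\sum_{T}\norm{\nbgh(I-\Ihz)(\bfn\cdot\vh)}_{L^2(T)}\le ch\norm{\vh}_{H^1(\Gah)}$, whereas $\Ihz(\bfn\cdot\vh)$ is a genuine finite element function, so a standard inverse estimate (valid by quasi‑uniformity \eqref{eq: quasi-uniform}) together with the $L^2$‑stability \eqref{eq: super-approximation stability} gives
\begin{equation*}
\begin{aligned}
  \norm{\nbgh\Ihz(\bfn\cdot\vh)}_{L^2(\Gah)}
  &\le ch^{-1}\norm{\Ihz(\bfn\cdot\vh)}_{L^2(\Gah)}
   \le ch^{-1}\norm{\bfn\cdot\vh}_{L^2(\Gah)}\\
  &\le ch^{-1}\norm{\vh\cdot\nh}_{L^2(\Gah)}+ch^{k_g-1}\norm{\vh}_{L^2(\Gah)}.
\end{aligned}
\end{equation*}

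Since $k_g\ge1$ implies $h^{k_g-1}\le 1$ for $h\le 1$, collecting all the bounds yields
\begin{equation*}
  \norm{\vhl}_{H^1(\Ga)}\le c\big(\norm{E_h(\vh)}_{L^2(\Gah)}+\norm{\vh}_{L^2(\Gah)}+h^{-1}\norm{\vh\cdot\nh}_{L^2(\Gah)}\big)+ch\norm{\vh}_{H^1(\Gah)},
\end{equation*}
and invoking the norm equivalence once more, the term $ch\norm{\vh}_{H^1(\Gah)}$ is absorbed into the left‑hand side for $h$ small enough, which proves \eqref{discrete Korn's inequality T nh}.

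I expect the crux of the argument to be the estimate of $\norm{\nbg v_n}_{L^2(\Ga)}$, the tangential gradient of the \emph{normal} component: this quantity is not controlled by $E_h(\vh)$, and the only way to bound it is to pay a factor $h^{-1}$ against the small quantity $\vh\cdot\nh$ — precisely what the combination ``Scott--Zhang splitting $+$ super‑approximation $+$ inverse inequality'' provides; one also has to check that $\bfng$ is the lift of a smooth field so that \eqref{eq: super-approximation estimate 1} applies, the mismatch $\bfn-\nh$ being harmless as it is only needed in $L^\infty$. Everything else is a careful bookkeeping of geometric perturbation errors, all of which are $O(h^{k_g})$ or $O(h)$ and hence feed into the absorbable term $ch\norm{\vh}_{H^1(\Gah)}$.
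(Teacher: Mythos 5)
Your proposal is correct and follows essentially the same route as the paper's proof: split $\vhl$ into tangential and normal parts, apply the continuous surface Korn inequality to the tangential part using $E(\vhl)=E(\bfPg\vhl)+(\vhl\cdot\bfng)\bfH$ together with the geometric perturbation bound \eqref{eq: Geometric perturbations a 2}, and control $\nbgh(\bfn\cdot\vh)$ via the Scott--Zhang splitting, super-approximation \eqref{eq: super-approximation estimate 1}, and an inverse estimate, absorbing the $O(h)\norm{\vh}_{H^1(\Gah)}$ remainders for small $h$. The only differences are cosmetic bookkeeping of the $O(h^{k_g})$ geometric terms.
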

\begin{proof}
Let us start from the continuous relation, similar to \cite{jankuhn2021trace}. We know in the continuous tangent space $\bfH_T^1(\Ga)$,  Korn's inequality
\eqref{eq: Korn inequality} holds so the bound for the tangential term $\norm{\bfPg \vhl}_{H^1(\Ga)}$ reads as 
\begin{equation}
    \begin{aligned}
    \norm{\bfPg \vhl}_{H^1(\Ga)} &\leq c(\norm{E(\bfPg \vhl)}_{L^2(\Ga)} + \norm{\bfPg \vhl}_{L^2(\Ga)}). \\
    \end{aligned}
\end{equation}
On the other hand, $E(\vhl) = E(\bfPg\vhl) + (\vhl \cdot \bfng)\bfH$ for all $\vhl \in \bfH^1(\Ga)$ so we may obtain, due to norm equivalence, that
\begin{equation}
    \begin{aligned}
    \label{Korn H1}
    \norm{\vh}_{H^1(\Gah)} \leq c\norm{\vhl}_{H^1(\Ga)} \leq c\norm{\bfPg \vhl}_{H^1(\Ga)} + c\norm{\vhl \cdot \bfng}_{H^1(\Ga)}.
    \end{aligned}
\end{equation}
We now try and bound the two terms appearing on the right hand-side appropriately. In regards to the second term we have for $\bfn = \bfng^{-\ell}$ that
\begin{equation*}
    \begin{aligned}
    \norm{\vhl \cdot \bfng}_{H^1(\Ga)} &\leq  c\norm{\vhl \cdot \bfng}_{L^2(\Ga)} + c\norm{\nbg(\vhl \cdot \bfng)}_{L^2(\Ga)} \\
    &\leq c\norm{\vh}_{L^2(\Gah)} + c\norm{\nbgh(\vh \cdot \bfn)}_{L^2(\Gah)}.    
    \end{aligned}
\end{equation*}
We cannot directly apply an inverse-type estimate to the second term. For that we want to utilize the super-approximation \eqref{eq: super-approximation estimate 1} and the fact that the Scott-Zhang interpolant \eqref{eq: Scott-Zhang interpolant} maps onto our finite element space $(S_{h,k_g}^{k})^3$. To do so we have the following
\begin{equation*}
    \begin{aligned}
   \norm{\nbgh(\vh \cdot \bfn)}_{L^2(\Gah)} &\leq \norm{\nbgh(I-\Ihz)(\vh \cdot \bfn)}_{L^2(\Gah)}  + \norm{\nbgh\Ihz(\vh \cdot \bfn)}_{L^2(\Gah)} \\
   &\leq c\norm{\vh}_{L^2(\Gah)} + ch^{-1}\norm{\vh \cdot \bfn}_{L^2(\Gah)},
    \end{aligned}
\end{equation*}
where we used the inverse estimate \cite[Lemma 4.5]{Heine2004} for the second right-hand side term. So, combining this result we get 
\begin{equation}\label{eq: h1n term bound}
     \norm{\vhl \cdot \bfng}_{H^1(\Ga)} \leq c\norm{\vh}_{L^2(\Gah)} + ch^{-1}\norm{\vh \cdot \bfn}_{L^2(\Gah)},
\end{equation}
It remains to bound the tangent term $\norm{\bfPg \vhl}_{H^1(\Ga)}$,
\begin{equation*}
    \begin{aligned}
    \norm{\bfPg \vhl}_{H^1(\Ga)} &\leq c\norm{E(\bfPg\vhl)}_{L^2(\Ga)} +  c\norm{\bfPg \vhl}_{L^2(\Ga)} \\ 
    &\leq c\norm{E(\vhl)}_{L^2(\Ga)} + c \norm{(\vhl \cdot \bfng)\bfH}_{L^2(\Ga)}  +\norm{\bfP \vh}_{L^2(\Gah)} \\ 
    & \leq c\norm{E_h(\vh)}_{L^2(\Gah)} + c\norm{E(\vhl) - E_h(\vh)}_{L^2(\Gah)} + c\norm{\vh}_{L^2(\Gah)}\\
    &\leq c\norm{E_h(\vh)}_{L^2(\Gah)} + ch^{k_g}\norm{\vh}_{H^1(\Gah)}+ ch^{k_g}\norm{\vh}_{L^2(\Gah)}+c\norm{\vh}_{L^2(\Gah)},
    \end{aligned}
\end{equation*}
where $\bfP = \bfPg^{-\ell}$ and where we have used geometric perturbation result \eqref{eq: Geometric perturbations a 2} in Section \ref{sec: Geometric perturbations} and the geometric estimates in \eqref{eq: geometric errors 2}.  Now combining the two estimate and absorbing the appropriate term to the left-hand side we get the desired result.
\end{proof}

\begin{remark}
 Note that, the needed regularity of the normal $\bfn_{\Ga}$ stems from applying the super-approximation estimate \eqref{eq: super-approximation estimate 1}. It forces us to consider normals such that $\bfn_{\Ga}\in W^{k+1}_{\infty}$ and thus limits us to surfaces of regularity at least $C^{k+2}$.

\end{remark}
\subsection{Integration by parts}
On the smooth surface the integration by parts for the bilinear form $b_T(\bfv,p)$ is given by $\int_\Ga \bfv \cdot \nbg p = - \int_\Ga
p \, \divg \bfv_T $. On the other hand, integration by parts for a general vector field function $\vh$ on a discretized surface $\Gah$ gives rise to extra terms due to jumps of co-normals $\mh$ 
\begin{lemma}
 On the discrete surface $\Gah$ the following integration by parts formula holds:
 \begin{equation}
     \begin{aligned}
     \label{eq: integration by parts}
         \int_{\Gah} \vh \cdot \nbgh \qh \, \dsh &= - \int_{\Gah} \qh \divgh \vh \, \dsh + \sum_{T\in \mathcal{T}_h} \int_T (\vh \cdot \nh)\qh \divgh \nh \, \dsh \\
         &+ \sum_{E \in \mathcal{E}_h} \int_E [\mh \cdot \vh] \qh \, d\ell
     \end{aligned}
 \end{equation}
 for $\vh \in (S_{h,k_g}^{k})^3$ and $\qh \in S_{h,k_g}^{l}$, for any $k,l \geq \in \mathbb{N}$.
\end{lemma}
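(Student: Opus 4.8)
The plan is to prove the identity elementwise and then sum, carefully tracking the co-normal boundary contributions. First I would fix a single curved triangle $T\in\Th$ and work with the tangential gradient $\nbgh$ restricted to $T$. The starting point is the product rule on a surface: for the scalar $\qh$ and the vector field $\vh$ one has $\divgh(\qh\vh)=\qh\,\divgh\vh+\vh\cdot\nbgh\qh$ pointwise on $T$ (this is just the Leibniz rule for the surface divergence applied componentwise, using $\divgh\bfF$ for the tensor case together with $\nbgh$ acting on the scalar factor). Integrating this over $T$ and rearranging gives
\begin{equation*}
\int_T \vh\cdot\nbgh\qh\,\dsh = \int_T \divgh(\qh\vh)\,\dsh - \int_T \qh\,\divgh\vh\,\dsh.
\end{equation*}

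Next I would apply the surface divergence theorem on the single smooth piece $T$ (a $C^{m-1}$ curved triangle with piecewise-$C^1$ boundary). The general formula reads $\int_T \divgh\bfF\,\dsh = -\int_T \bfkappa_h\,(\bfF\cdot\nh)\,\dsh + \int_{\partial T}\bfF\cdot\mh^{\partial T}\,d\ell$, where $\bfkappa_h=\divgh\nh$ is the (discrete) mean curvature of $\Gah$ on $T$ and $\mh^{\partial T}$ is the outward co-normal along $\partial T$ tangent to $T$. Applying this with $\bfF=\qh\vh$ produces three terms: the curvature term $-\int_T (\vh\cdot\nh)\,\qh\,\divgh\nh\,\dsh$, and the boundary term $\int_{\partial T}\qh\,(\mh^{\partial T}\cdot\vh)\,d\ell$. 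Moving the curvature term to the right-hand side explains the $+\sum_T\int_T(\vh\cdot\nh)\qh\,\divgh\nh$ contribution (note the sign flips once it is brought over).

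Finally I would sum over all $T\in\Th$. The interior-element volume terms $-\int_T\qh\divgh\vh$ and the curvature terms simply add up to the stated sums over $\Th$. The boundary integrals $\sum_T\int_{\partial T}\qh(\mh^{\partial T}\cdot\vh)\,d\ell$ must be reorganized edge by edge: each interior edge $E\in\mathcal{E}_h$ is shared by exactly two triangles $T^\pm$, contributing $\mh^+$ and $\mh^-$ respectively, and since $\qh$ and $\vh$ are globally continuous (they live in $S_{h,k_g}^{k}$ and $(S_{h,k_g}^{k})^3$, hence in $C^0(\Gah)$), the two contributions combine into $\int_E \qh\,([\mh]\cdot\vh)\,d\ell$ with $[\mh]=\mh^++\mh^-$; there are no genuine boundary edges since $\Gah$ is closed. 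This yields exactly the $\sum_{E\in\mathcal{E}_h}\int_E[\mh\cdot\vh]\qh\,d\ell$ term.

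The main obstacle, and the only place requiring care, is the justification of the divergence theorem on a curved simplex with the correct curvature term, and making sure the co-normal bookkeeping across edges is done with consistent orientations so that the $\mh^+$ and $\mh^-$ genuinely add (rather than subtract) — this is the point where continuity of $\vh$ and $\qh$ across $\mathcal{E}_h$ is essential, and where the non-cancellation $[\mh]\neq 0$ for a genuinely curved or faceted surface becomes visible. Everything else is the elementwise Leibniz rule and summation, which is routine. I would also remark that the hypothesis "$k,l\in\mathbb{N}$" is only used to guarantee $\vh,\qh\in C^0(\Gah)$ and piecewise $H^1$, so that all the integrals above are well-defined and the divergence theorem applies on each $T$.
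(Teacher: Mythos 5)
The paper states this lemma without giving a proof, so there is nothing to compare against directly; your elementwise strategy (Leibniz rule for the surface divergence, the divergence theorem on each curved triangle, then edge-by-edge reassembly of the boundary terms using the global continuity of $\vh$ and $\qh$ and the non-cancellation $\mh^+\neq-\mh^-$) is exactly the standard argument one would write down, and the structure of your proof is sound.

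There is, however, a sign inconsistency at the one step you yourself flag as requiring care. With the convention $\bfkappa_h=\divgh\nh$ used here (so that, e.g., the unit sphere with outward normal has positive mean curvature), the surface divergence theorem on a single patch $T$ reads
\begin{equation*}
\int_T \divgh\bfF\,\dsh \;=\; +\int_T (\divgh\nh)\,(\bfF\cdot\nh)\,\dsh \;+\; \int_{\partial T}\bfF\cdot\mh^{\partial T}\,d\ell,
\end{equation*}
with a \emph{plus} sign on the curvature term; this follows from the componentwise identity $\int_T \underline{D}_i f = \int_T f\,\bfkappa_h\, (\nh)_i + \int_{\partial T} f\,(\mh)_i$. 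You state it with a minus sign, and substituting your version with $\bfF=\qh\vh$ into $\int_T\vh\cdot\nbgh\qh=\int_T\divgh(\qh\vh)-\int_T\qh\divgh\vh$ would yield $-\int_T(\vh\cdot\nh)\,\qh\,\divgh\nh$ in the final identity, i.e.\ the wrong sign relative to \eqref{eq: integration by parts}. The parenthetical ``the sign flips once it is brought over'' does not repair this: the curvature term already sits on the right-hand side of your divergence theorem and is never moved across an equality, so no sign flip occurs. The fix is simply to state the divergence theorem with the correct sign; the rest of your argument (the product rule, the observation that continuity of $\qh$ and $\vh$ lets the two co-normal contributions on a shared edge add up to $[\mh]\cdot\vh$, and the absence of exterior edges on the closed surface $\Gah$) then goes through unchanged.
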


\section{The discrete Lagrange formulation}\label{sec: well-posedness of discrete formulation}

We consider the \emph{Taylor-Hood} surface finite elements $\bm{\mathcal{P}}_{k_u}/\mathcal{P}_{k_{pr}}/\mathcal{P}_{k_{\lambda}}$  for a $k_g$-order approximation of the surface, (c.f.  Section \ref{FEMspaces}), and  set $$k_u\geq 2,~k_{pr}=k_u-1~~ \mbox{and} ~~1 \leq k_{\lambda}\leq k_u.$$ Let us define the following finite element spaces
\begin{equation}
    V_h = S_{h,k_g}^{k_u}, ~~ \bfV_h = ( V_h)^3, ~~
        Q_h = S_{h,k_g}^{k_{pr}} \cap L^2_0(\Gah), ~~
        \Lambda_h = S_{h,k_g}^{k_{\lambda}}.
   \end{equation}
We also define the subspace of weakly discretely tangential divergence-free velocity fields
\begin{align}
    \bfV_h^{div} := \{\bfw_h \in \bfV_h : \bhtil(\bfw_h,\{\qh,\xi_h\}) =0, ~~\forall \, \{\qh,\xi_h\} \in Q_h\times\Lambda_h\}.
\end{align}
Finally, we will need the dual space of the finite element space $\Lambda_h$, which we denote as 
$\Lambda_h^* := H_h^{-1}(\Gah)$. More precisely, for $\ell_h \in H_h^{-1}(\Gah)$  we define its dual norm as 
\begin{equation}\label{eq: H^-1h definition}
    \norm{\ell_h}_{H_h^{-1}(\Gah)} = \sup_{\xi_h\in \Lambda_h\backslash\{0\}}\frac{<\ell_h,\xi_h>_{H^{-1},H^1}}{\norm{\xi_h}_{H^1(\Gah)}}.
\end{equation}

\subsection{The discrete bilinear forms}

The finite element method relies on the   discrete  bilinear forms $\ah(\cdot,\cdot),  \, \bhtil(\cdot,\{\cdot,\cdot\})$  defined by,
\begin{align}
   \label{eq: lagrange discrete bilinear forms}
    \ah(\bfw_h,\vh) &:= \int_{\Gah} E_h(\bfw_h):E_h(\vh) \dsh + \int_{\Gah} \bfw_h \cdot \vh \,\dsh, \\
     \label{eq: lagrange discrete bilinear forms 22}
      \bhtil(\bfw_h,\{\qh,\xi_h\}) &:= \int_{\Gah} \bfw_h \cdot \nbgh \qh  \, \dsh + \int_{\Gah} \bfw_h\cdot \nh \,  \xi_h \, \dsh,\\
       \label{eq: lagrange discrete bilinear formm b}
      \bh(\bfw_h,\qh) &:= \int_{\Gah} \bfw_h \cdot \nbgh \qh  \, \dsh.
    \end{align}
In our stability analysis we use the energy norm 
\begin{equation}
\norm{\bfw_h}_{\ah} = \norm{E_h(\bfw_h)}_{L^2(\Gah)} + \norm{\bfw_h}_{L^2(\Gah)},
\end{equation}
for a velocity vector $\bfw_h \in \bfH^1(\Gah)$.

\begin{lemma}[Bounds and coercivity]
\label{Lemma: discrete bounds and coercivity results}
    The following bounds and coercivity estimates hold:
    \begin{align}
        \label{coercivity and Korn's inequality Lagrange}
        \norm{\bfw_h}_{H^1(\Gah)}^2 &\leq a_h(\bfw_h,\bfw_h) + h^{-2}\norm{\bfw_h\cdot\nh}_{L^2(\Gah)}^2\leq c h^{-2} \ah(\bfw_h,\bfw_h)  \quad  \ \, \textnormal{ for all } \bfw_h \in \bfV_h, \\
        \label{ah boundedness}
        \ah(\bfw_h,\vh) &\leq \norm{\bfw_h}_{\ah}\norm{\vh}_{\ah}  \ \ \ \qquad \quad \ \ \textnormal{ for all } \bfw_h,\vh \in \bfV_h,\\
          \label{bhtilde boundedness}
       \bhtil(\bfw_h,\{\qh,\xi_h\}) &\leq c \norm{\bfw_h}_{\ah} \norm{\{\qh,\xi_h\}}_{L^2(\Gah)} \ \textnormal{ for all } \bfw_h \in \bfV_h, \, (\qh,\xi_h) \in Q_h\times \Lambda_h.
    \end{align}
\end{lemma}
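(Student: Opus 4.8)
Here is a plan to establish the three estimates, in increasing order of difficulty. For \eqref{ah boundedness} I would apply Cauchy--Schwarz to each of the two integrals in the definition \eqref{eq: lagrange discrete bilinear forms} of $\ah$, and then use the elementary bound $\alpha\beta+\gamma\delta\le(\alpha+\gamma)(\beta+\delta)$ for nonnegative reals to recombine the factors into $\big(\norm{E_h(\bfw_h)}_{L^2(\Gah)}+\norm{\bfw_h}_{L^2(\Gah)}\big)\big(\norm{E_h(\vh)}_{L^2(\Gah)}+\norm{\vh}_{L^2(\Gah)}\big)=\norm{\bfw_h}_{\ah}\norm{\vh}_{\ah}$. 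For \eqref{coercivity and Korn's inequality Lagrange} I would square the discrete Korn inequality of \cref{Lemma: discrete Korn's inequality T nh}, apply $(x+y+z)^2\le3(x^2+y^2+z^2)$, and use $\ah(\bfw_h,\bfw_h)=\norm{E_h(\bfw_h)}_{L^2(\Gah)}^2+\norm{\bfw_h}_{L^2(\Gah)}^2$ to get the left-hand inequality (with the usual constant absorbed into $\le$); for the right-hand one, since $h\le1$ one has $\ah(\bfw_h,\bfw_h)\le h^{-2}\ah(\bfw_h,\bfw_h)$, while $\norm{\bfw_h\cdot\nh}_{L^2(\Gah)}\le\norm{\bfw_h}_{L^2(\Gah)}\le\ah(\bfw_h,\bfw_h)^{1/2}$ gives $h^{-2}\norm{\bfw_h\cdot\nh}_{L^2(\Gah)}^2\le h^{-2}\ah(\bfw_h,\bfw_h)$, and adding the two produces the constant.

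The substantive estimate is \eqref{bhtilde boundedness}. The difficulty is that $\norm{\cdot}_{\ah}$ controls $\norm{E_h(\bfw_h)}_{L^2(\Gah)}$ and $\norm{\bfw_h}_{L^2(\Gah)}$ but \emph{not} the full gradient $\norm{\nbgh\bfw_h}_{L^2(\Gah)}$, so one can neither leave the derivative on $\bfw_h$, nor move it onto $\qh$ --- the latter would cost an inverse factor $h^{-1}$ absent from the claimed bound. My plan is to integrate by parts in the $\bh$-part of $\bhtil$ via \eqref{eq: integration by parts}, writing $\int_{\Gah}\bfw_h\cdot\nbgh\qh\,\dsh$ as $-\int_{\Gah}\qh\,\divgh\bfw_h\,\dsh + \sum_{T\in\Th}\int_T(\bfw_h\cdot\nh)\qh\,\divgh\nh\,\dsh + \sum_{E\in\mathcal{E}_h}\int_E[\mh\cdot\bfw_h]\qh\,d\ell$. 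The volume term I would handle using the algebraic identity $\divgh\bfw_h=\mathrm{tr}(E_h(\bfw_h))$ --- the trace of $\nbgcovh\bfw_h$ equals that of its symmetrization --- so that $\norm{\divgh\bfw_h}_{L^2(\Gah)}\le c\norm{E_h(\bfw_h)}_{L^2(\Gah)}\le c\norm{\bfw_h}_{\ah}$ and Cauchy--Schwarz yields $c\norm{\bfw_h}_{\ah}\norm{\qh}_{L^2(\Gah)}$; the curvature term I would bound using that $\divgh\nh=\mathrm{tr}(\bfH_h)$ is $L^\infty$-bounded for small $h$ (by \cref{lemma: Geometric errors}) together with $\norm{\bfw_h\cdot\nh}_{L^2(\Gah)}\le\norm{\bfw_h}_{L^2(\Gah)}$. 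The remaining piece of $\bhtil$, namely $\int_{\Gah}(\bfw_h\cdot\nh)\xi_h\,\dsh$, is immediately $\le\norm{\bfw_h}_{L^2(\Gah)}\norm{\xi_h}_{L^2(\Gah)}\le\norm{\bfw_h}_{\ah}\norm{\xi_h}_{L^2(\Gah)}$.

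The main obstacle will be the edge term $\sum_{E\in\mathcal{E}_h}\int_E[\mh\cdot\bfw_h]\qh\,d\ell$. Since $\bfw_h\in\bfV_h$ is $H^1$-conforming it is single-valued on each edge, so $[\mh\cdot\bfw_h]=[\mh]\cdot\bfw_h$ with $[\mh]=\mh^++\mh^-$; the key geometric input is that this co-normal jump is small, $\norm{[\mh]}_{L^\infty(\mathcal{E}_h)}\le ch^{k_g}$, because along each edge the tangent planes of the two adjacent curved elements each deviate from the tangent plane of $\Ga$ by $\bigo(h^{k_g})$ (a consequence of \cref{lemma: Geometric errors}). I would then estimate $\int_E[\mh]\cdot\bfw_h\,\qh\,d\ell\le\norm{[\mh]}_{L^\infty(E)}\norm{\bfw_h}_{L^2(E)}\norm{\qh}_{L^2(E)}$, invoke a scaled trace inequality together with an inverse estimate to get $\norm{v_h}_{L^2(E)}\le ch^{-1/2}\norm{v_h}_{L^2(T)}$ for a finite element function $v_h$ on an element $T$ adjacent to $E$, and sum over all edges using shape regularity (so each element carries only a bounded number of edges). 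Since $k_g\ge1$, the product $h^{k_g}h^{-1}$ is bounded, and the edge term becomes $\le c\norm{\bfw_h}_{L^2(\Gah)}\norm{\qh}_{L^2(\Gah)}\le c\norm{\bfw_h}_{\ah}\norm{\qh}_{L^2(\Gah)}$. Collecting all four contributions and recalling $\norm{\{\qh,\xi_h\}}_{L^2(\Gah)}=\norm{\qh}_{L^2(\Gah)}+\norm{\xi_h}_{L^2(\Gah)}$ gives \eqref{bhtilde boundedness}. Everything apart from the co-normal-jump bound and this edge trace/inverse estimate is routine Cauchy--Schwarz, so those are the two points I expect to require genuine care.
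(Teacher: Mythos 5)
Your proposal is correct and follows essentially the same route as the paper: Cauchy--Schwarz for \eqref{ah boundedness}, the discrete Korn inequality \eqref{discrete Korn's inequality T nh} for \eqref{coercivity and Korn's inequality Lagrange}, and for \eqref{bhtilde boundedness} the integration-by-parts formula \eqref{eq: integration by parts} combined with the identity $\divgh\bfw_h=\mathrm{tr}(\nbgcovh\bfw_h)=\mathrm{tr}(E_h(\bfw_h))$ and the co-normal jump bound $|[\mh]|\leq ch^{k_g}$. You have merely spelled out the edge-term estimate (scaled trace inequality plus $h^{k_g}h^{-1}\leq c$ for $k_g\geq 1$) that the paper leaves implicit.
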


\begin{proof}
The $H^1$ coercivity w.r.t. energy norm \eqref{coercivity and Korn's inequality Lagrange} follows immediately from the discrete Korn's Inequality \eqref{discrete Korn's inequality T nh}, while the bound \eqref{ah boundedness} is derived with simple use of the Cauchy-Schwarz inequality. For the last inequality, as previously, we need to use the integration by parts formula \eqref{eq: integration by parts}. This, coupled with the bound for the co-normal $|[\mh]| \leq ch^{k_g}$ (c.f. \cite{EllRan21}) and $\divgh(\vh) = tr(\nbgh \vh) = tr(\nbgcovh \vh)$,
\begin{equation*}
    \int_{\Gah} \qh \divgh \vh \, \dsh \leq \norm{\qh}_{L^2(\Gah)}\norm{\vh}_{\ah},
\end{equation*}
completes the proof.
\end{proof}

\begin{lemma}[Improved $H^1$ coercivity bound]\label{lemma: improved h1-ah bound}
     If $\underline{k_{\lambda} = k_u}$, that is $\Lambda_h = V_h$, and $\bfw_h \in \bfV_h^{div}$ then the following  holds
     \begin{align}\label{eq: improved h1-ah bound}
         \norm{\bfw_h}_{H^1(\Gah)}^2 &\leq a_h(\bfw_h,\bfw_h).
     \end{align}
\end{lemma}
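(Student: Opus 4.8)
The plan is to return to the discrete Korn inequality \eqref{discrete Korn's inequality T nh} and show that, when $\Lambda_h = V_h$ and $\bfw_h \in \bfV_h^{div}$, the ``bad'' normal term $h^{-1}\norm{\bfw_h \cdot \nh}_{L^2(\Gah)}$ is in fact controlled by $\norm{\bfw_h}_{L^2(\Gah)}$, so that the $h^{-1}$ scaling present in \eqref{coercivity and Korn's inequality Lagrange} disappears. The extra information available for $\bfw_h \in \bfV_h^{div}$ is the weak tangentiality constraint: choosing $\qh = 0$ in the definition of $\bfV_h^{div}$ gives $\int_{\Gah}(\bfw_h \cdot \nh)\,\xi_h\,\dsh = 0$ for every $\xi_h \in \Lambda_h = V_h$; that is, $\bfw_h\cdot\nh$ is $L^2(\Gah)$-orthogonal to the whole velocity space $V_h = S_{h,k_g}^{k_u}$.

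First I would exploit this orthogonality with the test function $\xi_h = \Ihz(\bfn\cdot\bfw_h) \in S_{h,k_g}^{k_u} = V_h$, the Scott--Zhang interpolant of $\bfn\cdot\bfw_h$ (here $\bfn = \bfng^{-\ell}$), which is an admissible choice precisely because $k_\lambda = k_u$. This gives
\[
\norm{\bfw_h\cdot\nh}_{L^2(\Gah)}^2 = \int_{\Gah}(\bfw_h\cdot\nh)\bigl((\bfw_h\cdot\nh) - \Ihz(\bfn\cdot\bfw_h)\bigr)\,\dsh ,
\]
and I would split $(\bfw_h\cdot\nh) - \Ihz(\bfn\cdot\bfw_h) = \bfw_h\cdot(\nh - \bfn) + (I - \Ihz)(\bfn\cdot\bfw_h)$. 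The first summand is bounded in $L^2(\Gah)$ by $\norm{\nh-\bfn}_{L^\infty(\Gah)}\norm{\bfw_h}_{L^2(\Gah)} \le ch^{k_g}\norm{\bfw_h}_{L^2(\Gah)}$ via \eqref{eq: geometric errors 1}, and the second by $ch\,\norm{\bfng}_{W_\infty^{k_u+1}(\Ga)}\norm{\bfw_h}_{L^2(\Gah)}$ by the super-approximation estimate \eqref{eq: super-approximation estimate 2} applied with $\chi = \bfng$ (this uses $\bfng \in W_\infty^{k_u+1}(\Ga)$, i.e. surface regularity $C^{k_u+2}$, consistent with the standing assumption $m\ge 4$). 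Hence, since $k_g \ge 1$, a Cauchy--Schwarz step yields $\norm{\bfw_h\cdot\nh}_{L^2(\Gah)} \le ch\,\norm{\bfw_h}_{L^2(\Gah)}$, so that $h^{-1}\norm{\bfw_h\cdot\nh}_{L^2(\Gah)} \le c\,\norm{\bfw_h}_{L^2(\Gah)}$.

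Plugging this into \eqref{discrete Korn's inequality T nh} gives $\norm{\bfw_h}_{H^1(\Gah)} \le c\bigl(\norm{E_h(\bfw_h)}_{L^2(\Gah)} + \norm{\bfw_h}_{L^2(\Gah)}\bigr)$, and squaring (using that $a_h(\bfw_h,\bfw_h) = \norm{E_h(\bfw_h)}_{L^2(\Gah)}^2 + \norm{\bfw_h}_{L^2(\Gah)}^2$) produces $\norm{\bfw_h}_{H^1(\Gah)}^2 \le c\,a_h(\bfw_h,\bfw_h)$, which is \eqref{eq: improved h1-ah bound}. The main obstacle — and precisely the point where the hypothesis $k_\lambda = k_u$ is indispensable — is the super-approximation step: we need the interpolation operator landing in $\Lambda_h$ to have the same polynomial degree as $\bfw_h$, for otherwise $(I-\Ihz)(\bfn\cdot\bfw_h)$ is not $\bigo(h)$-small relative to $\norm{\bfw_h}_{L^2(\Gah)}$ and the $h^{-1}$ factor coming from the discrete Korn inequality cannot be absorbed. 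For the cheaper choice $k_\lambda = k_u - 1$ this argument breaks down, which is why only the weaker bound \eqref{coercivity and Korn's inequality Lagrange} is available in that case.
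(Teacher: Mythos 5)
Your proposal is correct and follows essentially the same route as the paper: both use the weak tangentiality constraint with the admissible test function $\Ihz(\bfn\cdot\bfw_h)\in V_h=\Lambda_h$, split off $\bfw_h\cdot(\nh-\bfn)$ via the geometric estimate \eqref{eq: geometric errors 1}, invoke the super-approximation bound \eqref{eq: super-approximation estimate 2} to get $\norm{\bfw_h\cdot\nh}_{L^2(\Gah)}\leq ch\norm{\bfw_h}_{L^2(\Gah)}$, and then absorb the $h^{-1}$ term in the discrete Korn inequality. Your closing remark on why $k_\lambda=k_u$ is indispensable matches the paper's own observation.
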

\begin{proof}
From \eqref{coercivity and Korn's inequality Lagrange} we see that we  need to find a bound for $\norm{\bfw_h\cdot\nh}_{L^2(\Gah)}$ of the form
\begin{equation}
    \norm{\bfw_h\cdot\nh}_{L^2(\Gah)} \leq ch\norm{\wh}_{\ah},
\end{equation}
so that the $H^1$ improved coercivity bound \eqref{eq: improved h1-ah bound} holds. Notice, that this is not true in general and only holds in this specific $k_{\lambda} = k_u$ case when $\wh \in \bfV_h^{div}$, since we allow $\int_{\Gah} \xi_h(\bfw_h\cdot\nh) =0$ for any $\xi_h \in \Lambda_h = S^{k_{\lambda}}_{h,k_g} = S^{k_u}_{h,k_g}= V_h$. More specifically, 
\begin{equation*}
     \begin{aligned}
         \norm{\bfw_h\cdot\nh}_{L^2(\Gah)}^2 &= \int_{\Gah} (\bfw_h\cdot\nh)(\bfw_h\cdot\nh) = \int_{\Gah} (\bfw_h\cdot\bfn)(\bfw_h\cdot\nh) + \int_{\Gah} (\bfw_h\cdot(\nh-\bfn))(\bfw_h\cdot\nh)\\
         & \leq \int_{\Gah} \underbrace{\big(\bfw_h\cdot\bfn)-  \Ihz(\bfw_h\cdot\bfn)\big)}_{\int_{\Gah} \Ihz(\bfw_h\cdot\bfn)(\bfw_h\cdot\nh) =0} (\bfw_h\cdot\nh) + ch^{k_g}\norm{\bfw_h\cdot\nh}_{L^2(\Gah)}\norm{\bfw_h}_{L^2(\Gah)}
     \end{aligned}
 \end{equation*}
where we have used the geometric errors \eqref{eq: geometric errors 1} and $\Ihz(\cdot)\in V_h$ as our test function $\xi_h$. Now, by the super-approximation property \eqref{eq: super-approximation estimate 2} (it holds since $\wh \in \bfV_h^{div}\subset\bfV_h$ and $\Ihz(\cdot): L^2(\Gah) \to V_h$) 
\begin{equation}
    \begin{aligned}
       \norm{\bfw_h\cdot\nh}_{L^2(\Gah)}^2 \leq  ch\norm{\bfw_h\cdot\nh}_{L^2(\Gah)}\norm{\bfw_h}_{L^2(\Gah)},
    \end{aligned}
\end{equation}
which gives our desired bound.
\end{proof}

\begin{remark}
    Notice the  $H^1$ coercivity bound  $\norm{\bfw_h}^2_{H^1(\Gah)} \leq ch^{-2} \norm{\bfw_h}_{\ah}^2$ in \eqref{coercivity and Korn's inequality Lagrange}.
As we shall see in \cref{sec: the kl=ku-1 case} for $k_{\lambda} = k_u-1$, this forces us to lose one geometric order of approximation in our error bounds. Therefore, we need to use super-parametric surface  finite elements to observe optimal convergence; see numerical experiments \cref{section: simple sphere experiment}. 

In the $k_\lambda = k_u$ case \cref{sec: the kl=ku case}, due to the improved $H^1$ coercivity bound \eqref{eq: improved h1-ah bound} this geometric error loss vanishes, so we can simply use iso-parametric finite elements; see \cref{example: Dz-Ell}.
\end{remark}

 \begin{lemma}[Interpolation in the Lagrange energy norm]\label{lemma: energy norm interpolant lagrange}
Let $\bfv \in H^{k+1}(\Ga)^3$, then the following interpolation estimate holds, for constant $c$ independent of the mesh parameter $h$
\begin{equation}\label{eq: interpolation vh ah}
    \norm{\bfv^{-\ell} - \Ihz\bfv^{-\ell}}_{\ah} \leq ch^{k}\norm{\bfv}_{H^{k+1}(\Ga)},
\end{equation}
for $k \leq k_u$. Furthermore, we have the stability estimate
    \begin{equation}
    \label{eq: interpolation stability vh ah}
        \norm{\Ihz (\bfv)}_{\ah} \leq c \norm{\Ihz (\bfv)}_{H^1(\Gah)} \leq c\norm{\bfv}_{H^1(\Ga)}.
    \end{equation}
\end{lemma}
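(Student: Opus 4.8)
The plan is to reduce everything to the known Scott--Zhang interpolation estimates \eqref{eq: Scott-Zhang interpolant} and their stability counterparts \eqref{eq: stability of Scott-Zhang interpolant}, together with the geometric perturbation estimate \eqref{eq: Geometric perturbations a 2} which measures the mismatch between $E(\cdot)$ on $\Ga$ and $E_h(\cdot)$ on $\Gah$. First I would write the energy norm of the interpolation error as $\norm{\bfv^{-\ell} - \Ihz\bfv^{-\ell}}_{\ah} = \norm{E_h(\bfv^{-\ell} - \Ihz\bfv^{-\ell})}_{L^2(\Gah)} + \norm{\bfv^{-\ell} - \Ihz\bfv^{-\ell}}_{L^2(\Gah)}$. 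The second summand is immediately bounded by $ch^{k+1}\norm{\bfv}_{H^{k+1}(\Ga)}$ via \eqref{eq: Scott-Zhang interpolant} with $l=0$ and the norm equivalence \cref{lemma: norm equivalence}, which is even better than the claimed $h^k$; so the work is all in the $E_h$-term.

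For the $E_h$-term I would use that $E_h(\cdot) = \tfrac12(\nbgcovh\cdot + \nb_{\Gah}^{cov,t}\cdot) = \bfPh \, \mathrm{sym}(\nbgh \cdot)\,\bfPh$ is, pointwise on each $T\in\Th$, a bounded linear combination of entries of $\nbgh(\cdot)$ (the discrete projections $\bfPh$ are uniformly bounded by \cref{lemma: Geometric errors}). Hence $\norm{E_h(\bfv^{-\ell} - \Ihz\bfv^{-\ell})}_{L^2(\Gah)} \le c\,\norm{\nbgh(\bfv^{-\ell} - \Ihz\bfv^{-\ell})}_{L^2(\Gah)}$, and summing over $T$ and applying \eqref{eq: Scott-Zhang interpolant} with $l=1$ gives $\le c h^{k}\norm{\bfv}_{H^{k+1}(\Ga)}$, which is exactly the asserted order. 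Combining the two summands yields \eqref{eq: interpolation vh ah}, with the constraint $k\le k_u$ coming from the requirement that $\mathbb{P}^k\subset\mathbb{P}^{k_u}$ so that the Scott--Zhang operator reproduces polynomials of degree $k$ (the $+1$ in $H^{k+1}$ already being built into \eqref{eq: Scott-Zhang interpolant}).

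For the stability estimate \eqref{eq: interpolation stability vh ah}, the first inequality $\norm{\Ihz\bfv}_{\ah} \le c\norm{\Ihz\bfv}_{H^1(\Gah)}$ is just the pointwise bound $|E_h(\bfw_h)| \le c|\nbgh\bfw_h|$ again plus $\norm{\bfw_h}_{L^2}\le\norm{\bfw_h}_{H^1}$; the second inequality is the $H^1$-stability of $\Ihz$, i.e.\ \eqref{eq: stability of Scott-Zhang interpolant} with $k=1$ summed over the patches $\omega_{\Ttilde}$ (using that the overlap of patches is uniformly bounded by shape-regularity) together with the norm equivalence between $\norm{\cdot}_{H^1(\omega_{\Ttilde}^\ell)}$-sums and $\norm{\cdot}_{H^1(\Ga)}$. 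I do not anticipate a genuine obstacle here; the only point requiring mild care is making the passage from the broken, element-wise Scott--Zhang bounds on $\Gah$ (really on $\Galin$) to global norms on $\Ga$ rigorous — this is handled by \cref{lemma: norm equivalence} and the finite-overlap property of the patches $\omega_{\Ttilde}$, exactly as in \cite{hansbo2020analysis,camacho20152}. One should also note in passing that $E_h$ controlled by $\nbgh$ but not conversely is harmless here, since we only ever need the one direction.
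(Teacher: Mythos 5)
Your argument is correct and is essentially the paper's own proof: the paper likewise reduces \eqref{eq: interpolation vh ah} to the bound $\norm{\cdot}_{\ah}\le c\norm{\cdot}_{H^1(\Gah)}$ (which is exactly your pointwise control of $E_h$ by $\nbgh$ via the uniform boundedness of $\bfPh$) followed by the Scott--Zhang estimate \eqref{eq: Scott-Zhang interpolant}, and obtains \eqref{eq: interpolation stability vh ah} from \eqref{eq: stability of Scott-Zhang interpolant} in the same way. You merely spell out the details the paper leaves implicit.
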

\begin{proof}
   \ The estimate \eqref{eq: interpolation vh ah} is immediate if we combine the fact that $\norm{\bfv^{-\ell} - \Ihz\bfv^{-\ell}}_{\ah} \leq c  \norm{\bfv^{-\ell} - \Ihz\bfv^{-\ell}}_{H^1(\Gah)}$
with the $H^1$-estimate for the Scott-Zhang interpolant \eqref{eq: Scott-Zhang interpolant}. The stability estimate follows similarly using \eqref{eq: stability of Scott-Zhang interpolant}.
\end{proof}

  \subsection{The discrete Lagrange approximation {\bf(LhP)}}
The Lagrange finite element approximation is based on the continuous homogeneous Problem \eqref{weak lagrange hom}, where the tangentiality condition is enforced weakly via the use of a Lagrange multiplier. This formulation has the advantage of being really easy to implement.
The discrete Lagrange finite element method reads as:\\


\noindent {\bf(LhP)} Given $\bff_h\in L^2(\Gah)$, determine $(\uh,\{\ph,\lh\}) \in \bfV_h \times Q_h \times \Lambda_h$ such that,
\begin{align}
\begin{cases}
\tag{LhP}
    \label{weak lagrange discrete}
        \ah(\uh,\vh) + \bhtil(\vh,\{\ph,\lh\}) &= (\bff_h,\vh) \ \ \text{for all } \vh \in \bfV_h,\\
        \bhtil(\uh,\{\qh,\xi_h\})&=0 \ \ \ \ \ \ \ \text{ for all } \{\qh,\xi_h\}\in Q_h \times \Lambda_h. 
    \end{cases}
\end{align}

\begin{theorem}[Well-posedness of the discrete Lagrange problem] \label{Lemma: Well-posedness of discrete Lagrange problem}
There exists a unique solution $(\uh,\{\ph,\lh\})$ that satisfies \eqref{weak lagrange discrete} for all $(\vh ,\{\qh,\xi_h\}) \in  \bfV_h \times Q_h \times \Lambda_h$. Moreover, we have the following a priori estimates: 
\begin{equation}\label{eq: Well-posedness of discrete Lagrange problem}
    \norm{\uh}_{\ah} + \norm{\{\ph,\lh\}}_{L^2(\Gah)} \leq c \norm{\bff_h}_{L^2(\Gah)}.
\end{equation}
\end{theorem}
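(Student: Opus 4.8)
The plan is to verify the two hypotheses of the Babuška–Brezzi (inf-sup) theory for saddle-point problems on the finite-dimensional spaces $\bfV_h$ and $Q_h\times\Lambda_h$, and then invoke the abstract result (as in \cite{brezzi2012mixed}) exactly as was done at the continuous level in \cref{lemma: well-posed lagrange hom}. The bilinear forms $\ah(\cdot,\cdot)$ and $\bhtil(\cdot,\{\cdot,\cdot\})$ are bounded in the relevant norms by \eqref{ah boundedness} and \eqref{bhtilde boundedness}, so the two things left to check are (i) coercivity of $\ah$ on the kernel $\bfV_h^{div}$, and (ii) a discrete inf-sup condition for $\bhtil$ relating $\norm{\{\qh,\xi_h\}}_{L^2(\Gah)}$ to $\norm{\cdot}_{\ah}$. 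The discrete inf-sup condition is precisely the $L^2\times L^2$ inf-sup result whose statement and proof the paper develops in this section (the one ``concerning the $L^2\times L^2$-norm of the pressures with respect to the energy norm of the velocity''), so here I would simply cite it. Granting both, the abstract theory yields existence, uniqueness, and the a priori bound \eqref{eq: Well-posedness of discrete Lagrange problem} with the right-hand side controlled by $\norm{\bff_h}_{L^2(\Gah)}$.

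For the coercivity on the kernel, recall that $\bfV_h^{div}=\{\wh\in\bfV_h : \bhtil(\wh,\{\qh,\xi_h\})=0\ \forall\{\qh,\xi_h\}\in Q_h\times\Lambda_h\}$. Testing against $\qh=0$ and arbitrary $\xi_h\in\Lambda_h$ forces $\int_{\Gah}(\wh\cdot\nh)\xi_h\,\dsh=0$ for all $\xi_h\in\Lambda_h$. In either scenario ($k_\lambda=k_u-1$ or $k_\lambda=k_u$) this gives control of $\norm{\wh\cdot\nh}_{L^2(\Gah)}$: by the discrete Korn inequality \eqref{discrete Korn's inequality T nh}, $\norm{\wh}_{H^1(\Gah)}^2 \le c\,\ah(\wh,\wh) + ch^{-2}\norm{\wh\cdot\nh}_{L^2(\Gah)}^2$, and the orthogonality lets us absorb the last term (using \cref{lemma: improved h1-ah bound} when $k_\lambda=k_u$, or the weaker bound together with the super-parametric scaling otherwise). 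In fact, for the purposes of well-posedness it suffices that $\ah(\wh,\wh)\ge c\norm{\wh}_{\ah}^2$ trivially (since $\norm{\wh}_{\ah}^2 = (\norm{E_h(\wh)}_{L^2}+\norm{\wh}_{L^2})^2 \le 2\,\ah(\wh,\wh)$ after expanding), so coercivity of $\ah$ in the energy norm is immediate and does not even need the kernel structure; the kernel structure is only needed to make the inf-sup constant uniform in $h$, which is again the content of the cited $L^2\times L^2$ inf-sup lemma.

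The main obstacle is therefore not the abstract bookkeeping but the uniform (in $h$) discrete inf-sup condition for $\bhtil$, i.e. showing that for every $\{\qh,\xi_h\}\in Q_h\times\Lambda_h$ there is $\vh\in\bfV_h$ with $\bhtil(\vh,\{\qh,\xi_h\}) \ge c\,\norm{\vh}_{\ah}\,\norm{\{\qh,\xi_h\}}_{L^2(\Gah)}$ with $c$ independent of $h$. This requires building a velocity test function whose discrete tangential divergence reproduces $\qh$ (the Stokes part, inherited from a discrete analogue of \cite[Lemma 4.2]{jankuhn2018incompressible}, typically via a macro-element or Fortin-operator argument on the curved surface) while its normal component $\vh\cdot\nh$ recovers $\xi_h$ — and controlling the geometric perturbation terms (the curvature term $\int_T(\vh\cdot\nh)\qh\divgh\nh$ and the co-normal jump term $\sum_E\int_E[\mh\cdot\vh]\qh$ in \eqref{eq: integration by parts}) using \cref{lemma: Geometric errors} and $|[\mh]|\le ch^{k_g}$. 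Since the excerpt indicates this inf-sup result is established separately in this section, in the write-up of \cref{Lemma: Well-posedness of discrete Lagrange problem} I would state it as the decisive ingredient and then close the argument by the standard saddle-point theory; the a priori estimate \eqref{eq: Well-posedness of discrete Lagrange problem} follows by taking $\vh=\uh$, $\{\qh,\xi_h\}=\{\ph,\lh\}$ to bound $\norm{\uh}_{\ah}$, and then using the inf-sup condition on the first equation of \eqref{weak lagrange discrete} to bound $\norm{\{\ph,\lh\}}_{L^2(\Gah)}$.
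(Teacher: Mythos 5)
Your proposal is correct and follows essentially the same route as the paper: the paper's proof is a one-line appeal to the $L^2\times L^2$ discrete inf-sup condition (\cref{Lemma: Discrete inf-sup condition Gah Lagrange}), the bounds of \cref{Lemma: discrete bounds and coercivity results}, and standard saddle-point theory, which is exactly what you spell out. Your observation that coercivity of $\ah$ in the energy norm $\norm{\cdot}_{\ah}$ is immediate (so neither the kernel structure nor the discrete Korn inequality is needed for this particular theorem) is consistent with how the paper organizes the argument, since both the inf-sup condition and the a priori bound \eqref{eq: Well-posedness of discrete Lagrange problem} are stated in that norm.
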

\begin{proof}
This is a direct consequence of the inf-sup condition Lemma \ref{Lemma: Discrete inf-sup condition Gah Lagrange} presented in the following \Cref{Section: inf-sup lagrange}, the bounds in \cref{Lemma: discrete bounds and coercivity results}, and simple calculations.
\end{proof}

\subsection{The $L^2 \times L^2 $ inf-sup condition }\label{Section: inf-sup lagrange}
\begin{lemma}[Discrete Lagrange inf-sup condition]
\label{Lemma: Discrete inf-sup condition Gah Lagrange}
  Assume a quasi-uniform triangulation of $\Gah$, then there exists a constant $\beta >0$ independent of the mesh parameter $h$ such that 
    \begin{equation}\label{eq: discrete inf-sup condition Gah Lagrange}
    c\norm{\{\qh,\lh\}}_{L^2(\Gah)}\leq \sup_{\vh \in \bfV_h} \frac{\bhtil(\vh,\{\qh,\lh\})}{\norm{\vh}_{\ah}} \ \ \ \forall \{\qh,\lh\} \in Q_h \times \Lambda_h.
    \end{equation}
\end{lemma}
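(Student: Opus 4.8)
The plan is a $2\times2$ saddle--point argument: it suffices to produce, for every $\{\qh,\lh\}\in Q_h\times\Lambda_h$, a single test field $\vh\in\bfV_h$ with $\bhtil(\vh,\{\qh,\lh\})\geq c(\norm{\qh}_{L^2(\Gah)}^2+\norm{\lh}_{L^2(\Gah)}^2)$ and $\norm{\vh}_{\ah}\leq c(\norm{\qh}_{L^2(\Gah)}+\norm{\lh}_{L^2(\Gah)})$; dividing then gives the stated bound since $\norm{\{\qh,\lh\}}_{L^2(\Gah)}\simeq\norm{\qh}_{L^2(\Gah)}+\norm{\lh}_{L^2(\Gah)}$. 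I would build $\vh=\vh^{(1)}+\gamma\,\vh^{(2)}$, where $\vh^{(1)}$ handles the ordinary pressure $\qh$, $\vh^{(2)}$ handles the Lagrange multiplier $\lh$, and $\gamma>0$ is a fixed small constant chosen at the very end.

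For $\vh^{(1)}$ I would invoke the discrete surface Taylor--Hood inf--sup for the pair $\bfV_h/Q_h$ (established, as usual, via a Fortin operator applied to the tangential field furnished by the continuous pressure inf--sup \cite{jankuhn2018incompressible}, together with the geometric perturbation bounds of \cref{lemma: Geometric perturbations a} and \cref{Lemma: Bh estimates}): this gives $\bh(\vh^{(1)},\qh)\geq c_0\norm{\qh}_{L^2(\Gah)}^2$ and $\norm{\vh^{(1)}}_{\ah}\leq c\norm{\qh}_{L^2(\Gah)}$. The extra property I would extract is that $\vh^{(1)}$ is \emph{almost tangential}: being the image of a field tangent to $\Ga$ under an $L^2$--approximating, $H^1$--stable operator, it satisfies $\norm{\vh^{(1)}\cdot\nh}_{L^2(\Gah)}\leq ch\,\norm{\qh}_{L^2(\Gah)}$ (using $\norm{\bfn-\nh}_{L^\infty(\Gah)}\leq ch^{k_g}$ from \cref{lemma: Geometric errors}). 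Hence the unwanted coupling $\int_{\Gah}(\vh^{(1)}\cdot\nh)\lh$ is only $\bigo(h)\,\norm{\qh}_{L^2(\Gah)}\norm{\lh}_{L^2(\Gah)}$.

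For $\vh^{(2)}$ I would take $\vh^{(2)}:=\Ihz(\lh\nh)\in\bfV_h$, admissible since $k_\lambda\leq k_u$. The key observation is $\bfPh\nh=0$, so $\nbgcovh(\lh\nh)=\bfPh(\nh\otimes\nbgh\lh+\lh\nbgh\nh)=\lh\bfH_h$, whence $E_h(\lh\nh)=\lh\bfH_h$; combining this with the super--approximation estimates \eqref{eq: super-approximation estimate 1}--\eqref{eq: super-approximation estimate 2} (applied after the harmless replacement $\nh\rightsquigarrow\bfn^{-\ell}$, up to $\bigo(h^{k_g})$), an inverse inequality, \cref{lemma: norm equivalence} and \cref{lemma: Geometric errors} yields $\norm{\vh^{(2)}}_{\ah}\leq c\norm{\lh}_{L^2(\Gah)}$ — note that $\lh\nh$ itself has full $H^1$--size, so the normal direction is genuinely exploited here. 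Since $|\nh|=1$, $\int_{\Gah}(\vh^{(2)}\cdot\nh)\lh=\norm{\lh}_{L^2(\Gah)}^2+\int_{\Gah}\big((\Ihz-I)(\lh\nh)\cdot\nh\big)\lh\geq(1-ch)\norm{\lh}_{L^2(\Gah)}^2\geq\tfrac12\norm{\lh}_{L^2(\Gah)}^2$ for $h$ small, by \eqref{eq: super-approximation estimate 2}. Finally, because $\nbgh\qh$ is tangent to $\Gah$ we have $\int_{\Gah}\lh\nh\cdot\nbgh\qh=0$, so $\bh(\vh^{(2)},\qh)=\int_{\Gah}\big((\Ihz-I)(\lh\nh)\big)\cdot\nbgh\qh$, which by \eqref{eq: super-approximation estimate 2} and $\norm{\nbgh\qh}_{L^2(\Gah)}\leq ch^{-1}\norm{\qh}_{L^2(\Gah)}$ is bounded by $c\,\norm{\lh}_{L^2(\Gah)}\norm{\qh}_{L^2(\Gah)}$.

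Putting $\vh=\vh^{(1)}+\gamma\vh^{(2)}$ and expanding, the diagonal contributions are $\geq c_0\norm{\qh}_{L^2(\Gah)}^2+\tfrac{\gamma}{2}\norm{\lh}_{L^2(\Gah)}^2$ and the off--diagonal ones are bounded by $(ch+\gamma c)\norm{\qh}_{L^2(\Gah)}\norm{\lh}_{L^2(\Gah)}$; choosing $\gamma$ small and then $h$ small, Young's inequality absorbs these and produces the desired lower bound, while $\norm{\vh}_{\ah}\leq c(\norm{\qh}_{L^2(\Gah)}+\norm{\lh}_{L^2(\Gah)})$ follows from the triangle inequality. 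The hard part is exactly this coupling between the two pressures: the factor $h$ gained from interpolating $\lh\nh$ is precisely cancelled by the $h^{-1}$ coming from $\nbgh\qh$, so $\bh(\vh^{(2)},\qh)$ is only $\bigo(1)$ and cannot be made small; positivity survives only because the pressure--side cross term $\int(\vh^{(1)}\cdot\nh)\lh$ is genuinely $\bigo(h)$ (near--tangentiality of $\vh^{(1)}$) and the $\gamma$--scaling shrinks the $\lambda$--side cross term relative to the diagonal. The various geometric perturbations needed to pass between $\Ga$, $\Gah$ and between $\nh$, $\bfn^{-\ell}$ are routine given \cref{lemma: Geometric errors}, \cref{lemma: norm equivalence}, \cref{Lemma: Bh estimates} and \cref{lemma: Geometric perturbations a}.
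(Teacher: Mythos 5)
Your overall architecture is a genuine alternative to the paper's: you reduce the combined inf-sup to a presumed discrete $\bfV_h/Q_h$ inf-sup and then treat the $\lambda$-block as a $\gamma$-weighted perturbation, whereas the paper proves everything at once by Verf\"urth's trick (Step 1: test with the Scott--Zhang interpolant of $\bfv_T+\lhl\bfng$ built from the \emph{continuous} inf-sup, leaving a residual $\norm{\qh}_h\sim h\norm{\nbgh\qh}_{L^2(\Gah)}$; Step 2: control that residual with an edge-bubble velocity on $\Galin$ transported to $\Gah$). Your treatment of the $\lambda$-block is essentially the paper's: the test function $\Ihz(\lh\bfn)$, the identity $E_h(\lh\nh)=\lh\,\mathrm{sym}(\bfH_h)$ giving the energy-norm bound $\norm{\Ihz(\lh\bfn)}_{\ah}\leq c\norm{\lh}_{L^2(\Gah)}$, and the super-approximation estimates to handle $(\Ihz-I)(\lh\bfn)$ all appear in the paper's Step 1 (Terms (I)--(IV)). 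Your accounting of the cross terms is also sound: the $\bigo(1)$ size of $\bh(\vh^{(2)},\qh)$ (the $h$ gained from interpolation exactly cancelling the $h^{-1}$ from $\nbgh\qh$) is correctly identified and correctly absorbed by choosing $\gamma$ small, and the near-tangentiality $\norm{\vh^{(1)}\cdot\nh}_{L^2(\Gah)}\leq ch\norm{\qh}_{L^2(\Gah)}$ does hold for a Scott--Zhang interpolant of a field tangent to $\Ga$.

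The genuine gap is the input you take for granted. The discrete inf-sup for the pair $\bfV_h/Q_h$ on $\Gah$ with respect to the energy norm is the substantive content of the lemma, and "established, as usual, via a Fortin operator" does not dispose of it: Taylor--Hood elements famously do \emph{not} admit a simple Fortin operator, and the standard proofs (Verf\"urth's trick, or Boland--Nicolaides macro-elements) are exactly what the paper's Steps 1--2 carry out, including the surface-specific work — the edge-bubble construction \eqref{eq: choosing vh} on $\Galin$, the transport to $\Gah$ via $F_T$, and the geometric perturbation estimate for $\bold{Term \,(I_2)}$ in \eqref{eq: galin gah diff h norm}. Unless you either cite a specific surface-FEM reference that states the $\bfV_h/Q_h$ inf-sup in this exact parametric setting (and additionally verify that the velocity it supplies satisfies your near-tangentiality property, which is not part of any standard inf-sup statement and depends on the construction), or reproduce the paper's Step 2, your argument is circular at its core: the hardest estimate has been moved into a citation that, as described, does not exist. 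Everything downstream of that input is fine.
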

 The proof of \cref{Lemma: Discrete inf-sup condition Gah Lagrange} is based on   ``Verf\"urth's trick'' and is divided into three steps. \emph{First}, we bound below the bilinear form $\bhtil$ by choosing an appropriate discrete velocity with the help of the continuous problem, \emph{secondly} we try to control the pressure gradient term that arises in the first step by once again choosing another appropriate velocity, this time using the geometry of the problem, and finally in the \emph{third} step we combine the previous two to get our \emph{inf-sup} condition.

\begin{remark}\label{remark: equivalence Vh1-Vh}
   By $\bfV_h^{(1)}, Q_h^{(1)}$ we denote the parametric finite element spaces $(S_{h,1}^{k_u})^3,  \ S_{h,1}^{k_{pr}}$ i.e. arbitrary order spaces on flat triangles. Without loss of generality, we suppose that triangles $\Ttilde \in \Galin$ are the reference triangles (we basically just omit a linear affine map), and use the notation $\Tilde{\bfv}_h,\Tilde q_h$ for functions in $ \bfV_h^{(1)}, \ Q_h^{(1)}$. Then according to Section \ref{FEMspaces} we can easily see that $\vh = \Tilde{\bfv}_h \circ F_T^{-1} \in \bfV_h$ as a function of $\Gah$, where now the map $F_T = \pi_{k_g}$, and $\pi_{k_g} : \Galin \to \Gah$ the $k_g$th-order Lagrange interpolation of the closest point projection $\pi$, that is, $\pi_{k_g} = \mathcal{\tilde{I}}_h^{k_g}\pi$, see \cite{Demlow2009, DziukElliott_acta}. The same relationship also holds for the discrete spaces $Q_h$ and $Q_h^{(1)}$.
\end{remark}

\subsubsection{\bf Step 1}\label{sec: Step 1}

 It is convenient to  introduce for $\qh \in Q_h$ the following norm and associated equivalences that hold due to the shape regularity of our mesh
:
\begin{equation}
    \begin{aligned}\label{eq: h norm lagrange}
        \norm{\qh}_{h} = \Big(\sum_{T\in \Th}h_T^2\norm{\nbgh\qh}_{L^2(T)}^2\Big)^{1/2} \sim h\norm{\nbgh\qh}_{L^2(\Gah)} \sim h\norm{\nbglin \Tilde \qh}_{L^2(\Galin)},
    \end{aligned}
\end{equation}
We begin by establishing that given  $\{\qh,\lh\} \in Q_h \times \Lambda_h$ there exists a $\vh \in \bfV_h$ such that 
\begin{equation}\label{eq: pre almost discrete inf-sup lagrange}
    \begin{aligned}
        \bhtil(\vh,\{\qh,\lh\}) \geq c\norm{\{\qh,\lh\}}_{L^2(\Gah)}(\norm{\{\qh,\lh\}}_{L^2(\Gah)} -c\norm{\qh}_{h}),
    \end{aligned}
\end{equation}
and 
\begin{equation}\label{eq: energy interpolation stability vh lagrange}
     \norm{\vh}_{\ah}  \leq c \norm{\{\qh,\lh\}}_{L^2(\Gah)}.
\end{equation}

\noindent From which  we may obtain the following inequality:
\begin{equation}
    \begin{aligned}
    \label{eq: almost discrete inf-sup lagrange}
        \sup_{\vh \in \bfV_h} \frac{\bhtil(\vh,\{\qh,\lh\})}{\norm{\vh}_{\ah}} &\geq c\norm{\{\qh,\lh\}}_{L^2(\Gah)} - c\norm{\qh}_h.
    \end{aligned}
\end{equation}
The last term  is treated  in  \underline{Step 2}.

To show \eqref{eq: pre almost discrete inf-sup lagrange} and \eqref{eq: energy interpolation stability vh lagrange}  we proceed as follows:

 \noindent Since  $\{\qh,\lh\} \in Q_h \times \Lambda_h$  we have  $\{\qhl,\lhl\} \in H^1(\Ga)$. Observing that   $m_q = -\frac{1}{|\Ga|}\int_{\Ga}\qhl \, \ds$  satisfies  $|m_q| \leq h^2\norm{\qhl}_{L^2(\Ga)}$ we set the new pressure variable to be $q_h' = \qhl + m_q$ so  that   $\int_{\Ga}q_h' \, \ds = 0$ and  $q_h' \in H^1(\Ga)\cap L^2_0(\Ga)$. Now we know, by the smoothness of $\Gamma$ and elliptic regularity, that there exists a  $\bfv_T \in \bfH_T^1$  such that  $\nabla_\Gamma \cdot \bfv_T =\qh'$ that satisfies,
\begin{equation}\label{eq: tangent inf-sup and H^1 L^2 inequality lagrange}
    \norm{\bfv_T}_{H^1(\Ga)} \leq c  \norm{\qhl + m_q}_{L^2(\Ga)}  \leq c  \norm{\qh}_{L^2(\Gah)},
\end{equation} 
and furthermore
$$  \int_\Ga \bfv_T \cdot \nbg \qhl \ds=  \norm{\qhl + m_q}^2_{L^2(\Ga)}\geq c\norm{\qhl}^2_{L^2(\Ga)}.$$
Finally setting $\bfv_* = \bfv_T + \lhl\bfng $ we see that  $\bfv_* \in  \bfH^1(\Ga) $ has been constructed  such that
\begin{equation}
\begin{aligned}\label{eq: tangent inf-sup and H^1 L^2 inequality lagrange 2}
    b^L(\bfv_*,\{\qhl,\lhl\}) 
    & = \int_\Ga \bfv_T \cdot \nbg \qhl \ds + \norm{\lhl}_{L^2(\Ga)}^2 \geq c\norm{\{\qhl,\lhl\}}_{L^2(\Ga)}^2, \text{ and }\\    
   \norm{\bfv_*}_{\bfV} &=  \norm{\bfv_T}_{H^1(\Ga)} + \norm{\lhl}_{L^2(\Ga)} \leq c  \norm{\{\qhl,\lhl\}}_{L^2(\Ga)}  \leq c  \norm{\{\qh,\lh\}}_{L^2(\Gah)}.
    \end{aligned}
\end{equation}

\noindent  Using the Scott-Zhang interpolant \eqref{eq: Scott-Zhang interpolant linear} we set $\vh = \Ihz(\bfv_*^{-\ell})\in \bfV_h$. The super approximation property \eqref{eq: super-approximation estimate 2} and stability  \eqref{eq: interpolation stability vh ah} then yield the following  stability estimate in the  energy norm :\begin{equation*}
\begin{aligned}
    \norm{\vh}_{\ah}= \norm{\Ihz(\bfv_*^{-\ell})}_{\ah} &\leq \norm{\Ihz(\bfv_T^{-\ell})}_{\ah} + \norm{\Ihz(\lh\bfn)}_{\ah}  \qquad \quad (\text{by } \eqref{eq: interpolation stability vh ah})\\
    &\leq \norm{\bfv_T^{-\ell}}_{H^1(\Gah)}+ \norm{\Ihz(\lh\bfn)}_{\ah}\\
    &\leq \norm{\qh}_{L^2(\Gah)} + \norm{\Ihz(\lh\bfn)}_{\ah} \qquad \qquad (\text{by } \eqref{eq: tangent inf-sup and H^1 L^2 inequality lagrange}),
\end{aligned}
\end{equation*}
where once again $\bfn = (\bfng)^{-\ell}$. It remains to bound the second term, in terms of the $L^2-$norm of the Lagrange multiplier $\lh$. The following calculations hold:
\begin{equation}
    \begin{aligned}\label{eq: Ilhn}
       \norm{\Ihz(\lh\bfn)}_{\ah} \leq \norm{\lh\bfn}_{\ah} + \norm{\Ihz(\lh\bfn)-\lh\bfn}_{\ah}.
    \end{aligned}
\end{equation}
For the first term on the right-hand side of \eqref{eq: Ilhn} we consider the definition of the energy norm, the calculation presented in \eqref{eq: split cov} and the geometric errors in \eqref{eq: geometric errors 2}  to obtain, for small enough $h$
\begin{equation}
    \begin{aligned}
        \norm{\lh\bfn}_{\ah} &\leq c \norm{\nbgcovh(\lh\bfn)}_{L^2(\Gah)} + \norm{\lh\bfn}_{L^2(\Gah)} \leq c \norm{\bfPh \nbgh(\lh \bfn)}_{L^2(\Gah)} +\norm{\lh\bfn}_{L^2(\Gah)}\\
        &\leq c \norm{\bfPh ( \lh \nbgh \bfn + \bfn \nbgh^T\lh)}_{L^2(\Gah)} +\norm{\lh\bfn}_{L^2(\Gah)} \\
        &\leq c\norm{\lh}_{L^2(\Gah)} + ch\norm{\lh}_{L^2(\Gah)}\leq c\norm{\lh}_{L^2(\Gah)}.
    \end{aligned}
\end{equation}
Regarding the second term of \eqref{eq: Ilhn}, we use the super-approximation property \eqref{eq: super-approximation estimate 1}, where instead of an inner product we have the vector $\lh \bfn$. The calculations for this result follow similarly as presented in \cite[Lemma 4.3]{hansbo2020analysis}, and thus by \eqref{eq: super-approximation estimate 1}, the stability estimate in the Lagrange energy norm \eqref{eq: interpolation stability vh ah} and the inverse estimate we may obtain
\begin{equation}
    \begin{aligned}\label{eq: tangent inf-sup and H^1 L^2 inequality lagrange this}
       \norm{\Ihz(\lh\bfn)-\lh\bfn}_{\ah} \leq \norm{\Ihz(\lh\bfn)-\lh\bfn}_{H^1(\Gah)} \leq c \norm{\bfn_{\Ga}}_{W_{\infty}^{k_{\lambda}+1}(\Ga)}\norm{\lh}_{L^2(\Gah)}.
    \end{aligned}
\end{equation} 
Combining these results, the energy estimate for the chosen velocity $\vh = \Ihz(\bfv_*^{-\ell})$ is \eqref{eq: energy interpolation stability vh lagrange}, that is:
\begin{equation}
     \norm{\vh}_{\ah} = \norm{\Ihz(\bfv_*^{-\ell})}_{\ah} \leq c \norm{\{\qh,\lh\}}_{L^2(\Gah)}.
\end{equation}
\noindent Now, considering the discrete bilinear form $\bhtil(\cdot,\{\cdot,\cdot\})$ add and subtract the previously chosen velocity $\bfv_* = \bfv_T + \lhl \bfng \in \bfV$, such that for $\vh = \Ihz(\bfv^{-\ell})$ we may obtain
\begin{equation}
    \begin{aligned}\label{eq: splitting bh lagrange}
        \bhtil(\vh,\{\qh,\lh\}) &= \int_{\Gah} \vh \nbgh \qh \dsh + \int_{\Gah}\lh (\vh \cdot \nh) \dsh \\
        &= \underbrace{\int_{\Gah} \bfv_*^{-\ell} \nbgh \qh \, \dsh}_{\bold{Term \, (I)}} + \underbrace{\int_{\Gah} (\vh - \bfv_*^{-\ell}) \nbgh \qh \, \dsh}_{\bold{Term \, (II)}}\\
        &+ \underbrace{\int_{\Gah}\lh (\bfv_*^{-\ell} \cdot \nh) \, \dsh}_{\bold{Term \, (III)}} + \underbrace{\int_{\Gah}\lh ((\vh - \bfv_*^{-\ell}) \cdot \nh) \, \dsh.}_{\bold{Term \, (IV)}}
    \end{aligned}
\end{equation}
\noindent We now need to bound the four terms below.

$\underline{\bold{Term \, (I)}}$: is calculated as followed
\begin{equation}\label{eq: Term (I) inf-sup}
\begin{aligned}
    \int_{\Gah} \bfv_*^{-\ell} \nbgh \qh \, \dsh  &= \int_{\Gah} \bfv_T^{-\ell} \cdot \nbgh\qh\, \dsh + \int_{\Gah} \lh \bfn \cdot \nbgh\qh\, \dsh \\
    &= \int_{\Ga}\bfv_T \cdot \nbg \qhl\, \dsh + \int_{\Ga}(\frac{1}{\mu_h} -1)\bfv \cdot \nbg \qhl \, \dsh\\
    &\ \ + \int_{\Ga}\frac{1}{\mu_h}\bfPg\bfv \cdot \big( \bfPh(I-d\bfH) -I\big)\bfPg\nbg\qhl \, \dsh  + \int_{\Gah} \lh \bfn \cdot \nbgh\qh\, \dsh\\
        &= \int_{\Ga}\bfv_T \cdot \nbg \qhl \, \dsh + \int_{\Ga}(\frac{1}{\mu_h} -1)\bfv \cdot \nbg \qhl \, \dsh \\
        &\ \ + \int_{\Ga}\frac{1}{\mu_h}\big( [\bfPg\bfPh\bfPg - \bfPg] - \bfP_h d\bfH\bfP \big)\bfv \cdot \nbg\qhl \, \dsh + \int_{\Gah} \lh \bfn \cdot \nbgh\qh \, \dsh\\
        &\geq \int_{\Ga}\bfv_T \cdot \nbg \qhl \, \ds - ch^{k_g-1}(\norm{\bfv_T}_{L^2(\Ga)} + \norm{\lhl}_{L^2(\Ga)})\norm{\qh}_h,
    \end{aligned}
\end{equation}
where in the last inequality we used the geometric errors \eqref{eq: geometric errors 2}.

$\underline{\bold{Term \ (II)}}$: we split the vector field $\bfv_*$ into the normal and tangent component, $\bfv_* = \bfv_T + \lhl\bfng$, use the interpolation estimate \eqref{eq: Scott-Zhang interpolant linear} and the super-approximation \eqref{eq: super-approximation estimate 2}, where again instead of an inner product of the form $\bfn \cdot \vh$ we have a vector $\lh \bfn$, to obtain the estimate,
\begin{equation}
    \begin{aligned}\label{eq: Term (II) inf-sup}
        &\int_{\Gah} (\vh - \bfv_*^{-\ell}) \nbgh \qh \, \dsh \\
        &\ = \int_{\Gah} (\Ihz(\bfv_T^{-\ell}) - \bfv_T^{-\ell}) \nbgh \qh \, \dsh 
        + \int_{\Gah} (\Ihz(\lh \bfn) - \lh \bfn) \nbgh \qh \, \dsh \\
        &\ \geq -\sum_{T\in \Th}\norm{\nbgh \qh}_{L^2(T)} h_T \norm{\bfv_T^{-\ell}}_{H^1(\omega_T)} - \sum_{T\in \Th}\norm{\nbgh \qh}_{L^2(T)} h_T \norm{\lh\bfn}_{L^2(\omega_T)}\\
        &\ \geq -c (\norm{\bfv_T}_{H^1(\Ga)} + \norm{\lhl}_{L^2(\Ga)})\norm{\qh}_{h}.
    \end{aligned}
\end{equation}

$\underline{\bold{Term \ (III)}}$: we add and subtract the term $\bfv_*^{-\ell} \cdot \bfn$, where $\bfn = (\bfng)^{-\ell}$, and coupled with the geometric errors \eqref{eq: geometric errors 1} and the H\"older's inequality we prove that

\begin{equation}
    \begin{aligned}\label{eq: Term (III) inf-sup}
        \int_{\Gah}\lh (\bfv_*^{-\ell} \cdot \nh) \, \dsh &= \int_{\Gah}\lh (\bfv_*^{-\ell} \cdot \bfn) \, \dsh + \int_{\Gah}\lh (\bfv_*^{-\ell} \cdot (\nh-\bfn)) \, \dsh \\
        &\geq \int_{\Gah}\lh^2 \, \dsh -ch \norm{\lh}_{L^2(\Gah)}\norm{\bfv_*^{-\ell}}_{L^2(\Gah)}\\
        &\geq \int_{\Ga}(\lhl)^2 \, \ds + \int_{\Ga}(\frac{1}{\mu_h}-1)(\lhl)^2 \, \ds   -ch \norm{\lh}_{L^2(\Gah)}\norm{\bfv_*}_{L^2(\Ga)}\\
        &\geq (1-ch^{k_g})\norm{\lhl}^2_{L^2(\Ga)} -ch \norm{\lh}_{L^2(\Gah)}\norm{\bfv_*}_{L^2(\Ga)}.
    \end{aligned}
\end{equation}

$\underline{\bold{Term \ (IV)}}$: again the Scott-Zhang interpolant \eqref{eq: Scott-Zhang interpolant linear}, the super-approximation \eqref{eq: super-approximation estimate 2} and the norm equivalence $\eqref{eq: norm equivalence}$ leads to 
\begin{equation}
    \begin{aligned}\label{eq: Term (IV) inf-sup}
        \int_{\Gah}\lh ((\vh - \bfv_*^{-\ell}) \cdot \nh) \, \dsh 
        &= \int_{\Gah} \lh((\Ihz(\bfv_T^{-\ell}) - \bfv_T^{-\ell})\cdot \nh ) \, \dsh  + \int_{\Gah} \lh ((\Ihz(\lh \bfn) - \lh \bfn)\cdot \nh) \, \dsh \\
        &\geq -ch\norm{\lh}_{L^2(\Gah)} \norm{\bfv_T}_{H^1(\Ga)} - \tilde{c}h\norm{\lhl}^2_{L^2(\Ga)}.
    \end{aligned}
\end{equation}

\noindent Combing now these four terms \eqref{eq: Term (I) inf-sup}, \eqref{eq: Term (II) inf-sup}, \eqref{eq: Term (III) inf-sup}, \eqref{eq: Term (IV) inf-sup} into \eqref{eq: splitting bh lagrange} and taking $h$ sufficiently small we have, 
\begin{equation*}
    \begin{aligned}
        \bhtil(\vh,\{\qh,\lh\}) & \geq \int_{\Ga} \bfv_T \nbg \qhl \, \ds + (1-\tilde{c}h) \norm{\lhl}^2_{L^2(\Ga)} -c (\norm{\bfv_T}_{H^1(\Ga)}+  \norm{\lhl}_{L^2(\Ga)})\norm{\qh}_{h} \\ 
        & \quad -ch\norm{\lh}_{L^2(\Gah)} \norm{\bfv_T}_{H^1(\Ga)}\\
         &\geq \tilde{c}(\int_{\Ga} \bfv_T \nbg \qhl \, \ds + \norm{\lhl}^2_{L^2(\Ga)}) -c (\norm{\bfv_T}_{H^1(\Ga)}+  \norm{\lhl}_{L^2(\Ga)})\norm{\qh}_{h} \\ 
         &\quad -ch\norm{\{\qhl,\lhl\}}_{L^2(\Ga)}^2,
    \end{aligned}
\end{equation*}
by \eqref{eq: tangent inf-sup and H^1 L^2 inequality lagrange}  and Young's inequality.
The equations \eqref{eq: tangent inf-sup and H^1 L^2 inequality lagrange 2} and the norm equivalence \eqref{eq: norm equivalence}, then readily imply \eqref{eq: pre almost discrete inf-sup lagrange}.

\subsubsection{\bf Step 2}\label{sec: Step 2}
We still need to control the tangential gradient of the pressure, i.e. the term $\norm{\qh}_h$ \eqref{eq: h norm lagrange}. For this it is sufficient to  prove the following inequality
\begin{equation}
    \begin{aligned}
    \label{eq: control of tangential gradient lagrane}
        \sup_{\vh \in \bfV_h} \frac{\bh(\vh,\qh)}{\norm{\vh}_{\ah}}\geq \beta\Big(\sum_{T\in \Thlin}h_T^2\norm{\nbgh\qh}_{L^2(T)}^2\Big)^{1/2} \geq \beta h \norm{\nbgh\qh}_{L^2(\Gah)}. 
    \end{aligned}
\end{equation}
for some mesh-independent constant $\beta$.

Again for any given $\qh\in Q_h$ we construct a $\vh\in\bfV_h$, similarly to \cite{jankuhn2021Higherror,reusken2024analysis}, such that \begin{equation}
    \begin{aligned}
        \bh(\vh,\qh) \geq  ch^2\norm{\nbgh\qh}_{L^2(\Gah)}^2 \geq \beta h\norm{\vh}_{\ah}\norm{\nbgh\qh}_{L^2(\Gah)} \geq \beta \norm{\vh}_{\ah} \norm{\qh}_h,
    \end{aligned}
\end{equation}
where $\beta$ just a mesh independent constant.

We begin by considering the piecewise planar surface $\Galin$ together with the  mapping $F_T : \Galin \to \Gah$ and \Cref{remark: equivalence Vh1-Vh}. Set $\shlin$ to be the unit vector in a fixed direction across the  edge $E \in \mathcal{E}_h^{(1)}$. 
Let the midpoint of  each  edge $E \in \mathcal{E}_h^{(1)}$ be denoted by  $\bfz_M$ to which we attach the  piecewise quadratic surface finite element basis function $\varphi_E$ such that
\begin{align*}
    \begin{cases}
    \varphi_E(\bfz) =1 \qquad \text{for } \bfz = \bfz_M, \\
    \varphi_E(\bfz) =0 \qquad \text{for the rest of the nodal points }  \bfz \text{ of the neighboring triangles}.
    \end{cases}
\end{align*}
For $\qhtilde = \qh \circ F_T$ we set  the discrete velocity  $\vhtilde$ on $\Galin$ to be
\begin{equation}
    \begin{aligned}
    \label{eq: choosing vh}
        \vhtilde = \sum_{E \in \mathcal{E}_h} h_E^2\varphi_E |\shlin \cdot \nbglin \qhtilde| \shlin,
    \end{aligned}
\end{equation}
and note that it is continuous, since $\shlin$ continuous across the edges of the triangle, and also piecewise polynomial of degree at most $k$ so 
$\vhtilde \in \bfV_h^{(1)}$ (cf. \cite{reusken2024analysis}).  Setting $\vh = \vhtilde \circ F_T^{-1}$ 
and  $\qh  = \qhtilde \circ F_T^{-1} $ we have the following important inequality 
\begin{equation}\label{eq: vh estimate inf-sup}
\begin{aligned}
    \norm{\vh}_{\ah} \leq \norm{\vh}_{H^1(\Gah)} \leq \norm{\vhtilde}_{H^1(\Galin)} &\leq  c\Big(\sum_{T\in \Thlin}h_T^{-2}\norm{\vhtilde}_{L^2(T)}^2\Big)^{1/2} \\
    \leq c \Big(\sum_{T\in \Thlin} h_T^2 \norm{\nbglin \qhtilde}^2_{L^2(T)}\Big)^{1/2}
    &\leq ch\norm{\nbglin \qhtilde}_{L^2(\Galin)} \leq ch\norm{\nbgh \qh}_{L^2(\Gah)},
    \end{aligned}
\end{equation}
due to \eqref{eq: h norm lagrange}, and we can also readily see for $\vhtilde$ that
\begin{equation}\label{eq: vh sh bound ah}
    \begin{aligned}
        \norm{\vhtilde}_{\ah} \leq c \Big(\sum_{T\in \Thlin} h_T^2 \norm{\nbglin \qhtilde}^2_{L^2(T)}\Big)^{1/2} \leq ch\norm{\nbglin \qhtilde}_{L^2(\Galin)}.
    \end{aligned}
\end{equation}
and
\begin{equation}
    \begin{aligned}\label{eq: vh sh l2 bound}
        \norm{\vh}_{L^2(\Gah)}\sim \norm{\vhtilde}_{L^2(\Galin)}  \leq ch^2\norm{\nbglin \qhtilde}_{L^2(\Galin)}.
    \end{aligned}
\end{equation}
We now consider the bilinear form $\bhlin(\cdot,\cdot)$ as in \eqref{eq: lagrange discrete bilinear formm b} for the planar case, i.e. $\Gah = \Galin$.
By substituting $\vhtilde$ in $\bhlin(\cdot,\cdot)$ with $\qhtilde  = \qh \circ F_T $ and using the fact that $0 \leq \varphi_E \leq 1$  we obtain: 
\begin{equation}
    \begin{aligned}\label{eq: vh sh bound L2}
      \bhlin(\vhtilde,\qhtilde) &= \sum_{T\in \Thlin}\int_T \sum_{E\in \mathcal{E}(T)} h_E^2 \varphi_E |\shlin \cdot \nbglin \qhtilde|^2 \geq   \sum_{T\in \Thlin} \int_T  \sum_{E\in \mathcal{E}(T)}  h_E^2  |\shlin \cdot \nbglin \qhtilde|^2 \\
      &\geq\sum_{T\in \Thlin} h_T^2    \int_T  \sum_{E\in \mathcal{E}(T)} |\shlin \cdot \nbglin \qhtilde|^2,
    \end{aligned}
\end{equation}
where for the last inequality we used the fact that $h_E \geq c h_T$ for a quasi-uniform triangulation (Section \ref{sec: Triangulated surfaces}).
Now since we have assumed a closed surface then the assumption as seen in \cite{ern2004theory} are satisfied, thus we have that $|\shlin \cdot \nbglin \qhtilde|$ controls $|\nbglin \qhtilde|$ and so
\begin{equation}
    \begin{aligned}
    \label{eq: just before control of tangenital gradient}
      \bhlin(\vhtilde,\qhtilde) \geq c\sum_{T\in \Thlin} h_T^2 \norm{\nbglin \qhtilde}_{L^2(T)}^2.
    \end{aligned}
\end{equation}
Currently we have an inequality w.r.t. $\bhlin(\vhtilde,\qhtilde)$ for the previously chosen $\vhtilde$. What we need is \eqref{eq: control of tangential gradient lagrane}. To arrive to \eqref{eq: control of tangential gradient lagrane} we will use the above equation \eqref{eq: just before control of tangenital gradient} add and subtract $\bh(\vh,\qh)$ to get
\begin{equation}
    \begin{aligned}\label{eq: before final inf-sup}
        \bh(\vh,\qh) + \underbrace{\bhlin(\vhtilde,\qhtilde) - \bh(\vh,\qh)}_{\bold{Term \, (I_2)}}\geq c\sum_{T\in \Thlin} h_T^2 \norm{\nbglin \qhtilde}_{L^2(T)}^2 \geq ch^2\norm{\nbgh\qh}_{L^2(\Gah)}^2
    \end{aligned}
\end{equation}
due to \eqref{eq: h norm lagrange}.
We would like to bound  $\bold{Term \, (I_2)}$. 

$\underline{ \bold{Term \, (I_2)}} : $ This is possible by recalling \cite[Section 2.4]{Demlow2009}. Then for the  gradient on the planar triangulation $\Galin$ we have that,
\begin{equation*}
    \nbglin \qhtilde (\Tilde{x}) = \Bhkg \nbgh \qh(F_T(\Tilde{x})), \ \ \Tilde{x} \in \Galin,
\end{equation*}
	where $F_T(\cdot)$ the map in \eqref{eq: tildeT to T map}, i.e. $F_T(\cdot) =\mathcal{\tilde{I}}_h^{k_g}\pi(\cdot) = \pi_{k_g}(\cdot)$, $\qhtilde(\tilde{x}) = \qh \circ F_T(\tilde{x})$ and also $\Bhkg = \nb \pi_{k_g} + \nh \otimes \nh$. Then by lifting to $\Gah$ we obtain the following 
\begin{equation}
\begin{aligned}\label{eq: b1 to b large}
    &\int_{\Galin} \vhtilde \cdot \nbglin \qhtilde \dslin \leq \int_{\Gah} \vh \cdot \Bhkg \nbgh \qh (\frac{1}{\mu_{h,1}^k} -1 )\dsh + \int_{\Gah} \vh \cdot (\Bhkg- \Bhg )\bfPh \nbgh \qh \dsh \\ 
    &\quad \ + \int_{\Gah} \vh \cdot (\Bhg - \bfPh) \nbgh \qh \dsh 
     + \int_{\Gah} \vh \cdot \nbgh \qh \dsh \\
    &\leq \int_{\Gah} \vh \cdot \nbgh \qh \dsh +ch\norm{\vh}_{L^2(\Gah)}\norm{\nbgh\qh}_{L^2(\Gah)} + ch^{2}\norm{\vh}_{L^2(\Gah)}\norm{\nbgh\qh}_{L^2(\Gah)},
    \end{aligned}
\end{equation}
where the first term in the first step was bounded by the estimate $|1 - \frac{1}{\mu_{h,1}^k}| \leq ch^2$ from \eqref{eq: muhk1 estimate}, as well as the boundedness of $\Bhkg$, as $F_T(\cdot) = \pi_{k_g}(\cdot)$ is the Lagrange interpolant of the closest point projection $p$ and thus $\norm{\pi_{k_g}}_{W^m_{\infty}(\Ttilde)} \leq c \norm{\pi}_{W^m_{\infty}(\Ttilde)}$ for $\Ttilde \in \Thlin$ and small enough $h$. For the second estimate, we note from \cite[Section 2.4]{Demlow2009} and some easy calculations that $\norm{(\Bhkg- \Bhg)\bfPh}_{L_{\infty}} \leq c h$. While, the third term can be estimated similarly to \eqref{eq: Bh estimates} where $\norm{\Bhg- \bfPh}_{L_{\infty}(\Gah)} \leq c h^{k_g}$, thus concluding the result. 

All in all we obtain from \eqref{eq: b1 to b large} and \eqref{eq: vh sh l2 bound} that
\begin{equation}
    \begin{aligned}\label{eq: galin gah diff h norm}
       \bold{Term \, (I_2)} =  \int_{\Galin} \vhtilde \cdot \nbglin \qhtilde \dslin - \int_{\Gah} \vh \cdot \nbgh \qh \dsh \leq ch^3\norm{\nbgh\qh}_{L^2(\Gah)}^2.
    \end{aligned}
\end{equation}
Plugging \eqref{eq: galin gah diff h norm} into \eqref{eq: before final inf-sup} and using \eqref{eq: vh sh bound ah} we have for sufficiently small $h$ 
\begin{equation}
    \begin{aligned}
        \bh(\vh,\qh) \geq  ch^2\norm{\nbgh\qh}_{L^2(\Gah)}^2 \geq \beta h\norm{\vh}_{\ah}\norm{\nbgh\qh}_{L^2(\Gah)} \geq \beta \norm{\vh}_{\ah} \norm{\qh}_h,
    \end{aligned}
\end{equation}
where $\beta$ is just a new mesh independent constant. This proves \eqref{eq: control of tangential gradient lagrane}.

\subsubsection{ \bf Step 3}\label{sec: Step 3}
Finally combining \eqref{eq: control of tangential gradient lagrane} and \eqref{eq: almost discrete inf-sup lagrange} we get the following for any $\{\qh,\lh\} \in Q_h\times \Lambda_h$
\begin{equation}\label{eq: step 3 infsup}
   \frac{c}{\beta}\sup_{\vh \in \bfV_h} \frac{\bh(\vh,\qh)}{\norm{\vh}_{\ah}} + \sup_{\vh \in \bfV_h} \frac{\bhtil(\vh,\{\qh,\lh\})}{\norm{\vh}_{\ah}} \geq c\norm{\{\qh,\lh\}}_{L^2(\Gah)},
\end{equation}
 Focusing on the left hand-side we just notice that for any $\{\qh,\lh\} \in Q_h\times \Lambda_h$ 
\begin{equation*}
    \sup_{\vh \in \bfV_h} \frac{\bh(\vh,\qh)}{\norm{\vh}_{\ah}} 
 \leq \sup_{\vh \in \bfV_h} \frac{\bhtil(\vh,\{\qh,\lh\})}{\norm{\vh}_{\ah}}.
 \end{equation*}
Then, going back to \eqref{eq: step 3 infsup} and factoring out a new constant $c$ independent of $h$, we get our \emph{inf-sup} condition \eqref{eq: discrete inf-sup condition Gah Lagrange} for any $\{\qh,\lh\} \in Q_h\times \Lambda_h$:
\begin{equation}
    \begin{aligned}
         \sup_{\vh \in \bfV_h} \frac{|\bhtil(\vh,\{\qh,\lh\})|}{\norm{\vh}_{\ah}} \geq c\norm{\{\qh,\lh\}}_{L^2(\Gah)}, 
    \end{aligned}
\end{equation}

\subsection{The $L^2 \times H_h^{-1}$ inf-sup condition }\label{Section: inf-sup lagrange L^2 H-1}
\begin{lemma}[$L^2\times H_h^{-1}$ Discrete Lagrange inf-sup condition]\label{lemma: L^2 H^{-1} discrete inf-sup condition Gah Lagrange}
Assume a quasi-uniform triangulation of $\Gah$, then there exists a constant $\beta >0$ independent of the mesh parameter $h$ such that 
\begin{equation}\label{eq: L^2 H^{-1} discrete inf-sup condition Gah Lagrange}
    c\norm{\{\qh,\lh\}}_{L^2(\Gah)\times H_h^{-1}(\Gah)}\leq \sup_{\vh \in \bfV_h} \frac{\bhtil(\vh,\{\qh,\lh\})}{\norm{\vh}_{H^1(\Gah)}} \ \ \ \forall \{\qh,\lh\} \in Q_h \times \Lambda_h.
\end{equation}
\end{lemma}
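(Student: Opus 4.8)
The plan is to prove the two ingredients
$$\norm{\qh}_{L^2(\Gah)}\le c\,S\qquad\text{and}\qquad \norm{\lh}_{H_h^{-1}(\Gah)}\le c\,S,\qquad S:=\sup_{\vh\in\bfV_h}\frac{\bhtil(\vh,\{\qh,\lh\})}{\norm{\vh}_{H^1(\Gah)}},$$
separately and then add them. The first step I would carry out is a preliminary observation that turns the already-established $L^2\times L^2$ inf-sup (\cref{Lemma: Discrete inf-sup condition Gah Lagrange}) into control of the \emph{full} $L^2$-norm of $\lh$, but in the $h$-scaled form
\begin{equation*}
 h\norm{\lh}_{L^2(\Gah)}\ \le\ h\norm{\{\qh,\lh\}}_{L^2(\Gah)}\ \le\ c\,S .
\end{equation*}
Indeed, the $h^{-1}$-weighted coercivity in \eqref{coercivity and Korn's inequality Lagrange} gives $\norm{\vh}_{\ah}\ge c\,h\norm{\vh}_{H^1(\Gah)}$ for every $\vh\in\bfV_h$, whence $\sup_\vh\bhtil(\vh,\{\qh,\lh\})/\norm{\vh}_{\ah}\le (ch)^{-1}S$, and \eqref{eq: discrete inf-sup condition Gah Lagrange} closes the estimate. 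This is the device that will let us absorb, at the very end, every error term of the form $\mathcal O(h)\norm{\lh}_{L^2(\Gah)}$ produced by the geometric and interpolation perturbations.

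For the pressure $\qh$, I would re-run Steps~1 and 2 of the proof of \cref{Lemma: Discrete inf-sup condition Gah Lagrange}, but measuring the test velocities in $\norm{\cdot}_{H^1(\Gah)}$ and keeping only the tangential part: pick $\bfv_T\in\bfH^1_T$ with $\divg\bfv_T=\qhl+m_q$ and $\norm{\bfv_T}_{H^1(\Ga)}\le c\norm{\qh}_{L^2(\Gah)}$, set $\bfw_q=\Ihz(\bfv_T^{-\ell})$ so that $\norm{\bfw_q}_{H^1(\Gah)}\le c\norm{\qh}_{L^2(\Gah)}$ by \eqref{eq: stability of Scott-Zhang interpolant}, and combine it à la Verf\"urth with the bubble velocity $\vhtilde$ of Step~2, which already satisfies $\norm{\vhtilde}_{H^1(\Gah)}\le c\,h\norm{\nbgh\qh}_{L^2(\Gah)}$ by \eqref{eq: vh estimate inf-sup}. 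The only genuinely new point is that on these velocities $\bhtil$ differs from $\bh$ by $\int_{\Gah}\lh\,(\vh\cdot\nh)$, and since $\bfv_T$ is tangential while $\vhtilde$ is $\mathcal O(h^2)$ in $L^2$, this extra term is $\mathcal O(h)\norm{\lh}_{L^2(\Gah)}\norm{\vh}_{H^1(\Gah)}$; carrying it along yields $S\ge c\norm{\qh}_{L^2(\Gah)}-ch\norm{\lh}_{L^2(\Gah)}$, and the preliminary observation then gives $\norm{\qh}_{L^2(\Gah)}\le cS$.

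For the multiplier $\lh$, I would use that $\Lambda_h$ is finite dimensional, so by \eqref{eq: H^-1h definition} there is $\xi_h\in\Lambda_h$ with $\norm{\xi_h}_{H^1(\Gah)}=\norm{\lh}_{H_h^{-1}(\Gah)}$ and $\int_{\Gah}\lh\xi_h=\norm{\lh}_{H_h^{-1}(\Gah)}^2$. Mirroring the treatment of the $\lh\bfn$ terms in Step~1 of \cref{Lemma: Discrete inf-sup condition Gah Lagrange}, set $\bfw_\lambda:=\Ihz(\xi_h\bfn)$ with $\bfn=\bfng^{-\ell}$; then $\norm{\bfw_\lambda}_{H^1(\Gah)}\le c\norm{\xi_h}_{H^1(\Gah)}=c\norm{\lh}_{H_h^{-1}(\Gah)}$ by \eqref{eq: stability of Scott-Zhang interpolant}, while the geometric errors \eqref{eq: geometric errors 1}--\eqref{eq: geometric errors 2} together with the super-approximation \eqref{eq: super-approximation estimate 2} applied to the vector $\xi_h\bfn$ give $\int_{\Gah}\lh\,(\bfw_\lambda\cdot\nh)\ge\norm{\lh}_{H_h^{-1}(\Gah)}^2-ch\norm{\lh}_{L^2(\Gah)}\norm{\lh}_{H_h^{-1}(\Gah)}$, and \eqref{bhtilde boundedness} gives $|\bh(\bfw_\lambda,\qh)|\le c\norm{\qh}_{L^2(\Gah)}\norm{\lh}_{H_h^{-1}(\Gah)}$. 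Writing
\begin{equation*}
 \norm{\lh}_{H_h^{-1}(\Gah)}^2=\int_{\Gah}\lh\xi_h=\bhtil(\bfw_\lambda,\{\qh,\lh\})-\bh(\bfw_\lambda,\qh)-\int_{\Gah}\lh\,(\bfw_\lambda\cdot\nh-\xi_h),
\end{equation*}
dividing by $\norm{\lh}_{H_h^{-1}(\Gah)}$ and inserting the three bounds gives $\norm{\lh}_{H_h^{-1}(\Gah)}\le cS+c\norm{\qh}_{L^2(\Gah)}+ch\norm{\lh}_{L^2(\Gah)}$; the middle term was bounded in the previous paragraph and the last one by the preliminary observation, so $\norm{\lh}_{H_h^{-1}(\Gah)}\le cS$, and adding the two bounds proves \eqref{eq: L^2 H^{-1} discrete inf-sup condition Gah Lagrange}.

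The hard part here is bookkeeping rather than any single estimate: passing from the energy test-norm $\norm{\cdot}_{\ah}$ to the stronger $\norm{\cdot}_{H^1(\Gah)}$ costs an $h$-factor on the right (via \eqref{coercivity and Korn's inequality Lagrange}), and this is precisely compensated by the loss from $\norm{\lh}_{L^2(\Gah)}$ to $\norm{\lh}_{H_h^{-1}(\Gah)}$ on the left — so one must verify that every geometric or interpolation error generated carries the saving $h$-factor and can thus be swallowed by the $L^2\times L^2$ inf-sup. A related subtlety to be careful about: for the $\qh$-part one must \emph{not} reuse the velocity $\bfv_T+\lhl\bfng$ of the old proof (its $H^1(\Gah)$-norm is only $\mathcal O(h^{-1})\norm{\lh}_{L^2(\Gah)}$) but the purely tangential $\bfv_T$, and for the $\lh$-part one must build the normal velocity from the $H_h^{-1}$-realizer $\xi_h$ rather than from $\lh$ itself, so that no inverse-inequality blow-up enters the bound on $\norm{\bfw_\lambda}_{H^1(\Gah)}$.
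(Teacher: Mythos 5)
Your proposal is correct and follows essentially the same route as the paper: the key ingredients are identical, namely replacing $\lh$ in the normal part of the test velocity by its discrete $H^1$-Riesz representative in $\Lambda_h$ (so that no inverse estimate enters the $H^1$-bound of the test function), controlling the resulting $\mathcal{O}(h)\norm{\lh}_{L^2(\Gah)}$ geometric/super-approximation remainders via the already-proven $L^2\times L^2$ inf-sup combined with \eqref{coercivity and Korn's inequality Lagrange}, and recycling Verf\"urth's trick for $\norm{\qh}_h$. The only difference is organizational — you test the tangential and normal constructions separately instead of using the single combined velocity $\bfv_*=\bfv_T+(\mathcal{R}^h_{H^1}\lh)^{\ell}\bfng$ as the paper does — which does not change the substance of the argument.
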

\begin{proof}
This proof follows very similarly to the proof of the \emph{discrete inf-sup condition} \cref{Lemma: Discrete inf-sup condition Gah Lagrange}  and mimics that of the \emph{continuous inf-sup} in \cref{lemma: inf-sup cont lagrange}. For that reason, we will try to remain brief. More specifically, we will focus on Step 1; \cref{sec: Step 1}, as it is there that the main changes occur. Similarly to the inequality \eqref{eq: almost discrete inf-sup lagrange} of \cref{sec: Step 1}, we now instead want to prove the following
\begin{equation}
    \begin{aligned}
    \label{eq: almost discrete inf-sup lagrange L^2 H-1}
        \sup_{\vh \in \bfV_h} \frac{\bhtil(\vh,\{\qh,\lh\})}{\norm{\vh}_{H^1(\Gah)}} &\geq c\norm{\{\qh,\lh\}}_{L^2(\Gah)\times H_h^{-1}(\Gah)} - c\norm{\qh}_h.
    \end{aligned}
\end{equation}
Then we can readily see that the rest of the steps in \cref{sec: Step 2}  and  \cref{sec: Step 3} follow basically unchanged (in \cref{sec: Step 2}, eq. \eqref{eq: vh estimate inf-sup}, we just consider the $H^1$-norm instead of the energy norm, and the rest follow suit), and therefore we obtain our new \emph{discrete inf-sup condition} \eqref{eq: L^2 H^{-1} discrete inf-sup condition Gah Lagrange}.
    
So, let us prove \eqref{eq: almost discrete inf-sup lagrange L^2 H-1}. As established in \cref{sec: Step 1} let $\{\qh,\lh\}\in Q_h\times \Lambda_h$ given. Now we consider the well-posed discrete Riesz map $\mathcal{R}^h_{H^1}(\lh): H_h^{-1}(\Gah)\to \Lambda_h$ such that
\begin{align}
    \label{eq: discrete Riesz map bound}
    \norm{\mathcal{R}^h_{H^1}(\lh)}_{H^1(\Gah)} &\leq  \norm{\lh}_{H_h^{-1}(\Gah)},\\
    \label{eq: discrete Riesz map}
    <\lh,\mathcal{R}^h_{H^1}(\lh)>_{H_h^{-1},H^1} &= \norm{\mathcal{R}^h_{H^1}(\lh)}_{H^1(\Gah)}^2 = \norm{\lh}_{H_h^{-1}(\Gah)}^2,
\end{align}
where, since $\lh \in \Lambda_h$,
\begin{equation}\label{eq: H^-1h definition 2}
    \norm{\lh}_{H_h^{-1}(\Gah)}^2 = \sup_{\xi_h\in \Lambda_h}\frac{<\lh,\xi_h>_{H^{-1},H^1}}{\norm{\xi_h}_{H^1(\Gah)}} = \sup_{\xi_h\in \Lambda_h}\frac{(\lh,\xi_h)_{L^2(\Gah)}}{\norm{\xi_h}_{H^1(\Gah)}}.
\end{equation}

This time we choose velocity $\bfv_* = \bfv_T + (\mathcal{R}^h_{H^1}\lh)^{\ell}\bfng$. Then it clear by definition \eqref{eq: discrete Riesz map} that instead of \eqref{eq: tangent inf-sup and H^1 L^2 inequality lagrange 2} we have
\begin{equation}\label{eq: new cont inf sup H-1}
        b^L(\bfv_*,\{\qhl,\lhl\}) \geq c\norm{\{\qh,\lh\}}_{L^2(\Gah)\times H_h^{-1}(\Gah)} \text{ and } \norm{\bfv_*}_{H^1(\Ga)} \leq \norm{\{\qh,\lh\}}_{L^2(\Gah)\times H_h^{-1}(\Gah)}.
\end{equation}
Considering the vector-valued Scott-Zhang interpolant $\Ihz(\cdot)\in \bfV_h$, due to the interpolation stability \eqref{eq: Scott-Zhang interpolant} and bound \eqref{eq: discrete Riesz map bound}, instead of \eqref{eq: energy interpolation stability vh lagrange} we have for $\vh =  \Ihz(\bfv_*^{-\ell}) \in \bfV_h$ the following bound
    \begin{equation}\label{eq: energy interpolation stability vh lagrange new}
    \norm{\vh}_{H^1(\Gah)} = \norm{\Ihz(\bfv_*^{-\ell})}_{H^1(\Gah)}\leq \norm{\qh}_{L^2(\Gah)} + \norm{\mathcal{R}_{H^1}\lh}_{H^1(\Gah)} \leq  c\norm{\{\qh,\lh\}}_{L^2(\Gah)\times H_h^{-1}(\Gah)}.
        \end{equation}
   Now we are left to prove \eqref{eq: almost discrete inf-sup lagrange L^2 H-1}. For that, we go back to \eqref{eq: splitting bh lagrange} and see that we just need to bound each $\underline{\bold{Term}}$ appropriately, again.
   
 $\underline{\bold{Term \ (I)}}$ and $\underline{\bold{Term \ (II)}}$   are bounded as in \cref{sec: Step 1} eq. \eqref{eq: Term (I) inf-sup}, \eqref{eq: Term (II) inf-sup} where have replaced $\lh$ with the Riesz map $\mathcal{R}^h_{H^1}(\lh)$. Therefore using the bounds \eqref{eq: discrete Riesz map bound}, \eqref{eq: new cont inf sup H-1} and the Scott-Zhang super-approximation estimate \eqref{eq: super-approximation estimate 2} \big(that is $\norm{\Ihz(\mathcal{R}^h_{H^1}(\lh))-\mathcal{R}^h_{H^1}(\lh)}_{L^2(\Gah)} \leq ch\norm{\mathcal{R}^h_{H^1}(\lh)}_{L^2(\Gah)}$\big) we can see that
   \begin{equation}
     \begin{aligned}\label{eq: Term (I) (II) inf-sup new}
      \underline{\bold{Term \ (I)}}, \underline{\bold{Term \ (II)}} &\geq \int_{\Ga}\bfv_T \cdot \nbg \qhl \, \ds - c(\norm{\bfv_T^{-\ell}}_{H^1(\Gah)} + \norm{\lh}_{H_h^{-1}(\Gah)})\norm{\qh}_{h}\\
      &\geq \int_{\Ga}\bfv_T \cdot \nbg \qhl \, \ds -c\norm{\{\qh,\lh\}}_{L^2(\Gah)\times H_h^{-1}(\Gah)}\norm{\qh}_{h},
      \end{aligned}
   \end{equation}
   Likewise, for $\underline{\bold{Term \ (III)}}$ in \eqref{eq: Term (III) inf-sup} we see that with the help of the Riesz map \eqref{eq: discrete Riesz map}, and \eqref{eq: new cont inf sup H-1} 
   \begin{equation}
   \begin{aligned}\label{eq: Term (III) inf-sup new}
      \underline{\bold{Term \ (III)}} &\geq \int_{\Gah}\lh \mathcal{R}^h_{H^1}(\lh) \dsh -ch^{k_g}\norm{\lh}_{L^2(\Gah)}\norm{\bfv_*^{-\ell}}_{L^2(\Gah)}\\
      &\geq \norm{\lh}_{H_h^{-1}(\Gah)}^2-ch^{k_g-1}\norm{\{\qh,\lh\}}_{L^2(\Gah)\times H_h^{-1}(\Gah)}\sup_{\vh \in \bfV_h} \frac{\bhtil(\vh,\{\qh,\lh\})}{\norm{\vh}_{H^1(\Gah)}},
      \end{aligned}
   \end{equation}
where we used the second bound in \eqref{eq: new cont inf sup H-1} and the previously proven \emph{ $L^2\times L^2$ discrete inf-sup} \cref{Lemma: Discrete inf-sup condition Gah Lagrange} to see that:
\begin{equation*}
    \begin{aligned}
        c\norm{\lh}_{L^2(\Gah)}\leq \sup_{\vh \in \bfV_h} \frac{\bhtil(\vh,\{\qh,\lh\})}{\norm{\vh}_{\ah}} \overset{\eqref{coercivity and Korn's inequality Lagrange}}{\leq} \sup_{\vh \in \bfV_h} \frac{\bhtil(\vh,\{\qh,\lh\})}{h\norm{\vh}_{H^1(\Gah)}},
    \end{aligned}
\end{equation*}
and thus $ch\norm{\lh}_{L^2(\Gah)} \leq \sup_{\vh \in \bfV_h} \frac{\bhtil(\vh,\{\qh,\lh\})}{\norm{\vh}_{H^1(\Gah)}}$.

Finally, we have to bound $\underline{\bold{Term \ (IV)}}$ in \eqref{eq: Term (IV) inf-sup}. Using again the Scott-Zhang interpolation estimate \eqref{eq: Scott-Zhang interpolant} we see that
\begin{equation}
   \begin{aligned}\label{eq: Term (IV) inf-sup new}
      \underline{\bold{Term \ (IV)}} &\geq -ch\norm{\lh}_{L^2(\Gah)}\norm{\bfv_*^{-\ell}}_{H^1(\Gah)}-ch\norm{\lh}_{L^2(\Gah)}\norm{ \mathcal{R}^h_{H^1}(\lh)}_{H^1(\Gah)}\\
      &\geq -c\norm{\{\qh,\lh\}}_{L^2(\Gah)\times H_h^{-1}(\Gah)}\sup_{\vh \in \bfV_h} \frac{\bhtil(\vh,\{\qh,\lh\})}{\norm{\vh}_{H^1(\Gah)}},
      \end{aligned}
   \end{equation}
where we used \eqref{eq: discrete Riesz map bound}, the second bound in \eqref{eq: new cont inf sup H-1} and the fact that $ch\norm{\lh}_{L^2(\Gah)} \leq \sup_{\vh \in \bfV_h} \frac{\bhtil(\vh,\{\qh,\lh\})}{\norm{\vh}_{H^1(\Gah)}}$.

So, combining the above results \eqref{eq: Term (I) (II) inf-sup new}, \eqref{eq: Term (III) inf-sup new}, \eqref{eq: Term (IV) inf-sup new} with \eqref{eq: new cont inf sup H-1}, and the bound 
\eqref{eq: energy interpolation stability vh lagrange new}, and dividing by $\norm{\vh}_{H^1(\Gah)}$, moving $\sup_{\vh \in \bfV_h} \frac{\bhtil(\vh,\{\qh,\lh\})}{\norm{\vh}_{H^1(\Gah)}}$ to the RHS, and taking the supremum over $\vh \in \bfV_h$, we get our desired result \eqref{eq: almost discrete inf-sup lagrange L^2 H-1}, and therefore proving the new \emph{discrete inf-sup condition} \eqref{eq: L^2 H^{-1} discrete inf-sup condition Gah Lagrange}.
\end{proof}

\section{Error bounds}\label{Section: error estimates}
For the error bounds, we will consider two scenarios regarding the degree of the finite element space approximation of the extra Lagrange multiplier $\lh$. First, the case $k_\lambda=k_u-1$ and second, the case where $k_\lambda=k_u$. The difference in the error bounds for these two cases lies in the order of the geometry approximation. But first, let us introduce some standard results.

\subsection{Perturbation bounds for bilinear forms}

\begin{lemma}\label{lemma: Geometric perturbations b lagrange}
Let $\bfw_h,\vh \in \bfH^1(\Gah)$ and $\{\qh,\xi_h \}\in H^1(\Gah)\times L^2(\Gah)$. Then we have
\begin{align}
    \label{eq: Geometric perturbations a}
         |a(\bfw_h^{\ell},\vhl) - \ah(\bfw_h,\vh)| &\leq ch^{k_g} \norm{\bfw_h}_{H^1(\Gah)} \norm{\vh}_{H^1(\Gah)},\\
         |\bhtil(\bfw_h,\{\qh,\xi_h\}) - b^L(\bfw_h^{\ell},\{\qhl,\xi_h^{\ell}\})| &\leq ch^{k_g} \norm{\bfw_h}_{L^2(\Gah)}(\norm{\qh}_{H^1(\Gah)} + \norm{\xi_h}_{L^2(\Gah)}) \nonumber \\
         \label{eq: Geometric perturbations btilde}
        & \leq ch^{k_g-1} \norm{\bfw_h}_{L^2(\Gah)}(\norm{\qh}_{L^2(\Gah)}+ h\norm{\xi_h}_{L^2(\Gah)}). 
\end{align}
If  $\bfw \in \bfH_T^1$, then we get the following higher order bound
\begin{equation}
    \begin{aligned}\label{eq: Geometric perturbations btilde tangent}
        |\bhtil(\bfw^{-\ell},\{\qh,\xi_h\}) - b^L(\bfw,\{\qhl,\xi_h^{\ell}\})|&\leq ch^{k_g+1} \norm{\bfw}_{L^2(\Ga)}\norm{\qh}_{H^1(\Gah)} + ch^{k_g}\norm{\bfw}_{L^2(\Ga)}\norm{\xi_h}_{L^2(\Gah)}\\
        &\leq ch^{k_g} \norm{\bfw}_{L^2(\Ga)}\norm{\{\qh,\xi_h\}}_{L^2(\Gah)}.
    \end{aligned}
\end{equation}
Furthermore if  $\bfw \in \bfH_T^1\cap \bfH^2(\Ga)$ then we have
\begin{equation}\label{eq: Geometric perturbations btilde tangent extra regularity}
    |\bhtil(\bfw^{-\ell},\{\qh,\xi_h\}) - b^L(\bfw,\{\qhl,\xi_h^{\ell}\})|\leq ch^{k_g+1} \norm{\bfw}_{H^2(\Ga)}\norm{\{\qh,\lh\}}_{H^1(\Gah)}. 
\end{equation}
\end{lemma}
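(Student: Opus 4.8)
The plan is to prove all four estimates by the same mechanism: pull back every integral over $\Gah$ to an integral over $\Ga$ using $\ds=\muh\dsh$ together with the lift relation $\nbgh\qh=\Bhg\nbg\qhl$ and $(\bfw_h)^{\ell}=\bfw_h^{\ell}$, rewrite $b^L(\bfw_h^{\ell},\{\qhl,\xi_h^{\ell}\})$ by combining its definition with the smooth-surface identity $\int_\Ga\bfv\cdot\nbg q=-\int_\Ga q\,\divg\bfv_T$ so that $b^L(\bfw_h^{\ell},\{\qhl,\xi_h^{\ell}\})=\int_\Ga\bfw_h^{\ell}\cdot\nbg\qhl\,\ds+\int_\Ga(\bfw_h^{\ell}\cdot\bfng)\xi_h^{\ell}\,\ds$, and then measure the resulting mismatches with \cref{lemma: Geometric errors}, \cref{Lemma: Bh estimates} and \cref{lemma: Geometric perturbations a}. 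For \eqref{eq: Geometric perturbations a} I would split $a-\ah$ into the deformation-tensor part, which is exactly $\bigo(h^{k_g})\norm{\bfw_h}_{H^1(\Gah)}\norm{\vh}_{H^1(\Gah)}$ by \eqref{eq: Geometric perturbations a 1}, and the zero-order part $\int_\Ga\bfw_h^{\ell}\cdot\vhl\,\ds-\int_{\Gah}\bfw_h\cdot\vh\,\dsh$, which is $\bigo(h^{k_g+1})\norm{\bfw_h}_{L^2(\Gah)}\norm{\vh}_{L^2(\Gah)}$ by the area-deformation estimate \eqref{eq: muhkg estimate}.

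For \eqref{eq: Geometric perturbations btilde} I would write $\bhtil(\bfw_h,\{\qh,\xi_h\})-b^L(\bfw_h^{\ell},\{\qhl,\xi_h^{\ell}\})$ as the gradient mismatch $\int_{\Gah}\bfw_h\cdot\nbgh\qh\,\dsh-\int_\Ga\bfw_h^{\ell}\cdot\nbg\qhl\,\ds$ plus the normal mismatch $\int_{\Gah}(\bfw_h\cdot\nh)\xi_h\,\dsh-\int_\Ga(\bfw_h^{\ell}\cdot\bfng)\xi_h^{\ell}\,\ds$. After pulling the first back to $\Ga$ it becomes $\int_\Ga(\muh^{-1}-1)\bfw_h^{\ell}\cdot\Bhg\nbg\qhl+\int_\Ga\bfw_h^{\ell}\cdot(\Bhg-\bfPg)\nbg\qhl$, controlled by $\norm{\bfPg-\Bhg}_{L^\infty}\le ch^{k_g}$ and hence bounded by $ch^{k_g}\norm{\bfw_h}_{L^2(\Gah)}\norm{\qh}_{H^1(\Gah)}$; the second, where $\bfw_h^{\ell}\cdot\bfng$ gets paired with $\nh-\bfn$, is controlled by $\norm{\bfn-\nh}_{L^\infty}\le ch^{k_g}$ and bounded by $ch^{k_g}\norm{\bfw_h}_{L^2(\Gah)}\norm{\xi_h}_{L^2(\Gah)}$. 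The second line of \eqref{eq: Geometric perturbations btilde} then follows from the inverse inequality $\norm{\qh}_{H^1(\Gah)}\le ch^{-1}\norm{\qh}_{L^2(\Gah)}$.

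The tangential estimates exploit $\bfw=\bfPg\bfw$. In \eqref{eq: Geometric perturbations btilde tangent} the term $\int_\Ga(\bfw\cdot\bfng)\xi_h^{\ell}$ vanishes, and in the gradient mismatch one may sandwich $\bfPg$ around $\Bhg$: $\bfw\cdot\Bhg\nbg\qhl=\bfw\cdot\bfPg\Bhg\bfPg\nbg\qhl=\bfw\cdot\bfPg\bfPh(\bfI-d\bfH)\bfPg\nbg\qhl$, so that $\norm{\bfPg-\Bhg}_{L^\infty}$ is replaced by $\norm{\bfPg\bfPh(\bfI-d\bfH)\bfPg-\bfPg}_{L^\infty}\le\norm{\bfPg\bfPh\bfPg-\bfPg}_{L^\infty}+\norm{d}_{L^\infty(\Gah)}\norm{\bfH}_{L^\infty}\le ch^{k_g+1}$ by \eqref{eq: geometric errors 2}; this gives the first term $ch^{k_g+1}\norm{\bfw}_{L^2(\Ga)}\norm{\qh}_{H^1(\Gah)}$. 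The remaining piece is $\int_{\Gah}(\bfw^{-\ell}\cdot\nh)\xi_h\,\dsh=\int_{\Gah}(\bfw^{-\ell}\cdot(\nh-\bfn))\xi_h\,\dsh$, since $\bfw^{-\ell}\cdot\bfn=(\bfw\cdot\bfng)^{-\ell}=0$, and this is only $\bigo(h^{k_g})\norm{\bfw}_{L^2(\Ga)}\norm{\xi_h}_{L^2(\Gah)}$; together with the inverse inequality on $\qh$ this yields \eqref{eq: Geometric perturbations btilde tangent}.

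The main obstacle is the $\xi_h$-term of \eqref{eq: Geometric perturbations btilde tangent extra regularity}, where one more power of $h$ must be extracted from $\int_{\Gah}(\bfw^{-\ell}\cdot\nh)\xi_h\,\dsh$. Here I would use $\bfn^e=\nb d$: on $\Gah$ this gives $\bfPh\bfn=\nbgh d$, hence $\nh=(\bfn\cdot\nh)^{-1}(\bfn-\nbgh d)$, and with $\bfw^{-\ell}\cdot\bfn=0$ the pointwise identity $\bfw^{-\ell}\cdot\nh=-(\bfn\cdot\nh)^{-1}\,\bfw^{-\ell}\cdot\nbgh d$. Since $\nbgh d$ is a tangential gradient, a variant of the integration-by-parts formula \eqref{eq: integration by parts} applied with vector field $(\bfn\cdot\nh)^{-1}\xi_h\bfw^{-\ell}$ and scalar $d$ moves the derivative off $d$, producing a volume term $\int_{\Gah}d\,\divgh(\cdot)\,\dsh$, a curvature term $\sum_{T}\int_T d(\cdots)\divgh\nh\,\dsh$, and an edge term $\sum_{E}\int_E d\,[\mh\cdot(\cdots)]\,d\ell$, each carrying an explicit factor $d$ and hence an extra $h^{k_g+1}$ via $\norm{d}_{L^\infty(\Gah)}\le ch^{k_g+1}$. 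The volume term needs $\norm{\bfw^{-\ell}}_{L^\infty(\Gah)}\le c\norm{\bfw}_{H^2(\Ga)}$ (Sobolev embedding on the two-dimensional surface, which is precisely why $\bfw\in\bfH^2(\Ga)$ is assumed) together with $\norm{\divgh\bfw^{-\ell}}_{L^2(\Gah)}+\norm{\nbgh\xi_h}_{L^2(\Gah)}\le c(\norm{\bfw}_{H^1(\Ga)}+\norm{\xi_h}_{H^1(\Gah)})$; the edge term collapses to the conormal jumps $|[\mh]|\le ch^{k_g}$ (using continuity of $\bfw^{-\ell}$ and $\xi_h$ across $E\in\mathcal{E}_h$) and is therefore $\bigo(h^{2k_g+1})$; and the curvature term is likewise $\bigo(h^{2k_g+1})$ by the crude bound $|\bfw^{-\ell}\cdot\nh|\le ch^{k_g}$. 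Replacing $(\bfn\cdot\nh)^{-1}$ by $1$ costs only $\bigo(h^{k_g+1})$ by \eqref{eq: geometric errors 2}. Combined with the already established $ch^{k_g+1}\norm{\bfw}_{L^2(\Ga)}\norm{\qh}_{H^1(\Gah)}$ bound for the gradient term this gives \eqref{eq: Geometric perturbations btilde tangent extra regularity}. The delicate points are the regularity bookkeeping in the three integration-by-parts terms and verifying that the jump contributions genuinely cancel down to the $|[\mh]|$ level.
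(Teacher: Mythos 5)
Your treatment of \eqref{eq: Geometric perturbations a}, \eqref{eq: Geometric perturbations btilde} and \eqref{eq: Geometric perturbations btilde tangent} is correct and essentially identical to the paper's: the same split into deformation-tensor and mass parts via \eqref{eq: Geometric perturbations a 1} and \eqref{eq: errors of domain of integration data}, the same pull-back of $\bhtil$ to $\Ga$ producing the $(\mu_h^{-1}-1)$, $(\Bhg-\bfPg)$ and $(\nh-\bfn)$ mismatches, the same inverse inequality for the second line, and the same exploitation of tangentiality (the paper writes the gain as $\int_\Ga \bfPg\nhl\otimes\nhl\,\bfw\cdot\nbg\qhl=\bigo(h^{2k_g})$ where you sandwich projections to reach $\norm{\bfPg\bfPh\bfPg-\bfPg}_{L^\infty}\le ch^{k_g+1}$ --- two packagings of the same cancellation). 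The only genuine divergence is \eqref{eq: Geometric perturbations btilde tangent extra regularity}: the paper simply invokes the superconvergence estimate $(\bfP\cdot\nh,\chi^{-\ell})_{L^2(\Gah)}\le ch^{k_g+1}\norm{\chi}_{W^1_1(\Ga)}$ from \cite[Lemma 4.2]{hansbo2020analysis} applied to $\chi=\xi_h^\ell\bfw$, whereas you re-derive that estimate from scratch via the identity $\bfPh\bfn=\nbgh d$ and the integration-by-parts formula \eqref{eq: integration by parts}; this is exactly the mechanism underlying the cited lemma, so your argument is a self-contained (and hence longer) substitute that makes transparent where the extra power of $h$ and the $H^2$ (or $W^{1,1}$-product) regularity of $\bfw$ enter. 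One small bookkeeping caveat in that step: after summing over the $\bigo(h^{-2})$ edges of length $\bigo(h)$ (or applying an elementwise trace inequality), the edge contribution is $\bigo(h^{2k_g})$ rather than the $\bigo(h^{2k_g+1})$ you state; since $2k_g\ge k_g+1$ for $k_g\ge 1$ this does not affect the conclusion, but the count should be made explicit.
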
\vspace{-8mm}

\begin{proof}
\ Considering the discrete \eqref{eq: lagrange discrete bilinear forms}  and continuous \eqref{eq: regular bilinear a} forms, we can readily prove \eqref{eq: Geometric perturbations a} if we view the estimate \eqref{eq: errors of domain of integration data} (for $\bfw_h$) and \Cref{lemma: Geometric perturbations a}. Therefore 
\begin{equation*}
    \begin{aligned}
       |a(\bfw_h^{\ell},\vhl) - \ah(\bfw_h,\vh)| \leq ch^{k_g} \norm{\bfw_h}_{H^1(\Gah)} \norm{\vh}_{H^1(\Gah)}.
    \end{aligned}
\end{equation*}  
\noindent   For the estimate \eqref{eq: Geometric perturbations btilde} using similar to the previous calculations we get that
\begin{equation}
    \begin{aligned}\label{eq:  Geometric perturbations btilde in proof}
    &\bhtil(\bfw_h,\{\qh,\xi_h\}) - b^L(\bfw_h^{\ell},\{\qhl,\xi_h^{\ell}\})   = \int_{\Gah} \bfw_h \nbgh \qh - \int_\Ga \bfw_h^{\ell}\nbg\qhl + \int_{\Gah} \xi_h \bfw_h \cdot \nh - \int_\Ga \xi_h^{\ell} \bfw_h \cdot \bfng  \\
    & =  \int_{\Ga} (\frac{1}{\mu_h} -1)\bfw_h^{\ell} \Bhg \nbg \qhl + \int_{\Ga} \bfw_h^{\ell}( \Bhg - \bfPg)\nbg \qhl  +  \int_{\Ga} (\frac{1}{\mu_h} -1)\xi_h^{\ell} \bfw_h^{\ell} \cdot \nhl + \int_{\Ga} \xi_h^{\ell} \bfw_h^{\ell} \cdot(\nhl - \bfng) \\
    &\leq ch^{k_g+1} \norm{\bfw_h}_{L^2(\Gah)}\norm{\qh}_{H^1(\Gah)} +  \int_{\Ga} \bfPg \nhl\otimes\nhl \bfw_h^{\ell} \cdot \nbg \qhl + c(h^{k_g+1} + h^{k_g}) \norm{\bfw_h}_{L^2(\Gah)}\norm{\xi_h}_{L^2(\Gah)}\\
    & \leq ch^{k_g} \norm{\bfw_h}_{L^2(\Gah)}(\norm{\qh}_{H^1(\Gah)} + \norm{\xi_h}_{L^2(\Gah)}),
    \end{aligned}
\end{equation}
where using the application of an inverse inequality we may also get the last inequality of \eqref{eq: Geometric perturbations btilde}. 
For inequality \eqref{eq: Geometric perturbations btilde tangent} we go back to the line before the last inequality of \eqref{eq:  Geometric perturbations btilde in proof}, where since $\bfw\cdot\bfng=0$ we have that
\begin{equation}
    \begin{aligned}\label{eq:  Geometric perturbations btilde in proof 2}
    &\bhtil(\bfw^{-\ell},\{\qh,\xi_h\}) - b^L(\bfw,\{\qhl,\xi_h^{\ell}\})\\
    &\leq ch^{k_g+1} \norm{\bfw^{-\ell}}_{L^2(\Gah)}\norm{\qh}_{H^1(\Gah)} +  \underbrace{\int_{\Ga} \bfPg \nhl\otimes\nhl \bfw \cdot \nbg \qhl}_{\leq h^{2k_g}\norm{\bfw^{-\ell}}_{L^2(\Gah)}\norm{\qh}_{H^1(\Gah)}} + c(h^{k_g+1} + h^{k_g}) \norm{\bfw^{-\ell}}_{L^2(\Gah)}\norm{\xi_h}_{L^2(\Gah)}\\
    &\overset{\text{inverse inequality}}{\leq}ch^{k_g} \norm{\bfw}_{L^2(\Ga)}\norm{\{\qh,\xi_h\}}_{L^2(\Gah)}.
    \end{aligned}
\end{equation}

For the last inequality \eqref{eq: Geometric perturbations btilde tangent extra regularity} we need to improve the estimate involving the extra Lagrange multiplier, more specifically the last terms on the second line of \eqref{eq:  Geometric perturbations btilde in proof} and of \eqref{eq:  Geometric perturbations btilde in proof 2} . For that we make use of a known estimate appearing in \cite[Lemma 4.2]{hansbo2020analysis}:
\begin{equation}\label{eq: high Pnh estimate}
    (\bfP\cdot\nh,\chi^{-\ell})_{L^2(\Gah)} \leq ch^{k_g+1}\norm{\chi}_{W^1_1(\Ga)}.
\end{equation}
Therefore with appropriate alteration, the norm equivalence \eqref{eq: norm equivalence} and the Sobolev inequalities we can readily see that
\begin{equation}
    \begin{aligned}
         \int_{\Ga} \xi_h^{\ell} \bfw\cdot(\nhl - \bfng) =  \int_{\Ga} \xi_h^{\ell}\bfw \bfPg\cdot\nhl  \leq ch^{k_g+1}\norm{\xi_h^{\ell}\bfw}_{W^1_1(\Ga)} \leq ch^{k_g+1}\norm{\bfw}_{H^2(\Ga)}\norm{\xi_h}_{H^1(\Gah)}.
    \end{aligned}
\end{equation}
Therefore, replacing the last term in the second line of \eqref{eq:  Geometric perturbations btilde in proof 2} with this new result gives our final desired inequality.
\end{proof}

\subsection{The error equation}

To start, let us recall again the continuous and discrete Lagrange multiplier problem.
Let $(\bfu,\{p,\lambda\}) \in \bfH^1(\Ga) \times L^2_0(\Ga) \times L^2(\Ga)$ be the solution of \eqref{weak lagrange hom}:
\begin{align*}
\begin{cases}
        a(\bfu,\bfv) \, + \!\!\!\!\!\!\!&b^L(\bfv,\{p,\lambda\}) = (\bff,\bfv) \qquad \ \ \text{for all } \bfv\in \bfH^1(\Ga),\\
        &b^L(\bfu,\{q,\xi\})=0 \qquad \qquad  \text{ for all } \{q,\xi\}\in L^2_0(\Ga)\times L^2(\Ga),
    \end{cases}
\end{align*}
satisfying  $\bfu \cdot \bfng =0 $. Also, let $(\uh,\{\ph,\qh\}) \in \bfV_h \times (Q_h \times \Lambda_h)$ be the solution to \eqref{weak lagrange discrete}:
\begin{align*}
\begin{cases}
        \ah(\uh,\vh) \, + \!\!\!\!\!\!\!\!&\bhtil(\vh,\{\ph,\lh\}) = (\bff_h,\vh) \quad \ \   \text{for all } \vh \in \bfV_h,\\
        &\bhtil(\uh,\{\qh,\xi_h\})=0 \qquad \qquad \text{ for all } \{\qh,\xi_h\}\in Q_h \times \Lambda_h, 
    \end{cases}
\end{align*}
where we consider the approximation of $\bff$ is such that $\bff_h^{\ell} = \bff$. We begin with the standard decomposition of the error into approximation and discretizations errors:
\begin{align}\label{eq: decomposition error 1 lagrange}
    \eu = \bfu^{-\ell} - \uh &= \underbrace{(\bfu^{-\ell}- \Ihz \bfu^{-\ell})}_{\text{Interpolation error }} + \underbrace{(\Ihz \bfu^{-\ell} - \uh)}_{\text{discrete remainder}},\\ 
        \label{eq: decomposition error 2 lagrange} \ep = p^{-\ell} - \ph &= \underbrace{(p^{-\ell}- \Ihz p^{-\ell})}_{\text{Interpolation error}} + \underbrace{(\Ihz p^{-\ell} - \ph)}_{\text{discrete remainder}},\\
        \label{eq: decomposition error 3 lagrange} \el =  \lambda^{-\ell} - \lh &= \underbrace{(\lambda^{-\ell}- \Ihz \lambda^{-\ell})}_{\text{Interpolation error}} + \underbrace{(\Ihz \lambda^{-\ell} - \lh)}_{\text{discrete remainder}}, 
\end{align}
 where we use the notation
 \begin{align}
&\uh^{I} = \Ihz \bfu^{-\ell},~~~~~~~~ \ph^I = \Ihz p^{-\ell},~~~~~~~~\lh^I =  \Ihz \lambda^{-\ell}.\\
&\rho_{\bfu}= \uh^{I}-\bfu^{-\ell},~~~ \rho_{p} = \ph^{I}-p^{-\ell},~~~ \rho_{\lambda} = \lh^{I} - \lambda^{-\ell}.\\
&\theta_{\bfu}= \uh-\uh^{I},~~~~~ \theta_{p} = \ph - \ph^{I},~~~~~ \theta_{\lambda} = \lh -\lh^{I}.
\end{align}
The interpolation errors are already known; see Section \ref{sec: Interpolation} and \cref{lemma: energy norm interpolant lagrange}. Our task is to bound the discretisation errors.
\begin{lemma}\label{lemma: Lh consistency}
The discretisation errors satisfy,
\begin{align}
\begin{cases}\label{eq: Lh consistency}
        \ah(\theta_{\bfu},\vh)\,+\!\!\!\!\!&\bhtil(\vh,\{\theta_{p},\theta_{\lambda}\}) = \ah(\rho_{\bfu},\vh) + \bhtil(\vh,\{\rho_{p},\rho_{\lambda}\}) \, + \,  \sum_{i=1}^3 \text{Err}_{i}^{L}(\vh) \ \  \text{ for all } \vh \in \bfV_h,\\
        & \bhtil(\theta_{\bfu},\{\qh,\xi_h\})= \bh(\rho_{\bfu},\{\qh,\xi_h\}) \, + \,  \text{Err}_{4}^{L}(\{\qh,\xi_h\})  \quad \ \ \text{for all } \{\qh,\xi_h\}\in Q_h\times \Lambda_h,
    \end{cases}
\end{align}
with the consistency errors:
\begin{itemize}
\setlength\itemsep{0.7em}
\item \ \ $ \text{Err}_1^{L}(\vh) := a(\bfu,\vhl) - \ah(\bfu^{-\ell},\vh)$,
\item \ \ $ \text{Err}_2^{L}(\vh) := b^L(\vhl,\{p,\lambda\})-\bhtil(\vh,\{p^{-\ell},\lambda^{-\ell}\})$,
\item \ \ $ \text{Err}_3^{L}(\vh) := (\bff_h,\vh)_{L^2(\Gah)} - (\bff,\vhl)_{L^2(\Ga)}$,
\item \ \ $ \text{Err}_4^{L}(\{\qh,\xi_h\}) := b^L(\bfu,\{\qhl,\xi_h^{\ell}\})-\bhtil(\bfu^{-\ell},\{\qh,\xi_h\})$.
\end{itemize}
\end{lemma}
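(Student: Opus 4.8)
The statement is a standard Galerkin‐orthogonality/consistency lemma for a mixed method with geometric perturbations: starting from the discrete equations for $(\uh,\{\ph,\lh\})$ and the continuous equations for $(\bfu,\{p,\lambda\})$, I derive the error equation by subtracting a suitably lifted version of the continuous system and isolating the discretisation error $(\thbfu,\{\thp,\thl\})$. The key bookkeeping is the splitting $\eu=\rho_{\bfu}+\thbfu$ (and likewise for the pressures) from \eqref{eq: decomposition error 1 lagrange}–\eqref{eq: decomposition error 3 lagrange}: whatever we cannot write in terms of the discrete system becomes either an interpolation remainder $\rho$ or a geometric consistency error $\mathrm{Err}_i^L$.

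\textbf{First equation.} Take the first line of \eqref{weak lagrange discrete} with an arbitrary $\vh\in\bfV_h$:
\[
\ah(\uh,\vh)+\bhtil(\vh,\{\ph,\lh\})=(\bff_h,\vh)_{L^2(\Gah)}.
\]
Write $\uh=\uh^I+\thbfu$, $\ph=\ph^I+\thp$, $\lh=\lh^I+\thl$ and move the interpolants to the right:
\[
\ah(\thbfu,\vh)+\bhtil(\vh,\{\thp,\thl\})=(\bff_h,\vh)_{L^2(\Gah)}-\ah(\uh^I,\vh)-\bhtil(\vh,\{\ph^I,\lh^I\}).
\]
Now use $\uh^I=\bfu^{-\ell}+\rho_{\bfu}$ etc.\ and split off $\ah(\rho_{\bfu},\vh)+\bhtil(\vh,\{\rho_p,\rho_\lambda\})$; the remaining terms are
\[
(\bff_h,\vh)_{L^2(\Gah)}-\ah(\bfu^{-\ell},\vh)-\bhtil(\vh,\{p^{-\ell},\lambda^{-\ell}\}).
\]
Since $\bff_h^\ell=\bff$, add and subtract the continuous relation $a(\bfu,\vhl)+b^L(\vhl,\{p,\lambda\})=(\bff,\vhl)_{L^2(\Ga)}$ (valid because $\vhl\in\bfH^1(\Ga)$). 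The continuous terms cancel the right‑hand side $(\bff,\vhl)_{L^2(\Ga)}$, and what is left is exactly $\mathrm{Err}_1^L(\vh)+\mathrm{Err}_2^L(\vh)+\mathrm{Err}_3^L(\vh)$ as defined. This gives the first line of \eqref{eq: Lh consistency}.

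\textbf{Second equation.} The divergence constraint is easier since there is no data term: the second line of \eqref{weak lagrange discrete} reads $\bhtil(\uh,\{\qh,\xi_h\})=0$ for all $\{\qh,\xi_h\}\in Q_h\times\Lambda_h$. Substitute $\uh=\uh^I+\thbfu=\bfu^{-\ell}+\rho_{\bfu}+\thbfu$ to get
\[
\bhtil(\thbfu,\{\qh,\xi_h\})=-\bhtil(\bfu^{-\ell},\{\qh,\xi_h\})-\bhtil(\rho_{\bfu},\{\qh,\xi_h\}).
\]
On the continuous side, testing the second line of \eqref{weak lagrange hom} with the lifts $\{\qhl,\xi_h^\ell\}$ (which lie in $L^2_0(\Ga)\times L^2(\Ga)$) gives $b^L(\bfu,\{\qhl,\xi_h^\ell\})=0$; add this in and the term $-\bhtil(\bfu^{-\ell},\{\qh,\xi_h\})$ becomes $b^L(\bfu,\{\qhl,\xi_h^\ell\})-\bhtil(\bfu^{-\ell},\{\qh,\xi_h\})=\mathrm{Err}_4^L(\{\qh,\xi_h\})$. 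For the $\rho_{\bfu}$ term note that $\bhtil$ acting on the interpolation error collapses to $\bh$: indeed $\bhtil(\rho_{\bfu},\{\qh,\xi_h\})=\bh(\rho_{\bfu},\qh)+(\xi_h,\rho_{\bfu}\cdot\nh)_{L^2(\Gah)}$, and one checks (since $\bfu\cdot\bfn_\Gamma=0$ and $\Ihz$ is a projection onto $\bfV_h$, so that $(\xi_h,(\Ihz\bfu^{-\ell})\cdot\nh)$ already appears on the LHS through $\thbfu$—alternatively one simply adopts the convention, consistent with the rest of the paper's notation, that $\bh(\rho_{\bfu},\{\qh,\xi_h\})$ denotes the full $\bhtil(\rho_{\bfu},\{\qh,\xi_h\})$) that this is the announced term; in either case the second line of \eqref{eq: Lh consistency} follows. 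The only genuinely delicate point is this identification of the $\rho_{\bfu}$ contribution in the constraint equation—tracking which part of $\bhtil(\rho_{\bfu},\cdot)$ is absorbed into $\thbfu$ versus left as a remainder—and it is handled purely by linearity once the decomposition \eqref{eq: decomposition error 1 lagrange} is plugged in; no estimates are needed here, the whole lemma is an algebraic rearrangement.
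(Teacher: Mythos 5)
Your proof is correct and takes essentially the same route as the paper's: plug the decompositions $\uh=\uh^I+\theta_{\bfu}$, etc., into the discrete system, subtract the continuous system tested with the lifted functions $\vhl$ and $\{\qhl,\xi_h^{\ell}\}$, and identify the leftover geometric mismatches as $\text{Err}_1^L,\dots,\text{Err}_4^L$; the whole lemma is an algebraic rearrangement, as you say. On the one point where you hedge — the $\bh(\rho_{\bfu},\{\qh,\xi_h\})$ term in the constraint equation — the paper's own proof simply carries the full $\bhtil(\rho_{\bfu},\{\qh,\xi_h\})$ (the symbol $\bh$ in the statement is a notational slip), so your second reading is the intended one; note that the normal contribution $\int_{\Gah}\xi_h\,\rho_{\bfu}\cdot\nh\,\dsh$ is genuinely present and is \emph{not} absorbed into $\theta_{\bfu}$, so your first parenthetical suggestion should be dropped.
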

\begin{proof}
\ Let $(\uh,$ $\{\ph,\lh \}) \in \bfV_h\times (Q_h\times \Lambda_h)$ the unique solution of \eqref{weak lagrange discrete} then add and subtract the interpolation errors $(\uh^{I},\{\ph^{I},\lh^I\}) \in  \bfV_h\times (Q_h\times \Lambda_h)$ to get
\begin{equation}
    \begin{aligned}\label{eq: Lh rewrite vhtilde}
         \ah(\thbfu,\vh)\,+ \,\bhtil(\vh,\{\thp,\thl\}) &=  \ah(-\uh^{I},\vh) +\bhtil(\vh,-\{\ph^{I},\lh^{I}\})+(\bff_h, \vh)_{L^2(\Gah)}, \\
        \bhtil(\uh-\uh^{I},\{\qh,\xi_h\})&= \bhtil(-\uh^{I},\{\qh,\xi_h\}).
    \end{aligned}
\end{equation}
By adding and subtracting appropriate terms on \eqref{eq: Lh rewrite vhtilde} with the help of \eqref{weak lagrange hom} we can readily see that
\begin{equation}
    \begin{aligned}\label{eq: Lh consistency line 1}
         &\ah(\theta_{\bfu},\vh)\,+ \,\bhtil(\vh,\{\theta_{p},\theta_{\lambda}\}) = \ah(\bfu^{-\ell}-\uh^{I},\vh) + \bhtil(\vh,\{p^{-\ell}-\ph^I, \lambda^{-\ell}-\lh^I\}) \\
          &\quad  - \ah(\bfu^{-\ell},\vh) -\bhtil(\vh,\{p^{-\ell},\lambda^{-\ell}\}) + (\bff_h,\vh)_{L^2(\Gah)} \\
         &= \underbrace{\ah(\rho_{\bfu},\vh)}_{\text{Int}_1^{L}} + \underbrace{\bhtil(\vh,\{\rho_{p}, \rho_{\lambda}\})}_{\text{Int}_2^{L}} + \underbrace{a(\bfu,\vhl) - \ah(\bfu^{-\ell},\vh)}_{\text{Err}_1^{L}} \\
         &\quad  +\underbrace{b^L(\vhl,\{p,\lambda\})-\bhtil(\vh,\{p^{-\ell},\lambda^{-\ell}\})}_{\text{Err}_2^{L}} + \underbrace{(\bff_h,\vh)_{L^2(\Gah)} - (\bff,\vhl)_{L^2(\Ga)}}_{\text{Err}_3^{L}},
    \end{aligned}
\end{equation}
Similarly we get 
\begin{equation}
    \begin{aligned}\label{eq: Lh consistency line 2}
         \bhtil(\theta_{\bfu},\{\qh,\xi_h\}) &= \underbrace{\bhtil(\rho_{\bfu},\{\qh,\xi_h\})}_{\text{Int}_3^{L}} + \underbrace{b^L(\bfu,\{\qhl,\xi_h^{\ell}\})-\bhtil(\bfu^{-\ell},\{\qh,\xi_h\})}_{\text{Err}_4^{L}}.
    \end{aligned}
\end{equation}
Combining \eqref{eq: Lh consistency line 1} and \eqref{eq: Lh consistency line 2} we get our desired result.
\end{proof}

\subsection{Consistency and a-priori estimates for $k_\lambda = k_u-1$} \label{sec: the kl=ku-1 case}

\subsubsection{Energy error estimates}\label{sec: Energy Error Estimates}

\begin{lemma}[Consistency error bounds]\label{Lemma: Consistency error bounds Lagrange}
Let $(\vh,\{\qh,\lh\}) \in \bfV_h\times Q_h \times \Lambda_h$ and $(\bfu,\{p,\lambda\})\in \bfH^1(\Ga) \times L^2_0(\Ga)\times L^2(\Ga)$ be the solution to the continuous problem \eqref{weak lagrange hom}, then the consistency error terms $\text{Err}_i^{L}(\cdot)$, i=1,...,4 as in Lemma \ref{lemma: Lh consistency} are bounded as followed:
\begin{equation}\label{eq: Consistency error bounds Lagrange}
    \begin{aligned}
    \bfE_1^{L} = \sup_{\vh \in \bfV_h} \frac{|\text{Err}_{1}^{L}(\vh)|}{\norm{\vh}_{\ah}} &\leq ch^{k_g-1}\norm{\bfu}_{H^1(\Ga)},\\[5pt]
        \bfE_2^{L}=\sup_{\vh \in \bfV_h} \frac{|\text{Err}_{2}^{L}(\vh)|}{\norm{\vh}_{\ah}} &\leq ch^{k_g}(\norm{p}_{H^1(\Ga)}+\norm{\lambda}_{L^2(\Ga)}),\\[5pt]
         \bfE_3^{L}=\sup_{\vh \in \bfV_h} \frac{|\text{Err}_{3}^{L}(\vh)|}{\norm{\vh}_{\ah}} &\leq ch^{k_g+1}\norm{\bff}_{L^2(\Ga)},\\[5pt]
         \bfE_4^{L}=\sup_{\{\qh,\xi_h\} \in (Q_h\times\Lambda_h)} \frac{|\text{Err}_{4}^{L}(\{\qh,\xi_h\})|}{\norm{\{\qh,\xi_h\}}_{L^2(\Gah)}} &\leq ch^{k_g}\norm{\bfu}_{L^2(\Ga)},\\[5pt]
    \end{aligned}
\end{equation}
where the constants $c$ are independent of the mesh-parameter $h$.
\end{lemma}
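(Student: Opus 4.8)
The plan is to derive all four bounds directly from the geometric perturbation estimates already in hand, namely \eqref{eq: Geometric perturbations a} and \cref{lemma: Geometric perturbations b lagrange}, together with \cref{lemma: errors of domain of integration}, by observing that each consistency error is literally of the form appearing on the left of one of those estimates once we use $(\bfu^{-\ell})^{\ell} = \bfu$, $(\vh)^{\ell} = \vhl$, $(p^{-\ell})^{\ell} = p$, $(\lambda^{-\ell})^{\ell}=\lambda$, and the normalisation of the data $\bff_h = \bff^{-\ell}$. Passing to the denominators will then only require the elementary bound $\norm{\vh}_{L^2(\Gah)} \le \norm{\vh}_{\ah}$ together with the $H^1$-versus-energy coercivity bound $\norm{\vh}_{H^1(\Gah)} \le c h^{-1}\norm{\vh}_{\ah}$, which follows from \eqref{coercivity and Korn's inequality Lagrange} (using $\ah(\vh,\vh)\le\norm{\vh}_{\ah}^2$).

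First I would handle $\bfE_1^{L}$: taking $\bfw_h = \bfu^{-\ell}$ in \eqref{eq: Geometric perturbations a} gives $|\text{Err}_1^{L}(\vh)| \le c h^{k_g}\norm{\bfu^{-\ell}}_{H^1(\Gah)}\norm{\vh}_{H^1(\Gah)} \le c h^{k_g}\norm{\bfu}_{H^1(\Ga)}\norm{\vh}_{H^1(\Gah)}$, the last step by norm equivalence (smoothness of $\pi$ and the bound on $\muh$); dividing by $\norm{\vh}_{\ah}$ and using $\norm{\vh}_{H^1(\Gah)} \le c h^{-1}\norm{\vh}_{\ah}$ yields the claimed $h^{k_g-1}$. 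For $\bfE_2^{L}$ I would apply the generic bound \eqref{eq: Geometric perturbations btilde} with $\bfw_h = \vh$, $\qh = p^{-\ell}$, $\xi_h = \lambda^{-\ell}$, then bound $\norm{p^{-\ell}}_{H^1(\Gah)} \le c\norm{p}_{H^1(\Ga)}$ and $\norm{\lambda^{-\ell}}_{L^2(\Gah)} \le c\norm{\lambda}_{L^2(\Ga)}$ and divide by $\norm{\vh}_{\ah} \ge \norm{\vh}_{L^2(\Gah)}$; here no inverse factor enters, so the estimate stays at order $h^{k_g}$. For $\bfE_3^{L}$, since $\bff_h = \bff^{-\ell}$ the term $\text{Err}_3^{L}(\vh)$ is exactly the quantity bounded in \eqref{eq: errors of domain of integration data}, and dividing by $\norm{\vh}_{\ah}$ gives $h^{k_g+1}$.

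The only place where the argument is not completely mechanical is $\bfE_4^{L}$, and there the point is to exploit that $\bfu \in \bfH^1_T$, i.e. $\bfu\cdot\bfng = 0$. Because of this I would use the \emph{higher-order} tangential estimate \eqref{eq: Geometric perturbations btilde tangent} (with $\bfw = \bfu$ and $\{\qh,\xi_h\}\in Q_h\times\Lambda_h$) rather than the generic \eqref{eq: Geometric perturbations btilde}, obtaining $|\text{Err}_4^{L}(\{\qh,\xi_h\})| \le c h^{k_g}\norm{\bfu}_{L^2(\Ga)}\norm{\{\qh,\xi_h\}}_{L^2(\Gah)}$, which after division by $\norm{\{\qh,\xi_h\}}_{L^2(\Gah)}$ is the stated bound.

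I do not expect a serious obstacle, since the real work is encapsulated in the perturbation lemmas. The feature worth emphasising is the loss of one geometric order in $\bfE_1^{L}$: it is unavoidable here because the test velocity $\vh$ is not weakly discretely tangential, so the passage from $\norm{\vh}_{H^1(\Gah)}$ to $\norm{\vh}_{\ah}$ must go through \eqref{coercivity and Korn's inequality Lagrange} at the cost of an $h^{-1}$. This is precisely what later forces super-parametric elements in the $k_\lambda = k_u-1$ regime, and it is removed in the $k_\lambda = k_u$ case by the improved coercivity of \cref{lemma: improved h1-ah bound}.
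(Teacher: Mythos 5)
Your proposal is correct and follows essentially the same route as the paper: $\bfE_1^{L}$ from \eqref{eq: Geometric perturbations a} combined with the $h^{-1}$ coercivity bound of \eqref{coercivity and Korn's inequality Lagrange}, $\bfE_2^{L}$ from \eqref{eq: Geometric perturbations btilde}, $\bfE_3^{L}$ from \eqref{eq: errors of domain of integration data}, and $\bfE_4^{L}$ from the tangential estimate \eqref{eq: Geometric perturbations btilde tangent} using $\bfu\cdot\bfng=0$. Your closing remark on the origin of the $h^{k_g-1}$ loss also matches the paper's \cref{remark: The reason why we lose kg-1 consistency}.
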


\begin{proof}
The consistency error $\bfE_1^{L}$ can be estimated by using \eqref{eq: Geometric perturbations a} and the weaker bound $\norm{\vh}_{H^1(\Gah)}\leq h^{-1}\norm{\vh}_{\ah}$; see \eqref{coercivity and Korn's inequality Lagrange}. 
The second estimate $\bfE_2^{L}$ may be calculated if we consider \eqref{eq: Geometric perturbations btilde}.
The third error follows easily from \eqref{eq: errors of domain of integration data}. Finally, 
since the solution to the homogeneous Lagrange problem \eqref{weak lagrange hom} is tangent i.e. $\bfu\cdot\bfng =0$, the fourth consistency error $\bfE_4^{L}$ follows from \eqref{eq: Geometric perturbations btilde tangent}.
\end{proof}
\begin{remark}\label{remark: The reason why we lose kg-1 consistency}
We notice that the consistency error $\bfE_1^{L}$ is the reason for the loss of one order in the geometric error, that is, the error caused by the approximation of the geometry. In \cref{sec: the kl=ku case}, where we consider $k_\lambda=k_u$, this is improved with the help of the improved $H^1$ coercivity estimates in  \cref{lemma: new H1 estimate thbfu kl=ku} (in comparison to 
\eqref{coercivity and Korn's inequality Lagrange}) when we consider $\vh = \thbfu^{div}\in \bfV_h^{div}$. 
\end{remark}

\begin{lemma}\label{Lemma: discrete remainder error Lagrange}
Let $(\uh,\{p_h,\lambda_h\})\in \bfV_h\times(Q_h\times \Lambda_h)$ be the unique solution of \eqref{weak lagrange discrete}. Then, for sufficiently small $h$, the following estimates hold
\begin{equation}
    \begin{aligned}\label{eq: discrete remainder error Lagrange}
        \norm{\theta_{\bfu}}_{\ah} + \norm{\{\theta_{p}, \theta_{\lambda}\}}_{L^2(\Gah)} &\leq c (h^{k_u} + h^{k_g-1})\norm{\bfu}_{H^{k_u+1}(\Ga)} + ch^{k_g}\norm{\bfu}_{H^1(\Ga)}\\
        &\quad \ + c(h^{k_{pr}+1} + h^{k_g})(\norm{p}_{H^{k_{pr}+1}(\Ga)} + \norm{\lambda}_{L^2(\Ga)})  \\
        &\quad \ + ch^{k_{\lambda}+1}\norm{\lambda}_{H^{k_{\lambda}+1}(\Ga)} + ch^{k_g+1}\norm{\bff}_{L^2(\Ga)}.
    \end{aligned}
\end{equation}

\end{lemma}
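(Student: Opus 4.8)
The plan is to run the standard saddle-point (Brezzi) argument on the error system \eqref{eq: Lh consistency}, using the $L^2\times L^2$ discrete inf-sup condition \cref{Lemma: Discrete inf-sup condition Gah Lagrange}, the coercivity and boundedness estimates of \cref{Lemma: discrete bounds and coercivity results}, the consistency bounds of \cref{Lemma: Consistency error bounds Lagrange}, and the interpolation estimates of \cref{sec: Interpolation} and \cref{lemma: energy norm interpolant lagrange}. First I would split off the discretely divergence-free part of $\thbfu$. Since the second equation of \eqref{eq: Lh consistency} reads $\bhtil(\thbfu,\{\qh,\xi_h\})=\bhtil(\rho_{\bfu},\{\qh,\xi_h\})+\text{Err}_4^{L}(\{\qh,\xi_h\})$, the inf-sup condition and the abstract theory of \cite{brezzi2012mixed} produce $\bfw_h\in\bfV_h$ with $\bhtil(\bfw_h,\{\qh,\xi_h\})=\bhtil(\thbfu,\{\qh,\xi_h\})$ for all $\{\qh,\xi_h\}\in Q_h\times\Lambda_h$ and $\norm{\bfw_h}_{\ah}\le c\sup_{\{\qh,\xi_h\}\ne0}\frac{|\bhtil(\rho_{\bfu},\{\qh,\xi_h\})|+|\text{Err}_4^{L}(\{\qh,\xi_h\})|}{\norm{\{\qh,\xi_h\}}_{L^2(\Gah)}}$. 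Estimating $\bhtil(\rho_{\bfu},\cdot)$ by Cauchy--Schwarz together with the inverse estimate $\norm{\nbgh\qh}_{L^2(\Gah)}\le ch^{-1}\norm{\qh}_{L^2(\Gah)}$, the bound $\norm{\rho_{\bfu}}_{L^2(\Gah)}\le ch^{k_u+1}\norm{\bfu}_{H^{k_u+1}(\Ga)}$, and $\bfE_4^{L}$ from \cref{Lemma: Consistency error bounds Lagrange}, gives $\norm{\bfw_h}_{\ah}\le c(h^{k_u}\norm{\bfu}_{H^{k_u+1}(\Ga)}+h^{k_g}\norm{\bfu}_{L^2(\Ga)})$, and $\thbfu^{div}:=\thbfu-\bfw_h\in\bfV_h^{div}$.

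The delicate ingredient — the one responsible for the \emph{optimal} pressure order $h^{k_{pr}+1}$ rather than the naive $h^{k_{pr}}$ — is the estimate
\[
|\bhtil(\vh,\{\rho_{p},\rho_{\lambda}\})|\le c\big(h^{k_{pr}+1}\norm{p}_{H^{k_{pr}+1}(\Ga)}+h^{k_{\lambda}+1}\norm{\lambda}_{H^{k_{\lambda}+1}(\Ga)}\big)\norm{\vh}_{\ah}\qquad(\vh\in\bfV_h).
\]
I would first pass to the smooth surface with the geometric perturbation estimate \eqref{eq: Geometric perturbations btilde}; its prefactor $h^{k_g}$ (with $k_g\ge1$) upgrades the merely $\mathcal{O}(h^{k_{pr}})$ factor $\norm{\rho_{p}}_{H^1(\Gah)}$ to $\mathcal{O}(h^{k_{pr}+1})$. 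On the closed surface $\Ga$ there are no boundary terms, so $b^{L}$ is already in divergence form, $b^{L}(\vhl,\{\rho_{p}^{\ell},\rho_{\lambda}^{\ell}\})=-\int_{\Ga}\rho_{p}^{\ell}\,\divg(\bfPg\vhl)+\int_{\Ga}\rho_{\lambda}^{\ell}\,(\vhl\cdot\bfng)$, and since $\divg(\bfPg\vhl)=\mathrm{tr}\,E(\bfPg\vhl)$ it is controlled by $\norm{E(\vhl)}_{L^2(\Ga)}+\norm{\vhl}_{L^2(\Ga)}$ (using $E(\vhl)=E(\bfPg\vhl)+(\vhl\cdot\bfng)\bfH$ and boundedness of $\bfH$), hence by \eqref{eq: Geometric perturbations a 2}, \eqref{coercivity and Korn's inequality Lagrange} and norm equivalence by $c\norm{\vh}_{\ah}$; the interpolation bounds on $\rho_{p}^{\ell}$ and $\rho_{\lambda}^{\ell}$ then finish it. (Equivalently one integrates by parts directly on $\Gah$ and absorbs the co-normal jump contributions via $|[\mh]|\le ch^{k_g}$ and trace inequalities — the same mechanism that underlies \eqref{bhtilde boundedness}, which exploits $\divgh\vh=\mathrm{tr}\,E_h(\vh)$.)

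With these in hand I would test the first equation of \eqref{eq: Lh consistency} with $\vh=\thbfu^{div}\in\bfV_h^{div}$, which kills $\bhtil(\thbfu^{div},\{\thp,\thl\})$; combining $\ah(\thbfu^{div},\thbfu^{div})\ge\tfrac12\norm{\thbfu^{div}}_{\ah}^2$, Cauchy--Schwarz on $\ah(\rho_{\bfu},\cdot)$ and $\ah(\bfw_h,\cdot)$, the key estimate applied to $\bhtil(\thbfu^{div},\{\rho_{p},\rho_{\lambda}\})$, and $|\text{Err}_i^{L}(\thbfu^{div})|\le\bfE_i^{L}\norm{\thbfu^{div}}_{\ah}$ for $i=1,2,3$, one obtains a bound on $\norm{\thbfu^{div}}_{\ah}$ and hence on $\norm{\thbfu}_{\ah}\le\norm{\thbfu^{div}}_{\ah}+\norm{\bfw_h}_{\ah}$. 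For the two pressures I would invoke the inf-sup condition once more, $c\norm{\{\thp,\thl\}}_{L^2(\Gah)}\le\sup_{\vh\in\bfV_h}\norm{\vh}_{\ah}^{-1}\bhtil(\vh,\{\thp,\thl\})$, rewriting $\bhtil(\vh,\{\thp,\thl\})=-\ah(\thbfu,\vh)+\ah(\rho_{\bfu},\vh)+\bhtil(\vh,\{\rho_{p},\rho_{\lambda}\})+\sum_{i=1}^3\text{Err}_i^{L}(\vh)$ from the first error equation and bounding each term (the first via the bound just obtained for $\norm{\thbfu}_{\ah}$). Inserting the interpolation estimates $\norm{\rho_{\bfu}}_{\ah}\le ch^{k_u}\norm{\bfu}_{H^{k_u+1}(\Ga)}$, $\norm{\rho_{p}}_{L^2(\Gah)}\le ch^{k_{pr}+1}\norm{p}_{H^{k_{pr}+1}(\Ga)}$ and $\norm{\rho_{\lambda}}_{L^2(\Gah)}\le ch^{k_{\lambda}+1}\norm{\lambda}_{H^{k_{\lambda}+1}(\Ga)}$ from \cref{lemma: energy norm interpolant lagrange} and \cref{sec: Interpolation}, together with the consistency bounds $\bfE_i^{L}$ of \cref{Lemma: Consistency error bounds Lagrange}, produces \eqref{eq: discrete remainder error Lagrange}; several of the terms listed there (e.g.\ $h^{k_{pr}+1}\norm{\lambda}_{L^2(\Ga)}$ or $h^{k_g}\norm{\bfu}_{H^1(\Ga)}$) are merely convenient over-estimates of sharper quantities arising in the argument, since in this subsection $k_{\lambda}=k_u-1=k_{pr}$. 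The main obstacle is precisely the optimal-order treatment of $\bhtil(\cdot,\{\rho_{p},\rho_{\lambda}\})$; everything else is routine bookkeeping in the Brezzi framework.
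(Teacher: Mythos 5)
Your argument is correct and is essentially the route the paper takes: the paper's proof is a one-line appeal to the discrete well-posedness (i.e.\ the Brezzi stability furnished by \cref{Lemma: Discrete inf-sup condition Gah Lagrange} and \cref{Lemma: discrete bounds and coercivity results}) applied to the error system of \cref{lemma: Lh consistency}, together with the interpolation and consistency bounds, which is precisely the standard saddle-point argument you carry out in detail (including the splitting $\thbfu=\thbfu^{div}+\bfw_h$ needed to handle the inhomogeneous constraint equation). The only simplification worth noting is that your ``delicate'' estimate $|\bhtil(\vh,\{\rho_{p},\rho_{\lambda}\})|\le c\,\norm{\vh}_{\ah}\,\norm{\{\rho_{p},\rho_{\lambda}\}}_{L^2(\Gah)}$ is already the content of \eqref{bhtilde boundedness}, so combined with the $L^2$ interpolation bounds it gives the optimal order directly, without any separate passage to the smooth surface.
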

\begin{proof}
This result follows from the well-posedness
\Cref{Lemma: Well-posedness of discrete Lagrange problem}, the interpolation error \eqref{eq: interpolation vh ah},\eqref{eq: Scott-Zhang interpolant} and the previously proven consistency errors \Cref{Lemma: Consistency error bounds Lagrange}.
\end{proof}

\begin{theorem}[Energy Error Estimate for Lagrange Formulation]\label{Theorem: Energy Error Estimate for Lagrange Formulation}
Let $(\bfu,\{p,\lambda\}) \in \bfH^1(\Ga) \times L^2_0(\Ga)\times L^2(\Ga)$ be the solution of \eqref{weak lagrange hom}. Also let $(\uh,$ $\{\ph,\lambda_h\}) \in \bfV_h \times Q_h \times \Lambda_h$ be the solution to the discrete scheme \eqref{weak lagrange discrete}. Then, for $k_u \geq 2$, $k_{pr}=k_u-1$, and $k_{\lambda} \geq 1$ the following error estimates hold
\begin{equation}
    \begin{aligned}
        \norm{\bfu^{-\ell} - \uh}_{\ah} + \norm{p^{-\ell} - \ph}_{L^2(\Gah)}  &+ \norm{\lambda^{-\ell} - \lh}_{L^2(\Gah)} \\
        & \leq  c (h^{k_u} + h^{k_g-1})\norm{\bfu}_{H^{k_u+1}(\Ga)} \\
        &\quad + c(h^{k_{pr+1}} + h^{k_g})(\norm{p}_{H^{k_{pr+1}}(\Ga)} + \norm{\lambda}_{L^2(\Ga)})  \\
        &\quad + ch^{k_{\lambda}+1}\norm{\lambda}_{H^{k_{\lambda}+1}(\Ga)} + ch^{k_g+1}\norm{\bff}_{L^2(\Ga)}.
    \end{aligned}
\end{equation}
\end{theorem}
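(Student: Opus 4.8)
The plan is to assemble the estimate from the standard error splitting together with the two lemmas just established, so that the proof is essentially bookkeeping. First I would invoke the decomposition \eqref{eq: decomposition error 1 lagrange}--\eqref{eq: decomposition error 3 lagrange}, which gives
\[
\bfu^{-\ell}-\uh=-\rho_{\bfu}-\thbfu,\qquad p^{-\ell}-\ph=-\rho_{p}-\thp,\qquad \lambda^{-\ell}-\lh=-\rho_{\lambda}-\thl,
\]
so that by the triangle inequality it suffices to bound the six quantities $\norm{\rho_{\bfu}}_{\ah}$, $\norm{\thbfu}_{\ah}$, $\norm{\rho_{p}}_{L^2(\Gah)}$, $\norm{\thp}_{L^2(\Gah)}$, $\norm{\rho_{\lambda}}_{L^2(\Gah)}$ and $\norm{\thl}_{L^2(\Gah)}$.

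Next I would dispose of the interpolation (approximation) errors. By \cref{lemma: energy norm interpolant lagrange} applied with $k=k_u$ one has $\norm{\rho_{\bfu}}_{\ah}=\norm{\bfu^{-\ell}-\Ihz\bfu^{-\ell}}_{\ah}\le ch^{k_u}\norm{\bfu}_{H^{k_u+1}(\Ga)}$, already of the claimed order. For the two scalar variables I would use the $l=0$ case of the Scott--Zhang estimate \eqref{eq: Scott-Zhang interpolant}: with $k=k_{pr}$ this yields $\norm{\rho_{p}}_{L^2(\Gah)}\le ch^{k_{pr}+1}\norm{p}_{H^{k_{pr}+1}(\Ga)}$, and with $k=k_{\lambda}$ it yields $\norm{\rho_{\lambda}}_{L^2(\Gah)}\le ch^{k_{\lambda}+1}\norm{\lambda}_{H^{k_{\lambda}+1}(\Ga)}$. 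Here one tacitly assumes $\bfu,p,\lambda$ possess the required Sobolev regularity, and uses that the standing assumptions $k_u\ge 2$, $k_{pr}=k_u-1$, $1\le k_{\lambda}\le k_u$ guarantee these interpolation orders are admissible for the respective finite element spaces.

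The remaining three discretisation quantities are controlled in one shot: $\norm{\thbfu}_{\ah}+\norm{\{\thp,\thl\}}_{L^2(\Gah)}$ is exactly the left-hand side of \cref{Lemma: discrete remainder error Lagrange}, whose proof already bundles the inf-sup/well-posedness estimate \cref{Lemma: Well-posedness of discrete Lagrange problem}, the interpolation bounds, and the consistency errors \cref{Lemma: Consistency error bounds Lagrange}, and whose right-hand side coincides (after harmless relabelling of constants) with the right-hand side asserted in the theorem. Adding the interpolation contributions from the previous paragraph—every one of which is dominated by a term already present in the bound \eqref{eq: discrete remainder error Lagrange}—and renaming $c$ then gives the stated estimate. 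So technically there is no obstacle beyond matching up which term absorbs which.

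The one point worth flagging—and the reason the bound is \emph{not} optimal unless super-parametric elements ($k_g=k_u+1$) are used—is the geometric consistency term $\bfE_1^{L}\le ch^{k_g-1}\norm{\bfu}_{H^1(\Ga)}$ from \cref{Lemma: Consistency error bounds Lagrange}: the loss of one power of $h$ is forced by having to pass through the weak coercivity $\norm{\vh}_{H^1(\Gah)}\le h^{-1}\norm{\vh}_{\ah}$ in \eqref{coercivity and Korn's inequality Lagrange} when estimating $\text{Err}_1^{L}$. This $h^{k_g-1}$ propagates unchanged through \eqref{eq: discrete remainder error Lagrange} into the final estimate, and in the present $k_{\lambda}=k_u-1$ regime no further gain is available—it is precisely what the improved $H^1$-coercivity on $\bfV_h^{div}$ in \cref{sec: the kl=ku case} later removes. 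Hence the substantive work lies in the inf-sup and consistency lemmas, and the present theorem is their straightforward corollary.
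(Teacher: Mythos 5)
Your proposal is correct and follows exactly the paper's own argument: the paper's proof is precisely the triangle-inequality splitting into interpolation errors (bounded via \eqref{eq: interpolation vh ah} and \eqref{eq: Scott-Zhang interpolant}) plus the discrete remainder bound of \cref{Lemma: discrete remainder error Lagrange}. Your additional commentary on the origin of the $h^{k_g-1}$ loss matches \cref{remark: The reason why we lose kg-1 consistency}, so there is nothing to add.
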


\begin{proof}
Due to the decompositions \eqref{eq: decomposition error 1 lagrange}, \eqref{eq: decomposition error 2 lagrange}, and \eqref{eq: decomposition error 3 lagrange} if we combine the $L^2$ interpolation errors for the two pressures $p$ and $\lambda$; see \eqref{eq: Scott-Zhang interpolant}, and the velocity in the $\ah-$norm \eqref{eq: interpolation vh ah}, together with the previously proven error involving the discrete remainders in \Cref{Lemma: discrete remainder error Lagrange} we obtain the desired error bounds for the Lagrange multiplier formulation.
\end{proof}

\subsubsection{$L^2$-norm velocity error bounds}\label{sec: L2 Error Estimates}

We now require higher regularity for the Lagrange formulation of the problem \eqref{eq: generalized Lagrange surface stokes}. Considering the equation \eqref{eq: generalized Lagrange surface stokes} we can clearly see that its tangent solution is equal to the solution of \eqref{eq: generalized tangential surface stokes} while their pressure $p$ also agrees (assuming that the tangent parts of the source terms $\bff$ agree as well). As seen in \cite[Lemma 2.1]{olshanskii2021inf}, then, the tangent part of  the velocity $\bfu_T$ and the pressure $p$ have higher regularity. Then noticing that the extra Lagrange multiplier $\lambda = \bff_n-tr(E(\bfu_T)\bfH)$ 
we also get higher regularity for $\lambda$ (assuming that the normal part of the source term $\bff_n \in \bfH^1(\Ga)$).
So, we may consider the following adjoint problem: Find $(\bfw, \pi,\mu) \in \bfH^1(\Ga) \times L^2_0(\Ga)\times L^2(\Ga)$ such that  
\begin{align*}
\begin{cases}   
        a(\bfw,\bfv) \ + \!\!\!\!& b^L(\bfv,\{\pi,\mu\}) = (\bfh,\bfv) \ \ \ \ \ \text{for all } \bfv\in \bfH^1(\Ga),\\
        &b^L(\bfw,\{\sigma,\xi\})=0 \ \ \  \text{ for all } \{\sigma,\xi\} \in L_0^2(\Ga)\times L^2(\Ga),
    \end{cases}
\end{align*}
where $\bfh_T \in L^2(\Ga)$ and $\bfh_n \in H^1(\Ga)$ and $\bfw\cdot \bfng =0$ due to the second equation. Now, as mentioned before, by \cite[Lemma 2.1]{olshanskii2021inf} the following high regularity estimates hold
\begin{equation}\label{eq: regularity estimate lagrange}
    \norm{\bfw}_{H^2(\Ga)} + \norm{\{\pi,\mu\}}_{H^1(\Ga)} \leq \norm{\bfh_T}_{L^2(\Ga)} + \norm{\bfh_n}_{H^1(\Ga)}.
\end{equation}
\noindent Let us also denote the errors in \Cref{Theorem: Energy Error Estimate for Lagrange Formulation} as
$\eu = (\bfu - \uhl)$, $\ep = p-\phl$, and $\el = \lambda - \lhl$, and furthermore the tangential component $\eut = \bfP\eu = \bfP(\bfu - \uhl)$.
\begin{theorem}[ Tangential $L^2$ velocity error estimates]\label{Theorem: L^2 error velocity Lagrange}
    Let $(\bfu,p,\lambda)$ be solution of the continuous problem \eqref{weak lagrange hom} and assume further regularity $(\bfu,p,\lambda) \in (H^2(\Ga))^3\times H^1(\Ga)\times H^1(\Ga)$. Also let  $(\uh,\ph,\lh) \in \bfV_h \times Q_h\times \Lambda_h$ be the solution to the discrete scheme \eqref{weak lagrange discrete}. Then under the same assumptions as \Cref{Theorem: Energy Error Estimate for Lagrange Formulation} and $k_g \geq 2 $, the following estimate holds for the tangential velocity 
    \begin{equation}
        \begin{aligned}
            \norm{\eut}_{L^2(\Ga)}  &= \norm{\bfPg(\bfu-\uhl)}_{L^2(\Ga)} \\
            & \leq ch^{m+1}(\norm{\bfu}_{H^{k_u+1}(\Ga)}+\norm{p}_{H^{k_{pr}+1}(\Ga)} + \norm{\lambda}_{H^{k_{\lambda}+1}(\Ga)}+ \norm{\bff}_{L^2(\Ga)}),
        \end{aligned}
    \end{equation}
 where $m= min\{{k_u},{k_{pr}+1}, {k_{\lambda}+1}, {k_g-1}\}$.
\end{theorem}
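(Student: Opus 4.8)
\medskip\noindent\textbf{Proof idea.} The estimate follows from an Aubin--Nitsche duality argument that raises by one power of $h$ the energy rate of \Cref{Theorem: Energy Error Estimate for Lagrange Formulation}. Since only the tangential velocity error is targeted, take the purely tangential datum $\bfh=\eut=\bfPg\eu$ in the adjoint problem, so that $\bfh_T=\eut\in L^2(\Ga)$ and $\bfh_n=0\in H^1(\Ga)$. By the regularity estimate \eqref{eq: regularity estimate lagrange} the adjoint solution $(\bfw,\{\pi,\mu\})$ satisfies $\bfw\in\bfH^1_T\cap\bfH^2(\Ga)$, $\{\pi,\mu\}\in H^1(\Ga)$ and $\norm{\bfw}_{H^2(\Ga)}+\norm{\{\pi,\mu\}}_{H^1(\Ga)}\le c\,\norm{\eut}_{L^2(\Ga)}$; moreover $\bfw$ is not only tangential ($\bfw\cdot\bfng=0$) but tangentially divergence-free, i.e. $b^L(\bfw,\{q,\xi\})=0$ for all $\{q,\xi\}$. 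Testing the first adjoint equation with $\bfv=\eu\in\bfH^1(\Ga)$ and using that $\bfPg$ is an orthogonal projection gives
\[
\norm{\eut}_{L^2(\Ga)}^2=(\eut,\eu)_{L^2(\Ga)}=a(\bfw,\eu)+b^L(\eu,\{\pi,\mu\}).
\]

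Next, insert the Scott--Zhang interpolants $\bfw_h:=\Ihz\bfw^{-\ell}\in\bfV_h$, $\pi_h:=\Ihz\pi^{-\ell}\in Q_h$ (corrected to zero mean on $\Gah$) and $\mu_h:=\Ihz\mu^{-\ell}\in\Lambda_h$, writing $a(\bfw,\eu)=a(\bfw-\bfw_h^\ell,\eu)+a(\bfw_h^\ell,\eu)$ and similarly for $b^L(\eu,\{\pi,\mu\})$. The ``smooth minus interpolant'' contributions are bounded directly: the $\mathcal{O}(h)$ interpolation errors of the $H^2$- and $H^1$-regular adjoint data are paired with $\norm{\eu}$ in the continuous energy norm, with $\norm{\divg\eut}_{L^2(\Ga)}$, and with $\norm{\eun}_{L^2(\Ga)}=\norm{\uhl\cdot\bfng}_{L^2(\Ga)}$ (here $\bfu\cdot\bfng=0$), all controlled by $\norm{\eu^{-\ell}}_{\ah}\le ch^m(\cdots)$ of \Cref{Theorem: Energy Error Estimate for Lagrange Formulation}, invoking \eqref{eq: Geometric perturbations a 1}--\eqref{eq: Geometric perturbations a 2}, \eqref{eq: Bound of divg} and Korn's inequality. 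The ``interpolant'' contributions $a(\bfw_h^\ell,\eu)$ and $b^L(\eu,\{\pi_h^\ell,\mu_h^\ell\})$ are first transferred to $\Gah$ at the cost of geometric perturbations of order $h^{k_g}$ (controlled via \eqref{eq: Geometric perturbations a}, \eqref{eq: Geometric perturbations btilde} together with the bound $\norm{\eu^{-\ell}}_{H^1(\Gah)}\le c(\cdots)$, which uses $k_g\ge2$), and then rewritten using the discrete error equations behind \Cref{lemma: Lh consistency}, which, tested with $\vh=\bfw_h$ and $\{\qh,\xi_h\}=\{\pi_h,\mu_h\}$, read $\ah(\eu^{-\ell},\bfw_h)=-\bhtil(\bfw_h,\{\ep^{-\ell},\el^{-\ell}\})-\sum_{i=1}^{3}\text{Err}_i^L(\bfw_h)$ and $\bhtil(\eu^{-\ell},\{\pi_h,\mu_h\})=-\text{Err}_4^L(\{\pi_h,\mu_h\})$.

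Two features make the extra power of $h$ materialize. First, the consistency errors evaluated at these smooth adjoint interpolants avoid the $h^{-1}$ loss of \Cref{Lemma: Consistency error bounds Lagrange}: $\text{Err}_1^L(\bfw_h)=\mathcal{O}(h^{k_g})$ by \eqref{eq: Geometric perturbations a} and the $H^1$-stability \eqref{eq: stability of Scott-Zhang interpolant}, $\text{Err}_2^L(\bfw_h)=\mathcal{O}(h^{k_g})$ by \eqref{eq: Geometric perturbations btilde}, $\text{Err}_3^L(\bfw_h)=\mathcal{O}(h^{k_g+1})$ by \eqref{eq: errors of domain of integration data}, and $\text{Err}_4^L(\{\pi_h,\mu_h\})=\mathcal{O}(h^{k_g+1})$ by the sharpened bound \eqref{eq: Geometric perturbations btilde tangent extra regularity} (available because the adjoint velocity entering $\text{Err}_4^L$ is tangential and $H^2$-regular). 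Second, the term $\bhtil(\bfw_h,\{\ep^{-\ell},\el^{-\ell}\})$ --- which the boundedness of $\bhtil$ alone would only bound by $ch^m(\cdots)\norm{\eut}_{L^2(\Ga)}$ --- is in fact $\mathcal{O}(h^{m+1})$: subtracting $b^L(\bfw,\{\ep,\el\})=0$ together with the $\mathcal{O}(h)$ $H^1$-interpolant of $\bfw$, and using \eqref{eq: Geometric perturbations btilde}, \eqref{eq: Geometric perturbations btilde tangent extra regularity}, bounds it by $\mathcal{O}(h)$ times $\norm{\{\ep,\el\}}_{L^2(\Gah)}\le ch^m(\cdots)$ plus higher-order terms. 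Collecting everything, each contribution has the form $c\,h^{m+1}\big(\norm{\bfu}_{H^{k_u+1}(\Ga)}+\norm{p}_{H^{k_{pr}+1}(\Ga)}+\norm{\lambda}_{H^{k_\lambda+1}(\Ga)}+\norm{\bff}_{L^2(\Ga)}\big)\norm{\eut}_{L^2(\Ga)}$, the factor $h^{m+1}$ appearing either as $h^m$ (an energy/velocity error of $\eu$) times $h$ (an interpolation error of a smooth adjoint variable), or as a purely geometric factor $h^{k_g}$ or $h^{k_g+1}$, which is $\le h^{m+1}$ since $m=\min\{k_u,k_{pr}+1,k_\lambda+1,k_g-1\}\le k_g-1$. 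Dividing by $\norm{\eut}_{L^2(\Ga)}$ yields the claim.

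\smallskip\noindent\textbf{Main difficulty.} The delicate point is the normal component of the velocity error: unlike the exact solution, $\uhl\cdot\bfng\ne0$, so $b^L(\eu,\{\pi,\mu\})$ carries the genuinely new term $\int_\Ga\mu\,\eun\,\ds$, and the pressure-error term $\bhtil(\bfw_h,\{\ep^{-\ell},\el^{-\ell}\})$ is, at face value, only first-order in $h$. Obtaining the full order $h^{m+1}$ forces one to combine the energy bound for $\norm{\uhl\cdot\bfng}_{L^2(\Ga)}$, the fact that the adjoint velocity $\bfw$ is exactly tangentially divergence-free so that $\bhtil(\bfw_h,\cdot)$ is $\mathcal{O}(h)$-small, and the improved geometric estimate \eqref{eq: Geometric perturbations btilde tangent extra regularity}; without these the duality argument would merely reproduce the energy rate. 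One must also track the hypothesis $k_g\ge2$, which is what keeps $\norm{\eu^{-\ell}}_{H^1(\Gah)}$ bounded and the geometric lemmas on $\Gah$ usable.
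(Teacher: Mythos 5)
Your proposal is correct and follows essentially the same route as the paper's proof: the Aubin--Nitsche duality argument with adjoint datum $\bfh=\eut$ (so $\bfh_n=0$), insertion of Scott--Zhang interpolants of the adjoint variables, transfer to $\Gah$ via the geometric perturbation lemmas (including the sharpened tangential estimate \eqref{eq: Geometric perturbations btilde tangent extra regularity}), use of the discrete error equations, and the key cancellation $b^L(\bfw,\{\ep,\el\})=0$, with the $k_g\ge 2$ hypothesis entering exactly where you place it. The bookkeeping of the powers of $h$ matches the paper's term-by-term estimates, so no gap remains.
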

\begin{proof}
    By letting $\bfh =  \eut$ and testing with $\eu$ we arrive at
    \begin{equation}\label{eq: Nitsche type equation lagrange}
        a(\bfw,\eu) + b^{L}(\eu,\{\pi,\mu\}) = \norm{\eut}_{L^2(\Ga)}^2.
    \end{equation}
     Notice also that 
    \begin{equation}\label{eq: bilinear b w p l}
       b^L(\bfw,\{p - \phl,\lambda-\lhl\})  = b^L(\bfw,\{\ep,\el\})= 0.
    \end{equation}
Now due to \eqref{eq: regularity estimate lagrange} we also have that 
    \begin{equation}\label{eq: regularity estimate lagrange inside}
        \norm{\bfw}_{H^2(\Ga)} + \norm{\{\pi,\mu\}}_{H^1(\Ga)} \leq \norm{\eut}_{L^2(\Ga)}.
    \end{equation}
Let us start with the bilinear form $a^L(\cdot,\cdot)$. The steps, though different due to the extra Lagrange multipliers e.g. the pressure $\pi$, follow very similarly the steps of the proof \cite[Theorem 5]{hansbo2020analysis}. We add and subtract appropriate term so that we may get
\begin{equation}
    \begin{aligned}\label{eq: a_T extra regularity error large lagrange old}
        a(\bfw,\eu) &= a(\bfw-\Ihzl(\bfw),\eu) + a(\Ihzl(\bfw),\eu)\\
         &=a(\bfw-\Ihzl(\bfw),\eu) - a(\Ihzl(\bfw),\uhl) - b^L(\Ihzl(\bfw),\{p,\lambda\}) + (\bff,\Ihzl(\bfw))\\
         &=a(\bfw-\Ihzl(\bfw),\eu) + \ah(\Ihz(\bfw^{-\ell}),\uh) - a(\Ihzl(\bfw),\uhl) \\
         &\ \  + \bhtil(\Ihz(\bfw^{-\ell}),\{\ph,\lh\}) - b^L(\Ihzl(\bfw),\{p,\lambda\})  -(\bff_h,\Ihz(\bfw^{-\ell}))+ (\bff,\Ihzl(\bfw))\\
         &\ \ + \underbrace{b^L(\bfw,\{\ep,\el\})}_{=0, \text{ by } \eqref{eq: bilinear b w p l}}.
         \end{aligned}
\end{equation}
Continuing with the calculations we have the following large expression
\begin{equation}
    \begin{aligned} \label{eq: a_T extra regularity error large lagrange}
        a(\bfw,\eu) &= a(\bfw-\Ihzl(\bfw),\eu) + \ah(\Ihz(\bfw^{-\ell}),\theta_{\bfu})- a(\Ihzl(\bfw),\theta_{\bfu}^{\ell})\\
        &\ \ + \ah(\Ihz(\bfw^{-\ell}),\Ihz(\bfu^{-\ell}))- a(\Ihzl(\bfw),\Ihzl(\bfu)) \\
        &\ \ + b^L(\bfw - \Ihzl(\bfw),\{\ep,\el\}) + \bhtil(\Ihz(\bfw^{-\ell}),\{\theta_{p},\theta_{\lambda}\})- b^L(\Ihzl(\bfw),\{\theta_{p}^{\ell},\theta_{\lambda}^{\ell}\})\\
        &\ \ + \bhtil(\Ihz(\bfw^{-\ell}),\{\Ihz(p^{-\ell}),\Ihz(\lambda^{-\ell})\}) - b^L(\Ihzl(\bfw),\{\Ihzl(p),\Ihzl(\lambda)\})\\
        &\ \ + (\bff,\Ihzl(\bfw))_{L^2(\Ga)} - (\bff_h,\Ihz(\bfw^{-\ell}))_{L^2(\Gah)}.
    \end{aligned}
\end{equation}
Let us bound its term of \eqref{eq: a_T extra regularity error large lagrange} appropriately one by one:
\begin{itemize}
    \item \ \ The first term of \eqref{eq: a_T extra regularity error large lagrange} can be bounded as followed
    \begin{equation}
        \begin{aligned}\label{eq: L^2 estimate proof 1 lagrange}
             a(\bfw-\Ihzl(\bfw),\eu) &\leq \norm{\bfw-\Ihzl(\bfw)}_{a}(\norm{\rho_\bfu^{\ell}}_{a} + \norm{\theta_{\bfu}^{\ell}}_{a}) \leq c  \norm{\bfw-\Ihzl(\bfw)}_{H^1(\Ga)}(\norm{\rho_\bfu^{\ell}}_{H^1(\Ga)} + \norm{\theta_{\bfu}}_{\ah})  \\ 
                 &\leq ch^{m+1}(\norm{\bfu}_{H^{k_u+1}(\Ga)} + \norm{p}_{H^{k_{pr}+1}(\Ga)} + \norm{\lambda}_{H^{k_{\lambda}+1}(\Ga)}), 
        \end{aligned}
    \end{equation}
        where $m= min\{{k_u+1},$ ${k_{pr}+1},\, {k_{\lambda}+1}, \,{k_g-1}\}$, and where we used \Cref{Lemma: discrete remainder error Lagrange}, the interpolation estimate \eqref{eq: Scott-Zhang interpolant}, the norm equivalence \eqref{eq: norm equivalence}  and the fact that $\norm{\theta_{\bfu}}_{a}\leq \norm{\theta_{\bfu}}_{\ah}$ assuming $k_g \geq 2$, due to \eqref{eq: Geometric perturbations a 1}, the weaker bound $\norm{\theta_{\bfu}}_{H^1(\Gah)}^2 \leq h^{-2}\ah(\theta_{\bfu},\theta_{\bfu})$ and assuming $k_g \geq 2$.
    \item \ \ Moving onto the next two term we make use of the geometric perturbation \eqref{eq: Geometric perturbations a 1} to obtain the following bound
    \begin{equation}
         \begin{aligned}\label{eq: L^2 estimate proof 2 lagrange}                    \ah(\Ihz(\bfw^{-\ell}),\theta_{\bfu})- a(\Ihzl(\bfw),\theta_{\bfu}^{\ell})
                    &\leq ch^{k_g}\norm{\Ihz(\bfw^{-\ell})}_{H^1(\Gah)}\norm{\theta_{\bfu}}_{H^1(\Gah)}
                    \\
                    &\leq ch^{k_g-1 + m}\norm{\bfw}_{H^2(\Ga)}(\norm{\bfu}_{H^{k_u+1}(\Ga)} + \norm{p}_{H^{k_{pr}+1}(\Ga)} + \norm{\lambda}_{H^{k_{\lambda}+1}(\Ga)}),
            \end{aligned}
    \end{equation}
    where we have used again use the bound $\norm{\uh}_{H^1(\Gah)}^2 \leq h^{-2}\ah(\uh,\uh)$ since the discrete Korn's inequality \eqref{discrete Korn's inequality T nh} holds, and \Cref{Lemma: discrete remainder error Lagrange}.
    \item \ For the next estimate $\bfG(\Ihzl(\bfw),\Ihzl(\bfu)) =  \ah(\Ihz(\bfw^{-\ell}),\Ihz(\bfu^{-\ell}))- a(\Ihzl(\bfw),\Ihzl(\bfu))$    we add and subtract appropriate terms, use the interpolation estimate \eqref{eq: interpolation vh ah}, \eqref{eq: interpolation stability vh ah} to obtain 
\begin{equation}
        \begin{aligned}\label{eq: L^2 estimate proof 3 lagrange}
            \bfG(\Ihzl(\bfw),\Ihzl(\bfu)) &=  \bfG(\Ihzl(\bfw)-\bfw,\Ihzl(\bfu)- \bfu) +  \bfG(\bfw,\Ihzl(\bfu)-\bfu) + \bfG(\Ihzl(\bfw)-\bfw,\bfu)  + \bfG(\bfw,\bfu)\\
            &\leq c(h^{k_g+k_u}+ h^{k_g+1})\norm{\bfw}_{H^2(\Ga)}\norm{\bfu}_{H^{k_u+1}(\Ga)} + \bfG(\bfw,\bfu)\\
            &\underbrace{\leq}_{k_u+k_g \leq k_g+1} ch^{k_g+1} \norm{\bfw}_{H^2(\Ga)}\norm{\bfu}_{H^2(\Ga)}
        \end{aligned}
    \end{equation}
    where for the last term we also used the better estimate in \cite[Lemma 5.4]{hansbo2020analysis}\  $\bfG(\bfw,\bfu) \leq ch^{k_g+1} \norm{\bfw}_{H^2(\Ga)}\norm{\bfu}_{H^2(\Ga)}$ for $\bfw,\bfu\in \bfH^1
    _T(\Ga)$.
    \item \ \ Now moving onto the other bilinear form using the bounds on $b^L(\cdot,\{\cdot,\cdot\})$ we readily see that
    \begin{equation}
        \begin{aligned}\label{eq: L^2 estimate proof 5 lagrange}
            b^L(\bfw - \Ihzl(\bfw),\{\ep,\el\})  &\leq \norm{\bfw - \Ihzl(\bfw)}_{H^1(\Ga)}\norm{\ep}_{L^2(\Ga)} + \norm{\bfw - \Ihzl(\bfw)}_{L^2(\Ga)}\norm{\el}_{L^2(\Ga)}\\
            &\leq ch^{m+1}\norm{\bfw}_{H^2(\Ga)}\big(\norm{\bfu}_{H^{k_u+1}(\Ga)}+\norm{p}_{H^{k_{pr}+1}(\Ga)} + \norm{\lambda}_{H^{k_{\lambda}+1}(\Ga)}\big).
        \end{aligned}
    \end{equation}
    \item \ \ For the next two terms we  apply the geometric perturbation \eqref{eq: Geometric perturbations btilde} and \eqref{eq: Geometric perturbations btilde tangent}, interpolation estimates and \Cref{Lemma: discrete remainder error Lagrange} to obtain
    \begin{equation}
        \begin{aligned}\label{eq: L^2 estimate proof 6 lagrange}
            &\bhtil(\Ihz(\bfw^{-\ell}),\{\theta_{p},\theta_{\lambda}\})- b^L(\Ihzl(\bfw),\{\theta_{p}^{\ell},\theta_{\lambda}^{\ell}\}) = \bhtil(\bfw^{-\ell},\{\theta_{p},\theta_{\lambda}\})- b^L(\bfw,\{\theta_{p}^{\ell},\theta_{\lambda}^{\ell}\})\\
            &\quad \ + \bhtil(\Ihz(\bfw^{-\ell})- \bfw^{-\ell},\{\theta_{p},\theta_{\lambda}\})- b^L(\Ihzl(\bfw)-\bfw,\{\theta_{p}^{\ell},\theta_{\lambda}^{\ell}\})\\
            & \leq ch^{k_g}\norm{\bfw}_{L^2(\Ga)}(\norm{\theta_{p}}_{L^2(\Gah)}+ \norm{\theta_{\lambda}}_{L^2(\Gah)}) + ch^{k_g-1}h^{2}\norm{\bfw}_{H^2(\Ga)}(\norm{\theta_{p}}_{L^2(\Gah)}+ \norm{\theta_{\lambda}}_{L^2(\Gah)}) \\
            & \leq c(h^{2k_g-1}+ h^{k_g+m})\norm{\bfw}_{H^2(\Ga)}\big(\norm{\bfu}_{H^{k_u+1}(\Ga)}+\norm{p}_{H^{k_{pr}+1}(\Ga)} + \norm{\lambda}_{H^{k_{\lambda}+1}(\Ga)}\big).
        \end{aligned}
    \end{equation}
    \item \ \ Now, again as above, by adding and subtracting appropriate bilinear terms, applying known interpolation estimates, the geometric perturbations \eqref{eq: Geometric perturbations btilde}, \eqref{eq: Geometric perturbations btilde tangent} and \eqref{eq: Geometric perturbations btilde tangent extra regularity} where appropriate, and the stability for the Scott-Zhang interpolant \eqref{eq: stability of Scott-Zhang interpolant}, we obtain the following bound for $\bfG_b(\Ihzl(\bfw),\{\Ihzl(p),\Ihzl(\lambda)\}) = \bhtil(\Ihz(\bfw^{-\ell}),\{\Ihz(p^{-\ell}),\Ihz(\lambda^{-\ell})\})
            -  b^L(\Ihzl(\bfw),\{\Ihz(p),\Ihz(\lambda)\})$
    \begin{equation}
        \begin{aligned}\label{eq: L^2 estimate proof 7 lagrange}
           &\bfG_b(\Ihzl(\bfw),\{\Ihzl(p),\Ihzl(\lambda)\}) = \underbrace{\bfG_b(\Ihzl(\bfw)-\bfw,\{\Ihzl(p)-p,\Ihzl(\lambda)-\lambda\})}_{\text{by } \eqref{eq: Geometric perturbations btilde} } + \underbrace{\bfG_b(\bfw,\{\Ihzl(p)-p,\Ihzl(\lambda)-\lambda\})}_{\text{by } \eqref{eq: Geometric perturbations btilde tangent} }\\
           & \qquad\qquad\qquad\qquad\qquad\quad \ +\underbrace{\bfG_b(\Ihzl(\bfw)-\bfw,\{p,\lambda\})}_{\text{by } \eqref{eq: Geometric perturbations btilde}} + \underbrace{\bfG_b(\bfw,\{p,\lambda\})}_{\text{by } \eqref{eq: Geometric perturbations btilde tangent extra regularity}}\\
           &\leq ch^{k_g+1}\norm{\bfw}_{H^2(\Ga)}(\norm{p}_{H^1(\Ga)}+\norm{\lambda}_{H^1(\Ga)})
        \end{aligned}
    \end{equation}
    \item \ \ The last estimate of the \eqref{eq: a_T extra regularity error large lagrange} is immediate from a change of the domain of integration \eqref{eq: errors of domain of integration data}
    \begin{equation}
        \begin{aligned}\label{eq: L^2 estimate proof 8 lagrange}
            (\bff,\Ihzl(\bfw))_{L^2(\Ga)} - (\bff_h,\Ihz(\bfw^{-\ell}))_{L^2(\Gah)}&\leq  ch^{k_g+1} \norm{\bff}_{L^2(\Ga)}\norm{\bfw}_{H^2(\Ga)}.
        \end{aligned}
    \end{equation}
\end{itemize}
Combining now \eqref{eq: L^2 estimate proof 1 lagrange}-\eqref{eq: L^2 estimate proof 8 lagrange} onto \eqref{eq: a_T extra regularity error large lagrange} and considering $k_g \geq2$ we get the following estimate
\begin{equation}
    \begin{aligned}\label{eq: al(w,eu)}
        a(\bfw,\eu) \leq c(h^{k_g-1+m}+h^{k_g+1}+h^{2k_g-1})\norm{\bfw}_{H^2(\Ga)}(\norm{\bfu}_{H^{k_u+1}(\Ga)}+\norm{p}_{H^{k_{pr}+1}(\Ga)} + \norm{\lambda}_{H^{k_{\lambda}+1}(\Ga)} + \norm{\bff}_{L^2(\Ga)}).
    \end{aligned}
\end{equation}
Let us now proceed with providing an estimate for the term $b^L(\eu,\{\pi,\mu\})$ of \eqref{eq: Nitsche type equation lagrange}. Using the second equation \eqref{weak lagrange hom} and \eqref{weak lagrange discrete} along with the appropriate calculation we may get
\begin{equation}
    \begin{aligned}\label{eq: b_T extra regularity error large lagrange}
        &b^L(\eu,\{\pi,\mu\}) = b^L(\eu,\{\pi-\Ihzl(\pi),\mu-\Ihzl(\mu) \}) + b^L(\eu,\{\Ihzl(\pi),\Ihzl(\mu)\}) \\
        &=  b^L(\eu,\{\pi-\Ihzl(\pi),\mu-\Ihzl(\mu) \}) + \bhtil(\uh,\{\Ihz(\pi^{-\ell}),\Ihz(\mu^{-\ell})\})  - b^L(\uhl,\{\Ihzl(\pi),\Ihzl(\mu) \})\\
        &= b^L(\eu,\{\pi-\Ihzl(\pi),\mu-\Ihzl(\mu) \}) + \bhtil(\theta_{\bfu},\{\Ihz(\pi^{-\ell}),\Ihz(\mu^{-\ell})\})- b^L(\theta_{\bfu}^{\ell},\{\Ihzl(\pi),\Ihzl(\mu) \})\\
        &\quad  + \bhtil(\Ihz(\bfu^{-\ell}),\{\Ihz(\pi^{-\ell}),\Ihz(\mu^{-\ell})\})  - b^L(\Ihzl(\bfu),\{\Ihzl(\pi),\Ihzl(\mu) \}).
    \end{aligned}
\end{equation}
Once again, let us bound the above terms one by one:
\begin{itemize}
    \item \ \ The first estimate of \eqref{eq: b_T extra regularity error large lagrange} can be calculated by the boundedness of the bilinear form, the Scott-Zhang interpolant \eqref{eq: Scott-Zhang interpolant} and the application of the Korn's inequality \eqref{eq: Korn inequality} for the tangent vector $\eut$
    \begin{equation}
        \begin{aligned}\label{eq: L^2 estimate b proof 1 lagrange}
            &\qquad b^L(\eu,\{\pi-\Ihzl(\pi),\mu-\Ihzl(\mu) \})\\
            &\qquad \ \leq c\norm{\eut}_{a}\norm{\pi-\Ihzl(\pi)}_{L^2(\Ga)} + c\norm{\eu}_{L^2(\Ga)}\norm{\mu-\Ihzl(\mu)}_{L^2(\Ga)}\\
            &\qquad \ \leq ch\norm{\eu}_{\ah}\norm{\pi}_{H^1(\Ga)} \\
            &\qquad \ \ \ +    
            ch^{m+1}(\norm{\bfu}_{H^{k_u+1}(\Ga)}+\norm{p}_{H^{k_{pr}+1}(\Ga)} + \norm{\lambda}_{H^{k_{\lambda}+1}(\Ga)})\norm{\mu}_{H^1(\Ga)}\\
            &\qquad \ \leq ch^{m+1}(\norm{\bfu}_{H^{k_u+1}(\Ga)}+\norm{p}_{H^{k_{pr}+1}(\Ga)} + \norm{\lambda}_{H^{k_{\lambda}+1}(\Ga)})(\norm{\pi}_{H^1(\Ga)} + \norm{\mu}_{H^1(\Ga)}),
        \end{aligned}
    \end{equation}
    where in the last equation we used again the fact that $\norm{\cdot}_{a} \leq c \norm{\cdot}_{\ah}$ for $k_g \geq 2$, as in \eqref{eq: L^2 estimate proof 1 lagrange}.
    \item \ \ Considering the improved geometric perturbation \eqref{eq: Geometric perturbations btilde}, and bounds on the Scott-Zhang interpolant \eqref{eq: discrete remainder error Lagrange}, \eqref{eq: stability of Scott-Zhang interpolant} we may get
    \begin{equation} 
        \begin{aligned}\label{eq: L^2 estimate b proof 2 lagrange}
            &\qquad\bhtil(\theta_{\bfu},\{\Ihz(\pi^{-\ell}),\Ihz(\mu^{-\ell})\}) - b^L(\theta_{\bfu}^{\ell},\{\Ihzl(\pi),\Ihzl(\mu) \})\\
            &\qquad \ \leq ch^{k_g}\norm{\theta_{\bfu}}_{L^2(\Ga)}(\norm{\Ihz(\pi^{-\ell})}_{H^1(\Gah)} + \norm{\Ihz(\mu^{-\ell})}_{L^2(\Gah)})\\
            &\qquad \ \leq ch^{k_g+m}(\norm{\bfu}_{H^{k_u+1}(\Ga)}+\norm{p}_{H^{k_{pr}+1}(\Ga)} + \norm{\lambda}_{H^{k_{\lambda}+1}(\Ga)})(\norm{\pi}_{H^1(\Ga)}+ \norm{\mu}_{L^2(\Ga)}).
        \end{aligned}
    \end{equation}
    \item \ \ Similar to \eqref{eq: L^2 estimate proof 7 lagrange}, we add and subtract appropriate bilinear term and using the perturbation bounds \eqref{eq: Geometric perturbations btilde}, \eqref{eq: Geometric perturbations btilde tangent} and \eqref{eq: Geometric perturbations btilde tangent extra regularity} and the Scott-Zhang interpolant  estimates \eqref{eq: Scott-Zhang interpolant}, \eqref{eq: stability of Scott-Zhang interpolant} we obtain
    \begin{equation}
        \begin{aligned}\label{eq: L^2 estimate b proof 3 lagrange}
            \bfG_b(\Ihzl(\bfu),\{\Ihzl(\pi),\Ihzl(\mu)\}) \leq ch^{k_g+1}\norm{\bfu}_{H^2(\Ga)}(\norm{\pi}_{H^1(\Ga)} +\norm{\mu}_{H^1(\Ga)}) 
        \end{aligned}
    \end{equation}
\end{itemize}
So, combining \eqref{eq: L^2 estimate b proof 1 lagrange}-\eqref{eq: L^2 estimate b proof 3 lagrange} and \eqref{eq: b_T extra regularity error large lagrange} we may obtain
\begin{equation}\label{eq: btil(eu,pi)}
\begin{aligned}
    &b^L(\eu,\{\pi,\mu\}) \\
    & \qquad \leq c(h^{m+1} + h^{k_g+1})(\norm{\bfu}_{H^{k_u+1}(\Ga)}+\norm{p}_{H^{k_{pr}+1}(\Ga)} + \norm{\lambda}_{H^{k_{\lambda}+1}(\Ga)})(\norm{\pi}_{H^1(\Ga)}+ \norm{\mu}_{H^1(\Ga)}).
    \end{aligned}
\end{equation}
Substituting \eqref{eq: al(w,eu)}, \eqref{eq: btil(eu,pi)} into \eqref{eq: Nitsche type equation lagrange} coupled with \eqref{eq: regularity estimate lagrange inside} we finally get the final error
\begin{equation*}
    \begin{aligned}
        \norm{\eut}_{L^2(\Ga)} \leq ch^{m+1}(\norm{\bfu}_{H^{k_u+1}(\Ga)}+\norm{p}_{H^{k_{pr}+1}(\Ga)} + \norm{\lambda}_{H^{k_{\lambda}+1}(\Ga)}+ \norm{\bff}_{L^2(\Ga)}). 
    \end{aligned}
\end{equation*}
\end{proof}

\subsection{Consistency and a-priori estimates for $k_\lambda = k_u$}\label{sec: the kl=ku case}

For the case $\underline{k_\lambda = k_u}$ we are able to derive optimal a-priori estimates \textbf{without} the need of super-parametric finite elements; see in \cref{Theorem: Energy Error Estimate for Lagrange Formulation} and consequently in \cref{Theorem: L^2 error velocity Lagrange} the dependency on the suboptimal geometric approximation order $\bigo(h^{k_g-1})$ and $\bigo(h^{k_g})$ respectively. More specifically, we prove optimal error bounds, with respect to the geometric part of the error, for the velocity $\bfu-\uh$ and the pressure $p-\ph$ in the energy and $L^2$-norm, respectively, in the case of \emph{iso-parametric surface finite elements} for $k_g \geq 1$. Then, similarly, we proceed to prove optimal error bounds in the tangential $L^2$ velocity norm. Finally, we prove error bounds for the normal $L^2$ velocity error.\\

We begin by considering new decompositions for the velocity error, splitting it into a weakly discrete tangential divergence-free approximation and an interpolation error, with the help of a weakly tangential discrete divergence-free interpolant. The remaining errors are decomposed as before in \eqref{eq: decomposition error 2 lagrange}, \eqref{eq: decomposition error 3 lagrange}. Recall also that now $\Lambda_h = S^{k_\lambda}_{h,k_g} = S^{k_u}_{h,k_g} = V_h$.
So we have:
\begin{align}\label{eq: decomposition error 1 lagrange  kl=ku}
    \eu = \bfu^{-\ell} - \uh &= \underbrace{(\bfu^{-\ell}- \vhtilde)}_{\text{Interpolation error}} + \underbrace{(\vhtilde - \uh)}_{\text{discrete remainder}}\!\!\!\!\!\!,
\end{align}
where $\vhtilde \in \bfV_h^{div}$. We denote $\rho_\bfu^{div} = \vhtilde - \bfu^{-\ell}$ and $\theta_{\bfu}^{div} = \uh - \vhtilde$, where now $\theta_{\bfu}^{div} \in \bfV_h^{div}$. So recalling \cref{lemma: Lh consistency}, we can readily see that our new discretisation errors satisfy the following error equation:
\begin{lemma}\label{lemma: Lh consistency kl=ku}
The discretisation errors satisfy,
\begin{align}
\begin{cases}\label{eq: Lh consistency kl=ku}
        \ah(\theta_{\bfu}^{div},\vh)\,+\!\!\!\!\!&\bhtil(\vh,\{\theta_{p},\theta_{\lambda}\}) = \ah(\rho_{\bfu}^{div},\vh) + \bhtil(\vh,\{\rho_{p},\rho_{\lambda}\}) \, + \,  \sum_{i=1}^3 \text{Err}_{i}^{L}(\vh) \ \  \text{ for all } \vh \in \bfV_h,\\
        & \bhtil(\theta_{\bfu}^{div},\{\qh,\xi_h\})= 0 \quad \ \ \text{for all } \{\qh,\xi_h\}\in Q_h\times \Lambda_h,
    \end{cases}
\end{align}
with the consistency errors:
\begin{itemize}
\setlength\itemsep{0.7em}
\item \ \ $ \text{Err}_1^{L}(\vh) := a(\bfu,\vhl) - \ah(\bfu^{-\ell},\vh)$,
\item \ \ $ \text{Err}_2^{L}(\vh) := b^L(\vhl,\{p,\lambda\})-\bhtil(\vh,\{p^{-\ell},\lambda^{-\ell}\})$,
\item \ \ $ \text{Err}_3^{L}(\vh) := (\bff_h,\vh)_{L^2(\Gah)} - (\bff,\vhl)_{L^2(\Ga)}$.
\end{itemize}
\end{lemma}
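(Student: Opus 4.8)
The plan is to mimic closely the proof of \cref{lemma: Lh consistency}, the only difference being the new decomposition $\eu = \rho_\bfu^{div} + \theta_\bfu^{div}$ with $\vhtilde \in \bfV_h^{div}$ instead of the Scott--Zhang interpolant. First I would start from the discrete problem \eqref{weak lagrange discrete} satisfied by $(\uh,\{\ph,\lh\})$, and add and subtract the interpolation quantities $\vhtilde$, $\ph^I = \Ihz p^{-\ell}$ and $\lh^I = \Ihz \lambda^{-\ell}$. This gives, exactly as in \eqref{eq: Lh rewrite vhtilde},
\begin{equation*}
\ah(\theta_\bfu^{div},\vh) + \bhtil(\vh,\{\theta_p,\theta_\lambda\}) = \ah(-\vhtilde,\vh) + \bhtil(\vh,-\{\ph^I,\lh^I\}) + (\bff_h,\vh)_{L^2(\Gah)},
\end{equation*}
and $\bhtil(\uh - \vhtilde,\{\qh,\xi_h\}) = \bhtil(-\vhtilde,\{\qh,\xi_h\})$ for all test functions.

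Next I would insert the continuous equation \eqref{weak lagrange hom}, written for the test function $\vhl$ (resp. $\{\qhl,\xi_h^\ell\}$), namely $a(\bfu,\vhl) + b^L(\vhl,\{p,\lambda\}) = (\bff,\vhl)_{L^2(\Ga)}$ and $b^L(\bfu,\{q,\xi\}) = 0$, and regroup terms. For the first equation this reproduces line by line the computation \eqref{eq: Lh consistency line 1}: one gets the interpolation terms $\ah(\rho_\bfu^{div},\vh) + \bhtil(\vh,\{\rho_p,\rho_\lambda\})$ together with the three consistency errors $\text{Err}_1^L,\text{Err}_2^L,\text{Err}_3^L$ defined exactly as before (these only involve $\bfu^{-\ell}$, $p^{-\ell}$, $\lambda^{-\ell}$ and the data, so their definitions are unchanged). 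The genuinely different — and in fact simpler — point is the second equation: since $\vhtilde \in \bfV_h^{div}$ by construction, we have $\bhtil(\vhtilde,\{\qh,\xi_h\}) = 0$ for all $\{\qh,\xi_h\} \in Q_h\times\Lambda_h$, so the right-hand side $\bhtil(-\vhtilde,\{\qh,\xi_h\})$ vanishes identically. Using in addition $b^L(\bfu,\{\qhl,\xi_h^\ell\}) = 0$, we conclude $\bhtil(\theta_\bfu^{div},\{\qh,\xi_h\}) = 0$, with no interpolation term $\text{Int}_3^L$ and no consistency error $\text{Err}_4^L$ surviving. Combining the two computed identities yields \eqref{eq: Lh consistency kl=ku}.

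There is essentially no obstacle here: the lemma is a bookkeeping statement and the argument is a verbatim repeat of \cref{lemma: Lh consistency} with two simplifications, namely that $\vhtilde$ is chosen in the weakly discretely tangential divergence-free subspace (killing the second-equation consistency error) and that $\Lambda_h = V_h$ so the Scott--Zhang interpolant of $\lambda^{-\ell}$ lands in $\Lambda_h$ as required. The only thing to be careful about is to keep $\text{Err}_1^L,\text{Err}_2^L,\text{Err}_3^L$ stated with $\bfu^{-\ell}$ (not $\vhtilde$) so that they coincide with the previously bounded quantities; the $\vhtilde$-dependence has been entirely absorbed into the interpolation remainders $\rho_\bfu^{div}$, $\theta_\bfu^{div}$, which are to be estimated separately (via a weakly discretely divergence-free interpolation estimate, presumably established just after this lemma).
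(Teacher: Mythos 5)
Your proposal is correct and is exactly the argument the paper intends: it repeats the derivation of \cref{lemma: Lh consistency} with $\vhtilde\in\bfV_h^{div}$ in place of $\Ihz\bfu^{-\ell}$, so that $\bhtil(\theta_\bfu^{div},\{\qh,\xi_h\})=\bhtil(\uh,\{\qh,\xi_h\})-\bhtil(\vhtilde,\{\qh,\xi_h\})=0$ and the terms $\text{Int}_3^L$ and $\text{Err}_4^L$ disappear, while $\text{Err}_1^L,\text{Err}_2^L,\text{Err}_3^L$ are unchanged. The only cosmetic remark is that invoking $b^L(\bfu,\{\qhl,\xi_h^\ell\})=0$ in the second equation is superfluous, since the vanishing already follows from the two purely discrete identities.
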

Now, as before in \cref{Lemma: Consistency error bounds Lagrange} we have the following new consistency bounds, which we instead write with respect to the $H^1$ and $L^2$ norms.
\begin{lemma}[Consistency error bounds]\label{Lemma: Consistency error bounds Lagrange kl=ku}
Let $(\vh,\{\qh,\lh\}) \in \bfV_h\times Q_h \times \Lambda_h$ and $(\bfu,\{p,\lambda\})\in \bfH^1(\Ga) \times L^2_0(\Ga)\times L^2(\Ga)$ be the solution to the continuous problem \eqref{weak lagrange hom}, then the consistency error terms $\text{Err}_i^{L}(\cdot)$, i=1,...,3 as in Lemma \ref{lemma: Lh consistency} are bounded as followed:
\begin{equation}\label{eq: Consistency error bounds Lagrange kl=ku}
    \begin{aligned}
    |\text{Err}_{1}^{L}(\vh)| &\leq ch^{k_g}\norm{\bfu}_{H^1(\Ga)}\norm{\vh}_{H^1(\Gah)},\\[5pt]
        |\text{Err}_{2}^{L}(\vh)| &\leq ch^{k_g}(\norm{p}_{H^1(\Ga)}+\norm{\lambda}_{L^2(\Ga)})\norm{\vh}_{L^2(\Gah)},\\[5pt]
         |\text{Err}_{3}^{L}(\vh)| &\leq ch^{k_g+1}\norm{\bff}_{L^2(\Ga)}\norm{\vh}_{L^2(\Gah)},\\[5pt]
    \end{aligned}
\end{equation}
where the constants $c$ are independent of the mesh-parameter $h$.
\end{lemma}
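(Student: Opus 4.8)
The three consistency errors are precisely the error functionals $\text{Err}_1^L,\text{Err}_2^L,\text{Err}_3^L$ already introduced in \cref{lemma: Lh consistency}, so the statement is just a re-casting of the bounds from \cref{Lemma: Consistency error bounds Lagrange} but keeping the $H^1(\Gah)$ or $L^2(\Gah)$ factor of $\vh$ explicit instead of converting it to the energy norm $\norm{\vh}_{\ah}$. Hence the plan is essentially to \emph{re-run the three estimates from the proof of \cref{Lemma: Consistency error bounds Lagrange}} but stop one step earlier, before invoking the (lossy) inequality $\norm{\vh}_{H^1(\Gah)}\le h^{-1}\norm{\vh}_{\ah}$ from \eqref{coercivity and Korn's inequality Lagrange}.

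For $\text{Err}_1^L(\vh)=a(\bfu^{-\ell\,\ell},\vhl)-\ah(\bfu^{-\ell},\vh)$ I would apply the geometric perturbation bound \eqref{eq: Geometric perturbations a} directly with $\bfw_h=\bfu^{-\ell}$: this gives $|\text{Err}_1^L(\vh)|\le ch^{k_g}\norm{\bfu^{-\ell}}_{H^1(\Gah)}\norm{\vh}_{H^1(\Gah)}$, and the norm equivalence \eqref{eq: norm equivalence} converts $\norm{\bfu^{-\ell}}_{H^1(\Gah)}$ into $\norm{\bfu}_{H^1(\Ga)}$. For $\text{Err}_2^L(\vh)=b^L(\vhl,\{p,\lambda\})-\bhtil(\vh,\{p^{-\ell},\lambda^{-\ell}\})$ I would use the first line of \eqref{eq: Geometric perturbations btilde}, namely $|\bhtil(\bfw_h,\{\qh,\xi_h\})-b^L(\bfw_h^\ell,\{\qhl,\xi_h^\ell\})|\le ch^{k_g}\norm{\bfw_h}_{L^2(\Gah)}(\norm{\qh}_{H^1(\Gah)}+\norm{\xi_h}_{L^2(\Gah)})$, again with $\bfw_h=\vh$ and $\{\qh,\xi_h\}=\{p^{-\ell},\lambda^{-\ell}\}$, then pass to $\norm{p}_{H^1(\Ga)}$, $\norm{\lambda}_{L^2(\Ga)}$ via \eqref{eq: norm equivalence}. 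For $\text{Err}_3^L(\vh)=(\bff_h,\vh)_{L^2(\Gah)}-(\bff,\vhl)_{L^2(\Ga)}$, since $\bff_h^\ell=\bff$ this is exactly the left-hand side of the first inequality in \eqref{eq: errors of domain of integration data} of \cref{lemma: errors of domain of integration}, giving the $ch^{k_g+1}\norm{\bff}_{L^2(\Ga)}\norm{\vh}_{L^2(\Gah)}$ bound immediately.

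There is essentially no obstacle here; the point of the lemma is purely bookkeeping — it isolates the $\vh$-dependence in the finer norms so that, later, when $\vh$ is chosen to be $\thbfu^{div}\in\bfV_h^{div}$, the \emph{improved} coercivity $\norm{\thbfu^{div}}_{H^1(\Gah)}^2\le a_h(\thbfu^{div},\thbfu^{div})$ of \cref{lemma: improved h1-ah bound} (rather than the $h^{-2}$-lossy \eqref{coercivity and Korn's inequality Lagrange}) can be applied, recovering the missing geometric order. The only mild care needed is to make sure each perturbation estimate is quoted with the correct placement of the $h$-power and of the finer norm on $\vh$; so I would write the three lines essentially verbatim from the cited estimates and note in one sentence that these are the same computations as in \cref{Lemma: Consistency error bounds Lagrange}, merely retaining $\norm{\vh}_{H^1(\Gah)}$ or $\norm{\vh}_{L^2(\Gah)}$ in place of $\norm{\vh}_{\ah}$.
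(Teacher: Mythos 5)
Your proposal is correct and matches the paper's intent exactly: the paper gives no separate proof for this lemma, simply noting that the bounds follow "as before" from \cref{Lemma: Consistency error bounds Lagrange} by invoking \eqref{eq: Geometric perturbations a}, the first line of \eqref{eq: Geometric perturbations btilde}, and \eqref{eq: errors of domain of integration data}, while retaining the $H^1(\Gah)$ or $L^2(\Gah)$ norm of $\vh$ instead of passing to $\norm{\vh}_{\ah}$ via the lossy bound \eqref{coercivity and Korn's inequality Lagrange}. Your identification of the purpose (enabling the improved coercivity of \cref{lemma: improved h1-ah bound} when $\vh=\thbfu^{div}$) is also the paper's stated motivation.
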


\subsubsection{Energy error estimates}
The following lemma follows from \cref{lemma: improved h1-ah bound} and the fact that the discrete remainder $\thbfu^{div}\in \bfV_h^{div}$. This is an improved $H^1$ coercivity bound.

\begin{lemma}\label{lemma: new H1 estimate thbfu kl=ku}
If $\underline{k_{\lambda} = k_u}$ then for $h$ sufficiently small the following improved bound holds
    \begin{equation}\label{eq: new H1 estimate thbfu 1 kl=ku}
        \begin{aligned}
            \norm{\thbfu^{div}}_{H^1(\Gah)} \leq c\norm{\thbfu^{div}}_{\ah}.
        \end{aligned}
    \end{equation}
\end{lemma}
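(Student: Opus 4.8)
The plan is to recognize this statement as an immediate consequence of the improved $H^1$ coercivity bound \cref{lemma: improved h1-ah bound}, once we observe that the discrete remainder $\thbfu^{div}$ actually lies in the weakly discretely tangential divergence-free space $\bfV_h^{div}$. Indeed, the discrete solution $\uh$ satisfies $\bhtil(\uh,\{\qh,\xi_h\})=0$ for all $\{\qh,\xi_h\}\in Q_h\times\Lambda_h$ by the second equation of \eqref{weak lagrange discrete}, so $\uh\in\bfV_h^{div}$; and the interpolant $\vhtilde$ was chosen in $\bfV_h^{div}$ by construction. Hence $\thbfu^{div}=\uh-\vhtilde\in\bfV_h^{div}$, which is precisely the second relation recorded in \eqref{eq: Lh consistency kl=ku}.

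First I would apply \cref{lemma: improved h1-ah bound} with $\bfw_h=\thbfu^{div}$; since $k_{\lambda}=k_u$ (so $\Lambda_h=V_h$) and $\thbfu^{div}\in\bfV_h^{div}$, this yields
\begin{equation*}
    \norm{\thbfu^{div}}_{H^1(\Gah)}^2 \leq a_h(\thbfu^{div},\thbfu^{div}).
\end{equation*}
Then I would simply bound the right-hand side by the energy norm: from the definitions $a_h(\bfw_h,\bfw_h)=\norm{E_h(\bfw_h)}_{L^2(\Gah)}^2+\norm{\bfw_h}_{L^2(\Gah)}^2 \leq \big(\norm{E_h(\bfw_h)}_{L^2(\Gah)}+\norm{\bfw_h}_{L^2(\Gah)}\big)^2 = \norm{\bfw_h}_{\ah}^2$, so that $a_h(\thbfu^{div},\thbfu^{div})\leq\norm{\thbfu^{div}}_{\ah}^2$. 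Combining the two displays and taking square roots gives \eqref{eq: new H1 estimate thbfu 1 kl=ku}.

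The argument carries no genuine obstacle of its own; all the real work has already been done in establishing \cref{lemma: improved h1-ah bound}, which in turn rests on the possibility of testing the discrete divergence constraint with $\xi_h=\Ihz(\thbfu^{div}\cdot\bfn)\in\Lambda_h=V_h$ together with the super-approximation estimate \eqref{eq: super-approximation estimate 2}. The only point worth emphasising is that the membership $\thbfu^{div}\in\bfV_h^{div}$ combined with the equality $\Lambda_h=V_h$ is exactly what makes this bound special to the case $k_{\lambda}=k_u$; for $k_{\lambda}=k_u-1$ one has to fall back on the weaker estimate \eqref{coercivity and Korn's inequality Lagrange}, which costs the extra factor $h^{-1}$ and hence one geometric order.
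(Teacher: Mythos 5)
Your proposal is correct and follows exactly the paper's own (one-line) justification: the lemma is an immediate consequence of \cref{lemma: improved h1-ah bound} once one notes that $\thbfu^{div}=\uh-\vhtilde\in\bfV_h^{div}$ (both terms lying in $\bfV_h^{div}$, which is a linear subspace), together with the elementary bound $a_h(\bfw_h,\bfw_h)\leq\norm{\bfw_h}_{\ah}^2$. Nothing is missing.
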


\begin{theorem}\label{Theorem: Improved Energy Error Estimate for Lagrange Formulation kl=ku}
Let $k_u  = k_\lambda\geq 2$, $k_{pr}=k_u-1$ and $h$ sufficiently small. Then for $(\bfu,\{p,\lambda\}) \in \bfH^1(\Ga) \times L^2_0(\Ga)\times L^2(\Ga)$ the solution of the continuous problem \eqref{weak lagrange hom}, and $(\uh,$ $\{\ph,\lambda_h\}) \in \bfV_h \times Q_h \times \Lambda_h$ the solution to the discrete scheme \eqref{weak lagrange discrete} we have the following error estimates
\begin{equation}
    \begin{aligned}\label{eq: Improved Energy Error Estimate for Lagrange Formulation 1}
        \norm{\bfu^{-\ell} - \uh}_{\ah} + \norm{p^{-\ell} - \ph}_{L^2(\Gah)} &+ \norm{\lambda^{-\ell} - \lh}_{H_h^{-1}(\Gah)}
         \leq c\inf_{\vh\in\bfV_h^{div}}\norm{\bfu^{-\ell} - \vh}_{\ah} \\
        &+ ch^s\big(\norm{p}_{H^{k_{pr}+1}(\Ga)} + \norm{\lambda}_{H^{k_{\lambda}+1}(\Ga)}+ \norm{\bff}_{L^2(\Ga)}\big),
    \end{aligned}
\end{equation}
with $s= min\{{k_{pr}+1}, {k_{\lambda}+1}, {k_g}\}$.
\end{theorem}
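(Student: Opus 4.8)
The plan is to run a Strang/Céa-type argument on the divergence-free decomposition $\eu=\bfu^{-\ell}-\uh$, $\ep$, $\el$ introduced in \eqref{eq: decomposition error 1 lagrange  kl=ku}, \eqref{eq: decomposition error 2 lagrange}, \eqref{eq: decomposition error 3 lagrange}: bound the discrete remainders $\theta_{\bfu}^{div}\in\bfV_h^{div}$, $\theta_{p}$, $\theta_{\lambda}$, then use the triangle inequality together with the $L^2$ interpolation estimates of \cref{sec: Interpolation}. Since $\rho_{\bfu}^{div}=\vhtilde-\bfu^{-\ell}$ with $\vhtilde\in\bfV_h^{div}$ arbitrary, taking the infimum over such $\vhtilde$ at the end produces the $\inf_{\vh\in\bfV_h^{div}}\norm{\bfu^{-\ell}-\vh}_{\ah}$ on the right. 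Throughout I assume the regularity $p\in H^{k_{pr}+1}(\Ga)$, $\lambda\in H^{k_\lambda+1}(\Ga)$ and use $\norm{\bfu}_{H^1(\Ga)}\le c\norm{\bff}_{L^2(\Ga)}$ from \cref{lemma: well-posed lagrange hom} to clear any residual $\norm{\bfu}$-norms.

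\emph{Step 1 (velocity energy error).} Because $\theta_{\bfu}^{div}\in\bfV_h^{div}\subset\bfV_h$, testing \eqref{weak lagrange discrete} with $\theta_{\bfu}^{div}$ annihilates the $\bhtil$-term and leaves $\ah(\uh,\theta_{\bfu}^{div})=(\bff_h,\theta_{\bfu}^{div})$. Writing $\uh=\vhtilde+\theta_{\bfu}^{div}=\bfu^{-\ell}+\rho_{\bfu}^{div}+\theta_{\bfu}^{div}$, transferring $\ah(\bfu^{-\ell},\cdot)$ and $(\bff,\cdot)$ to $\Ga$ (which produces exactly $\text{Err}_1^L$ and $\text{Err}_3^L$), and invoking the first equation of \eqref{weak lagrange hom} with test function $(\theta_{\bfu}^{div})^\ell$, I obtain the identity
\begin{equation*}
\ah(\theta_{\bfu}^{div},\theta_{\bfu}^{div})=b^L\big((\theta_{\bfu}^{div})^\ell,\{p,\lambda\}\big)-\ah(\rho_{\bfu}^{div},\theta_{\bfu}^{div})+\text{Err}_1^L(\theta_{\bfu}^{div})+\text{Err}_3^L(\theta_{\bfu}^{div}).
\end{equation*}
The key observation is that $b^L((\theta_{\bfu}^{div})^\ell,\{p,\lambda\})$ is \emph{small}: split it as $b^L((\theta_{\bfu}^{div})^\ell,\{p-(\ph^I)^\ell,\lambda-(\lh^I)^\ell\})+b^L((\theta_{\bfu}^{div})^\ell,\{(\ph^I)^\ell,(\lh^I)^\ell\})$; the first piece is bounded by the $L^2$-interpolation errors, i.e.\ by $c(h^{k_{pr}+1}\norm{p}_{H^{k_{pr}+1}}+h^{k_\lambda+1}\norm{\lambda}_{H^{k_\lambda+1}})\norm{\theta_{\bfu}^{div}}_{H^1(\Gah)}$, while the second equals $b^L((\theta_{\bfu}^{div})^\ell,\{(\ph^I)^\ell,(\lh^I)^\ell\})-\bhtil(\theta_{\bfu}^{div},\{\ph^I,\lh^I\})$ since $\theta_{\bfu}^{div}\in\bfV_h^{div}$ and $\{\ph^I,\lh^I\}\in Q_h\times\Lambda_h$, hence is a pure geometric perturbation controlled by \eqref{eq: Geometric perturbations btilde} and the interpolation stability \eqref{eq: stability of Scott-Zhang interpolant} by $ch^{k_g}(\norm{p}_{H^1}+\norm{\lambda}_{L^2})\norm{\theta_{\bfu}^{div}}_{L^2(\Gah)}$. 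Crucially, every $\norm{\theta_{\bfu}^{div}}_{H^1(\Gah)}$ that appears — in $\text{Err}_1^L(\theta_{\bfu}^{div})$ via \cref{Lemma: Consistency error bounds Lagrange kl=ku}, and in the $\divg$-term of $b^L$ — is replaced by $c\norm{\theta_{\bfu}^{div}}_{\ah}$ using the improved coercivity \cref{lemma: new H1 estimate thbfu kl=ku}; this is precisely what removes the factor $h^{-1}$ that produced the $h^{k_g-1}$ loss in \cref{Theorem: Energy Error Estimate for Lagrange Formulation}. Using $\ah(\cdot,\cdot)\ge\tfrac12\norm{\cdot}_{\ah}^2$, $s\le k_g$, and $\norm{\bfu}_{H^1}\le c\norm{\bff}_{L^2}$, dividing through by $\norm{\theta_{\bfu}^{div}}_{\ah}$ gives $\norm{\theta_{\bfu}^{div}}_{\ah}\le c\norm{\rho_{\bfu}^{div}}_{\ah}+ch^s(\norm{p}_{H^{k_{pr}+1}}+\norm{\lambda}_{H^{k_\lambda+1}}+\norm{\bff}_{L^2})$, and the triangle inequality plus the infimum over $\vhtilde\in\bfV_h^{div}$ yields the velocity part.

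\emph{Step 2 (pressure and multiplier errors).} Apply the $L^2\times H_h^{-1}$ discrete inf-sup condition \cref{lemma: L^2 H^{-1} discrete inf-sup condition Gah Lagrange} to $\{\theta_{p},\theta_{\lambda}\}\in Q_h\times\Lambda_h$ and substitute $\bhtil(\vh,\{\theta_{p},\theta_{\lambda}\})$ from the first line of the error equation \eqref{eq: Lh consistency kl=ku}. The two $\ah$-terms are bounded by $c(\norm{\theta_{\bfu}^{div}}_{\ah}+\norm{\rho_{\bfu}^{div}}_{\ah})\norm{\vh}_{H^1(\Gah)}$ via $\norm{\vh}_{\ah}\le c\norm{\vh}_{H^1(\Gah)}$; the consistency errors $\text{Err}_i^L$, $i=1,2,3$, are $O(h^{k_g})$ or better \emph{without any loss} from \cref{Lemma: Consistency error bounds Lagrange kl=ku}, because here the supremum is normalised by $\norm{\vh}_{H^1(\Gah)}$ rather than $\norm{\vh}_{\ah}$ (this is the whole reason the $H_h^{-1}$-based inf-sup is needed, the supremum running over all of $\bfV_h$, not merely $\bfV_h^{div}$); and the last term $\bhtil(\vh,\{\rho_{p},\rho_{\lambda}\})$ is treated by the broken integration-by-parts formula \eqref{eq: integration by parts} applied to the $C^0$ function $\rho_{p}$ — the volume term gives $\norm{\rho_{p}}_{L^2}\norm{\vh}_{H^1}$, the curvature term $\norm{\rho_{p}}_{L^2}\norm{\vh}_{L^2}$, and the co-normal jump term, using $|[\mh]|\le ch^{k_g}$ together with trace inequalities and the interpolation bounds, gives $ch^{k_g+k_{pr}}\norm{p}_{H^{k_{pr}+1}}\norm{\vh}_{H^1}$, with $k_g+k_{pr}\ge s$ — while the $\rho_{\lambda}$-contribution is just $\norm{\vh\cdot\nh}_{L^2}\norm{\rho_{\lambda}}_{L^2}\le ch^{k_\lambda+1}\norm{\lambda}_{H^{k_\lambda+1}}\norm{\vh}_{H^1}$. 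Combining with Step 1 gives the same bound for $\norm{\{\theta_{p},\theta_{\lambda}\}}_{L^2(\Gah)\times H_h^{-1}(\Gah)}$; adding the interpolation errors $\norm{\rho_{p}}_{L^2(\Gah)}+\norm{\rho_{\lambda}}_{H_h^{-1}(\Gah)}\le ch^s(\norm{p}_{H^{k_{pr}+1}}+\norm{\lambda}_{H^{k_\lambda+1}})$ and taking the infimum over $\vhtilde$ finishes the proof.

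\emph{Expected main obstacle.} The heart of the argument is Step 1: one must recognise that the a priori $O(1)$ quantity $b^L((\theta_{\bfu}^{div})^\ell,\{(\ph^I)^\ell,(\lh^I)^\ell\})$ is actually a geometric error of size $h^{k_g}$ thanks to $\theta_{\bfu}^{div}\in\bfV_h^{div}$, and — the decisive point of the whole $k_\lambda=k_u$ analysis — that the improved $H^1$ coercivity \cref{lemma: new H1 estimate thbfu kl=ku} lets one estimate $\text{Err}_1^L$ and the $\divg$-terms against $\norm{\theta_{\bfu}^{div}}_{\ah}$ with \emph{no} power of $h^{-1}$, which is exactly why the geometric order is recovered in the iso-parametric case. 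The only other place requiring care is the jump term in Step 2, where the $h^{k_g}$ gain from $|[\mh]|$ is needed to absorb the $h^{-1}$ coming from the trace inequalities.
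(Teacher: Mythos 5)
Your proposal is correct and follows essentially the same route as the paper: test the error equation with $\theta_{\bfu}^{div}\in\bfV_h^{div}$ so that the $\bhtil$-terms involving $\{\theta_p,\theta_\lambda\}$ and $\{\ph^I,\lh^I\}$ vanish, use the improved coercivity of \cref{lemma: new H1 estimate thbfu kl=ku} to replace $\norm{\theta_{\bfu}^{div}}_{H^1(\Gah)}$ by $\norm{\theta_{\bfu}^{div}}_{\ah}$ without the $h^{-1}$ loss, then recover the pressures from the $L^2\times H_h^{-1}$ inf-sup of \cref{lemma: L^2 H^{-1} discrete inf-sup condition Gah Lagrange} and conclude by the triangle inequality and the infimum over $\bfV_h^{div}$. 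Your reorganization of the $b^L((\theta_{\bfu}^{div})^\ell,\{p,\lambda\})$ term into an interpolation piece plus a geometric perturbation is just an unpacking of the paper's $\text{Err}_2^L+\bhtil(\cdot,\{\rho_p,\rho_\lambda\})$ splitting, so the two arguments coincide.
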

\begin{proof}
\ Let us start with the discretization errors. To find a bound for $\thbfu^{div}$ we go back to the new error equation \eqref{eq: Lh consistency kl=ku}, and test the first equation with $\theta_{\bfu}^{div} \in \bfV_h^{div}$. Then, the consistency error bounds \eqref{eq: Consistency error bounds Lagrange kl=ku} and simple calculations give
\begin{equation}
    \begin{aligned}
        \norm{\theta_{\bfu}^{div}}_{\ah}^2 &\leq  \norm{\rho_{\bfu}^{div}}_{\ah}\norm{\theta_{\bfu}^{div}}_{\ah} + \norm{\theta_{\bfu}^{div}}_{\ah}\norm{\{\rho_p,\rho_\lambda\}}_{L^2(\Gah)} + ch^{k_g}(\norm{\bff}_{L^2(\Ga)}+ \norm{\bfu}_{H^1(\Ga)})\norm{\theta_{\bfu}^{div}}_{H^1(\Gah)}.
    \end{aligned}   
\end{equation}
With the help of the new coercivity bound in \cref{lemma: new H1 estimate thbfu kl=ku} and the interpolation bounds of $\rho_p,\rho_\lambda$, it is easy to see, after a simple kickback argument, that
\begin{equation}
    \begin{aligned}\label{eq: h1 imrpoved inside 1 kl=ku}
        \norm{\theta_{\bfu}^{div}}_{\ah} \leq c\norm{\rho_{\bfu}^{div}}_{\ah} + ch^{s}(\norm{p}_{H^{k_{pr}+1}(\Ga)} + \norm{\lambda}_{H^{k_{\lambda}+1}(\Ga)}+ \norm{\bff}_{L^2(\Ga)})
    \end{aligned}
\end{equation}
with $s = min\{h^{k_{pr+1}}, h^{k_{\lambda}+1}, h^{k_g}\}$. Moving onto the pressure estimates, with the help of the $L^2\times H_h^{-1}$ \emph{discrete inf-sup} \eqref{eq: L^2 H^{-1} discrete inf-sup condition Gah Lagrange}, the error equation \eqref{eq: Lh consistency kl=ku} along with the consistency errors \eqref{eq: Consistency error bounds Lagrange kl=ku}, we obtain
\begin{equation}
    \begin{aligned}\label{eq: improved pressure bound before kl=ku}
        \norm{\{\theta_p,\theta_\lambda\}}_{L^2(\Gah)\times H_h^{-1}(\Gah)} &\leq \norm{\rho_{\bfu}^{div}}_{\ah} + \norm{\{\rho_{p},\rho_{\lambda}\}}_{L^2(\Gah)} + \norm{\thbfu^{div}}_{\ah} + \sup_{\vh \in \bfV_h} \frac{Err_1^{L}(\vh)}{\norm{\vh}_{H^1(\Gah)}}\\
        &\leq \norm{\rho_{\bfu}^{div}}_{\ah} + ch^{s}(\norm{p}_{H^{k_{pr}+1}(\Ga)} + \norm{\lambda}_{H^{k_{\lambda}+1}(\Ga)}+ \norm{\bff}_{L^2(\Ga)}),
    \end{aligned}
\end{equation}
with $s = min\{{k_{pr+1}}, {k_{\lambda}+1}, {k_g}\}$. To complete the proof, we consider the decompositions \eqref{eq: decomposition error 1 lagrange  kl=ku} \eqref{eq: decomposition error 2 lagrange}, and \eqref{eq: decomposition error 3 lagrange}, use the triangle inequality, and combine the estimates \eqref{eq: h1 imrpoved inside 1 kl=ku} and \eqref{eq: improved pressure bound before kl=ku}.
\end{proof}

\begin{lemma}\label{lemma: inf need to show kl=ku}
Let the $L^2\times L^2$ discrete inf-sup condition \cref{Lemma: Discrete inf-sup condition Gah Lagrange} hold. Then have the following bound
    \begin{equation}
        \begin{aligned}\label{eq: inf need to show kl=ku}
            \inf_{\vh\in\bfV_h^{div}}\norm{\bfu^{-\ell} - \vh}_{\ah} \leq \inf_{\wh\in\bfV_h}\norm{\bfu^{-\ell} - \wh}_{\ah} + ch^{k_g}\norm{\bfu}_{H^1(\Ga)}.
        \end{aligned}
    \end{equation}
Furthermore, if we now consider the  $L^2\times H_h^{-1}$ discrete inf-sup condition \cref{lemma: L^2 H^{-1} discrete inf-sup condition Gah Lagrange} instead, then we have the following bound
\begin{equation}
        \begin{aligned}\label{eq: inf need to show kl=ku H1}
            \inf_{\vh\in\bfV_h^{div}}\norm{\bfu^{-\ell} - \vh}_{H^1(\Gah)} \leq \inf_{\wh\in\bfV_h}\norm{\bfu^{-\ell} - \wh}_{H^1(\Gah)} + ch^{k_g}\norm{\bfu}_{H^1(\Ga)}.
        \end{aligned}
    \end{equation}
\end{lemma}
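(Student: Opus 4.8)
The statement is a perturbed Fortin-type comparison of the best approximation of $\bfu^{-\ell}$ over the constrained space $\bfV_h^{div}$ with that over the full space $\bfV_h$; the extra term $ch^{k_g}\norm{\bfu}_{H^1(\Ga)}$ accounts for the fact that $\bfu^{-\ell}$ is only \emph{approximately} discretely (tangentially) divergence-free, the defect being of geometric size $h^{k_g}$ because on $\Ga$ we have $\bfu$ exactly tangential and $\divg\bfu=0$, so that $b^L(\bfu,\{\qhl,\xi_h^{\ell}\}) = -\int_\Ga \qhl\,\divg\bfu_T\,\ds + \int_\Ga \xi_h^{\ell}(\bfu\cdot\bfng)\,\ds = 0$ for every $\{\qh,\xi_h\}\in Q_h\times\Lambda_h$.

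For the first bound \eqref{eq: inf need to show kl=ku} the plan is: since \cref{Lemma: Discrete inf-sup condition Gah Lagrange} is an inf-sup between finite-dimensional spaces, it is equivalent to the existence of a bounded right inverse of $\bhtil$, i.e. for every $G\in(Q_h\times\Lambda_h)^*$ there is $\bfz_h\in\bfV_h$ with $\bhtil(\bfz_h,\{\qh,\xi_h\}) = G(\{\qh,\xi_h\})$ and $\norm{\bfz_h}_{\ah}\le c\sup_{\{\qh,\xi_h\}}|G(\{\qh,\xi_h\})|/\norm{\{\qh,\xi_h\}}_{L^2(\Gah)}$. Given an arbitrary $\wh\in\bfV_h$, apply this with $G=\bhtil(\wh,\cdot)$ and set $\vh:=\wh-\bfz_h\in\bfV_h^{div}$, so that $\norm{\bfu^{-\ell}-\vh}_{\ah}\le\norm{\bfu^{-\ell}-\wh}_{\ah}+\norm{\bfz_h}_{\ah}$. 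It then remains to bound $\bhtil(\wh,\{\qh,\xi_h\})$, which I would split as $\bhtil(\wh-\bfu^{-\ell},\{\qh,\xi_h\})+\bhtil(\bfu^{-\ell},\{\qh,\xi_h\})$. The second term is handled by \eqref{eq: Geometric perturbations btilde tangent} together with $b^L(\bfu,\cdot)=0$, giving $|\bhtil(\bfu^{-\ell},\{\qh,\xi_h\})|\le ch^{k_g}\norm{\bfu}_{H^1(\Ga)}\norm{\{\qh,\xi_h\}}_{L^2(\Gah)}$. In the first term, the $\xi_h$-part is bounded directly by $\norm{\wh-\bfu^{-\ell}}_{L^2(\Gah)}\norm{\xi_h}_{L^2(\Gah)}\le\norm{\wh-\bfu^{-\ell}}_{\ah}\norm{\xi_h}_{L^2(\Gah)}$, and the $\qh$-part is treated with the integration-by-parts formula \eqref{eq: integration by parts} (valid element-wise also for the smooth part $\bfu^{-\ell}$): the volume term is bounded using $\divgh(\cdot)=\mathrm{tr}(E_h(\cdot))$, the $\divgh\nh$-term using boundedness of the discrete curvature, and the edge terms using $|[\mh]|\le ch^{k_g}$, where one inserts $\Ihz(\bfu^{-\ell})$ and applies the discrete trace inequality to $\wh-\Ihz(\bfu^{-\ell})$ and a trace–interpolation estimate to $\bfu^{-\ell}-\Ihz(\bfu^{-\ell})$; for $k_g\ge1$ this gives a contribution $\le c\big(\norm{\wh-\bfu^{-\ell}}_{\ah}+h^{k_g}\norm{\bfu}_{H^1(\Ga)}\big)\norm{\qh}_{L^2(\Gah)}$. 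Collecting the terms yields $\norm{\bfz_h}_{\ah}\le c\big(\norm{\wh-\bfu^{-\ell}}_{\ah}+h^{k_g}\norm{\bfu}_{H^1(\Ga)}\big)$, and taking the infimum over $\wh\in\bfV_h$ proves \eqref{eq: inf need to show kl=ku}.

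For \eqref{eq: inf need to show kl=ku H1} I would run the identical argument with \cref{lemma: L^2 H^{-1} discrete inf-sup condition Gah Lagrange} in place of \cref{Lemma: Discrete inf-sup condition Gah Lagrange}, the $H^1(\Gah)$-norm of the corrector instead of $\norm{\cdot}_{\ah}$, and the dual norm $\norm{\{\qh,\xi_h\}}_{L^2(\Gah)\times H_h^{-1}(\Gah)}$ in the corrector estimate. The $\qh$-contributions carry over (now controlled by $\norm{\wh-\bfu^{-\ell}}_{H^1(\Gah)}$), and for the $\xi_h$-part of $\bhtil(\wh-\bfu^{-\ell},\cdot)$ one splits $(\wh-\bfu^{-\ell})\cdot\nh$ through $\Ihz$: the interpolated part pairs against $\norm{\xi_h}_{H_h^{-1}(\Gah)}$ using $\norm{\Ihz(\bfw\cdot\nh)}_{H^1(\Gah)}\le c\norm{\bfw}_{H^1(\Gah)}$ (bounded $\bfH_h$), and the remainder against $\norm{\xi_h}_{L^2(\Gah)}$ using $\norm{(I-\Ihz)(\bfw\cdot\nh)}_{L^2(\Gah)}\le ch\norm{\bfw}_{H^1(\Gah)}$ followed by the inverse estimate $\norm{\xi_h}_{L^2(\Gah)}\le ch^{-1}\norm{\xi_h}_{H_h^{-1}(\Gah)}$ on $\Lambda_h$.

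The step I expect to be the main obstacle is the control, in the $L^2\times H_h^{-1}$ setting, of the second-pressure contribution coming from $\bhtil(\bfu^{-\ell},\{\qh,\xi_h\})$, i.e. of $\int_{\Gah}(\bfu^{-\ell}\cdot\nh)\xi_h\,\dsh$ measured against the weak norm $\norm{\xi_h}_{H_h^{-1}(\Gah)}$: here the pointwise bound $|\bfu^{-\ell}\cdot\nh|\le ch^{k_g}|\bfu^{-\ell}|$ is not by itself enough, and one must instead use that $\bfu^{-\ell}$ is \emph{exactly} tangential, so $\bfu^{-\ell}\cdot\nh=\bfu^{-\ell}\cdot(\bfP\cdot\nh)$, and invoke the orthogonality-type estimate \eqref{eq: high Pnh estimate} with $\chi=\xi_h^{\ell}\bfu$ together with a super-approximation argument; carefully tracking the interplay between this gain and the inverse estimates relating $\norm{\cdot}_{H^1(\Gah)}$, $\norm{\cdot}_{L^2(\Gah)}$ and $\norm{\cdot}_{H_h^{-1}(\Gah)}$ on $\Lambda_h$ is the delicate point. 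Everything else is a routine assembly of the already-established perturbation, trace, interpolation and inverse estimates.
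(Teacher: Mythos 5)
Your treatment of the first bound \eqref{eq: inf need to show kl=ku} is sound and is essentially the paper's argument in an equivalent formulation: you pass from $\wh\in\bfV_h$ to $\vh\in\bfV_h^{div}$ via a corrector $\bfz_h$ built from the right inverse of $\bhtil$, whereas the paper picks $\vh$ with $\wh-\vh\in(\bfV_h^{div})^{\perp}$ and bounds $\norm{\wh-\vh}_{\ah}$ by the inf-sup quotient applied to $b_h^{L,*}(\bfu^{-\ell}-\wh,\cdot)-b_h^{L,*}(\bfu^{-\ell},\cdot)$. Both reduce to the same two ingredients: boundedness of $\bhtil(\wh-\bfu^{-\ell},\cdot)$ in terms of $\norm{\wh-\bfu^{-\ell}}_{\ah}$ (your insertion of $\Ihz(\bfu^{-\ell})$ and the edge/trace estimates is a legitimate way to get this for the non-discrete difference), and the $\bigo(h^{k_g})$ consistency of $\bhtil(\bfu^{-\ell},\cdot)$, for which your appeal to \eqref{eq: Geometric perturbations btilde tangent} together with $b^L(\bfu,\cdot)=0$ is clean and correct (the paper re-derives the same bound in-line using $|[\mh]|\le ch^{k_g}$ and $|\bfP[\mh]|\le ch^{2k_g}$).

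The gap is in the second bound \eqref{eq: inf need to show kl=ku H1}, precisely at the term you single out, $\int_{\Gah}(\bfu^{-\ell}\cdot\nh)\,\xi_h\,\dsh$ measured against $\norm{\xi_h}_{H_h^{-1}(\Gah)}$. Your proposed remedy --- tangentiality plus \eqref{eq: high Pnh estimate} with $\chi=\xi_h^{\ell}\bfu$ --- does not close the order gap: that estimate yields $ch^{k_g+1}\norm{\bfu}_{H^1(\Ga)}\norm{\xi_h}_{H^1(\Gah)}$, and since $\xi_h$ is a finite element function the only route back to the dual norm is through two inverse estimates, $\norm{\xi_h}_{H^1(\Gah)}\le ch^{-1}\norm{\xi_h}_{L^2(\Gah)}\le ch^{-2}\norm{\xi_h}_{H_h^{-1}(\Gah)}$, which leaves you at $ch^{k_g-1}\norm{\bfu}_{H^1(\Ga)}\norm{\xi_h}_{H_h^{-1}(\Gah)}$ --- one power of $h$ short of the claimed $h^{k_g}$. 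The paper never proves a bound of the form $ch^{k_g}\norm{\bfu}\norm{\xi_h}_{H_h^{-1}}$ for this term in isolation. Instead it decomposes $\bfu^{-\ell}=(\bfu^{-\ell}-\vh)+(\vh-\wh)+\wh$ with the \emph{same} $\vh\in\bfV_h^{div}$ already constructed, uses that $\int_{\Gah}\vh\cdot\nh\,\xi_h\,\dsh=0$ exactly, pairs the $(\vh-\wh)\cdot\bfn$ and $\wh\cdot\bfn$ pieces against $\norm{\xi_h}_{H_h^{-1}}$ via $\Ihz$ and super-approximation (always subtracting $\bfu^{-\ell}\cdot\bfn=0$ before interpolating), and bounds the residual $\int(\bfu^{-\ell}-\vh)\cdot(\nh-\bfn)\xi_h$ by $ch^{k_g-1}\norm{\bfu^{-\ell}-\vh}_{L^2(\Gah)}\norm{\xi_h}_{H_h^{-1}(\Gah)}$, after which $\norm{\bfu^{-\ell}-\vh}_{\ah}$ is controlled by the already-proven first inequality. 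The second estimate is thus a bootstrap from the first, not an independent rerun of the same argument with weaker norms; without that device your route cannot reach the stated constant. A further minor point: $\Ihz(\bfw\cdot\nh)$ is not an admissible $H^1$ object since $\nh$ jumps across edges; one must interpolate $\bfw\cdot\bfn$ with the smooth extended normal and handle $\bfw\cdot(\nh-\bfn)$ separately, as the paper does.
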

\begin{proof}
  For the first inequality, consider the discrete inf-sup \cref{Lemma: Discrete inf-sup condition Gah Lagrange}. First, the adjoint of $\bhtil$ can be calculated thanks to the integration by parts formula \eqref{eq: integration by parts} as  
\begin{equation}
    b_h^{L,*}(\vh,\{\qh,\xi_h\}) = - \sum_{T\in \mathcal{T}_h}\int_{T} \qh \divgh \bfPh\vh \, \dsh + \sum_{E \in \mathcal{E}_h} \int_E [\mh \cdot \vh] \qh \, d\ell + \int_{\Gah}\vh\cdot\nh \xi_h \, \dsh.
\end{equation}
Now let $\wh\in\bfV_h$ and choose $\vh \in \bfV_h^{div}$, such that $\wh-\vh \in (\bfV_h^{div})^{\perp}$. Due to the discrete inf-sup \cref{Lemma: Discrete inf-sup condition Gah Lagrange} we can see that the following holds
\begin{equation}
    \begin{aligned}\label{eq: corol inside 1 kl=ku}
        \norm{\vh-\wh}_{\ah} \leq \sup_{\{\qh,\xi_h\}\in Q_h\times\Lambda_h}\frac{ b_h^{L,*}(\vh-\wh,\{\qh,\xi_h\})}{\norm{\{\qh,\xi_h\}}_{L^2(\Gah)}}.
    \end{aligned}
\end{equation}
Adding and subtracting the true solution $\bfu$, using the fact that $\vh\in \bfV_h^{div}$, noting the estimates $|[\mh]| \leq ch^{k_g}$ and $|\bfP[\mh]| \leq ch^{2k_g}$ (cf. \cite[Lemma 3.5]{OlshReusXu2013}) and the consistency errors \cref{eq: Geometric perturbations btilde}, \cite[Lemma 6.6]{jankuhn2021Higherror}, we readily see  for sufficiently small $h$ that
\begin{equation}
    \begin{aligned}\label{eq: corol inside 2 kl=ku}
         &b_h^{L,*}(\vh-\wh,\{\qh,\xi_h\}) =  b_h^{L,*}(\bfu^{-\ell}-\wh,\{\qh,\xi_h\}) - b_h^{L,*}(\bfu^{-\ell},\{\qh,\xi_h\})\\
         &\leq c\norm{\bfu^{-\ell}-\wh}_{\ah}\norm{\qh}_{L^2(\Gah)} - \int_{\Gah}\qh \divgh \bfu^{-\ell} + \underbrace{\int_{\Ga}\qhl \divg \bfu}_{:=0} + \underbrace{\int_{\Gah}\bfu^{-\ell}\cdot(\nh-\bfn) \xi_h}_{\bfu^{-\ell}\cdot\bfn =0, \ \  \norm{\nh-\bfn}_{L^{\infty}}\leq ch^{k_g}} \\
         &\ + \sum_{E \in \mathcal{E}_h} \int_E [\mh] \cdot \bfu^{-\ell} \qh \, d\ell \\
         &\leq c\norm{\bfu^{-\ell}-\wh}_{\ah}\norm{\qh}_{L^2(\Gah)} + ch^{k_g}\norm{\bfu}_{H^1(\Ga)}\norm{\{\qh,\xi_h\}}_{L^2(\Gah)} + ch^{2k_g-1/2}\norm{\bfu}_{H^1(\Ga)}\norm{\qh}_{L^2(\Gah)},
    \end{aligned}
\end{equation}
where we also used the norm equivalence where appropriate. Now, combining \eqref{eq: corol inside 1 kl=ku}, \eqref{eq: corol inside 2 kl=ku}, using the following triangle inequality
\begin{equation}\label{eq: tring ineq inf proof}
   \norm{\bfu^{-\ell} - \vh}_{\ah} \leq \norm{\bfu^{-\ell} - \wh}_{\ah} + \norm{\wh - \vh}_{\ah}
\end{equation}
and taking the infimum over all $\vh \in \bfV_h^{div}$ and $\wh \in \bfV_h$ gives our assertion \eqref{eq: inf need to show kl=ku}. The second inequality \eqref{eq: inf need to show kl=ku H1} follows similarly if we use the $L^2\times H_h^{-1}$ discrete inf-sup condition \cref{lemma: L^2 H^{-1} discrete inf-sup condition Gah Lagrange} instead. We just need to bound the last term in the second line of \eqref{eq: corol inside 2 kl=ku} appropriately. We use the fact that $\norm{\xi_h}_{L^2(\Gah)} \leq ch^{-1}\norm{\xi_h}_{H_h^{-1}(\Gah)}$, by def. \eqref{eq: H^-1h definition 2} and the inverse inequality $\norm{\xi_h}_{H^1(\Gah)} \leq ch^{-1}\norm{\xi_h}_{L^2(\Gah)}$, to obtain
\begin{equation}
    \begin{aligned}
        &\int_{\Gah}\bfu^{-\ell}\cdot\nh \xi_h = \int_{\Gah}(\bfu^{-\ell}-\vh)\cdot(\nh-\bfn) \xi_h + \int_{\Gah}(\vh-\wh)\cdot\bfn\xi_h + \int_{\Gah}\wh\cdot\bfn\xi_h\\
        &\leq ch^{k_g-1}\norm{\bfu^{-\ell}-\vh}_{L^2(\Gah)}\norm{\xi_h}_{H_h^{-1}(\Gah)} + \int_{\Gah}\Ihz((\vh-\wh)\cdot\bfn)\xi_h + \int_{\Gah}((\vh-\wh)\cdot\bfn- \Ihz((\vh-\wh)\cdot\bfn))\xi_h\\
        &\  + \int_{\Gah}\Ihz(\wh\cdot\bfn)\xi_h+  \int_{\Gah}(\wh\cdot\bfn- \Ihz(\wh\cdot\bfn))\xi_h \\
        &\leq c\norm{\bfu^{-\ell}-\vh}_{L^2(\Gah)}\norm{\xi_h}_{H_h^{-1}(\Gah)} +c\norm{\vh-\wh}_{L^2(\Gah)}\norm{\xi_h}_{H_h^{-1}(\Gah)}+ c\norm{\wh-\bfu^{-\ell}}_{H^1(\Gah)}\norm{\xi_h}_{H_h^{-1}(\Gah)},
    \end{aligned}
\end{equation}
where we used the super approximation properties \eqref{Lemma: super-approximation properties}, the interpolation estimate \eqref{eq: Scott-Zhang interpolant} and the fact that $\bfu^{-\ell}\cdot \bfn=0$. Therefore, recalling \eqref{eq: corol inside 2 kl=ku} and the $L^2\times H_h^{-1}$ inf-sup \cref{lemma: L^2 H^{-1} discrete inf-sup condition Gah Lagrange} we see that 
\begin{equation*}
    \begin{aligned}
        \norm{\vh-\wh}_{H^1(\Gah)} \leq c\norm{\bfu^{-\ell}-\vh}_{\ah} + c\norm{\bfu^{-\ell} - \wh}_{H^1(\Gah)} +  ch^{k_g}\norm{\bfu}_{H^1(\Ga)}.
    \end{aligned}
\end{equation*}
Using the triangle inequality we see that
\begin{equation*}
    \begin{aligned}
        \inf_{\vh\in\bfV_h^{div}}\norm{\bfu^{-\ell} - \vh}_{H^1(\Gah)} \leq \norm{\bfu^{-\ell} - \vh}_{H^1(\Gah)} &\leq \norm{\bfu^{-\ell} - \wh}_{H^1(\Gah)} + \norm{\wh - \vh}_{H^1(\Gah)}\\
        &\leq c\norm{\bfu^{-\ell}-\vh}_{\ah} + c\norm{\bfu^{-\ell} - \wh}_{H^1(\Gah)} +  ch^{k_g}\norm{\bfu}_{H^1(\Ga)},
    \end{aligned}
\end{equation*}
where taking the infimum over $\vh\in\bfV_h^{div}$ and $\wh \in \bfV_h$ and considering the previously proven estimate \eqref{eq: inf need to show kl=ku} we get our desired result.
\end{proof}

\begin{theorem}[Improved Error Estimates for Lagrange Formulation]\label{Corrolary: Improved Energy Error Estimate for Lagrange Formulation kl=ku}
    Let the assumptions in \cref{Theorem: Improved Energy Error Estimate for Lagrange Formulation kl=ku} hold. Then, the solution  $(\uh,$ $\{\ph,\lambda_h\}) \in \bfV_h \times Q_h \times \Lambda_h$ to the discrete scheme \eqref{weak lagrange discrete} satisfies the following error bounds
    \begin{equation}
        \begin{aligned} \label{eq: Improved Energy Error Estimate for Lagrange Formulation kl=ku new}
       \norm{\bfu^{-\ell} - \uh}_{\ah} + \norm{p^{-\ell} - \ph}_{L^2(\Gah)} &+ \norm{\lambda^{-\ell} - \lh}_{H_h^{-1}(\Gah)}\\ 
        &\leq ch^{\widehat{m}}\big(\norm{\bfu}_{H^{k_u+1}(\Ga)}+\norm{p}_{H^{k_{pr}+1}(\Ga)} + \norm{\lambda}_{H^{k_{\lambda}+1}(\Ga)}+ \norm{\bff}_{L^2(\Ga)}\big),
    \end{aligned}
\end{equation}
with $\widehat{m}= min\{{k_u},{k_{pr}+1}, {k_{\lambda}+1}, {k_g}\}$.
\end{theorem}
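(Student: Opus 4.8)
The plan is to chain together the two main results already established for the $k_\lambda = k_u$ regime, namely the abstract a priori bound \cref{Theorem: Improved Energy Error Estimate for Lagrange Formulation kl=ku} and the comparison estimate \cref{lemma: inf need to show kl=ku}, and then reduce the remaining best-approximation term to a standard interpolation estimate. The point of the argument is bookkeeping of the orders of $h$; there is no new analytic difficulty, since the delicate steps (the improved $H^1$-coercivity \cref{lemma: new H1 estimate thbfu kl=ku} and the $L^2\times H_h^{-1}$ discrete inf-sup condition \cref{lemma: L^2 H^{-1} discrete inf-sup condition Gah Lagrange}) are precisely what make \cref{Theorem: Improved Energy Error Estimate for Lagrange Formulation kl=ku} geometrically optimal.

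First I would apply \cref{Theorem: Improved Energy Error Estimate for Lagrange Formulation kl=ku} to obtain
\[
\norm{\bfu^{-\ell} - \uh}_{\ah} + \norm{p^{-\ell} - \ph}_{L^2(\Gah)} + \norm{\lambda^{-\ell} - \lh}_{H_h^{-1}(\Gah)} \le c\inf_{\vh\in\bfV_h^{div}}\norm{\bfu^{-\ell} - \vh}_{\ah} + ch^{s}\bigl(\norm{p}_{H^{k_{pr}+1}(\Ga)} + \norm{\lambda}_{H^{k_{\lambda}+1}(\Ga)}+ \norm{\bff}_{L^2(\Ga)}\bigr),
\]
with $s=\min\{k_{pr}+1,k_{\lambda}+1,k_g\}$. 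Next I would estimate the constrained best-approximation term via \eqref{eq: inf need to show kl=ku}, which gives $\inf_{\vh\in\bfV_h^{div}}\norm{\bfu^{-\ell} - \vh}_{\ah} \le \inf_{\wh\in\bfV_h}\norm{\bfu^{-\ell} - \wh}_{\ah} + ch^{k_g}\norm{\bfu}_{H^1(\Ga)}$, and then bound the unconstrained best approximation by taking $\wh = \Ihz\bfu^{-\ell}$ and invoking the energy-norm interpolation estimate \eqref{eq: interpolation vh ah} of \cref{lemma: energy norm interpolant lagrange} with $k=k_u$, so that $\norm{\bfu^{-\ell} - \Ihz\bfu^{-\ell}}_{\ah} \le ch^{k_u}\norm{\bfu}_{H^{k_u+1}(\Ga)}$.

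Collecting terms, the velocity contribution is $\bigo(h^{\min\{k_u,k_g\}})\norm{\bfu}_{H^{k_u+1}(\Ga)}$ (after absorbing $\norm{\bfu}_{H^1(\Ga)}$ into $\norm{\bfu}_{H^{k_u+1}(\Ga)}$, which is harmless since $k_u\ge 2$), and the data contribution is $\bigo(h^{s})$; since $\widehat m = \min\{k_u,k_{pr}+1,k_{\lambda}+1,k_g\} = \min\{\min\{k_u,k_g\},s\}$, every term is $\bigo(h^{\widehat m})$ times the stated combination of norms, which is exactly \eqref{eq: Improved Energy Error Estimate for Lagrange Formulation kl=ku new}.

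The only thing to watch is that no geometric order is lost: the contribution of the geometry error enters here through $h^{k_g}$ in both \eqref{eq: inf need to show kl=ku} and in $s$, in contrast with the $k_\lambda = k_u-1$ case where the consistency error $\bfE_1^{L}$ forced the suboptimal factor $h^{k_g-1}$; consequently the present estimate is optimal already for iso-parametric surface finite elements, $k_g\ge 1$, and this is reflected in $\widehat m$ containing $k_g$ rather than $k_g-1$. This is essentially the whole content of the proof, so I expect no genuine obstacle — the ``hard part'', such as it is, was carried out upstream in establishing the improved coercivity bound and the two discrete inf-sup conditions.
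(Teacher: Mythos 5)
Your proposal is correct and follows exactly the paper's own argument: apply \cref{Theorem: Improved Energy Error Estimate for Lagrange Formulation kl=ku}, reduce the constrained best approximation via \eqref{eq: inf need to show kl=ku}, and bound the resulting unconstrained infimum with the Scott--Zhang energy-norm interpolation estimate \eqref{eq: interpolation vh ah}. The bookkeeping of the orders of $h$ and the absorption of $\norm{\bfu}_{H^1(\Ga)}$ into $\norm{\bfu}_{H^{k_u+1}(\Ga)}$ are exactly as in the paper.
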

\begin{proof}
These improved error estimates can be shown by combining the previously proven results in \cref{Theorem: Improved Energy Error Estimate for Lagrange Formulation kl=ku} and \eqref{eq: inf need to show kl=ku} together with standard interpolation estimates; we may consider the Scott-Zhang interpolant for the right-hand side of \eqref{eq: inf need to show kl=ku} and the corresponding estimates \eqref{eq: Scott-Zhang interpolant},  \eqref{eq: interpolation vh ah}.
\end{proof}

\begin{corollary}[$H^1$ Error Bound]\label{corollary: H^1 bound kl-ku}
    Let the assumptions in \cref{Theorem: Improved Energy Error Estimate for Lagrange Formulation kl=ku} hold. Then, the solution  $(\uh,$ $\{\ph,\lambda_h\}) \in \bfV_h \times Q_h \times \Lambda_h$ to the discrete scheme \eqref{weak lagrange discrete} satisfies the following error bounds
    \begin{equation}
        \begin{aligned} \label{eq: Improved H1 Error Estimate for Lagrange Formulation kl=ku new}
       \norm{\bfu^{-\ell} - \uh}_{H^1(\Gah)} \leq ch^{\widehat{m}}\big(\norm{\bfu}_{H^{k_u+1}(\Ga)}+\norm{p}_{H^{k_{pr}+1}(\Ga)} + \norm{\lambda}_{H^{k_{\lambda}+1}(\Ga)}+ \norm{\bff}_{L^2(\Ga)}\big)
    \end{aligned}
\end{equation}
with $\widehat{m}= min\{{k_u},{k_{pr}+1}, {k_{\lambda}+1}, {k_g}\}$ as in \cref{Corrolary: Improved Energy Error Estimate for Lagrange Formulation kl=ku}.
\end{corollary}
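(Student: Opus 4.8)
The plan is to upgrade the energy-norm velocity estimate of \cref{Corrolary: Improved Energy Error Estimate for Lagrange Formulation kl=ku} to the full $H^1$-norm by routing everything through the weakly discretely tangential divergence-free space $\bfV_h^{div}$. Recall that the energy norm $\norm{\cdot}_{\ah}$ does \emph{not} control $\norm{\cdot}_{H^1(\Gah)}$ for a generic $\bfw_h\in\bfV_h$ — the coercivity bound \eqref{coercivity and Korn's inequality Lagrange} loses a factor $h^{-1}$ coming from the normal component — so the crucial point is to keep the discrete part of the error inside $\bfV_h^{div}$, where the improved $H^1$ coercivity \cref{lemma: new H1 estimate thbfu kl=ku} is available. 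Accordingly I would use the divergence-free velocity splitting $\bfu^{-\ell}-\uh=\rho_\bfu^{div}+\theta_\bfu^{div}$ with $\theta_\bfu^{div}=\uh-\vhtilde\in\bfV_h^{div}$ and $\rho_\bfu^{div}=\vhtilde-\bfu^{-\ell}$ for a suitably chosen $\vhtilde\in\bfV_h^{div}$, and estimate the two pieces separately in $H^1(\Gah)$ via the triangle inequality.

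For the discrete remainder, since $\theta_\bfu^{div}\in\bfV_h^{div}$, \cref{lemma: new H1 estimate thbfu kl=ku} gives $\norm{\theta_\bfu^{div}}_{H^1(\Gah)}\le c\norm{\theta_\bfu^{div}}_{\ah}$. The energy-norm estimate \eqref{eq: h1 imrpoved inside 1 kl=ku}, already established inside the proof of \cref{Theorem: Improved Energy Error Estimate for Lagrange Formulation kl=ku}, then bounds $\norm{\theta_\bfu^{div}}_{\ah}$ by $c\norm{\rho_\bfu^{div}}_{\ah}+ch^{s}(\norm{p}_{H^{k_{pr}+1}(\Ga)}+\norm{\lambda}_{H^{k_{\lambda}+1}(\Ga)}+\norm{\bff}_{L^2(\Ga)})$ with $s=\min\{k_{pr}+1,k_{\lambda}+1,k_g\}$, and trivially $\norm{\rho_\bfu^{div}}_{\ah}\le c\norm{\rho_\bfu^{div}}_{H^1(\Gah)}$. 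Hence the whole velocity error is controlled in $H^1(\Gah)$ by $c\norm{\rho_\bfu^{div}}_{H^1(\Gah)}$ plus the data-dependent $h^{s}$ term.

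It then remains to choose $\vhtilde$ optimally: I would take $\vhtilde\in\bfV_h^{div}$ realizing, up to a fixed factor, $\inf_{\vh\in\bfV_h^{div}}\norm{\bfu^{-\ell}-\vh}_{H^1(\Gah)}$. The $H^1$ variant of the divergence-free best-approximation estimate \eqref{eq: inf need to show kl=ku H1} (whose proof, as noted in \cref{lemma: inf need to show kl=ku}, rests on the $L^2\times H_h^{-1}$ inf-sup of \cref{lemma: L^2 H^{-1} discrete inf-sup condition Gah Lagrange}) bounds this infimum by $\inf_{\wh\in\bfV_h}\norm{\bfu^{-\ell}-\wh}_{H^1(\Gah)}+ch^{k_g}\norm{\bfu}_{H^1(\Ga)}$, and the Scott--Zhang interpolation bound \eqref{eq: Scott-Zhang interpolant} estimates the first term by $ch^{k_u}\norm{\bfu}_{H^{k_u+1}(\Ga)}$. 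Collecting all contributions yields the claimed rate with $\widehat{m}=\min\{k_u,k_{pr}+1,k_{\lambda}+1,k_g\}$. There is no genuinely hard step here beyond what has already been built; the only thing requiring care is the bookkeeping that ensures the discrete remainder never leaves $\bfV_h^{div}$, since that is exactly what makes \cref{lemma: new H1 estimate thbfu kl=ku} applicable and prevents the $h^{-1}$ loss one would otherwise suffer when passing from the energy norm to the $H^1$-norm.
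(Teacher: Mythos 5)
Your proposal is correct and follows essentially the same route as the paper: the splitting through a weakly discretely tangential divergence-free $\vhtilde\in\bfV_h^{div}$, the improved $H^1$ coercivity of \cref{lemma: new H1 estimate thbfu kl=ku} applied to the discrete remainder, and the best-approximation bound of \cref{lemma: inf need to show kl=ku} combined with Scott--Zhang interpolation. The only (inessential) difference is that you reuse the intermediate bound \eqref{eq: h1 imrpoved inside 1 kl=ku} for $\norm{\theta_\bfu^{div}}_{\ah}$, whereas the paper bounds it by a second triangle inequality and the already-stated energy error estimate \eqref{eq: Improved Energy Error Estimate for Lagrange Formulation kl=ku new}.
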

\begin{proof}
    Considering $\vhtilde\in \bfV_h^{div}$, $\thbfu = \vhtilde - \uh \in \bfV_h^{div}$ and the $H^1$ coercivity bound \eqref{eq: new H1 estimate thbfu 1 kl=ku} we see that
\begin{equation*}
    \begin{aligned}
        \norm{\bfu^{-\ell} - \uh}_{H^1(\Gah)} &\leq \norm{\bfu^{-\ell}-\vhtilde}_{H^1(\Gah)} + \norm{\thbfu}_{H^1(\Gah)} \leq \norm{\bfu^{-\ell}-\vhtilde}_{H^1(\Gah)} + \norm{\thbfu}_{\ah} \\
        &\leq \norm{\bfu^{-\ell}-\vhtilde}_{H^1(\Gah)} + \norm{\bfu^{-\ell}-\vhtilde}_{\ah} + \norm{\bfu^{-\ell} - \uh}_{\ah}.
    \end{aligned}
\end{equation*}
The proof follows by taking infimum over all $\vhtilde\in \bfV_h^{div}$, and applying \cref{lemma: inf need to show kl=ku} and \eqref{eq: Improved Energy Error Estimate for Lagrange Formulation kl=ku new}.
\end{proof}

\subsubsection{$L^2$-norm velocity error bounds}

\begin{theorem}[Improved $L^2$ Error estimate for the tangential velocity]\label{Theorem: improved L^2 error velocity Lagrange}
    Let $k_u  = k_\lambda\geq 2$, $k_{pr}=k_u-1$ and $h$ sufficiently small. Then for
   $(\bfu,p,\lambda)$ the solution of the continuous problem \eqref{weak lagrange hom} and assume further regularity $(\bfu,p,\lambda) \in (H^2(\Ga))^3\times H^1(\Ga)\times H^1(\Ga)$, and  $(\uh,\ph,\lh) \in \bfV_h \times Q_h\times \Lambda_h$ the solution of the discrete scheme \eqref{weak lagrange discrete}, the following estimate holds for the tangential part of the velocity
    \begin{equation}
        \begin{aligned}\label{eq: improved L^2 error velocity Lagrange 1}
            \norm{\eut}_{L^2(\Ga)}  &= \norm{\bfPg(\bfu-\uhl)}_{L^2(\Ga)} \\
            & \leq ch^{\widehat{m}+1}(\norm{\bfu}_{H^{k_u+1}(\Ga)}+\norm{p}_{H^{k_{pr}+1}(\Ga)} + \norm{\lambda}_{H^{k_{\lambda}+1}(\Ga)}+ \norm{\bff}_{L^2(\Ga)}),
        \end{aligned}
    \end{equation}
    where $\widehat{m}= min\{{k_u},{k_{pr}+1}, {k_{\lambda}+1}, {k_g}\}$ as in \cref{Corrolary: Improved Energy Error Estimate for Lagrange Formulation kl=ku}.
\end{theorem}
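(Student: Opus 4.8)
The proof is an Aubin--Nitsche duality argument that reuses the structure of the proof of \cref{Theorem: L^2 error velocity Lagrange}, with the suboptimal geometric ingredients replaced by the sharper ones available when $k_\lambda=k_u$. As in \eqref{eq: Nitsche type equation lagrange} the plan is to introduce the adjoint problem with data $\bfh=\eut$, so that $\bfh_n=0\in H^1(\Ga)$ and \eqref{eq: regularity estimate lagrange} provides a solution $(\bfw,\{\pi,\mu\})\in\bfH^1_T(\Ga)\times L^2_0(\Ga)\times L^2(\Ga)$ with $\norm{\bfw}_{H^2(\Ga)}+\norm{\{\pi,\mu\}}_{H^1(\Ga)}\leq c\norm{\eut}_{L^2(\Ga)}$ (the extra regularity of $(\bfu,p,\lambda)$ being the one discussed before \cref{Theorem: L^2 error velocity Lagrange}). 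Testing the adjoint equations with $\eu$ and using $b^L(\bfw,\{\ep,\el\})=0$ yields the identity $\norm{\eut}_{L^2(\Ga)}^2=a(\bfw,\eu)+b^L(\eu,\{\pi,\mu\})$, and the task is to bound each term by $c\,h^{\widehat m+1}\norm{\eut}_{L^2(\Ga)}$ times the data norms.

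First I would bound $a(\bfw,\eu)$ by repeating the expansion \eqref{eq: a_T extra regularity error large lagrange}: insert $\Ihzl(\bfw)$ and $\Ihz(\bfu^{-\ell})$, use the weakly-divergence-free decomposition \eqref{eq: decomposition error 1 lagrange  kl=ku} so that the discrete remainder $\thbfu^{div}$ lies in $\bfV_h^{div}$, and use the consistency errors of \cref{lemma: Lh consistency kl=ku}. The decisive change from the $k_\lambda=k_u-1$ case is that in the geometric perturbation term \eqref{eq: L^2 estimate proof 2 lagrange} one may now invoke \cref{lemma: new H1 estimate thbfu kl=ku}, i.e. $\norm{\thbfu^{div}}_{H^1(\Gah)}\leq c\norm{\thbfu^{div}}_{\ah}$, instead of the lossy $\norm{\thbfu^{div}}_{H^1(\Gah)}\leq c h^{-1}\norm{\thbfu^{div}}_{\ah}$; together with the improved bound $\norm{\thbfu^{div}}_{\ah}\leq c h^{\widehat m}(\dots)$ obtained in the proof of \cref{Theorem: Improved Energy Error Estimate for Lagrange Formulation kl=ku} and the approximability estimate \cref{lemma: inf need to show kl=ku}, every occurrence of $h^{k_g-1}$ becomes $h^{k_g}\geq h^{\widehat m}$, while the extra factor $h$ comes, as before, from $\norm{\bfw-\Ihzl(\bfw)}_{H^1(\Ga)}\leq ch\norm{\bfw}_{H^2(\Ga)}$. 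The pure Ritz-type term $\bfG(\bfw,\bfu)$ and the data term are handled unchanged via \cite[Lemma 5.4]{hansbo2020analysis} and \eqref{eq: errors of domain of integration data}. The term $b^L(\eu,\{\pi,\mu\})$ is treated in the same spirit as \eqref{eq: b_T extra regularity error large lagrange}, inserting $\Ihzl(\pi),\Ihzl(\mu)$; there all genuinely new factors are carried by $\thbfu^{div}$ (now controlled optimally in $H^1(\Gah)$) or by the interpolation errors of $\pi,\mu$, so no new difficulty arises.

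The main obstacle will be the contributions carrying the multiplier discretisation error $\thl$, which for $k_\lambda=k_u$ is only controlled in $H_h^{-1}(\Gah)$. These appear in the analogue of \eqref{eq: L^2 estimate proof 6 lagrange}, namely $\bhtil(\Ihz(\bfw^{-\ell}),\{\thp,\thl\})-b^L(\Ihzl(\bfw),\{\thp^{\ell},\thl^{\ell}\})$. I would split off $\bhtil(\bfw^{-\ell},\{\thp,\thl\})-b^L(\bfw,\{\cdot\})$ and use the higher-order perturbation estimate \eqref{eq: Geometric perturbations btilde tangent extra regularity}, which exploits $\bfw\in\bfH^1_T(\Ga)\cap\bfH^2(\Ga)$ to gain $h^{k_g+1}$ at the cost of needing $\norm{\thl}_{H^1(\Gah)}$; this is absorbed using the inverse estimate $\norm{\thl}_{H^1(\Gah)}\leq ch^{-1}\norm{\thl}_{L^2(\Gah)}$ and, via \eqref{eq: H^-1h definition 2}, $\norm{\thl}_{L^2(\Gah)}\leq ch^{-1}\norm{\thl}_{H_h^{-1}(\Gah)}$ together with the optimal $H_h^{-1}$ bound of \cref{Theorem: Improved Energy Error Estimate for Lagrange Formulation kl=ku}, while the remaining interpolation remainders are handled with the $L^2$-form \eqref{eq: Geometric perturbations btilde} and the $L^2\times L^2$ inf-sup \cref{Lemma: Discrete inf-sup condition Gah Lagrange}. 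Careful bookkeeping of the several $h^{-1}$ losses that appear whenever a generic $\vh\in\bfV_h$, rather than a member of $\bfV_h^{div}$, must be used — and checking that they are always compensated so that every contribution is of order at least $h^{\widehat m+1}$ — is the delicate part; everything else is a routine repetition of the $k_\lambda=k_u-1$ computation with the sharper ingredients. Substituting the resulting bounds for $a(\bfw,\eu)$ and $b^L(\eu,\{\pi,\mu\})$ into the identity above and dividing by $\norm{\eut}_{L^2(\Ga)}$ then gives \eqref{eq: improved L^2 error velocity Lagrange 1}.
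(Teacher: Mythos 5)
Your overall architecture is the paper's: rerun the duality argument of \cref{Theorem: L^2 error velocity Lagrange} with the adjoint data $\bfh=\eut$, replace the interpolant $\Ihz(\bfu^{-\ell})$ by a discretely divergence-free approximant $\vhtilde\in\bfV_h^{div}$ so that the discrete remainder lies in $\bfV_h^{div}$, apply the improved coercivity \cref{lemma: new H1 estimate thbfu kl=ku} to $\thbfu^{div}$, and use \cref{lemma: inf need to show kl=ku} to pass back to approximation over all of $\bfV_h$. Your treatment of $a(\bfw,\eu)$ and of most contributions to $b^L(\eu,\{\pi,\mu\})$ matches the paper's.

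The genuine gap is in your handling of the multiplier term, i.e.\ the analogue of \eqref{eq: L^2 estimate proof 6 lagrange}. You bound $\bhtil(\bfw^{-\ell},\{\thp,\thl\})-b^L(\bfw,\{\thp^{\ell},\thl^{\ell}\})$ by \eqref{eq: Geometric perturbations btilde tangent extra regularity}, which costs $ch^{k_g+1}\norm{\bfw}_{H^2(\Ga)}\norm{\thl}_{H^1(\Gah)}$, and then reach the optimally controlled quantity $\norm{\thl}_{H_h^{-1}(\Gah)}$ through two inverse-type steps, $\norm{\thl}_{H^1(\Gah)}\le ch^{-1}\norm{\thl}_{L^2(\Gah)}\le ch^{-2}\norm{\thl}_{H_h^{-1}(\Gah)}$. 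That chain yields $ch^{k_g-1}\norm{\thl}_{H_h^{-1}(\Gah)}\le ch^{k_g-1+\widehat m}$, which matches the claimed $h^{\widehat m+1}$ only when $k_g\ge 2$; for $k_g=1$ it is one full order short, and the theorem is explicitly meant to cover planar triangulations (see \cref{remark: about k_g=1 kl=ku} and the experiment in \cref{example: Dz-Ell}). The paper avoids every inverse estimate at this point: it picks $\widetilde{\bfw}_h\in\bfV_h^{div}$ with $\norm{\bfw^{-\ell}-\widetilde{\bfw}_h}_{H^1(\Gah)}\le ch\norm{\bfw}_{H^2(\Ga)}$ (via \eqref{eq: inf need to show kl=ku H1}), uses $\int_{\Gah}\widetilde{\bfw}_h\cdot\nh\,\thl\,\dsh=0$ to reduce the offending integral to one over $\bfr_h:=\Ihz(\bfw^{-\ell})-\widetilde{\bfw}_h$, and then pairs $\thl$ in $H_h^{-1}(\Gah)$ directly against the discrete test function $\Ihz(\bfr_h\cdot\bfn)\in\Lambda_h=V_h$ (this is exactly where $k_\lambda=k_u$ is used), whose $H^1(\Gah)$-norm is $O(h)\norm{\bfw}_{H^2(\Ga)}$ because $\bfw$ is tangential and both $\Ihz(\bfw^{-\ell})$ and $\widetilde{\bfw}_h$ approximate it to order $h$ in $H^1$; the remainder is absorbed by the super-approximation property \eqref{eq: super-approximation estimate 2}. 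This gives $ch^{\widehat m+1}$ uniformly for $k_g\ge 1$. Unless you restrict the statement to $k_g\ge 2$, you need this duality device (or an equivalent one) rather than the inverse-estimate chain.
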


\begin{proof}
The proof follows exactly as in \cref{Theorem: L^2 error velocity Lagrange}, where now we use the previous improved estimates instead, that is, \cref{Corrolary: Improved Energy Error Estimate for Lagrange Formulation kl=ku}. However, there are some terms that still need extra attention, mainly for the case $k_g=1$, since in \cref{Theorem: L^2 error velocity Lagrange} we see that a geometric approximation error $\bigo(h^{2k_g-1})$ arises. 

We will go through the error bounds appearing in  \cref{Theorem: L^2 error velocity Lagrange} and try to attain higher-order convergence. Let us start by finding an improved bound for $a(\bfw,\eu)$ of \eqref{eq: Nitsche type equation lagrange}:
\begin{description}
    \item[Terms \eqref{eq: L^2 estimate proof 1 lagrange}, \eqref{eq: L^2 estimate proof 5 lagrange}, \eqref{eq: L^2 estimate proof 7 lagrange} and \eqref{eq: L^2 estimate proof 8 lagrange}: ] These are bounded as in \cref{Theorem: L^2 error velocity Lagrange} but now we use the improved estimates in \cref{Corrolary: Improved Energy Error Estimate for Lagrange Formulation kl=ku} where appropriate, so:
    \begin{equation}
       \eqref{eq: L^2 estimate proof 1 lagrange},\, \eqref{eq: L^2 estimate proof 5 lagrange},\, \eqref{eq: L^2 estimate proof 7 lagrange} \text{ and } \eqref{eq: L^2 estimate proof 8 lagrange} \leq  ch^{\widehat{m}+1}(\norm{\bfu}_{H^{k_u+1}(\Ga)}+\norm{p}_{H^{k_{pr}+1}(\Ga)} + \norm{\lambda}_{H^{k_{\lambda}+1}(\Ga)}+ \norm{\bff}_{L^2(\Ga)}).
    \end{equation}
    Notice also that \eqref{eq: L^2 estimate proof 1 lagrange} now also holds for $k_g=1$, due to 
    \cref{corollary: H^1 bound kl-ku}.
   
    \item[Terms \eqref{eq: L^2 estimate proof 2 lagrange} and \eqref{eq: L^2 estimate proof 3 lagrange}: ]  These two terms came up by adding and subtracting $\Ihz(\bfu)$ at the terms $\ah(\Ihz(\bfw^{-\ell}),\uh) - a(\Ihzl(\bfw),\uhl)$ in \eqref{eq: a_T extra regularity error large lagrange old}. Now instead of that, we add and subtract $\vhtilde\in\bfV_h^{div}$ a weakly discretely tangential divergence free "interpolant", and consider the estimate in  \eqref{eq: inf need to show kl=ku H1}. Therefore, we re-write \eqref{eq: L^2 estimate proof 2 lagrange} and \eqref{eq: L^2 estimate proof 3 lagrange} as
    \begin{equation}
        \begin{aligned}
            &\ah(\Ihz(\bfw^{-\ell}),\theta_{\bfu}^{div})- a(\Ihzl(\bfw),\theta_{\bfu}^{div,\ell}) + \bfG(\Ihzl(\bfw),\vhtilde^{\ell}) \\
            &\leq ch^{k_g}\norm{\Ihz(\bfw^{-\ell})}_{H^1(\Gah)}\norm{\theta_{\bfu}^{div}}_{H^1(\Gah)} + ch^{k_g}\norm{\vhtilde-\bfu^{-\ell}}_{H^{1}(\Gah)} + ch^{k_g+1} \norm{\bfw}_{H^2(\Ga)}\norm{\bfu}_{H^2(\Ga)}\\
            &\leq c(h^{k_g + \widehat{m}}+ h^{2k_g}+ h^{k_g+1})\norm{\bfw}_{H^2(\Ga)}(\norm{\bfu}_{H^{k_u+1}(\Ga)} + \norm{p}_{H^{k_{pr}+1}(\Ga)} + \norm{\lambda}_{H^{k_{\lambda}+1}(\Ga)}), 
        \end{aligned}
    \end{equation}
where we have used \cref{Corrolary: Improved Energy Error Estimate for Lagrange Formulation kl=ku} (more specifically in this case the bound for $\theta_{\bfu}^{div}$; see proof) and \eqref{eq: inf need to show kl=ku H1} after taking the infimum over all $\vhtilde \in \bfV_h^{div}$.

\item[Term \eqref{eq: L^2 estimate proof 6 lagrange}: ] For this estimate we need to be more careful. Consider $\widetilde{\bfw}_h \in \bfV_h^{div}$ and remember by \cref{lemma: inf need to show kl=ku} that $\inf_{\wh\in\bfV_h^{div}}\norm{\bfw^{-\ell} - \wh}_{H^1(\Gah)} \leq \inf_{\wh\in\bfV_h}\norm{\bfw^{-\ell} - \wh}_{H^1(\Gah)} + ch^{k_g}\norm{\bfw}_{H^1(\Ga)} \leq ch\norm{\bfw}_{H^2(\Ga)}$. So, since also $\bfw\cdot\bfng=0$, we have that
\begin{equation*}
    \begin{aligned}
        &\eqref{eq: L^2 estimate proof 6 lagrange} = b_h(\Ihz(\bfw^{-\ell})- \bfw^{-\ell},\{\theta_{p},\theta_{\lambda}\})- b(\Ihzl(\bfw)-\bfw,\{\theta_{p}^{\ell},\theta_{\lambda}^{\ell}\}) + b_h(\bfw^{-\ell},\theta_{p})-  b(\bfw,\theta_{p}^{\ell})\\
        &\quad \  + \int_{\Gah}\Ihz(\bfw^{-\ell})\cdot\nh\theta_{\lambda} \, \dsh  - \int_{\Ga}(\Ihzl(\bfw)-\bfw)\cdot\bfn\theta_{\lambda}^{\ell} \, \dsh\\
        &\leq ch^{k_g}\norm{\bfw}_{L^2(\Ga)}\norm{\theta_{p}}_{L^2(\Gah)} + ch^{2}\norm{\bfw}_{L^2(\Ga)}\norm{\theta_{\lambda}}_{L^2(\Gah)}\\
        &\quad \ + \int_{\Gah}\big(\underbrace{(\Ihz(\bfw^{-\ell})- \widetilde{\bfw}_h)}_{\bfr_h\in\bfV_h}\big)\cdot(\nh-\bfn)\theta_{\lambda} \, \dsh + \int_{\Gah}\underbrace{(\Ihz(\bfw^{-\ell})- \widetilde{\bfw}_h)}_{\bfr_h\in\bfV_h}\cdot\bfn\theta_{\lambda} \, \dsh \\
        &\leq ch^{k_g}\norm{\bfw}_{L^2(\Ga)}\norm{\theta_{p}}_{L^2(\Gah)} + ch^{2}\norm{\bfw}_{L^2(\Ga)}\norm{\theta_{\lambda}}_{L^2(\Gah)} + ch^{k_g}\norm{\bfr_h}_{L^2(\Gah)}\norm{\theta_{\lambda}}_{L^2(\Gah)}\\
        &\quad \ + \int_{\Gah} \underbrace{\Ihz\big(\bfr_h\cdot\bfn\big)}_{\in V_h = \Lambda_h}\theta_{\lambda} \, \dsh +  \int_{\Gah} \underbrace{\big(\bfr_h\cdot\bfn - \Ihz(\bfr_h\cdot\bfn)\big)}_{\leq ch\norm{\bfr_h}_{L^2(\Gah)} \text{ by \eqref{eq: super-approximation estimate 2}}}\theta_{\lambda} \, \dsh\\
        &\hspace{-5mm}\overset{ \text{Def.}\eqref{eq: H^-1h definition}}{\leq} \!\!\!\!\!ch^{k_g}\norm{\bfw}_{L^2(\Ga)}\norm{\theta_{p}}_{L^2(\Gah)} +ch^{2}\norm{\bfw}_{L^2(\Ga)}\norm{\theta_{\lambda}}_{L^2(\Gah)} + ch^{k_g}\norm{\bfr_h}_{L^2(\Gah)}\norm{\theta_{\lambda}}_{L^2(\Gah)} \\
        &\quad \ +\norm{ \Ihz\big(\bfr_h\cdot\bfn\big)}_{H^1(\Gah)}\norm{\theta_{\lambda}}_{H_h^{-1}(\Gah)}.
    \end{aligned}
\end{equation*}
Now considering the stability estimate \eqref{eq: stability of Scott-Zhang interpolant}, using the fact that $\norm{\bfr_h}_{L^2(\Gah)} \leq \norm{\Ihz(\bfw^{-\ell})-\bfw^{-\ell}}_{L^2(\Gah)}+\norm{\bfw^{-\ell}-\widetilde{\bfw}_h}_{L^2(\Gah)}$, taking the infimum over all $\widetilde{\bfw}_h\in \bfV_h^{div}$ and applying the previous estimates \eqref{Lemma: discrete remainder error Lagrange} (they still hold in this case) and \cref{Theorem: Improved Energy Error Estimate for Lagrange Formulation kl=ku} where appropriate, we get
\begin{equation}
    \eqref{eq: L^2 estimate proof 6 lagrange}  \leq c(h^{k_g + \widehat{m}} + h^{k_g+1})\norm{\bfw}_{H^2(\Ga)}(\norm{\bfu}_{H^{k_u+1}(\Ga)} + \norm{p}_{H^{k_{pr}+1}(\Ga)} + \norm{\lambda}_{H^{k_{\lambda}+1}(\Ga)}  + \norm{\bff}_{L^2(\Ga)})
\end{equation}
\item[Term \eqref{eq: al(w,eu)}: ] Therefore combining the above new results we get the improved bound
\begin{equation}\label{eq: al(w,eu) new}
    \eqref{eq: al(w,eu)} \leq c(h^{\widehat{m}+1}+h^{k_g+1})\norm{\bfw}_{H^2(\Ga)}(\norm{\bfu}_{H^{k_u+1}(\Ga)}+\norm{p}_{H^{k_{pr}+1}(\Ga)} + \norm{\lambda}_{H^{k_{\lambda}+1}(\Ga)} + \norm{\bff}_{L^2(\Ga)}).
\end{equation}
\end{description}
\noindent Let us now proceed with providing an estimate for the term $b^L(\eu,\{\pi,\mu\})$ of \eqref{eq: Nitsche type equation lagrange}. Again, we follow the estimates in \cref{Theorem: L^2 error velocity Lagrange} and we improve upon them:
\begin{description}
\item[Term \eqref{eq: L^2 estimate b proof 1 lagrange}: ] Again it is clear that if we use the improved estimates in \cref{Corrolary: Improved Energy Error Estimate for Lagrange Formulation kl=ku} instead 
\begin{equation}\label{eq: kl inside L2 b 1}
    \eqref{eq: L^2 estimate b proof 1 lagrange}\leq  ch^{\widehat{m}+1}(\norm{\bfu}_{H^{k_u+1}(\Ga)}+\norm{p}_{H^{k_{pr}+1}(\Ga)} + \norm{\lambda}_{H^{k_{\lambda}+1}(\Ga)}+ \norm{\bff}_{L^2(\Ga)}).
\end{equation}
\item[Terms \eqref{eq: L^2 estimate b proof 2 lagrange}, \eqref{eq: L^2 estimate b proof 3 lagrange}: ] Similar to Terms \eqref{eq: L^2 estimate proof 2 lagrange} and \eqref{eq: L^2 estimate proof 3 lagrange} we add and subtract $\vhtilde$,  instead of $\Ihz(\bfu)$, in \eqref{eq: b_T extra regularity error large lagrange}. Then using, yet again, the improved estimates in \cref{Corrolary: Improved Energy Error Estimate for Lagrange Formulation kl=ku} (more specifically the bound for $\theta_{\bfu}^{div}$) we see that 
\begin{equation}
    \begin{aligned}\label{eq: kl inside L2 b 2}
        &\bhtil(\theta_{\bfu}^{div},\{\Ihz(\pi^{-\ell}),\Ihz(\mu^{-\ell})\}) - b^L(\theta_{\bfu}^{div,\ell},\{\Ihzl(\pi),\Ihzl(\mu) \}) + \bfG_b(\vhtilde,\{\Ihzl(\pi),\Ihzl(\mu)\}) \\
        &\leq ch^{k_g}\norm{\theta_{\bfu}^{div}}_{L^2(\Ga)}(\norm{\Ihz(\pi^{-\ell})}_{H^1(\Gah)} + \norm{\Ihz(\mu^{-\ell})}_{L^2(\Gah)}) + ch^{k_g}\norm{\vhtilde-\bfu^{-\ell}}_{H^{1}(\Gah)}\\
        &\ + ch^{k_g+1}\norm{\bfu}_{H^2(\Ga)}(\norm{\pi}_{H^1(\Ga)} +\norm{\mu}_{H^1(\Ga)})\\
        &\leq c(h^{k_g+\widehat{m}}+h^{k_g+1})(\norm{\bfu}_{H^{k_u+1}(\Ga)}+\norm{p}_{H^{k_{pr}+1}(\Ga)} + \norm{\lambda}_{H^{k_{\lambda}+1}(\Ga)})(\norm{\pi}_{H^1(\Ga)} +\norm{\mu}_{H^1(\Ga)}).
    \end{aligned}
\end{equation}
\item[Term \eqref{eq: btil(eu,pi)}: ] Now combining the new results \eqref{eq: kl inside L2 b 1}, \eqref{eq: kl inside L2 b 2} we obtain the improved bound
\begin{equation}\label{eq: btil(eu,pi) new}
\begin{aligned}
    &b^L(\eu,\{\pi,\mu\}) \leq ch^{\widehat{m}+1} (\norm{\bfu}_{H^{k_u+1}(\Ga)}+\norm{p}_{H^{k_{pr}+1}(\Ga)} + \norm{\lambda}_{H^{k_{\lambda}+1}(\Ga)})(\norm{\pi}_{H^1(\Ga)}+ \norm{\mu}_{H^1(\Ga)}).
    \end{aligned}
\end{equation}
\end{description}

Finally, we gather the improved bounds \eqref{eq: al(w,eu) new}, and \eqref{eq: btil(eu,pi) new} in \eqref{eq: Nitsche type equation lagrange} of \cref{Theorem: L^2 error velocity Lagrange} and coupled with \eqref{eq: regularity estimate lagrange inside} we finally get our assertion, following calculations as in \cref{Theorem: L^2 error velocity Lagrange}.
\end{proof}

\begin{remark}\label{remark: about k_g=1 kl=ku}
    As mentioned in the above proof of \cref{Theorem: improved L^2 error velocity Lagrange}, these bounds now also hold for $k_g=1$, as opposed to \cref{Theorem: L^2 error velocity Lagrange} in the case $k_\lambda = k_u-1$.
\end{remark}
\noindent Finally, we present error bounds for the $L^2$-norm of the normal part of the velocity. We see that our result is suboptimal by half an order compared to \cref{Theorem: improved L^2 error velocity Lagrange}. This may be improved if certain changes occur in our discrete scheme \eqref{weak lagrange discrete}; see \cref{remark: About the L^2 Error of the normal velocity}.
\begin{theorem}[$L^2$ Error estimate for the normal velocity]\label{Theorem: Error estimate for the normal velocity improved}
Let $ k_u = k_\lambda \geq 2$, $k_{pr}=k_u-1$. Also, let $(\bfu,\{p,\lambda\}) \in \bfH^1(\Ga) \times L^2_0(\Ga)\times L^2(\Ga)$ be the solution of the continuous problem \eqref{weak lagrange hom}, and let $(\uh,$ $\{\ph,\lambda_h\}) \in \bfV_h \times Q_h \times \Lambda_h$ be the solution to the discrete scheme \eqref{weak lagrange discrete}. Then, for sufficiently small $h$ with  $\widehat{m}= min\{{k_u},\, {k_{pr}+1}, \, {k_{\lambda}+1},\,  {k_g}\}$,  we have the following error estimate
\begin{equation}
    \begin{aligned}\label{eq: Error estimate for the normal velocity 1}
         \norm{(\bfu^{-\ell} - \uh)\cdot\bfn}_{L^2(\Gah)}\leq  ch^{\widehat{m}+1/2}(\norm{\bfu}_{H^{k_u+1}(\Ga)}&+\norm{p}_{H^{k_{pr}+1}(\Ga)}+ \norm{\lambda}_{H^{k_{\lambda}+1}(\Ga)}+ \norm{\bff}_{L^2(\Ga)}).
    \end{aligned}
\end{equation}
\end{theorem}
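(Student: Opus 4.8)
The plan is to run an Aubin--Nitsche duality argument against a \emph{purely normal} adjoint problem, exploiting that in the case $k_\lambda=k_u$ the improved $H^1$ velocity bound \cref{corollary: H^1 bound kl-ku} is available. First, since $\bfu\cdot\bfng=0$ we have $\bfu^{-\ell}\cdot\bfn=0$, hence $(\bfu^{-\ell}-\uh)\cdot\bfn=-\uh\cdot\bfn$, and by the norm equivalence \cref{lemma: norm equivalence} it suffices to estimate $\norm{\eu^\ell\cdot\bfng}_{L^2(\Ga)}$ with $\eu^\ell=\bfu-\uhl$. I would then use the adjoint problem introduced before \cref{Theorem: L^2 error velocity Lagrange}, but now with data $\bfh_T=0$ and $\bfh_n=\eu^\ell\cdot\bfng\in H^1(\Ga)$; its solution $(\bfw,\{\pi,\mu\})$ satisfies $\bfw\cdot\bfng=0$, $\divg\bfw_T=0$, and by \eqref{eq: regularity estimate lagrange}
\[
  \norm{\bfw}_{H^2(\Ga)}+\norm{\{\pi,\mu\}}_{H^1(\Ga)}\le\norm{\eu^\ell\cdot\bfng}_{H^1(\Ga)}.
\]

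The key point is that, in contrast to the tangential case \cref{Theorem: improved L^2 error velocity Lagrange} (where the dual datum $\eut$ is only needed in $L^2$ and $\norm{\bfw}_{H^2}$ is absorbed into the left-hand side), here the dual solution is itself small: expanding $\nbg(\eu^\ell\cdot\bfng)$ in terms of $\nbg\eu^\ell$, $\eu^\ell$ and the bounded Weingarten map, and using \cref{corollary: H^1 bound kl-ku} together with \cref{lemma: norm equivalence}, one gets
\[
  \norm{\eu^\ell\cdot\bfng}_{H^1(\Ga)}\le c\norm{\eu^\ell}_{H^1(\Ga)}\le ch^{\widehat m}\big(\norm{\bfu}_{H^{k_u+1}(\Ga)}+\norm{p}_{H^{k_{pr}+1}(\Ga)}+\norm{\lambda}_{H^{k_\lambda+1}(\Ga)}+\norm{\bff}_{L^2(\Ga)}\big).
\]
Testing the adjoint equation with $\eu^\ell\in\bfH^1(\Ga)$ and using the adjoint constraint gives, exactly as in \eqref{eq: Nitsche type equation lagrange},
\[
  \norm{\eu^\ell\cdot\bfng}_{L^2(\Ga)}^2=(\bfh,\eu^\ell)=a(\bfw,\eu^\ell)+b^L(\eu^\ell,\{\pi,\mu\}),
\]
with $b^L(\bfw,\{\ep,\el\})=0$ by Galerkin orthogonality (using $\divg\bfw_T=0$ and $\bfw\cdot\bfng=0$, just as in \cref{Theorem: L^2 error velocity Lagrange}). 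Since $\bfw\in\bfH_T^1\cap\bfH^2(\Ga)$ and $\divg\bfw_T=0$, I would bound the right-hand side \emph{verbatim} as in the proof of \cref{Theorem: improved L^2 error velocity Lagrange} --- split through the Scott--Zhang interpolant, insert the error equation \eqref{eq: Lh consistency kl=ku}, apply the perturbation bounds of \cref{lemma: Geometric perturbations b lagrange}, standard interpolation, and the improved a priori estimates \cref{Corrolary: Improved Energy Error Estimate for Lagrange Formulation kl=ku} --- reproducing \eqref{eq: al(w,eu) new} and \eqref{eq: btil(eu,pi) new}, so that (using $k_g\ge\widehat m$ to absorb the $h^{k_g+1}$ contributions into $h^{\widehat m+1}$)
\[
  a(\bfw,\eu^\ell)+b^L(\eu^\ell,\{\pi,\mu\})\le ch^{\widehat m+1}\big(\norm{\bfu}_{H^{k_u+1}(\Ga)}+\norm{p}_{H^{k_{pr}+1}(\Ga)}+\norm{\lambda}_{H^{k_\lambda+1}(\Ga)}+\norm{\bff}_{L^2(\Ga)}\big)\big(\norm{\bfw}_{H^2(\Ga)}+\norm{\{\pi,\mu\}}_{H^1(\Ga)}\big).
\]

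Combining the three displays yields $\norm{\eu^\ell\cdot\bfng}_{L^2(\Ga)}^2\le ch^{2\widehat m+1}(\cdots)^2$, and taking square roots and transferring back to $\Gah$ via \cref{lemma: norm equivalence} gives the asserted $h^{\widehat m+1/2}$ bound. The main obstacle --- and the structural reason for the half-order loss --- is this last ``self-testing'' step: the normal datum $\bfh_n$ of the adjoint problem must be controlled in $H^1(\Ga)$, whereas the best available estimate for $\eu^\ell\cdot\bfng$ in that norm is only $\bigo(h^{\widehat m})$ (one order below what one would want for an $L^2$ quantity), so the factor $h^{\widehat m}$ from the dual solution and the factor $h^{\widehat m+1}$ from the consistency/perturbation bounds only multiply to $h^{2\widehat m+1}$, i.e. $h^{\widehat m+1/2}$ after the square root; a genuinely higher-order discrete normal $\nhtil$ would let one replace $\bfh_n$ by the discrete normal component and shrink this term, recovering the optimal order (see the discussion in \cref{remark: About the L^2 Error of the normal velocity}). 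Note finally that, since \cref{corollary: H^1 bound kl-ku} already holds for $k_g=1$, so does this estimate, unlike in the $k_\lambda=k_u-1$ case.
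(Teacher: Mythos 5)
Your duality argument is correct and reaches the stated rate, but it is a genuinely different route from the paper's. The paper proves \cref{Theorem: Error estimate for the normal velocity improved} by a direct computation on $\Gah$: after splitting off the interpolation error and the $\bigo(h^{k_g})$ discrepancy between $\bfn$ and $\nh$ as in \eqref{eq: initial eun kl=ku}, it estimates $\norm{\thbfu\cdot\nh}_{L^2(\Gah)}^2$ by inserting $\Ihz(\thbfu\cdot\bfn)$, which lies in $\Lambda_h$ precisely because $k_\lambda=k_u$; the discrete constraint then annihilates $\int_{\Gah}(\uh\cdot\nh)\,\Ihz(\thbfu\cdot\bfn)$, the interpolation defect is absorbed by super-approximation \eqref{eq: super-approximation estimate 2}, and the surviving term $\int_{\Gah}(\bfu^{-\ell}\cdot\nh)\,\Ihz(\thbfu\cdot\bfn)$ is controlled by the non-standard geometric estimate \eqref{eq: high Pnh estimate} paired with $\norm{\thbfu}_{H^1(\Gah)}\leq ch^{\widehat{m}}$, so the half order is lost in the product $h^{k_g+1}\cdot h^{\widehat{m}}\leq h^{2\widehat{m}+1}$. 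Your Aubin--Nitsche argument with purely normal dual data loses the same half order at the structurally analogous place (the dual datum can only be controlled in $H^1(\Ga)$ at rate $h^{\widehat{m}}$ via \cref{corollary: H^1 bound kl-ku}), and the arithmetic $\norm{\eu^\ell\cdot\bfng}_{L^2(\Ga)}^2\leq ch^{\widehat{m}+1}\cdot ch^{\widehat{m}}$ followed by a square root is fine. Two points you should justify rather than assert: (i) the ``verbatim'' reuse of \eqref{eq: al(w,eu) new} and \eqref{eq: btil(eu,pi) new} is legitimate because their derivations use only $\bfw\cdot\bfng=0$, $\divg\bfw_T=0$ and the $H^2\times H^1\times H^1$ regularity of the adjoint triple, never the specific form of the data $\bfh$; and (ii) the regularity estimate \eqref{eq: regularity estimate lagrange} does apply with $\bfh_T=0$ and $\bfh_n=\eu^\ell\cdot\bfng\in H^1(\Ga)$, since $\uhl$ is an $H^1$ lift and $\bfng$, $\bfH$ are smooth and bounded. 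What your route buys is conceptual economy: it recycles the tangential $L^2$ machinery of \cref{Theorem: improved L^2 error velocity Lagrange} wholesale and avoids \eqref{eq: high Pnh estimate}. What the paper's route buys is a self-contained discrete argument in which the accuracy of the discrete normal $\nh$ appears explicitly as the bottleneck, which is what makes the improvement mechanism of \cref{remark: About the L^2 Error of the normal velocity} transparent; in your framework it is less evident how replacing $\nh$ by an improved $\nhtil$ in $\bhtil$ would upgrade the $H^1$ control of the dual datum.
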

\begin{proof}
To address the error of the normal component of the velocity $\norm{\eu\cdot\bfn}_{L^2(\Gah)} = \norm{(\bfu^{-\ell} - \uh)\cdot\bfn}_{L^2(\Gah)}$, we add and subtract the interpolation error $\Ihz(\bfu^{-\ell})$, then using the interpolation bound \eqref{eq: Scott-Zhang interpolant}, the estimates in \cref{Corrolary: Improved Energy Error Estimate for Lagrange Formulation kl=ku} and remembering that $\norm{\bfn-\nh}_{L^{\infty}} \leq ch^{k_g}$ by \eqref{eq: geometric errors 1} we obtain
\begin{equation}
    \begin{aligned}\label{eq: initial eun kl=ku}
        \norm{\eu\cdot\bfn}_{L^2(\Gah)} &\leq \norm{\eu\cdot \nh}_{L^2(\Gah)} + ch^{k_g}\norm{\eu}_{L^2(\Gah)}\\
        &\leq Ch^{k_g+\widehat{m}} + \underbrace{\norm{(\bfu^{-\ell} - \Ihz(\bfu^{-\ell}))\cdot \nh}_{L^2(\Gah)}}_{\leq ch^{k_u+1}} +  \underbrace{\norm{(\Ihz(\bfu^{-\ell})-\uh)\cdot \nh}}_{ =: \norm{\theta_{\bfu}\cdot \nh}_{L^2(\Gah)}}
    \end{aligned}
\end{equation}
where $\widehat{m} =  min\{{k_u},\, {k_{pr}+1},\, {k_{\lambda}}+1,\, {k_g}\}$ and $C = \norm{\bfu}_{H^{k_u+1}(\Ga)}+\norm{p}_{H^{k_{pr}+1}(\Ga)} + \norm{\lambda}_{H^{k_{\lambda}+1}(\Ga)}+ \norm{\bff}_{L^2(\Ga)}$. Let us bound the last term $\norm{(\Ihz(\bfu^{-\ell})-\uh)\cdot \nh}_{L^2(\Gah)} = \norm{\theta_\bfu\cdot \nh}_{L^2(\Gah)}$:

    \begin{align}\label{eq: main thbfu nh}
     &\norm{\thbfu\cdot\nh}_{L^2(\Gah)}^2  = \int_{\Gah}(\theta_\bfu\cdot \nh)(\theta_\bfu\cdot \bfn)\dsh +  \int_{\Gah}(\theta_\bfu\cdot \nh)(\theta_\bfu\cdot (\nh-\bfn))\dsh \nonumber \\
     &\leq   \int_{\Gah} (\thbfu\cdot\nh) \Ihz(\thbfu\cdot\bfn)\, \dsh + \int_{\Gah}(\thbfu\cdot\nh)\big(\Ihz(\thbfu\cdot\bfn) - \thbfu\cdot\bfn\big)\, \dsh + ch^{k_g}\norm{\thbfu\cdot\nh}_{L^2(\Gah)}\norm{\thbfu}_{L^2(\Gah)} \nonumber \\
     &\leq \int_{\Gah} (\Ihz(\bfu^{-\ell})\cdot\nh) \Ihz(\thbfu\cdot\bfn)\, \dsh + ch\norm{\thbfu\cdot\nh}_{L^2(\Gah)}\norm{\thbfu}_{L^2(\Gah)}.
    \end{align}
where in the last inequality we have used the super-approximation property \eqref{eq: super-approximation estimate 2} and the fact that 
$\int_{\Gah} (\uh\cdot \nh)\Ihz(\theta_\bfu\cdot \bfn)\,\dsh =0$ with $\Ihz(\cdot) \in S^{k_u}_{h,k_g} $,  since $\uh \in \bfV_h^{div}$ and therefore $\int_{\Gah}\uh\cdot\nh \xi_h \, \dsh=0$ for any $\xi_h \in \Lambda_h = S^{k_\lambda}_{h,k_g} = S^{k_u}_{h,k_g}$.
Now, with the help of the geometric errors \eqref{eq: geometric errors 1}, $\bfu \cdot \bfn =0$, the $L^2$-stability \eqref{eq: super-approximation stability} and the non-standard geometric estimate \eqref{eq: high Pnh estimate}
    \begin{align}\label{eq: main thbfu nh 2}
       &\int_{\Gah} (\Ihz(\bfu^{-\ell})\cdot\nh) \Ihz(\thbfu\cdot\bfn) \,\dsh  = \int_{\Gah} \big((\Ihz(\bfu^{-\ell})-\bfu^{-\ell})\cdot\nh\big) \Ihz(\thbfu\cdot\bfn)\,\dsh + \int_{\Gah} (\bfu^{-\ell}\cdot\nh) \Ihz(\thbfu\cdot\bfn)\,\dsh \nonumber \\
       &\leq ch^{k_u+1}\norm{\bfu}_{H^{k_u+1}(\Ga)} \norm{\thbfu\cdot\bfn}_{L^2(\Gah)}+ch^{k_g+1}\norm{\bfu}_{H^2(\Ga)}\norm{\thbfu}_{H^1(\Gah)} \\
       &\leq ch^{k_u+1}\norm{\bfu}_{H^{k_u+1}(\Ga)} \norm{\thbfu\cdot\nh}_{L^2(\Gah)}+ch^{k_g+1}\norm{\bfu}_{H^2(\Ga)}\big( \norm{\bfu^{-\ell} - \Ihz(\bfu^{-\ell})}_{H^1(\Gah)} + \norm{\bfu^{-\ell} - \uh}_{H^1(\Gah)}\big). \nonumber
    \end{align}
Combining \eqref{eq: main thbfu nh} and \eqref{eq: main thbfu nh 2}, using \cref{Theorem: Improved Energy Error Estimate for Lagrange Formulation kl=ku} and the $H^1$ velocity error estimate \eqref{eq: Improved H1 Error Estimate for Lagrange Formulation kl=ku new} we see that, after applying young's inequality and a kickback argument
\begin{equation}
\begin{aligned}
        \norm{\thbfu\cdot\nh}_{L^2(\Gah)}^2 &\leq C(h^{k_u+k_g+1} + h^{\widehat{m}+k_g+1}) \leq Ch^{2\widehat{m}+1}.
    \end{aligned}
\end{equation}
Substituting this back to \eqref{eq: initial eun kl=ku} we  get our desired result.
\end{proof}
\begin{remark}[About the $L^2$ Error of the normal velocity]\label{remark: About the L^2 Error of the normal velocity}
    In the above proof of \cref{Theorem: Error estimate for the normal velocity improved}, it is not difficult to see that if instead an improved normal $\nhtil$ was used such that $\norm{\nhtil - \bfn}_{L^{\infty}}\leq ch^{k_g+1}$, in the discretization of our bilinear form $\bhtil$ \eqref{eq: lagrange discrete bilinear forms 22}, then we could have obtained optimal $\bigo(h^{\widehat{m}+1})$ bounds, as in \cref{Theorem: improved L^2 error velocity Lagrange}. This happens because then in the first line of \eqref{eq: main thbfu nh 2} we would instead have $\int_{\Gah} (\bfu\cdot\nhtil) \Ihz(\thbfu\cdot\bfn)\,\dsh \leq ch^{k_g+1}\norm{\bfu}_{L^2(\Ga)}\norm{\thbfu\cdot\bfn}_{L^2(\Gah)}$.
    Notice that replacing $\nh$ by $\nhtil$ in our bilinear $\bhtil$ would not have an impact on the rest of the result.    
    For construction of such a normal see \cite{hansbo2020analysis}. 
\end{remark}

\section{Numerical results}\label{Section: Numerical results}
In this section we present numerical results confirming the proved orders of convergence for the Lagrange multiplier method {\bf(L.M.)}. Comparisons with the penalty method, {\bf(P.M.)}, analyzed in \cite{reusken2024analysis} are made. The numerical results were implemented using the \emph{Firedrake} package \cite{FiredrakeUserManual}, where the linear systems were solved with a direct solver. The finite element approximations are carried out on sequences of mesh-refinements  in order to calculate experimental orders of convergence. 

\subsection{Simple sphere}\label{section: simple sphere experiment}

We consider a  unit sphere $\mathcal{S}$ with the exact  smooth solutions of \eqref{eq: generalized Lagrange surface stokes}
\begin{equation}
    \bfu = \bfPg(-x_3^2,x_2,x_1)^T, \quad p = x_1x_2^2 + x_3
\end{equation}
  satisfying $p \in L^2_0(\Ga)$. Formulae for the right-hand side $\bff$ and the Lagrange multiplier $\lambda$ can be calculated with the help of the exact solution above. This example was also computed  in \cite{Olshanskii2018} using the {\it trace} finite element method.
  \begin{figure}[h]
\centering
  \subfloat[Velocity $\bfu$]{\includegraphics[width=0.5\textwidth]{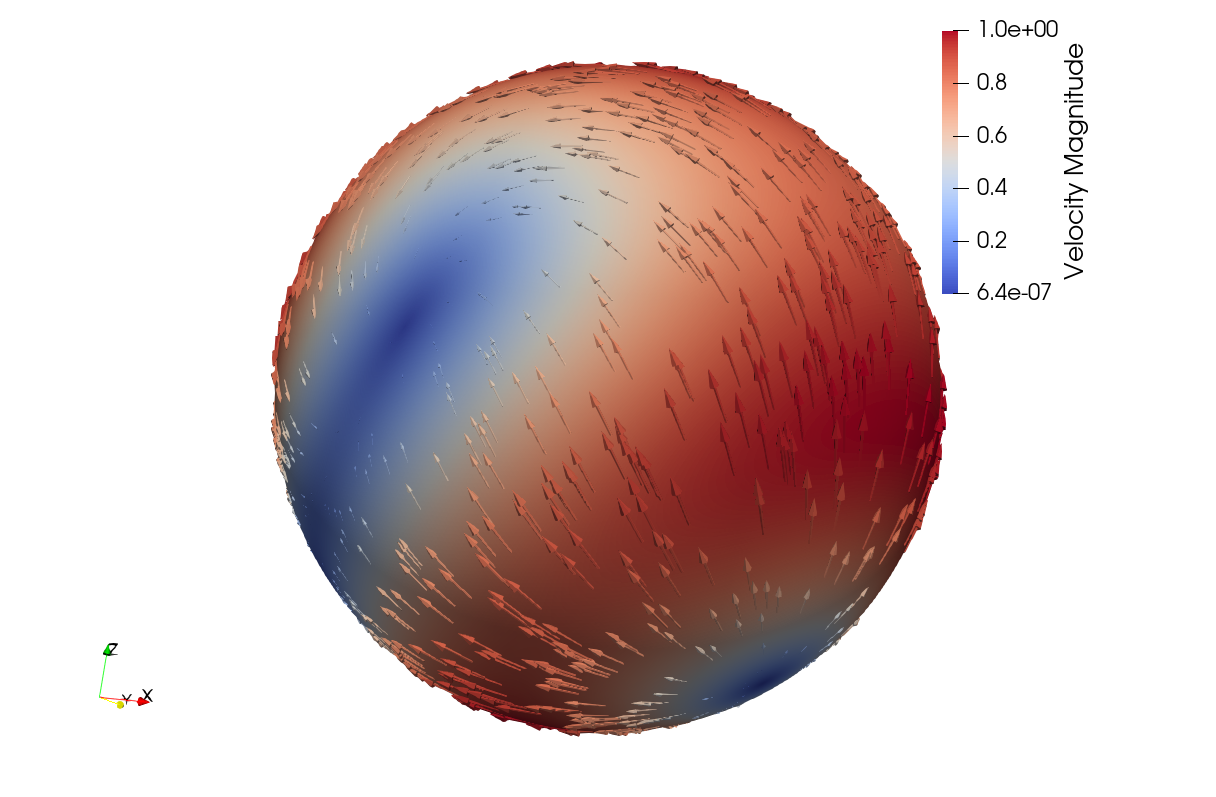}}
  \subfloat[Pressure $p$]{\includegraphics[width=0.5\textwidth]{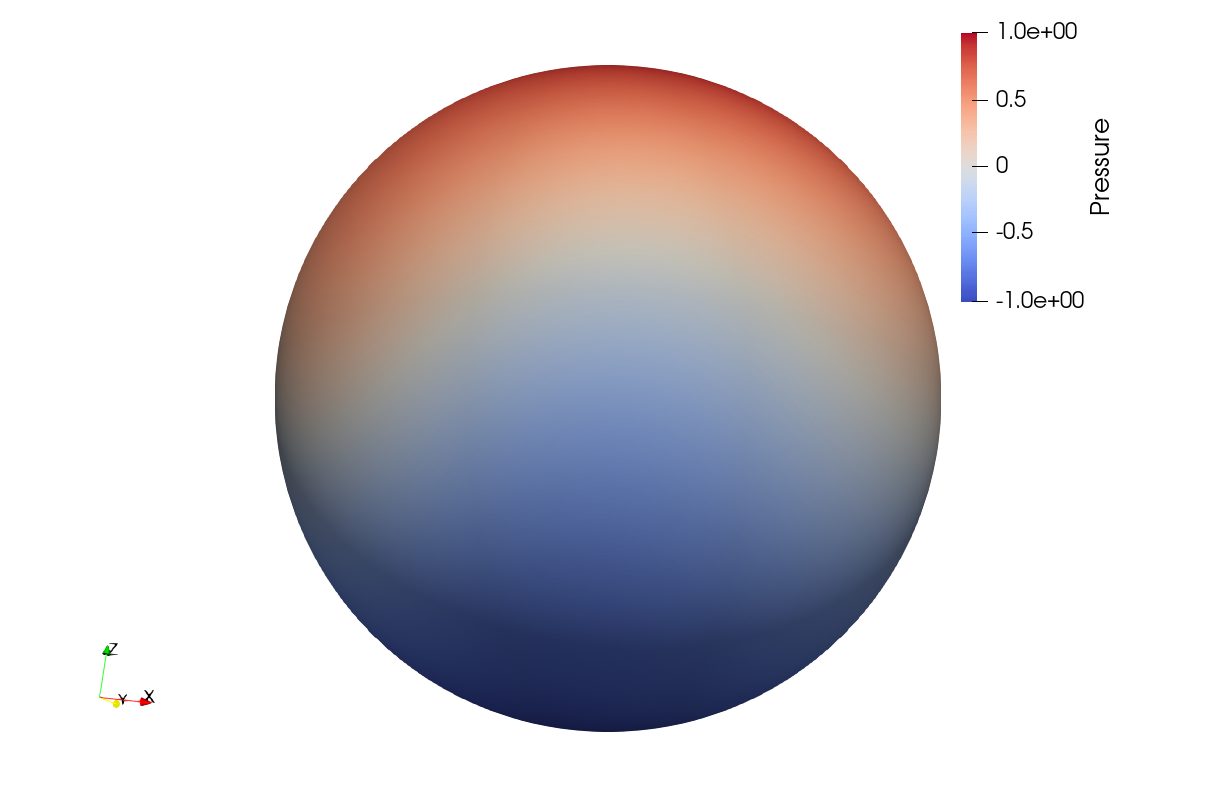}}
  \caption{Sphere | Velocity (a) and pressure (b) solutions}
  \label{fig: Sphere velocity pressure}
\end{figure}

The computations use $\bm{\mathcal{P}}_{3}/\mathcal{P}_{{2}}/\mathcal{P}_{{2}}$ \emph{Taylor-Hood} finite elements and two different orders of approximation of the surface $\Gah$. One is the  iso-parametric surface approximation such that $k_g=k_u=3$ and the other is  super-parametric with $k_g=k_u+1=4$. The rest of the orders are $k_{\lambda} = k_{pr} = k_u-1= 2$. In \cref{fig: Sphere u-Pu-p} the $L^2$-errors of the full velocity, tangential velocity, and pressure are presented. We notice that the order of convergence of both the full and tangential velocity is reduced by one order for the iso-parametric case compared to the super-parametric one, in agreement with \cref{Theorem: L^2 error velocity Lagrange}. However, the pressure (which actually exhibits better than expected $\bigo(h^{3.5})$ convergence) does not appear to be affected. This seems to be specific for this example, as a lower-order convergence has also been observed for the iso-parametric case also in \cite{fries2018higher}. 

\begin{figure}[h]
    \centering
    \includegraphics[width = \textwidth]{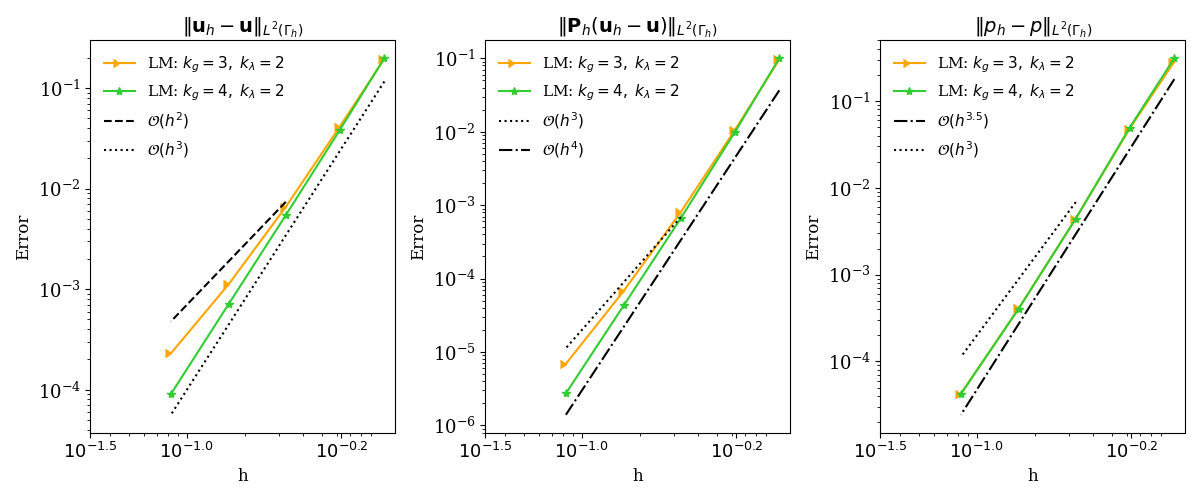}
    \caption{Sphere | Velocity-pressure $L^2$-Errors | \textcolor{Orange}{L.M.}: $\{k_g=3,k_u=3,k_{pr}=2,k_{\lambda}=2\}$, \textcolor{LimeGreen}{L.M.}: $\{k_g=4,k_u=3,k_{pr}=2,k_{\lambda}=2\}$.}
    \label{fig: Sphere u-Pu-p}
\end{figure}

We notice similar characteristic for the error in derivatives in  \cref{fig: Sphere Du-nu}. Letting $k_g=k_u=3$ we obtain a lower order of convergence $\bigo(h^2)$ (sub-optimal), along with worse errors in general, for the covariant derivatives, compared to when $k_g= k_u+1 = 4$, where we observe $\bigo(h^3)$ (optimal) convergence. Thus, it seems the use of super-parametric finite elements is necessary for optimal order of convergence as indicated by \cref{Theorem: Energy Error Estimate for Lagrange Formulation}.  Due to the $H^1$ coercivity bound \eqref{coercivity and Korn's inequality Lagrange}, that is $\norm{\uh}_{H^1(\Gah)} \leq ch^{-1}\norm{\uh}_{\ah},$
the error in the directional derivative $\norm{\nabla_{\Gah}^{dir}(\uh - \bfu^{-\ell})}_{L^2(\Gah)}=\norm{\nbgh(\uh - \bfu^{-\ell})}_{L^2(\Gah)}$  is one order less, i.e. $\bigo(h^2)$. This seems to indicate that this factor $h^{-1}$ is sharp in the discrete \emph{Korn's inequality} \eqref{discrete Korn's inequality T nh} in the case of Taylor-Hood elements.

Finally in the third figure in \cref{fig: Sphere Du-nu} we again notice that $\norm{\uh \cdot \nh}$ has a smaller absolute error, in the case of super-parametric elements $k_g = k_u+1 = 4$ and thus the tangential condition is enforced better.
\begin{figure}[h]
    \centering
    \includegraphics[width = \textwidth]{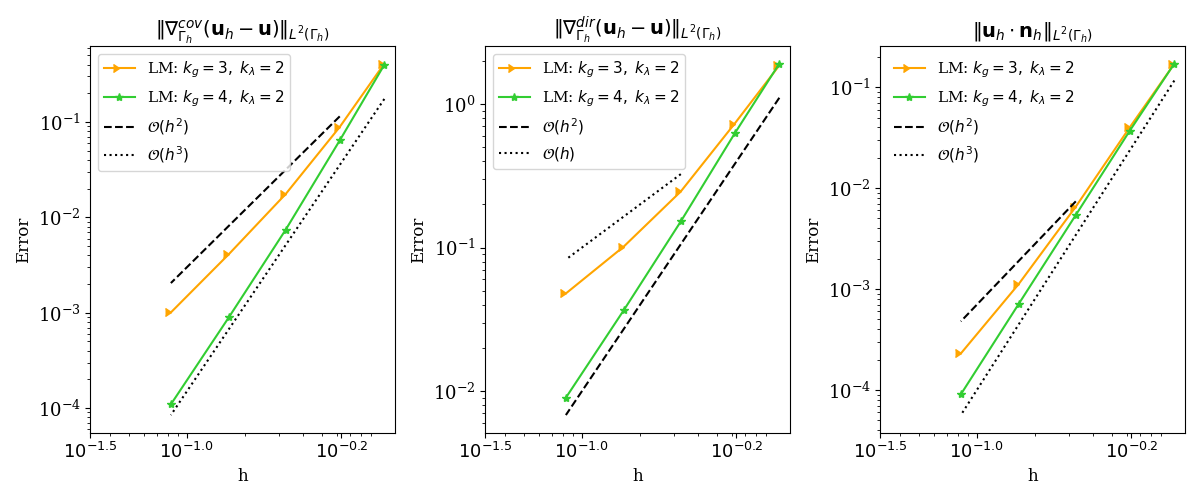}
    \caption{Sphere | Velocity  $\nbgcovh$-Errors (Left), $\nbgh$-Errors (Middle), Normal-Errors (Right) 
    | \textcolor{Orange}{L.M.}: $\{k_g=3,k_u=3,k_{pr}=2,k_{\lambda}=2\}$, \textcolor{LimeGreen}{L.M.}: $\{k_g=4,k_u=3,k_{pr}=2,k_{\lambda}=2\}$.}
    \label{fig: Sphere Du-nu}
\end{figure}

\subsection{Biconcave surface }\label{sec: bioconcave example}

\begin{figure}[h]
  \centering
  \subfloat[Velocity $\bfu$]{\includegraphics[width=0.45\textwidth]{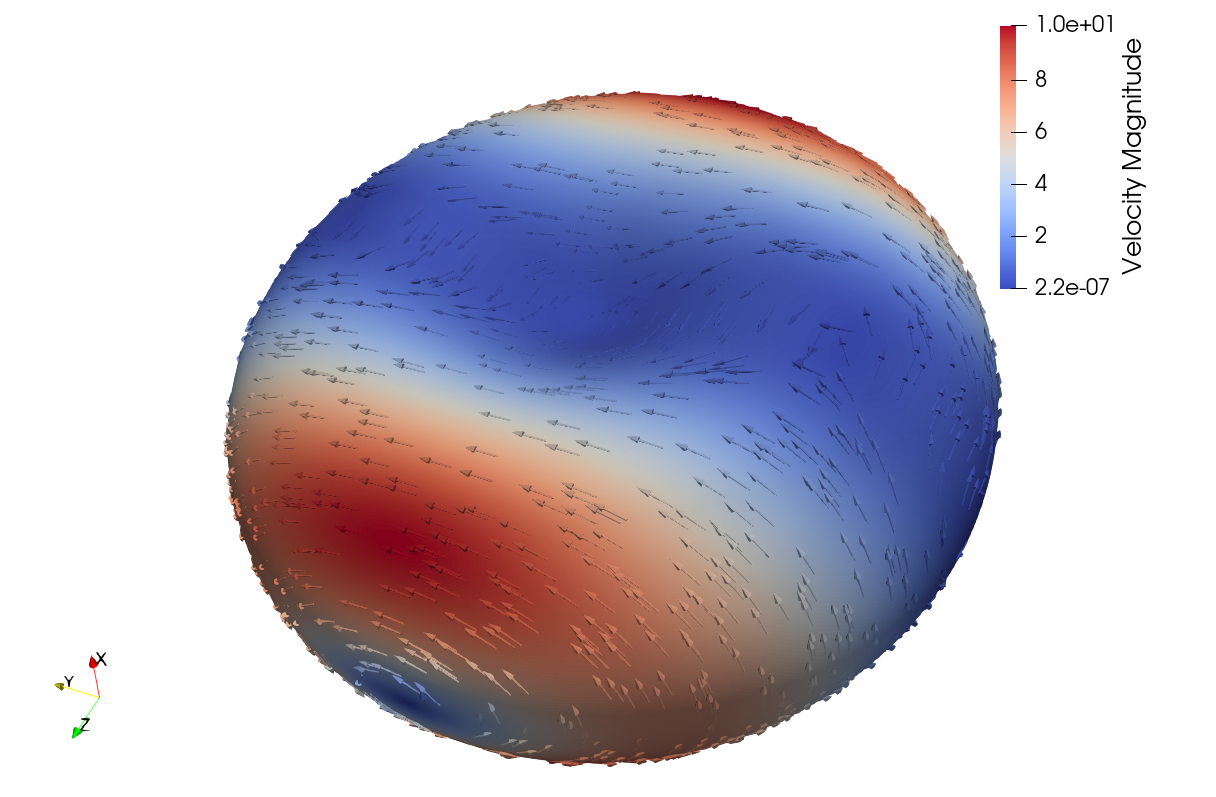}}
  \subfloat[Pressure $p$]{\includegraphics[width=0.45\textwidth]{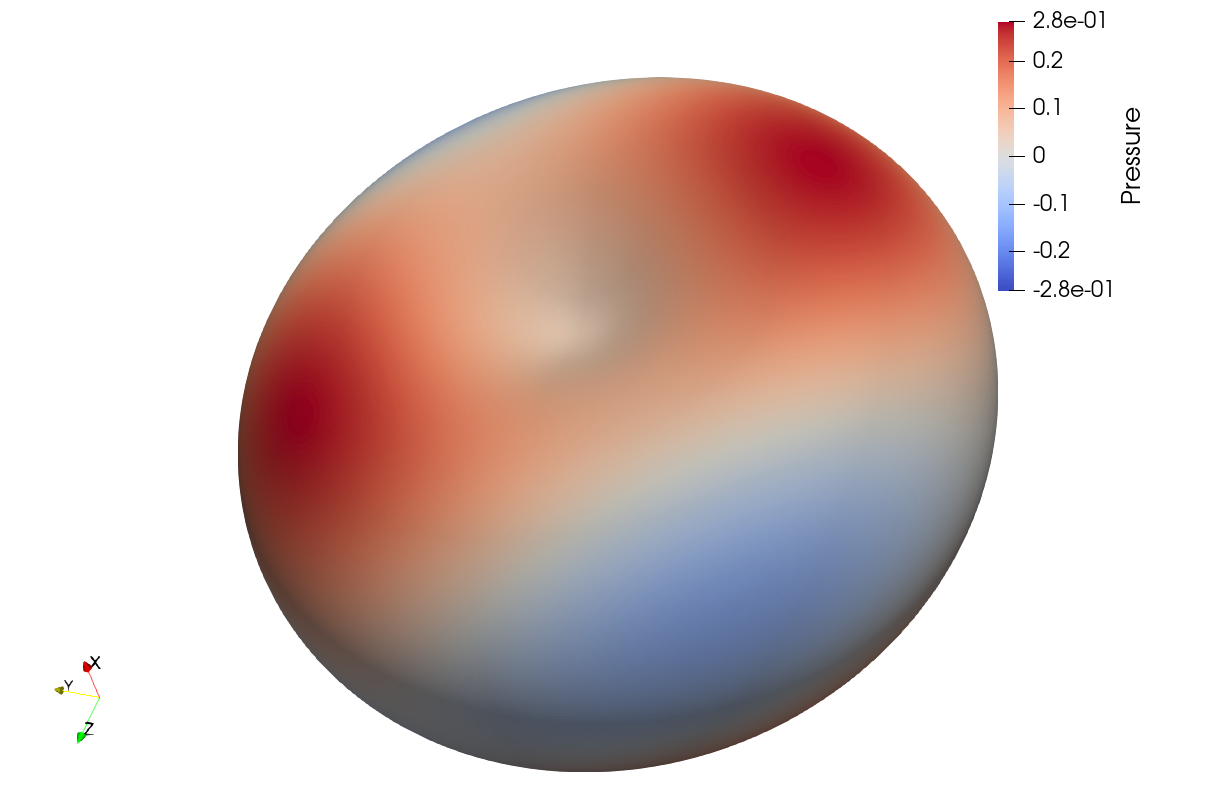}}
  \caption{L.M.  Biconcave Surface | Velocity and pressure solutions}
  \label{fig: Bio velocity pressure}
\end{figure}
Here we compare the Lagrange multiplier method ({\bf L.M.}) and the Penalty method ({\bf P.M.}) on  \cite[Example 5.1]{brandner2022finite} concerning the  biconcave shape surface $\Ga$   defined by the zero-level  set of the following function
\begin{equation}
     \phi(x) = (d^2 + x_1^2 + x_2^2 + x_3^2 )^3 - 8d^2(x_2^2+x_3^2) - c^4 =0.
\end{equation}
In our case $d=0.91$ and $c=0.95$. Since $\phi(x)$ is a level set and not a distance function, we calculated the closest-point
projection of the surface iteratively using  Newton's method. As in \cite{brandner2022finite} the solution of the Stokes problem \eqref{eq: generalized tangential surface stokes} and \eqref{eq: generalized Lagrange surface stokes} is set  to be
\begin{equation}
    \bfu = \textbf{curl}_{\Ga}\psi,\ \text{ with } \psi = x_1^2x_2 - 5x_3^3.
\end{equation}
From \cite{reusken2018stream} the surface curl is tangential and thus the solution is also clearly tangential but also divergence free, while $p \in L^2_0(\Ga)$. The data on the right hand side of \eqref{eq: generalized tangential surface stokes} i.e. $\bff$, $g$ and the Lagrange multiplier $\lambda$ can be evaluated with the help of the exact solution.

\begin{figure}[h]
    \centering
    \includegraphics[width = 0.9\textwidth]{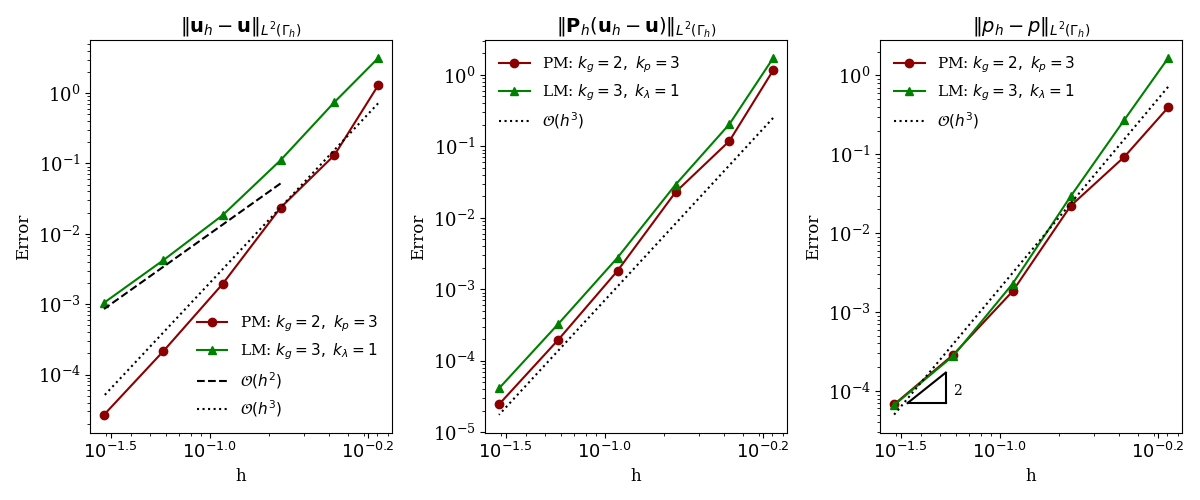}
    \caption{Biconcave Surface | Velocity-pressure $L^2$-Errors | \textcolor{BrickRed}{P.M.}: $\{k_g=2,k_u=2,k_{pr}=1,k_p=3\}$, \textcolor{OliveGreen}{L.M.}: $\{k_g=3,k_u=2,k_{pr}=1,k_{\lambda}=1\}$.}
    \label{fig: Bio u-Pu-p}
\end{figure}

We start with comparing $\bm{\mathcal{P}}_{2}/\mathcal{P}_{{1}}$ \emph{Taylor-Hood} finite elements for the penalty and $\bm{\mathcal{P}}_{2}/\mathcal{P}_{{1}}/\mathcal{P}_{{1}}$ \emph{Taylor-Hood} finite elements for the Lagrange method. We consider iso-parametric elements $k_g$ and an improved normal $\nhtil$ to be of order $k_p = k_g + 1$ in the Penalty method ({\bf P.M.}), whilst using  super-parametric finite elements for the Lagrange multiplier method ({\bf L.M.}).
We observe $\bigo(h^3)$ convergence in the full velocity for ({\bf P.M.}) and worse $\bigo(h^2)$ for ({\bf L.M.}), and optimal convergence in the tangential velocity $\bigo(h^3)$ for both methods. Both observations agree with \cref{Theorem: L^2 error velocity Lagrange}. For the $L^2$ pressure norm, we observed a higher than anticipated  convergence (specifically $\sim \bigo(h^3)$) at the beginning for both methods, but ultimately they tend towards $\bigo(h^2)$ as expected from \cref{Theorem: Energy Error Estimate for Lagrange Formulation}.


\begin{figure}[h]
    \centering
    \includegraphics[width = \textwidth]{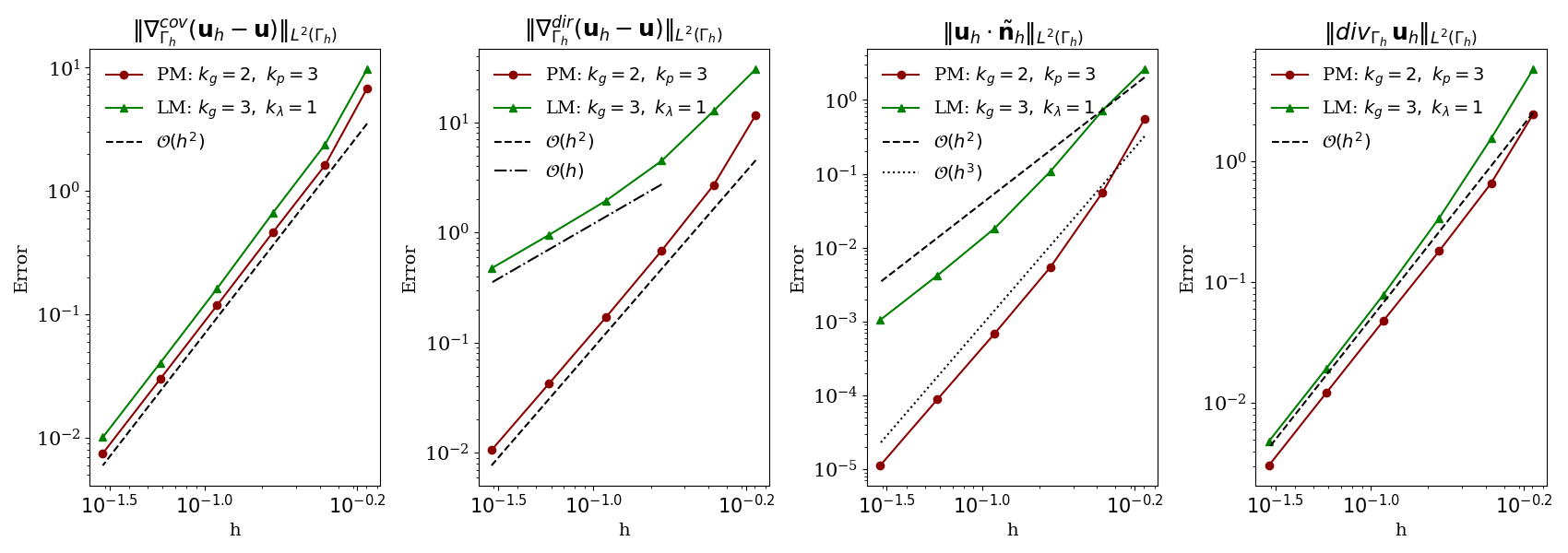}
    \caption{Biconcave Surface | Velocity : $\nbgcovh$-Errors, \ $\nbgh$-Errors, \  Normal-Errors (\textcolor{BrickRed}{P.M.} : $\norm{\uh\cdot\nhtil}_{L^2(\Gah)}$, \textcolor{OliveGreen}{L.M.} : $\norm{\uh\cdot\nh}_{L^2(\Gah)}$), $\divgh$-Errors
    | \textcolor{BrickRed}{P.M.}: $\{k_g=2,k_u=2,k_{pr}=1,k_p=3\}$,  \textcolor{OliveGreen}{L.M.}: $\{k_g=3,k_u=2,k_{pr}=1,k_{\lambda}=1\}$ .}
    \label{fig: Bio Du}
\end{figure}

For errors in derivatives, from the first two figures of \cref{fig: Bio Du} we notice optimal convergence $\bigo(h^2)$ for both methods in  the $\nbgcovh$- norm (or energy norm), while for the directional derivative $\nbgh^{dir}(\cdot) = \nbgh(\cdot)$ we notice an one order decrease $\bigo(h)$ in the Lagrange method ({\bf L.M.}), which is attributed to the
the discrete \emph{Korn's inequality} \eqref{discrete Korn's inequality T nh}  or otherwise the $H^1$ coercivity bound  $\norm{\uh}_{H^1(\Gah)} \leq ch^{-1}\norm{\uh}_{\ah}$ \eqref{coercivity and Korn's inequality Lagrange}.

Now we focus on the last two figures of \cref{fig: Bio Du}, where we notice that the tangential approximation, i.e. normal velocity error, determined by the quantity $\norm{\uh \cdot \nhtil}_{L^2(\Gah)}$ for the Penalty method ({\bf P.M.}) and $\norm{\uh \cdot \nh}_{L^2(\Gah)}$ for the Lagrange method ({\bf L.M.}), exhibits the expected characteristics. That is, higher order $\bigo(h^3)$ for ({\bf P.M.}) compared to the  $\bigo(h^2)$ convergence for ({\bf L.M.}). Finally, we see that both methods adequately approximate the divergence-free characteristic of our solution, with order $\bigo(h^2)$.

\subsection{Varying curvature surface $k_\lambda=k_u$}\label{example: Dz-Ell}
In this example, we consider the case where $k_\lambda=k_u$ for the extra Lagrange multiplier approximation, and test it against our theoretical results in Theorems \ref{Corrolary: Improved Energy Error Estimate for Lagrange Formulation kl=ku} and \ref{Theorem: improved L^2 error velocity Lagrange}. We consider the closed and compact surface $\Ga$ in \cite[Example 4.8]{DziukElliott_acta} which is a the level set function 
\begin{equation*}
    \phi(x) = \frac{1}{4} x_1^2 + x_2^2 + \frac{4 x_3^2}{(1 + \frac{1}{2} \sin(\pi x_1))^2} - 1 \quad x \in \mathbb{R}^3.
\end{equation*}
It is constructed by mapping from a discretised unit sphere $\mathcal{S}$ with the help of the mapping  $F(p) = (2p_0,\, p_1,\, \frac{1}{2}p_2(1 + \frac{1}{2}\sin(2\pi p_0))) $ for $p \in \mathcal{S}$, such that $\Ga = F(\mathcal{S})$. We describe an exact solution of the Stokes problems  \eqref{eq: generalized Lagrange surface stokes} as
\begin{equation}
\begin{aligned}
    \bfu = \textbf{curl}_{\Ga} \psi, \ \text{with} \ \psi =  \frac{1}{2\pi}\cos(2\pi x_1)\cos(2\pi x_2)\cos(2\pi x_3), \quad p = \sin(\pi x_1)\sin(2\pi x_2)\sin(2\pi x_3).
\end{aligned}
\end{equation}

\begin{figure}[h]
    \centering
    \includegraphics[width = 0.5\textwidth]{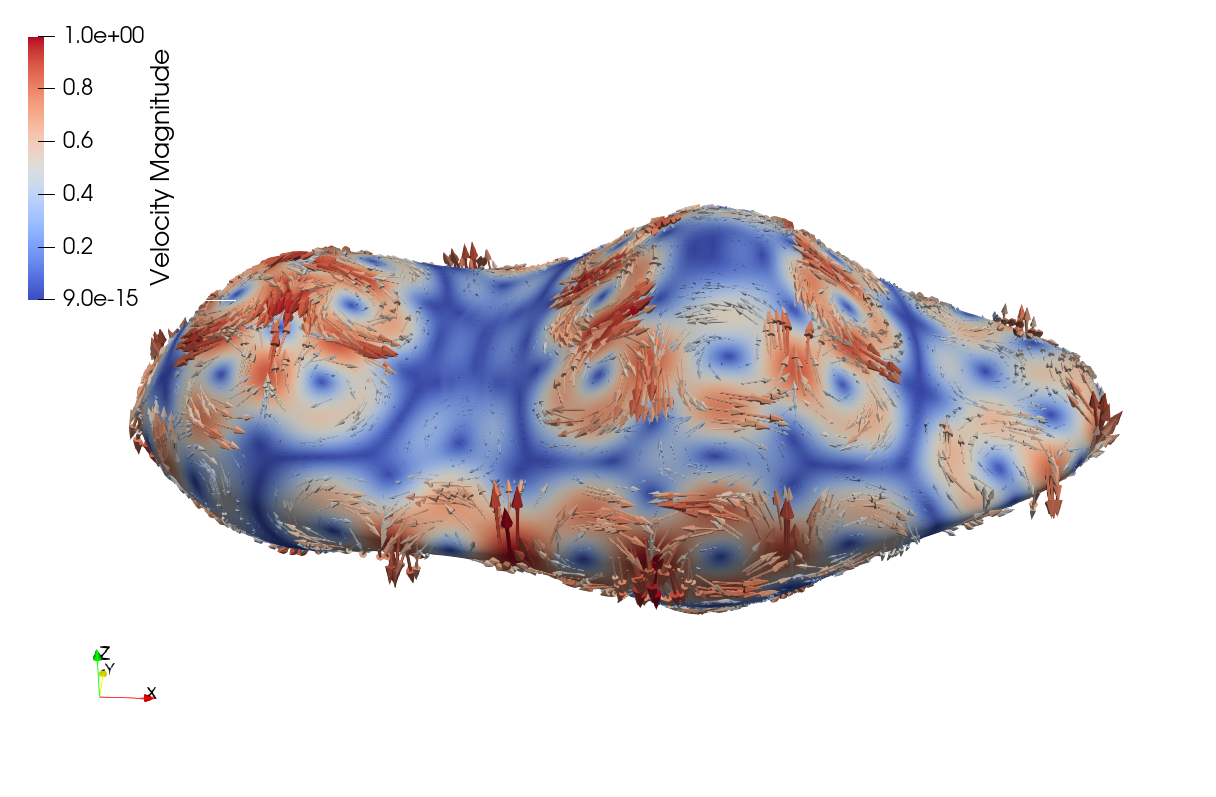}
    \caption{L.M.  Varying Curvature Surface | Velocity $\bfu$}
    \label{fig: Dziuk surface velocity}
\end{figure}

From \cite{reusken2018stream} we know that the surface curl is tangential. This means that the solution is also tangential and divergence-free, while $p \in L^2_0(\Ga)$. The data on the right-hand side, that is, $\bff$, $g$  can be calculated numerically with the help of the exact solution as interpolation of the smooth data. 

\begin{figure}[h]
    \centering
    \includegraphics[width = \textwidth]{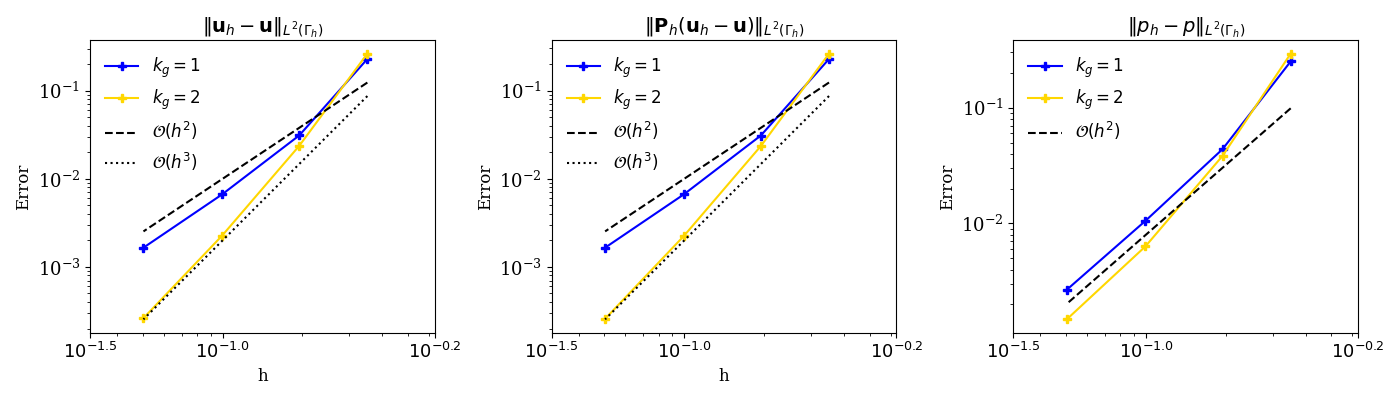}
    \caption{Varying Curvature Surface | Velocity-pressure $L^2$-Errors }
    \label{fig: Dziuk u-pu-p}
\end{figure}
In our example, we use $\bm{\mathcal{P}}_{2}/\mathcal{P}_{{1}}/\mathcal{P}_{{2}}$ extended \emph{Taylor-Hood} finite elements. We address
the convergence order of \emph{two} different surface discretizations, one where we use the linear setting $k_g=1$, which corresponds to a piecewise planar geometric approximation, and another where we use the quadratic surface approximation $k_g=2$.

Let us start with the $k_g = 2$ case. First and foremost, in both Figures \ref{fig: Dziuk u-pu-p} and \ref{fig: Dziuk Du-nu} we notice that most of our theoretical results in \cref{sec: the kl=ku case} agree with the experimental convergence. That is, we have $\bigo(h^{3})$ convergence for the tangential $L^2$ velocity error norm; see \cref{Theorem: improved L^2 error velocity Lagrange}, and $\bigo(h^2)$ convergence in the energy norm for the velocity and $L^2$ for the pressure \cref{Corrolary: Improved Energy Error Estimate for Lagrange Formulation kl=ku}. 
We do notice though, a half an higher than expected convergence for our normal $L^2$ velocity error norm, and therefore the full $L^2$ norm, $\bigo(h^{k_g+1}) = \bigo(h^{3})$ compared to $\bigo(h^{k_g})$ predicted by \cref{Theorem: Error estimate for the normal velocity improved}, despite not using an improved normal $\nhtil$ in our discretization scheme; \cref{remark: About the L^2 Error of the normal velocity}.
 
For the planar case $k_g=1$, we see in Figure \ref{fig: Dziuk u-pu-p}, that our theoretical result for the tangential $L^2$ error \cref{Theorem: improved L^2 error velocity Lagrange} agrees with our experimental results $\bigo(h^2)$. On the other hand, in both Figures \ref{fig: Dziuk u-pu-p}, \ref{fig: Dziuk Du-nu} we notice higher than expected convergence rate ($\bigo(h^2)$) in several of our experimental quantities, as opposed to the expected $\bigo(h^{k_g})=\bigo(h)$ convergence rate from \cref{Corrolary: Improved Energy Error Estimate for Lagrange Formulation kl=ku,Theorem: Error estimate for the normal velocity improved}. Notice in particular that the $L^2$ error norm for the normal part of the velocity in Figure \ref{fig: Dziuk Du-nu} is one and a half orders higher than expected, $\bigo(h^3)$, which also explains the $\bigo(h^2)$ convergence for the full $L^2$ velocity norm; see \cref{Theorem: Error estimate for the normal velocity improved}.

Finally, we make a small note regarding the choice $k_\lambda$. We see that letting $k_\lambda= k_u$ we obtain optimal convergence even when using \emph{iso-parametric finite elements} $k_g=k_u=2$, compared to the other case; see \cref{section: simple sphere experiment}. We even obtain good results when considering planar triangulation, i.e. $k_g=1$. Despite the good results, 
for this choice the condition of the corresponding systems of equations scales much worse w.r.t. the mesh parameter $h$; for more details see \cite{fries2018higher}.

\begin{figure}[h]
    \centering
    \includegraphics[width = \textwidth]{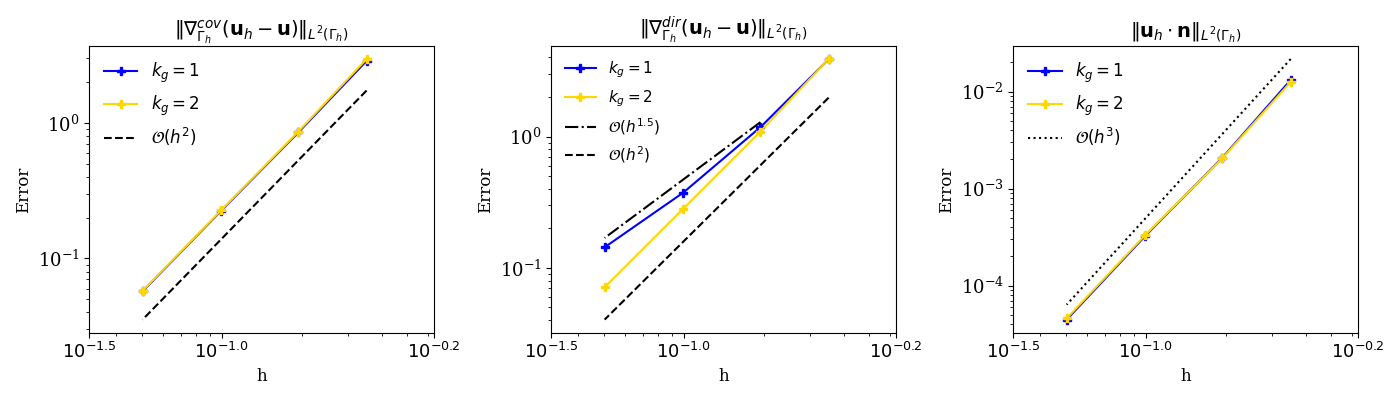}
    \caption{Varying Curvature Surface | Velocity:  $\nbgcovh$-Errors (Left), $\nbgh$-Errors (Middle), Normal-Error (Right).}
    \label{fig: Dziuk Du-nu}
\end{figure}

\noindent \textbf{Acknowledgments.} \ \ The authors extend their gratitude to Tom Ranner for helpful discussions regarding the numerical computations and to Arnold Reusken for the fruitful conversations. We also thank the anonymous referees for their comments.

\bibliographystyle{siam}
\bibliography{reference}
\end{document}